\definecolor{dblue}{rgb}{0,0,0.45}
\definecolor{red}{rgb}{0.7,0,0}
\numberwithin{equation}{section}
\newtheorem{theorem}{Theorem}[section]
\newtheorem{lemma}[theorem]{Lemma}
\newtheorem{corollary}[theorem]{Corollary}
\newtheorem{proposition}[theorem]{Proposition}
\theoremstyle{definition}
\newtheorem{remark}[theorem]{Remark}
\newtheorem{definition}[theorem]{Definition}
\newtheorem{example}[theorem]{Example}
\theoremstyle{remark}
\title{A thought on generalized Morrey spaces}
\author{Yoshihiro Sawano}
\begin{document}

\maketitle

\tableofcontents

\begin{abstract}
Morrey spaces can complement the boundedness properties
of operators that Lebesgue spaces can not handle.
Morrey spaces which we have been handling are called classical Morrey spaces.
However,
classical Morrey spaces are not totally enough to describe the boundedness properties.
To this end, we need to generalize parameters $p$ and $q$,
among others $p$.
\end{abstract}

\section{Introduction}
\label{s2}

This note studies the function spaces
which Guliyev, Mizuhara and Nakai defined 
in \cite{Guliyev94, Mizuhara91,Nakai94}.
Let $0<q \le p<\infty$.
We define the classical Morrey space ${\mathcal M}^p_q({\mathbb R}^n)$
to be the set of all measurable functions $f$ for which the quantity
\[
\|f\|_{{\mathcal M}^p_q}
\equiv
\sup_{x \in {\mathbb R}^n, r>0}
|Q(x,r)|^{\frac1p-\frac1q}
\left(\int_{Q(x,r)}|f(y)|^q\,dy\right)^{\frac1q}
\]
is finite,
where $Q(x,r)=\{y \in {\mathbb R}^n\,:\,
|x_1-y_1|,|x_2-y_2|,\ldots,|x_n-y_n| \le r\}$.
This space goes back to the papers
\cite{Morrey38, Morrey66} by Morrey
and the subsequent paper \cite{Peetre69} by Peetre.
To describe the endpoint case or to describe
the intersection space, sometimes it is useful
to generalize the parameter $p$:
let us suppose that $p$ comes from the function $t^{\frac{n}{p}}$.
So, we envisage the situation
where $t^{\frac{n}{p}}$ is replaced by a general function $\varphi$.
More precisely,
it is known as the Adams theorem that
$I_\alpha$ maps 
${\mathcal M}^p_q({\mathbb R}^n)$
to
${\mathcal M}^s_t({\mathbb R}^n)$
whenever $1<q \le p<\infty$ and $1<t \le s<\infty$
satisfy
\[
\frac{1}{s}=\frac{1}{p}-\frac{\alpha}{n}, \quad
\frac{q}{p}=\frac{t}{s}.
\]
Here $I_\alpha$ is the fractional integral operator
given by (\ref{eq:170426-99})
for a nonnegative measurable function $f:{\mathbb R}^n \to [0,\infty]$.
However, it is known that
we can not take $s=\infty$. 
In fact, $I_\alpha$ fails to be bounded 
from ${\mathcal M}^{n/\alpha}_q({\mathbb R}^n)$ to $L^\infty({\mathbb R}^n)$
for all $1\le q \le \frac{n}{\alpha}$.
To compensate for this failure,
we can use generalized Morrey spaces.
We will define the generalized Morrey norm
by
\[
\|f\|_{{\mathcal M}^\varphi_q}
\equiv
\sup_{x \in {\mathbb R}^n, r>0}
\frac{1}{\varphi(r)}\left(
\frac{1}{|Q(x,r)|}
\int_{Q(x,r)}|f(y)|^q\,dy\right)^{\frac1q}<\infty.
\]
Here $\varphi:(0,\infty) \to [0,\infty)$
is a function which does not vanish at least at some point.
Another advantage of generalized Morrey spaces
is that we can cover many function spaces related to Lebesgue spaces.

Although we do not consider the direct applications 
of generalized Morrey spaces to PDEs,
generalized Morrey spaces can be applied to PDEs.
Applications to the nondivergence elliptic differential equations
can be found in 
\cite{FHS17, GGA12,KMR14,Softova13-2,WNTZ12}.
Applications to the parabolic differential equations
can be found in 
\cite{ZJSZ10}.
See \cite{GuOm16, Softova13-1}
for the parabolic oblique derivative problem.
See \cite{KNS00} for applicatitons to Schr\"{o}dinger equations.
We refer to \cite{LMPS12} for the application to singular
integral equations.

This note is organized as follows:
We collect some preliminary facts
in Section \ref{s6}
In Section \ref{s3},
we define generalized Morrey spaces
and then discuss the structure of generalized Morrey spaces.
The conditions on $\varphi$ will be important.
So we discuss them quite carefully.
Section \ref{s4} is a detailed discusssion
of the boundedness properties of the operators.
Here we handle
the Hardy--Littlewood maximal operators,
the Riesz potentails,
the Riesz transforms
and 
the fractional maximal operators
as well as their generalizations.
Section \ref{s5} is a survey of
other related function spaces.

Here we list a series of (somewhat standard) notation we use in this note.
\begin{enumerate}
\item
Let $(X,d)$ be a metric space.
We denote by $B(x,r)$
the {\it ball centered at $x$ of radius $r$}.
Namely, we write
\[
B(x,r) \equiv
\{y \in {\mathbb R}^n\,:\, d(x,y)<r\}
\]
when $x \in {\mathbb R}^n$
and $r>0$.
\index{$B(x,r)$}
Given a ball $B$,
we denote by $c(B)$ its {\it center} 
and by $r(B)$ its {\it radius}.
In the Euclidean space ${\mathbb R}^n$,
we write $B(r)$ instead of $B(o,r)$,
where $o\equiv (0,0,\ldots,0)$.
\index{$B(r)$}
\item
A metric measure space
is a pair of a metric space $(X,d)$
and a measure $\mu$ such that
any open set is measurable.
\item
By a \lq \lq cube" we mean a compact cube in ${\mathbb R}^n$
whose edges are parallel to the coordinate {\it axes}.
The metric closed ball defined 
by $\ell^\infty$ is called a {\it cube}.
If a cube has center $x$ and radius $r$,
we denote it by $Q(x,r)$.
Namely, we write
\[
Q(x,r) \equiv
\left\{y=(y_1,y_2,\ldots,y_n) \in {\mathbb R}^n\,:\,
\max_{j=1,2,\ldots,n}|x_j-y_j| \le r\right\}
\]
when $x=(x_1,x_2,\ldots,x_n) \in {\mathbb R}^n$
and $r>0$.
\index{$Q(x,r)$}
{}From the definition of $Q(x,r)$,
its volume is $(2r)^n$.
We write $Q(r)$ instead of $Q(o,r)$.
Given a cube $Q$,
we denote by $c(Q)$ the {\it center of $Q$}
and by $\ell(Q)$ the {\it sidelength of $Q$}:
$\ell(Q)=|Q|^{1/n}$,
where $|Q|$ denotes the volume of the cube $Q$.
\item
Given a cube $Q$ and $k>0$,
$k\,Q$ means the {\it cube concentric to $Q$
with sidelength $k\,\ell(Q)$}.
Given a ball $B$ and $k>0$,
we denote by $k\,B$
the {\it ball concentric to $B$
with radius $k\,r(B)$}.
\item
Let $A,B \ge 0$.
Then $A \lesssim B$ and $B \gtrsim A$ mean
that there exists a constant $C>0$
such that $A \le C B$,
where $C$ depends only on the parameters
of importance.
The symbol $A \sim B$ means
that $A \lesssim B$ and $B \lesssim A$
happen simultaneously,
while $A \simeq B$ means that there exists
a constant $C>0$ such that $A=C B$.
\item
When we need to emphasize or
keep in mind that
the constant $C$
depends on the parameters
$\alpha,\beta,\gamma$ etc
Instead of $A\lesssim B$,
we write
$A\lesssim_{\alpha,\beta,\gamma,\ldots}B$.
\end{enumerate}

\section{Preliminaries}
\label{s6}

\subsection{Lebesgue spaces and integral inequalities}

Here we recall Lebesgue spaces and the integral inequalities.
\begin{definition}[Lebesgue space]\label{defi:150824-57}
\index{Lebesgue space@Lebesgue space}
Let $(X,{\mathcal B},\mu)$ be a measure space
and $0 < p \le \infty$.
\begin{enumerate}
\item
The Lebesgue norm
$L^p(\mu)$-(quasi-)norm of a measurable function $f$ is given
by
\begin{align*}
\| f \|_p
&\equiv
\| f \|_{L^{p}(\mu)}
\equiv
\left(
\int_X |f(x)|^p\,d\mu(x)
\right)^{\frac{1}{p}}, \, p<\infty \\
\| f \|_\infty
&\equiv
\| f \|_{L^\infty(\mu)}
\equiv
\sup\{ \lambda>0 \, : \,
|f(x)| \le \lambda \mbox{ for $\mu$-a.e. $x \in X$ }
\}, \, p=\infty.
\end{align*}
\item
Define
\begin{align*}
L^p(\mu)
&\equiv
\{ f:X \to \overline{\mathbb K} \, : \,
f \mbox{ is measurable and }
\| f \|_p<\infty
\}/\sim.
\end{align*}
Here the equivalence relation $\sim$
is defined by:
\begin{equation}
f \sim g \Longleftrightarrow f=g \mbox{ a.e.}
\end{equation}
and below omit this equivalence in defining function spaces.
\end{enumerate}
As usual if $(X,{\mathcal B},\mu)$ is the Lebesgue measure,
then we write
$L^p({\mathbb R}^n)=L^p(\mu)$
and
$\|\cdot\|_{{\rm W}L^p}=\|\cdot\|_{{\rm W}L^p(\mu)}$.
\index{$L^p(\mu)$}
\end{definition}

\begin{theorem}[H\"{o}lder's inequality]Let
$\displaystyle
0< p,q,r \le \infty
$
satisfy
$\displaystyle
\frac{1}{r}=\frac{1}{p}+\frac{1}{q}.
$
Then for $f \in L^p(\mu)$ and $g \in L^q(\mu)$,
$\displaystyle
\|f g\|_{L^r(\mu)} \le \|f\|_{L^p(\mu)}\|g\|_{L^q(\mu)}.
$
See {\rm\cite[Theorem 2.4]{AdFo-text-03}}
for example.
\end{theorem}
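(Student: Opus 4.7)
The plan is to reduce the stated inequality to the classical conjugate-exponent Hölder inequality (where $1/P+1/Q=1$ with $P,Q \in [1,\infty]$) by homogenizing in the exponent $r$. First I would dispose of the boundary cases. If $p=\infty$, then $1/q=1/r$ forces $q=r$, and the pointwise a.e.\ bound $|fg| \le \|f\|_{\infty}\,|g|$ integrates to give $\|fg\|_{L^r(\mu)} \le \|f\|_{\infty}\|g\|_{L^r(\mu)}$; the case $q=\infty$ is symmetric, and the case $r=\infty$ is immediate since $1/p+1/q=0$ with $p,q>0$ forces $p=q=\infty$.

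Assume now that $p,q,r$ are all finite. Because $1/p$ and $1/q$ are strictly positive and sum to $1/r$, we have $p>r$ and $q>r$, so $P\equiv p/r$ and $Q\equiv q/r$ both exceed $1$, with
\[
\frac{1}{P}+\frac{1}{Q}=\frac{r}{p}+\frac{r}{q}=1.
\]
I would then apply the classical conjugate-exponent Hölder inequality (from \cite[Theorem 2.4]{AdFo-text-03}) to the nonnegative functions $|f|^{r}$ and $|g|^{r}$ with exponents $P,Q$, obtaining
\[
\int_X |f(x)g(x)|^{r}\,d\mu(x)
\le
\left(\int_X |f|^{rP}\,d\mu\right)^{\frac{1}{P}}
\left(\int_X |g|^{rQ}\,d\mu\right)^{\frac{1}{Q}}
=\|f\|_{L^{p}(\mu)}^{r}\,\|g\|_{L^{q}(\mu)}^{r},
\]
and taking the $r$th root of both sides completes the argument.

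There is essentially no obstacle here; the only idea is the substitution $P=p/r$, $Q=q/r$, which converts the statement (which allows $r<1$) into the textbook case whose exponents already lie in $[1,\infty]$. Since the pointwise Young inequality $ab \le a^{P}/P+b^{Q}/Q$ underpinning the classical case is established in the cited reference, the derivation above is complete modulo the trivial boundary computations handled in the first paragraph.
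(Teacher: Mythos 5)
Your proof is correct: the boundary cases are handled properly, and the substitution $P=p/r$, $Q=q/r$ (which gives conjugate exponents in $(1,\infty)$ and lets you apply classical H\"{o}lder to $|f|^r$ and $|g|^r$) is the standard argument. The paper itself gives no proof, only the citation to \cite[Theorem 2.4]{AdFo-text-03}, and your reduction is exactly the expected way to deduce the stated version (which allows $r<1$) from the textbook conjugate-exponent case, so nothing is missing.
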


In addition to $L^p(\mu)$ with $0<p \le \infty$,
it is convenient to define $L^0(\mu)$:
The space $L^0(\mu)$
denotes the set of all measurable functions
considered modulo the difference on the set of measure zero.
\subsection{Integral operators}

We will study 
the Hardy--Littlewood maximal operators,
the Riesz potentails,
the Riesz transforms
and 
the fractional maximal operators.
So we recall the definition together with the boundedness
properties on $L^p({\mathbb R}^n)$.
For Theorems \ref{thm:1208-2}, \ref{thm:maximal},
\ref{thm:CZ-operator1}, \ref{thm:CZ-operator2},
\ref{thm:fractional}
we refer to \cite{Grafakos-text-08, Sawano-text-18, Stein-text-93, Stein-text-70}.
\begin{definition}[Hardy--Littlewood maximal operator]
For $f \in L^0({\mathbb R}^n)$,
define a function $M f$ by
\begin{equation}\label{eq:maximal operator}
M f(x)\equiv
\sup_{B \in {\mathcal B}}\frac{\chi_B(x)}{|B|}\int_B |f(y)|{\rm d}y
\quad (x \in {\mathbb R}^n).
\end{equation}
The mapping $M:f \mapsto Mf$
is called the {\it Hardy--Littlewood maximal operator}.
\index{Hardy--Littlewood maximal operator@Hardy--Littlewood maximal operator}
\end{definition}

We have the weak-$(1,1)$ boundedness of $M$.
\begin{theorem}[Hardy--Littlewood maximal inequality]
\label{thm:1208-2}
For $f \in L^1({\mathbb R}^n), \lambda>0$,
\[
\lambda|\,\{M f>\lambda \}\,|
\le 3^n\|f\|_1.
\]
\index{Hardy--Littlewood maximal inequality@
Hardy--Littlewood maximal inequality}
\end{theorem}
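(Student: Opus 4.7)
The plan is to prove the weak-type $(1,1)$ estimate through the standard Vitali-type covering argument. First, observe that $Mf$ is lower semicontinuous, hence $E_\lambda \equiv \{x \in \mathbb{R}^n : Mf(x)>\lambda\}$ is open. For each $x \in E_\lambda$, the defining supremum in (\ref{eq:maximal operator}) furnishes a ball $B_x \ni x$ with
\[
\frac{1}{|B_x|}\int_{B_x}|f(y)|\,dy > \lambda,
\]
which in particular forces $|B_x| < \lambda^{-1}\|f\|_1$, so the radii of $\{B_x\}_{x \in E_\lambda}$ are uniformly bounded. This uniform bound is what will make a covering extraction possible.

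Next, fix a compact set $K \subseteq E_\lambda$ and extract a finite subcover $B_{x_1},\dots,B_{x_N}$. The heart of the argument is a $3r$-covering extraction: reorder so that $r(B_{x_1}) \ge r(B_{x_2}) \ge \dots \ge r(B_{x_N})$, then greedily select $B_{x_{i_1}}=B_{x_1}$ and at each subsequent stage the first ball disjoint from all previously selected. Any discarded $B_{x_i}$ must meet some earlier selected $B_{x_{i_k}}$ with $r(B_{x_{i_k}})\ge r(B_{x_i})$; the triangle inequality then yields $B_{x_i} \subseteq 3B_{x_{i_k}}$. Consequently,
\[
K \subseteq \bigcup_{k} 3B_{x_{i_k}}.
\]

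Summing volumes, using the disjointness of the selected balls together with the pointwise lower bound above, gives
\[
|K| \le \sum_k |3B_{x_{i_k}}| = 3^n \sum_k |B_{x_{i_k}}| < \frac{3^n}{\lambda}\sum_k \int_{B_{x_{i_k}}} |f(y)|\,dy \le \frac{3^n}{\lambda}\|f\|_1.
\]
Taking the supremum over compact $K \subseteq E_\lambda$ via inner regularity of Lebesgue measure yields $\lambda|E_\lambda| \le 3^n \|f\|_1$, which is the claim.

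The main obstacle is the covering step itself, namely the greedy extraction of a pairwise disjoint subfamily whose $3$-dilations still cover $K$. The geometric factor $3^n=|3B|/|B|$ is precisely what produces the constant in the stated inequality; in a general metric measure space this argument would require a doubling condition on $\mu$ to replace the Euclidean scaling. All remaining ingredients (lower semicontinuity of $Mf$, inner regularity) are routine.
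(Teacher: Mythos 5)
Your proof is correct: the greedy extraction of a disjoint subfamily ordered by decreasing radius, the inclusion $B_{x_i}\subseteq 3B_{x_{i_k}}$ when a discarded ball meets a larger selected one, and the passage from compact $K\subseteq E_\lambda$ to $E_\lambda$ by inner regularity all go through, and the factor $|3B|/|B|=3^n$ gives exactly the stated constant for the uncentered maximal operator of (\ref{eq:maximal operator}). The paper itself does not prove Theorem \ref{thm:1208-2} but merely cites the standard references, and your argument is precisely the classical Vitali-type covering proof found there, so there is nothing to reconcile.
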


For $1<p \le \infty$
we have the strong-$(p,p)$ boundedness.
\begin{theorem}[$L^p({\mathbb R}^n)$-inequality]
\label{thm:maximal}
Let $1<p<\infty$. Then 
\[
\| M f \|_p
\le
\left(\frac{p\,2^p\cdot 3^n}{p-1}\right)^{\frac1p}
\| f \|_p
\]
for all $f \in L^0({\mathbb R}^n)$.
\index{Lp-inequality of the Hardy--Littlewood maximal operator@$L^p({\mathbb R}^n)$-inequality of the Hardy--Littlewood maximal operator}
\end{theorem}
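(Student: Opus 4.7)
The plan is to derive the strong $(p,p)$ bound by Marcinkiewicz-style interpolation between the weak $(1,1)$ bound of Theorem \ref{thm:1208-2} and the trivial $L^\infty$ bound $\|Mf\|_\infty \le \|f\|_\infty$ (which is immediate from the definition of $M$, since every average of $|f|$ over a ball is bounded by $\|f\|_\infty$). The idea is to truncate $f$ at each height $\lambda$, use the $L^\infty$ bound to remove the small part from the level set $\{Mf>\lambda\}$, and apply the weak-type estimate to the large part, then integrate in $\lambda$ via the layer-cake formula.

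Concretely, for fixed $\lambda>0$ and $f \in L^p({\mathbb R}^n)$ I would split
\[
f = f^\lambda + f_\lambda, \qquad f_\lambda \equiv f\,\chi_{\{|f|>\lambda/2\}},
\]
so that $|f^\lambda|\le \lambda/2$ almost everywhere. Sublinearity of $M$ gives $Mf \le Mf^\lambda + Mf_\lambda \le \lambda/2 + Mf_\lambda$, whence
\[
\{Mf>\lambda\} \subset \left\{Mf_\lambda > \lambda/2\right\}.
\]
Theorem \ref{thm:1208-2} applied to $f_\lambda \in L^1({\mathbb R}^n)$ then yields
\[
|\{Mf>\lambda\}| \le \frac{2\cdot 3^n}{\lambda}\int_{\{|f|>\lambda/2\}}|f(y)|\,dy.
\]

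The final step is the layer-cake identity
\[
\|Mf\|_p^p = p\int_0^\infty \lambda^{p-1}|\{Mf>\lambda\}|\,d\lambda,
\]
into which I plug the estimate above and exchange the order of integration by Fubini. The inner $\lambda$-integral becomes $\int_0^{2|f(y)|}\lambda^{p-2}\,d\lambda = (2|f(y)|)^{p-1}/(p-1)$, and collecting factors produces exactly
\[
\|Mf\|_p^p \le \frac{p\cdot 2^p\cdot 3^n}{p-1}\|f\|_p^p,
\]
which is the stated inequality after taking $p$-th roots.

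The proof itself has no conceptual obstacle beyond the standard Marcinkiewicz bookkeeping; the only delicate point is choosing the truncation level correctly (splitting at $\lambda/2$ rather than at $\lambda$) so that the constant comes out to the advertised $\frac{p\cdot 2^p\cdot 3^n}{p-1}$ rather than something larger. One must also be mindful that the case $f \in L^0({\mathbb R}^n) \setminus L^p$ is covered trivially (both sides are $+\infty$ or the bound is vacuous), so the reduction to $f \in L^p$ is automatic.
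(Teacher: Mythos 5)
Your proof is correct, and in fact the paper offers no proof of this theorem at all: it simply refers the reader to the standard texts (Grafakos, Stein, Sawano), where exactly your argument appears. The splitting $f=f^\lambda+f_\lambda$ at height $\lambda/2$, the inclusion $\{Mf>\lambda\}\subset\{Mf_\lambda>\lambda/2\}$, the weak-$(1,1)$ bound of Theorem \ref{thm:1208-2} applied to $f_\lambda\in L^1({\mathbb R}^n)$, and the layer-cake/Fubini computation give precisely $\|Mf\|_p^p\le 2\cdot 3^n p\int_{{\mathbb R}^n}|f(y)|\,\frac{(2|f(y)|)^{p-1}}{p-1}\,dy=\frac{p\,2^p\cdot 3^n}{p-1}\|f\|_p^p$, so the constant matches the statement exactly, and your remarks on the trivial reduction to $f\in L^p({\mathbb R}^n)$ and the integrability of $f_\lambda$ close the only potential gaps.
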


We move on to the singular integral operators.
Among others we consider the Calder\'{o}n--Zygmund operators.

Here 
we are interested in the following type 
of the singular integral operators:
\begin{definition}[Singular integral operator]
A {\it singular integral operator}
is an $L^2({\mathbb R}^n)$-bounded linear operator
$T$
that comes with 
a function $K \in C^1({\mathbb R}^n \setminus \{0\})$
satisfying the following conditions:
\begin{enumerate}
\item
$(${\it Size condition}$)$
\index{size condition@size condition}
For all $x \in {\mathbb R}^n$,
\begin{equation}\label{eq:size condition}
|K(x)| \lesssim |x|^{-n}.
\end{equation}
\item
$(${\it Gradient condition}$)$
\index{gradient condition@gradient condition}
For all $x \in {\mathbb R}^n$,
\begin{equation}\label{eq:gradient condition}
|\nabla K(x)| \lesssim |x|^{-n-1}.
\end{equation}
\item
Let $f$ be an $L^2({\mathbb R}^n)$-function.
For almost all $x \notin {\rm supp}(f)$,
\begin{equation}\label{eq:081112-401}
Tf(x)=\int_{{\mathbb R}^n}K(x-y)f(y){\rm d}y.
\end{equation}
The function $K$ is called the {\it integral kernel} of $T$.
\end{enumerate}
\index{integral kernel@integral kernel}
\index{singular integral operator@singular integral operator}
\label{defi:Singular integral operator}
\end{definition}
More generally we consider the following type of
singular integral operators:
\begin{definition}\label{defi:CZ}
\index{Calderon-Zygmund operator@Calderon-Zygmund operator}
A (generalized) Calder\'{o}n-Zygmund operator
is an $L^2({\mathbb R}^n)$-bounded linear operator $T$,
if it satisfies the following conditions:
\index{generalized Calderon-Zygmund operator@(generalized) Calder\'{o}n-Zygmund operator}
\begin{enumerate}
\item
There exists a measurable function $K$
such that for all $L^\infty({\mathbb R}^n)$-functions
$f$
with compact support we have
\begin{equation}
\label{eq:sing1}
Tf(x)=\int_{{\mathbb R}^n}K(x,y)f(y)\,dy
\mbox{ for all } x \notin {\rm supp}{f}.
\end{equation}
\item
The kernel function $K$ satisfies the following estimates.
\begin{align}
\label{eq:sing3}
|K(x,y)| \lesssim |x-y|^{-n},
\end{align}
if $x \ne y$
and
\begin{align}
\label{eq:sing2}
|K(x,z)-K(y,z)|
+
|K(z,x)-K(z,y)|
\lesssim \frac{|x-y|}{|x-z|^{n+1}}\,,
\end{align}
if $0<2|x-y|<|z-x|$.
\end{enumerate}
\end{definition}

As a typical example we consider the 
Riesz transform:
\begin{example}\label{example:180916-62}
\index{Riesz transform@Riesz transform}
Let $j=1,2,\ldots,n$.
The singular integral operators,
which are represented by the $j$-th Riesz transform 
given by,
\begin{equation}\label{eq:170228-2}
R_j f(x)
\equiv
\lim_{\varepsilon \downarrow 0}
\int_{{\mathbb R}^n \setminus B(x,\varepsilon)}
\frac{x_j-y_j}{|x-y|^{n+1}}f(y){\rm d}y,
\end{equation}
are integral operators with singularity.
\end{example}

We have the weak-$(1,1)$ boundedness as before.\begin{theorem}\label{thm:CZ-operator1}
Suppose that $T$ is a CZ-operator.
Then $T$ is weak-$(1,1)$ bounded, that is
\begin{equation}
\left|\left\{ \, |Tf| > \lambda \right\}\right|
\lesssim \frac{1}{\lambda}\int_{{\mathbb R}^n} |f(x)|\,dx
\end{equation}
for all $f \in L^1({\mathbb R}^n) \cap L^2({\mathbb R}^n)$.
\end{theorem}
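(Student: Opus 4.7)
The plan is to run the classical Calderón--Zygmund decomposition argument. Fix $\lambda>0$ and $f\in L^1({\mathbb R}^n)\cap L^2({\mathbb R}^n)$. First I would apply the Calderón--Zygmund decomposition of $f$ at height $\lambda$: by stopping-time on dyadic cubes one obtains a pairwise disjoint family $\{Q_j\}_j$ of cubes together with a splitting $f=g+b$, $b=\sum_j b_j$, such that $\|g\|_\infty \lesssim \lambda$, $\operatorname{supp} b_j \subset Q_j$, $\int b_j=0$, $\|b_j\|_1 \lesssim \lambda|Q_j|$, and $\sum_j|Q_j|\lesssim \lambda^{-1}\|f\|_1$. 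From $\|g\|_\infty\lesssim\lambda$ and $\|g\|_1\le\|f\|_1$ one also gets $\|g\|_2^2\lesssim \lambda\|f\|_1$.

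Next I would split the level set as
\[
\{|Tf|>\lambda\}\subset\{|Tg|>\lambda/2\}\cup\{|Tb|>\lambda/2\}
\]
and control each piece separately. For the good part I use Chebyshev together with the $L^2({\mathbb R}^n)$-boundedness of $T$ which is built into Definition \ref{defi:CZ}:
\[
|\{|Tg|>\lambda/2\}|\lesssim \lambda^{-2}\|Tg\|_2^2\lesssim \lambda^{-2}\|g\|_2^2\lesssim \lambda^{-1}\|f\|_1.
\]

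The heart of the argument, and the main obstacle, is the bad part. Let $c_j$ denote the center of $Q_j$ and set $Q_j^*\equiv 2\sqrt{n}\,Q_j$ and $E\equiv\bigcup_j Q_j^*$, so that $|E|\lesssim\sum_j|Q_j|\lesssim\lambda^{-1}\|f\|_1$. It then suffices to estimate $|\{x\in{\mathbb R}^n\setminus E:|Tb(x)|>\lambda/2\}|$. For $x\notin Q_j^*$ and $y\in Q_j$ one has $2|y-c_j|<|x-c_j|$, so the Hörmander-type estimate \eqref{eq:sing2} applies; combining it with the mean-value cancellation $\int b_j=0$ and the representation \eqref{eq:sing1} gives
\[
|Tb_j(x)|
=\left|\int_{Q_j}\bigl(K(x,y)-K(x,c_j)\bigr)b_j(y)\,dy\right|
\lesssim\int_{Q_j}\frac{|y-c_j|}{|x-c_j|^{n+1}}|b_j(y)|\,dy.
\]
Integrating in $x$ over the complement of $Q_j^*$ produces $\int_{{\mathbb R}^n\setminus Q_j^*}|Tb_j(x)|\,dx\lesssim\|b_j\|_1$, and summing in $j$ together with Chebyshev yields
\[
|\{x\notin E:|Tb(x)|>\lambda/2\}|
\lesssim\lambda^{-1}\sum_j\|b_j\|_1
\lesssim\lambda^{-1}\|f\|_1.
\]
Adding the three contributions gives the claimed weak-type inequality. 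The delicate point is the use of cancellation of $b_j$ combined with the smoothness estimate \eqref{eq:sing2}; everything else is the decomposition and Chebyshev.
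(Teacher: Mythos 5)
Your argument is the classical Calder\'{o}n--Zygmund decomposition proof, and it is correct; the paper itself does not prove Theorem \ref{thm:CZ-operator1} but refers to the cited texts, where exactly this decomposition-plus-H\"{o}rmander-condition argument is carried out. The only point worth noting is that \eqref{eq:sing1} is stated for $L^\infty$-functions with compact support, whereas the bad pieces $b_j$ are merely in $L^1\cap L^2$ with compact support, so strictly speaking you should remark that the kernel representation extends to such $b_j$ for a.e.\ $x\notin Q_j^*$ by a routine truncation argument using the $L^2({\mathbb R}^n)$-boundedness of $T$; with that remark the proof is complete.
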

We have the strong boundedness
for $1<p<\infty$.
\begin{theorem}\label{thm:CZ-operator2}
Let $1<p<\infty$ and $T$ be a generalized singular integral operator,
which is initially defined on $L^2({\mathbb R}^n)$.
Then $T$ can be extended to $L^p({\mathbb R}^n)$
for all $1<p<\infty$ so that
\begin{equation}
\| \, T f \|_p \lesssim_p\,\| f \|_p
\end{equation}
for all $f \in L^p({\mathbb R}^n)$.
\end{theorem}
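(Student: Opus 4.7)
The plan is to combine the hypothesis that $T$ is $L^{2}({\mathbb R}^n)$-bounded with the weak-$(1,1)$ estimate of Theorem~\ref{thm:CZ-operator1} via Marcinkiewicz interpolation, and then use a duality argument to cover the range above $2$.

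First I would use that $T$, being bounded on $L^{2}({\mathbb R}^n)$ by assumption, is automatically of weak type $(2,2)$, while Theorem~\ref{thm:CZ-operator1} asserts that $T$ is of weak type $(1,1)$. The Marcinkiewicz interpolation theorem then upgrades these endpoint estimates to the strong $(p,p)$ bound
\[
\|Tf\|_{p}\lesssim_{p}\|f\|_{p}\qquad (f\in L^{1}({\mathbb R}^n)\cap L^{2}({\mathbb R}^n))
\]
for every $p\in(1,2)$. Since $L^{1}\cap L^{2}$ is dense in $L^{p}$ for this range, $T$ admits a unique bounded extension to $L^{p}({\mathbb R}^n)$, which settles the case $1<p\le 2$.

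For the range $2<p<\infty$ I would pass to the formal adjoint. Define $K^{*}(x,y)\equiv\overline{K(y,x)}$. The size bound \eqref{eq:sing3} and the H\"older-type bound \eqref{eq:sing2} are symmetric in the two variables (both roles $|K(x,z)-K(y,z)|$ and $|K(z,x)-K(z,y)|$ appear in \eqref{eq:sing2}), so $K^{*}$ satisfies exactly the same kernel estimates as $K$. Moreover the $L^{2}$-boundedness of $T$ transfers to the $L^{2}$-boundedness of $T^{*}$. Consequently $T^{*}$ is itself a generalized Calder\'on--Zygmund operator in the sense of Definition~\ref{defi:CZ}. Applying the already established range $1<q\le 2$ to $T^{*}$ with $q=p'$ gives $\|T^{*}g\|_{p'}\lesssim_{p'}\|g\|_{p'}$, and the identity $\langle Tf,g\rangle=\langle f,T^{*}g\rangle$ for compactly supported $f,g\in L^{\infty}({\mathbb R}^n)$ with disjoint supports, combined with $(L^{p})^{*}=L^{p'}$, yields
\[
\|Tf\|_{p}\lesssim_{p}\|f\|_{p}\qquad (f\in L^{2}({\mathbb R}^n)\cap L^{p}({\mathbb R}^n)).
\]
A density argument then extends $T$ to all of $L^{p}({\mathbb R}^n)$ for $2<p<\infty$.

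The only real obstacle is the duality step, since one must be careful that the pairing $\langle Tf,g\rangle=\langle f,T^{*}g\rangle$ makes sense and is consistent; this is why I would first test it on $f,g\in L^{\infty}$ with compact and disjoint supports, where both sides reduce to absolutely convergent integrals against $K(x,y)f(y)\overline{g(x)}$ by \eqref{eq:sing1}, and then propagate the identity by density using the $L^{2}$ boundedness of $T$ and $T^{*}$. Once that is in place, the two halves $1<p\le 2$ and $2\le p<\infty$ glue together into the claimed estimate.
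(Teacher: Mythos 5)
Your argument is correct and is exactly the standard route: the paper itself gives no proof of Theorem \ref{thm:CZ-operator2} but refers to \cite{Grafakos-text-08, Sawano-text-18, Stein-text-93, Stein-text-70}, where the result is obtained precisely as you propose, by interpolating the weak-$(1,1)$ bound of Theorem \ref{thm:CZ-operator1} against the $L^2({\mathbb R}^n)$ bound via Marcinkiewicz for $1<p\le 2$, and then dualizing for $2<p<\infty$, using that the kernel conditions (\ref{eq:sing3}) and (\ref{eq:sing2}) are symmetric in the two variables so that $T^{*}$ with kernel $K^{*}(x,y)=\overline{K(y,x)}$ is again a generalized Calder\'{o}n--Zygmund operator. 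One bookkeeping remark: the identity $\langle Tf,g\rangle=\langle f,T^{*}g\rangle$ holds for all $f,g\in L^{2}({\mathbb R}^n)$ by the very definition of the Hilbert-space adjoint, so the disjoint-support computation is needed only to verify the kernel representation (\ref{eq:sing1}) for $T^{*}$, not the pairing identity itself.
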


As a different type of singular integral operators,
we consider the fractional integral operators.
Let $0<\alpha<n$.
Let $I_\alpha$ be the {\it fractional maximal operator}
given by
\begin{equation}\label{eq:170426-99}
I_\alpha f(x)
\equiv
\int_{{\mathbb R}^n}\frac{f(y)}{|x-y|^{n-\alpha}}{\rm d}y
\quad (x \in {\mathbb R}^n)
\end{equation}
for a nonnegative measurable function $f:{\mathbb R}^n \to [0,\infty]$.

The following theorem is known
as the Hardy--Littlewood--Sobolev theorem.
Generalized Morrey spaces
can be used to refine this theorem.
\begin{theorem}[Hardy--Littlewood--Sobolev]\label{thm:fractional}
{\rm \cite{Hardy--Littlewood-1,Hardy--Littlewood-2,Sobolev38}}
Let $0<\alpha<n$.
\begin{enumerate}
\item
We have
\begin{equation}
\lambda^{\frac{n}{n-\alpha}}
|\{ x \in {\mathbb R}^n\,:\, |I_\alpha f(x)|>\lambda \}|
\lesssim \|f\|_1
\end{equation}
for all $f \in L^1({\mathbb R}^n)$ and $\lambda>0$.
\item
Assume that the parameters $\displaystyle 1<p<\frac{n}{\alpha}$
and $\displaystyle \frac{1}{q}=\frac{1}{p}-\frac{\alpha}{n}$.
Then we have $\| I_\alpha f\|_q \lesssim \| f \|_p$
for all $f \in L^p({\mathbb R}^n)$.
\end{enumerate}
\end{theorem}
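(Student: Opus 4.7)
The plan is to reduce both assertions to the Hardy--Littlewood maximal inequalities (Theorems \ref{thm:1208-2} and \ref{thm:maximal}) via the classical Hedberg splitting. For any $R>0$ and any $x \in {\mathbb R}^n$, I split
\[
I_\alpha f(x)
=\int_{|x-y|\le R}\frac{f(y)}{|x-y|^{n-\alpha}}\,dy
+\int_{|x-y|>R}\frac{f(y)}{|x-y|^{n-\alpha}}\,dy
\equiv A_R(x)+B_R(x).
\]
For the \emph{near} piece $A_R(x)$, I dyadically decompose $B(x,R)$ into annuli $B(x,2^{-k}R)\setminus B(x,2^{-k-1}R)$. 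Since $|x-y|^{\alpha-n}$ is comparable to $(2^{-k}R)^{\alpha-n}$ on the $k$-th annulus and $\int_{B(x,2^{-k}R)}|f(y)|\,dy\lesssim (2^{-k}R)^n\,Mf(x)$, summing the geometric series in $k$ (which converges precisely because $\alpha>0$) yields $|A_R(x)|\lesssim R^\alpha\,Mf(x)$.

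For the \emph{far} piece $B_R(x)$, I use a different bound in the two parts. In part (2), H\"older's inequality with conjugate index $p'$ gives
\[
|B_R(x)|\le \|f\|_p\left(\int_{|x-y|>R}|x-y|^{-(n-\alpha)p'}\,dy\right)^{1/p'}
\lesssim R^{\alpha-n/p}\|f\|_p,
\]
the integral converging precisely because the assumption $p<n/\alpha$ forces $(n-\alpha)p'>n$. In part (1), the trivial bound $|x-y|^{\alpha-n}\le R^{\alpha-n}$ on $\{|x-y|>R\}$ yields $|B_R(x)|\le R^{\alpha-n}\|f\|_1$.

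The decisive step is the optimization in $R$. In part (2), choosing $R$ so that $R^\alpha\,Mf(x)=R^{\alpha-n/p}\|f\|_p$, i.e.\ $R=(\|f\|_p/Mf(x))^{p/n}$, produces the Hedberg pointwise inequality
\[
|I_\alpha f(x)|\lesssim Mf(x)^{p/q}\,\|f\|_p^{1-p/q},
\]
whose exponents are forced by $1/q=1/p-\alpha/n$. Raising to the $q$-th power, integrating over ${\mathbb R}^n$ and invoking Theorem \ref{thm:maximal} give $\|I_\alpha f\|_q\lesssim\|f\|_p$. In part (1), the analogous balancing shows that $|I_\alpha f(x)|>\lambda$ forces $Mf(x)\gtrsim \lambda^{n/(n-\alpha)}\|f\|_1^{-\alpha/(n-\alpha)}$, and the weak-$(1,1)$ bound for $M$ (Theorem \ref{thm:1208-2}) then delivers the desired weak-type inequality.

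The main obstacle is really only exponent bookkeeping in the Hedberg optimization, together with checking that the hypotheses enter at exactly the right places: $\alpha>0$ is used to sum the dyadic series for $A_R(x)$, while $p<n/\alpha$ is used to make the H\"older tail integral for $B_R(x)$ converge. The degenerate situations $Mf(x)=0$ and $\|f\|_p=0$ are handled trivially since each forces $f=0$ on the set that matters, so the optimizing $R$ is always well defined.
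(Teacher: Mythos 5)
Your argument is correct, but note that the paper itself does not prove Theorem \ref{thm:fractional} at all: it is stated with citations to the literature (the remark before Theorem \ref{thm:1208-2} refers the reader to the textbooks of Grafakos, Sawano and Stein). So there is no internal proof to compare against; what you have written is the standard Hedberg argument, and it is exactly the kind of proof found in those references. It has the virtue of being self-contained within the paper's own toolkit, since it uses only Theorems \ref{thm:1208-2} and \ref{thm:maximal}, and it is in fact the same pointwise-estimate technique that the paper later exploits in the generalized setting (Lemma \ref{lem:150824-20} is precisely a Hedberg-type inequality for $I_\rho$, and Lemma \ref{lem:180526-1} for $M_\rho$), so your route fits the paper's methodology well; by contrast, the one fractional result the paper does prove, Theorem \ref{thm:180925-2} for $M_\alpha$, is handled by a different device (a distribution-function/H\"{o}lder bootstrap absorbing $\|M_\alpha f\|_q$ into the left-hand side). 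Your exponent bookkeeping checks out: $(n-\alpha)p'>n$ is equivalent to $p<n/\alpha$, the balanced $R$ gives $|I_\alpha f(x)|\lesssim Mf(x)^{p/q}\|f\|_p^{1-p/q}$ with $1-\alpha p/n=p/q$, and raising to the $q$-th power and using Theorem \ref{thm:maximal} gives part (2). One caveat on part (1): your balancing yields the scaling-correct estimate
\[
\lambda^{\frac{n}{n-\alpha}}\,|\{|I_\alpha f|>\lambda\}|\lesssim \|f\|_1^{\frac{n}{n-\alpha}},
\]
i.e.\ $I_\alpha:L^1\to {\rm w}L^{n/(n-\alpha)}$. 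The display in the statement, with a bare $\|f\|_1$ on the right, is not dilation-invariant (replace $f$ by $cf$ and $\lambda$ by $c\lambda$) and is evidently a typo in the paper; your proof establishes the correct form, and you should state explicitly that this is what you prove.
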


Let $0 \le \alpha<n$.
The fractional maximal operator $M_\alpha$
of order $\alpha$ is defined by
\[
M_\alpha f(x) \equiv
\sup_{Q \in {\mathcal Q}}\ell(Q)^{\alpha-n}
\chi_Q(x)\int_Q|f(y)|\,dy
\]
for a measurable function $f$.
Note that 
$M_\alpha$ maps ${\mathcal M}^{n/\alpha}_1({\mathbb R}^n)$ boundedly to
$L^{\infty}({\mathbb R}^n)$.
As an opposite endpoint case,
we have the following boundedness:
\begin{theorem}\label{thm:180925-1}
Let $0<\alpha<n$.
Then
\begin{equation}
\lambda^{\frac{n}{n-\alpha}}
|\{ x \in {\mathbb R}^n\,:\, M_\alpha f(x)>\lambda \}|
\lesssim \|f\|_1
\end{equation}
for all $f \in L^1({\mathbb R}^n)$.
\end{theorem}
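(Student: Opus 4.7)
The plan is to adapt the familiar weak-$(1,1)$ argument for the Hardy--Littlewood maximal operator; the fractional weight $\ell(Q)^{\alpha-n}$ merely improves the scaling exponent. I read the displayed right-hand side $\|f\|_1$ in the usual weak-type sense — the homogeneity under $f\mapsto cf$ forces the constant to absorb a power of $\|f\|_1$ — so that the result is the exact analog of Theorem~\ref{thm:fractional}(1) for $I_\alpha$, and I aim to produce an inequality in exactly the displayed shape $\lambda^{n/(n-\alpha)}|\{M_\alpha f>\lambda\}|\lesssim\|f\|_1^{n/(n-\alpha)}$, which becomes the displayed estimate after the customary rearrangement.

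First I fix $\lambda>0$, set $E_\lambda:=\{M_\alpha f>\lambda\}$, and for each $x\in E_\lambda$ select a cube $Q_x\ni x$ with $\ell(Q_x)^{\alpha-n}\int_{Q_x}|f(y)|\,dy>\lambda$. Rearranging as $\ell(Q_x)^{n-\alpha}<\lambda^{-1}\int_{Q_x}|f|\le\lambda^{-1}\|f\|_1$ yields the uniform side-length ceiling
\[
\ell(Q_x)<R:=\left(\frac{\|f\|_1}{\lambda}\right)^{1/(n-\alpha)}.
\]
This ceiling is the workhorse of the proof: it licenses the Vitali covering lemma, and it will control the volume of the selected cubes.

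Next I invoke a basic Vitali-type covering lemma for the family $\{Q_x\}_{x\in E_\lambda}$: since the side-lengths are uniformly bounded by $R$, I can extract a countable pairwise disjoint subcollection $\{Q_{x_j}\}$ such that $E_\lambda\subset\bigcup_j 5\,Q_{x_j}$. Then I split $|Q_{x_j}|=\ell(Q_{x_j})^\alpha\cdot\ell(Q_{x_j})^{n-\alpha}$, bound the first factor uniformly by $R^\alpha$, and bound the sum of the second factors using the defining inequality of the $Q_{x_j}$ together with their disjointness,
\[
\sum_j \ell(Q_{x_j})^{n-\alpha}\le\lambda^{-1}\sum_j\int_{Q_{x_j}}|f|\,dy\le\lambda^{-1}\|f\|_1.
\]
Combining,
\[
|E_\lambda|\le 5^n\sum_j|Q_{x_j}|\le 5^n R^\alpha\lambda^{-1}\|f\|_1=5^n\lambda^{-n/(n-\alpha)}\|f\|_1^{n/(n-\alpha)},
\]
which is the displayed inequality after multiplying through by $\lambda^{n/(n-\alpha)}$.

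The main (and really only) obstacle is the covering step; everything else falls out mechanically from the single defining inequality of the selected cubes. To avoid any measurability subtlety in the selection $x\mapsto Q_x$, one can first run the argument for the dyadic analog of $M_\alpha$ (where the admissible cubes form a countable family, so no selection axiom is needed) and then transfer to $M_\alpha$ by the standard $3^n$-translation comparison, paying at most a dimensional multiplicative constant.
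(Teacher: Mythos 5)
Your argument is correct. Note first that the paper states this theorem without supplying a proof (it is also not on the list of results for which the textbook references are cited), so there is no in-paper argument to compare against; what you have written is the classical covering-lemma proof of the weak-type $(1,\tfrac{n}{n-\alpha})$ estimate, and every step checks out. Your reading of the display is also the right one: as written the inequality is not invariant under $f\mapsto cf$ (the left side scales like $c^{n/(n-\alpha)}$, the right side like $c$), so the intended conclusion must be $|\{M_\alpha f>\lambda\}|\lesssim(\|f\|_1/\lambda)^{n/(n-\alpha)}$, which is exactly what you prove. The identity $|Q|=\ell(Q)^n$ matches the paper's convention $\ell(Q)=|Q|^{1/n}$, the arithmetic $R^\alpha\lambda^{-1}\|f\|_1=(\|f\|_1/\lambda)^{n/(n-\alpha)}$ is right, disjointness is used exactly where it is needed, and your dyadic reduction disposes of the selection issue in the Vitali step.

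For comparison, there is a shorter route that leans on material the paper already proves. For $x\in Q$ one has the exact factorization
\[
\ell(Q)^{\alpha-n}\int_Q|f(y)|\,dy
=\Bigl(\ell(Q)^{-n}\int_Q|f(y)|\,dy\Bigr)^{1-\frac{\alpha}{n}}\Bigl(\int_Q|f(y)|\,dy\Bigr)^{\frac{\alpha}{n}}
\lesssim_n Mf(x)^{1-\frac{\alpha}{n}}\,\|f\|_1^{\frac{\alpha}{n}},
\]
the last step by comparing the cube $Q$ with a ball of comparable measure containing it. Hence $\{M_\alpha f>\lambda\}\subset\{Mf>c(\lambda\|f\|_1^{-\alpha/n})^{n/(n-\alpha)}\}$, and Theorem~\ref{thm:1208-2} yields the claim at once. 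This Hedberg-type route saves the covering lemma and ties the statement to the weak-$(1,1)$ inequality already recorded in the paper; your self-contained covering proof is equally standard and has the advantage of not passing through $M$ at all.
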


Using the boundedess of $M$,
we can prove the boundedness of $M_\alpha$.
Here for the sake of convenience for readers
we supply the proof.
\begin{theorem}\label{thm:180925-2}
Let $1<p<q<\infty$ and $0<\alpha<n$
satisfy $\frac1q=\frac1p-\frac{\alpha}{n}$.
Then
$\|M_\alpha f\|_{L^q} \lesssim \|f\|_{L^p}$
for all $f \in L^p({\mathbb R}^n)$.
\end{theorem}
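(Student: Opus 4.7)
The plan is to establish the standard pointwise Hedberg-type inequality
\[
M_\alpha f(x) \lesssim \|f\|_p^{1-p/q}\,(Mf(x))^{p/q}
\]
and then integrate in the $L^q$-norm, invoking Theorem \ref{thm:maximal}.

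To derive the pointwise inequality, I fix $x \in {\mathbb R}^n$ and consider an arbitrary cube $Q \ni x$. The quantity $\ell(Q)^{\alpha-n}\int_Q |f(y)|\,dy$ can be bounded in two different ways. First, since $|Q| = \ell(Q)^n$ in the convention of the paper, one has
\[
\ell(Q)^{\alpha-n}\int_Q |f(y)|\,dy
=\ell(Q)^\alpha \cdot \frac{1}{|Q|}\int_Q |f(y)|\,dy
\lesssim \ell(Q)^\alpha\, Mf(x).
\]
Second, by H\"older's inequality applied with exponents $p$ and $p'$,
\[
\ell(Q)^{\alpha-n}\int_Q |f(y)|\,dy
\le \ell(Q)^{\alpha-n}\,|Q|^{1/p'}\,\|f\|_p
= \ell(Q)^{\alpha - n/p}\,\|f\|_p.
\]
Here the hypothesis $p<q$ combined with $\frac{1}{q}=\frac{1}{p}-\frac{\alpha}{n}$ forces $\alpha < n/p$, so the second bound decreases in $\ell(Q)$ while the first increases in $\ell(Q)$.

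I would then take the geometric mean of the two bounds, balancing them by choosing the weight that eliminates the sidelength. Using the first estimate with weight $p/q$ and the second with weight $1-p/q$, the exponents of $\ell(Q)$ combine as
\[
\frac{p}{q}\cdot\alpha + \Bigl(1-\frac{p}{q}\Bigr)\Bigl(\alpha-\frac{n}{p}\Bigr)
=\alpha - \Bigl(1-\frac{p}{q}\Bigr)\frac{n}{p}
=\alpha - \frac{\alpha n}{n}=0,
\]
where I used $1-\frac{p}{q}=\frac{p\alpha}{n}$. This yields, uniformly in $Q \ni x$,
\[
\ell(Q)^{\alpha-n}\int_Q |f(y)|\,dy
\lesssim (Mf(x))^{p/q}\,\|f\|_p^{1-p/q}.
\]
Taking the supremum over cubes $Q$ containing $x$ gives the Hedberg inequality
$M_\alpha f(x) \lesssim (Mf(x))^{p/q}\,\|f\|_p^{1-p/q}$.

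Finally, raising both sides to the $q$-th power and integrating,
\[
\|M_\alpha f\|_q^q \lesssim \|f\|_p^{q-p}\int_{{\mathbb R}^n}(Mf(x))^p\,dx
=\|f\|_p^{q-p}\,\|Mf\|_p^p
\lesssim \|f\|_p^{q-p}\,\|f\|_p^p = \|f\|_p^q,
\]
where Theorem \ref{thm:maximal} supplies the last inequality and uses crucially that $p>1$. I do not anticipate a genuine obstacle; the only subtle point is verifying that the chosen weights $(p/q, 1-p/q)$ are the correct ones, which is dictated precisely by the Sobolev relation $\frac{1}{q}=\frac{1}{p}-\frac{\alpha}{n}$.
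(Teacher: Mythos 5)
Your proof is correct, but it follows a genuinely different route from the paper. You prove the pointwise Hedberg inequality $M_\alpha f(x) \lesssim (Mf(x))^{p/q}\,\|f\|_p^{1-p/q}$ by balancing the two bounds $\ell(Q)^{\alpha}Mf(x)$ and $\ell(Q)^{\alpha-n/p}\|f\|_p$ (the weight computation using $1-\tfrac pq=\tfrac{p\alpha}{n}$ is right, and the comparison of cube averages over $Q\ni x$ with the ball-based $M$ costs only a dimensional constant), and then you conclude with the strong $(p,p)$ bound of Theorem \ref{thm:maximal}. The paper instead argues through the distribution function: it first reduces to $f\in L^\infty_{\rm c}({\mathbb R}^n)$ by monotone convergence, writes $\|M_\alpha f\|_{L^q}^q=q\int_0^\infty\lambda^{q-1}|\Omega_\lambda|\,d\lambda$, applies the weak-type endpoint estimate of Theorem \ref{thm:180925-1} to $f\chi_{\Omega_\lambda}$, interpolates $\|f\chi_{\Omega_\lambda}\|_{L^1}$ by H\"older, converts the $\lambda$-integral back into an integral of $|f|^{nu/(n-\alpha)}(M_\alpha f)^{q-n/(n-\alpha)}$, and finally absorbs $(\|M_\alpha f\|_{L^q})^{q-n/(n-\alpha)}$ into the left-hand side. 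Your argument is shorter, avoids the qualitative finiteness issue that the absorption step requires (hence the paper's preliminary reduction), and in fact anticipates exactly the Hedberg-type mechanism the paper itself uses later for the generalized operators (Lemma \ref{lem:150824-20} and Lemma \ref{lem:180526-1}); the paper's bootstrapping from the weak $(1,\tfrac{n}{n-\alpha})$ estimate, on the other hand, illustrates a technique that still works in settings where no pointwise domination by $M$ is available. Both proofs use $p>1$ only through the strong boundedness of the relevant maximal-type object, so there is no gap in your argument.
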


\begin{proof}
By the monotone convergence theorem
we may assume that $f \in L^\infty_{\rm c}({\mathbb R}^n)$.
For $\lambda>0$ we let
$\Omega_\lambda\equiv\{M_\alpha f>\lambda\}$.
We observe that
\begin{align*}
(\|M_\alpha f\|_{L^q})^q
=q
\int_0^\infty \lambda^{q-1}|\Omega_\lambda|\,d\lambda
\le q
\int_0^\infty \lambda^{q-1-\frac{n}{n-\alpha}}(\|f\chi_{\Omega_\lambda}\|_{L^1})^{\frac{n}{n-\alpha}}\,d\lambda.
\end{align*}
We choose $u>0$ so that
$
n(1-u)=p\alpha.
$
Then we have
\[
\|f\chi_{\Omega_\lambda}\|_{L^1}
\le
\left(\|f\chi_{\Omega_\lambda}\|_{L^{\frac{n u}{n-\alpha}}}\right)^{u}
(\|f\|_{L^{p}})^{1-u}.
\]
Thus, we have
\begin{align*}
(\|M_\alpha f\|_{L^q})^q
&\lesssim
\int_0^\infty \lambda^{q-1-\frac{n}{n-\alpha}}\left(
\|f\chi_{\Omega_\lambda}\|_{L^{\frac{n u}{n-\alpha}}}\right)^{\frac{n u}{n-\alpha}}
(\|f\|_{L^{p}})^{\frac{n(1-u)}{n-\alpha}}\,d\lambda\\
&\simeq
(\|f\|_{L^{p}})^{\frac{n(1-u)}{n-\alpha}}
\int_{{\mathbb R}^n} |f(x)|^{\frac{n u}{n-\alpha}}M_\alpha f(x)^{q-\frac{n}{n-\alpha}}\,dx.
\end{align*}
Observe that
\begin{align*}
\left(q-\frac{n}{n-\alpha}\right)
\times
\left(\frac{(n-\alpha)p}{n u}\right)'
&=
\left(q-\frac{n}{n-\alpha}\right)
\times
\frac{(n-\alpha)p}{(n-\alpha)p-n u}\\
&=
\frac{(n-\alpha)p q-n p}{(n-\alpha)p-n u}\\
&=
\frac{q(p-1)n}{(p-1)n}.
\end{align*}
By the H\"{o}lder inequality, we obtain
\begin{eqnarray*}
\int_{{\mathbb R}^n} |f(x)|^{\frac{n u}{n-\alpha}}M_\alpha 
f(x)^{q-\frac{n}{n-\alpha}}\,d\mu(x)
\le
(\|f\|_{L^p})^{\frac{n u}{n-\alpha}}
(\|M_\alpha f\|_{L^q})^{q-\frac{n}{n-\alpha}}.
\end{eqnarray*}
As a result,
\begin{eqnarray*}
(\|M_\alpha f\|_{L^q})^q
\lesssim
(\|f\|_{L^{p}})^{\frac{n}{n-\alpha}}
(\|M_\alpha f\|_{L^q})^{q-\frac{n}{n-\alpha}}.
\end{eqnarray*}
If we arrange this inequality,
we obtain
\[
\|M_\alpha f\|_{L^q}
\lesssim
\|f\|_{L^{p}}.
\]
\end{proof}

In addition to the linear operators above
we sometimes consider
the commutator generated by BMO and these operators.
Here we recall the BMO (bounded mean oscillation) as follows:
\begin{definition}[${\rm BMO}({\mathbb R}^n)$ (space)]
\label{defi:170428-161}
Define
\[
\| f \|_{{\rm BMO}}
\equiv 
\sup_{Q \in {\mathcal Q}}\frac{1}{|Q|}\int_Q|f(y)-m_Q(f)|{\rm d}y
=\sup_{Q \in {\mathcal Q}}m_Q(|f-m_Q(f)|)
\]
for $f \in L^1_{\rm loc}({\mathbb R}^n)$.
\index{$m_Q$}
The \lq \lq norm" $\| \star \|_{{\rm BMO}}$
is called
the ${\rm BMO}({\mathbb R}^n)$ {\it norm}.
The space
${\rm BMO}({\mathbb R}^n)$
collects
all $f \in L^1_{\rm loc}({\mathbb R}^n)$
such that
$\|f\|_{{\rm BMO}}$ is finite.
Usually ${\rm BMO}({\mathbb R}^n)$ is considered
modulo the constant functions.
\index{${\rm BMO}({\mathbb R}^n)$ (space)}
\end{definition}

As the following theorem shows,
the BMO functions have high local integrability.
\begin{theorem}[John--Nirenberg inequality]
\label{thm:John--Nirenberg}
For any $\lambda>0$, a cube
$Q$ 
and a nonconstant ${\rm BMO}({\mathbb R}^n)$-function
$b$,
\[
\left|
\left\{
x \in Q \, : \, |b(x)-m_Q(b)|>\lambda
\right\}
\right\|
\lesssim_n
\exp\left(-\frac{D\lambda}{\| b \|_{{\rm BMO}}}\right),
\]
where $D$ depends only on the dimension.
\index{John--Nirenberg inequality@John--Nirenberg inequality}
\end{theorem}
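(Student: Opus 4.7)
The plan is the classical Calderón–Zygmund stopping-time argument combined with iteration across generations. By homogeneity I may normalize $\|b\|_{\rm BMO}=1$; the target bound (with the $|Q|$ factor that is implicit in the statement) then becomes
$|\{x\in Q:|b(x)-m_Q(b)|>\lambda\}|\lesssim_n |Q|\,e^{-D\lambda}$
uniformly in $Q$.

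Fix a threshold $\alpha>1$ to be optimized at the end. Apply the bisection stopping-time on $Q$ to $|b-m_Q(b)|$, whose mean on $Q$ is at most $1<\alpha$: keep subdividing dyadically and stop at each subcube maximal with respect to the property that the mean of $|b-m_Q(b)|$ on it exceeds $\alpha$. This produces a pairwise disjoint family $\{Q_j^{(1)}\}$ satisfying the usual CZ trichotomy: (i) $\alpha<m_{Q_j^{(1)}}(|b-m_Q(b)|)\le 2^n\alpha$ (the upper bound comes from the dyadic parent); (ii) $|b-m_Q(b)|\le\alpha$ almost everywhere on $Q\setminus\bigcup_j Q_j^{(1)}$ via Lebesgue differentiation; (iii) $\sum_j|Q_j^{(1)}|\le\alpha^{-1}|Q|$. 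In particular $|m_{Q_j^{(1)}}(b)-m_Q(b)|\le 2^n\alpha$, which will be the crucial drift bound.

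Now iterate: on each $Q_j^{(1)}$ run the same procedure for $b-m_{Q_j^{(1)}}(b)$ (whose BMO norm is still $1$) to obtain children $\{Q_{j,k}^{(2)}\}\subset Q_j^{(1)}$, and so on. After $N$ generations I have a pairwise disjoint family $\mathcal F_N$ of subcubes of $Q$ with $\sum_{R\in\mathcal F_N}|R|\le\alpha^{-N}|Q|$, and with $|b(x)-m_Q(b)|\le (N+1)\,2^n\alpha$ for almost every $x\in Q\setminus\bigcup_{R\in\mathcal F_N}R$, because at each descent step the mean shifts by at most $2^n\alpha$ and outside the next generation the fluctuation from the current mean is at most $\alpha$. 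Given $\lambda$ with $\lambda>2^{n+1}\alpha$, choose $N=\lfloor\lambda/(2^n\alpha)\rfloor-1$ so that $(N+1)\,2^n\alpha\le\lambda$; then
\[
\bigl|\{x\in Q:|b(x)-m_Q(b)|>\lambda\}\bigr|\le\sum_{R\in\mathcal F_N}|R|\le\alpha^{-N}|Q|\le\alpha^2\,e^{-(\log\alpha)\,\lambda/(2^n\alpha)}\,|Q|.
\]
Optimizing the exponent $(\log\alpha)/(2^n\alpha)$ over $\alpha>1$ gives the choice $\alpha=e$, with the dimensional constant $D=1/(2^n e)$; the range $\lambda\le 2^{n+1}\alpha$ is absorbed into the dimensional constant of $\lesssim_n$. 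Undoing the normalization replaces $\lambda$ by $\lambda/\|b\|_{\rm BMO}$.

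The main technical nuisance is the careful bookkeeping of the drift $|m_R(b)-m_Q(b)|$ as $R$ ranges through the stopping-cube tower: one has to ensure that the total deviation accumulated over $N$ generations grows only linearly in $N$, since the exponential factor $\alpha^{-N}$ is what drives the decay. Everything else is a generation-by-generation application of Calderón--Zygmund decomposition, plus a single one-variable optimization at the end.
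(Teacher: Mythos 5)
Your proof is correct. Note that the paper itself does not prove this theorem: it is stated in the preliminaries as a standard fact (with the textbooks \cite{Grafakos-text-09, Stein-text-93} serving as the implicit references), so there is no in-paper argument to compare against. What you give is the classical Calder\'{o}n--Zygmund stopping-time proof, and the details check out: the first-generation cubes satisfy $\alpha<m_{Q_j}(|b-m_Q(b)|)\le 2^n\alpha$ and $\sum_j|Q_j|\le\alpha^{-1}|Q|$ because the mean of $|b-m_Q(b)|$ over $Q$ is at most $\|b\|_{\rm BMO}=1$; the drift bound $|m_{R_i}(b)-m_{R_{i-1}}(b)|\le 2^n\alpha$ telescopes to give $|b(x)-m_Q(b)|\le(N+1)2^n\alpha$ a.e.\ off the $N$-th generation; and the choice $N=\lfloor\lambda/(2^n\alpha)\rfloor-1$ together with $\sum_{R\in\mathcal F_N}|R|\le\alpha^{-N}|Q|$ yields the exponential decay, with the range $\lambda\lesssim_n 1$ absorbed into the constant. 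You were also right to read the statement as containing the factor $|Q|$ on the right-hand side (the paper's displayed inequality omits it, and has a stray $\|$); the scale-invariant form you prove, $|\{x\in Q:|b(x)-m_Q(b)|>\lambda\}|\lesssim_n|Q|\exp(-D\lambda/\|b\|_{\rm BMO})$, is the intended one, and your normalization $\|b\|_{\rm BMO}=1$ is legitimate since a nonconstant $b$ has strictly positive norm.
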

We will consider the commutators
generated by BMO and these operators.
For the properties of these commutators
we refer to \cite{Grafakos-text-09}.
Let $a \in {\rm BMO}({\mathbb R}^n)$.
Then the we can consider the operators
\[
f \in L^\infty_{\rm c}({\mathbb R}^n)
\mapsto 
[a,T]f=a \cdot T f-T[a \cdot f], \quad
[a,I_\alpha]f=a \cdot I_\alpha f-I_\alpha[a \cdot f]
\]
despite the ambiguity which the definition of BMO$({\mathbb R}^n)$
causes.

Thanks to the following theorem,
we can extend our results 
of generalized Morrey spaces
by mimicking the proof of the boundedness
for the similar operators.
However, due to the similarity we do not consider
the boundedness and the definition of these operators.
Instead, we give some references when needed.
\begin{theorem}\label{thm:180925-3}
Let $0<\alpha<n$, $a \in {\rm BMO}({\mathbb R}^n)$
and $T$ be a generalized Calder\'{o}n--Zygmund operator.
\begin{enumerate}
\item
Let $1<p<\infty$.
We have
$\|[a,T]f\|_{L^p} \lesssim \|a\|_{\rm BMO}\|f\|_{L^p}$
for all $f \in L^\infty_{\rm c}({\mathbb R}^n)$.
\item
Let $1<p<q<\infty$ satisfy $\frac1q=\frac1p-\frac{\alpha}{n}$.
Then we have
$\|[a,I_\alpha]f\|_{L^q} \lesssim \|a\|_{\rm BMO}\|f\|_{L^p}$
for all $f \in L^\infty_{\rm c}({\mathbb R}^n)$.
\end{enumerate}
\end{theorem}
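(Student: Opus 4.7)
The plan is to adapt the complex-analytic argument of Coifman--Rochberg--Weiss. I would first normalize $\|a\|_{\rm BMO}=1$ by exploiting linearity of the commutator in $a$, and then embed both operators in one-parameter analytic families
\[
T_z g \equiv e^{za}\, T(e^{-za} g), \qquad (I_\alpha)_z g \equiv e^{za}\, I_\alpha(e^{-za} g), \qquad z \in \mathbb{C}.
\]
Differentiating formally at $z=0$ reproduces $[a,T]f$ and $[a,I_\alpha]f$. Provided that $z\mapsto T_z f$ and $z\mapsto (I_\alpha)_z f$ are holomorphic into $L^p$ (resp.\ $L^q$) with norms uniformly bounded on a circle $\{|z|=\rho\}$ for some $\rho>0$ independent of $f$, Cauchy's formula turns the derivative at $0$ into a contour integral:
\[
[a,T]f = \frac{1}{2\pi i}\oint_{|z|=\rho}\frac{T_z f}{z^2}\,dz, \qquad \|[a,T]f\|_p \le \rho^{-1}\sup_{|z|=\rho}\|T_z f\|_p,
\]
with the analogous bound for $[a,I_\alpha]f$ in $L^q$. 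Rescaling then produces the factor $\|a\|_{\rm BMO}$ in the conclusion.

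The heart of the proof is therefore the uniform weighted bound $\|T_z f\|_p\lesssim \|f\|_p$ and $\|(I_\alpha)_z f\|_q\lesssim \|f\|_p$ for $|z|\le\rho$, with $\rho$ and the implicit constants depending only on $n,p,q,\alpha$ and the kernel. Rewriting these inequalities as integrals against the weight $w_z(x)\equiv e^{p\,\mathrm{Re}(za)(x)}$ reduces them to weighted norm estimates for $T$ and $I_\alpha$. Here the John--Nirenberg inequality (Theorem \ref{thm:John--Nirenberg}) is decisive: it furnishes an absolute constant $D>0$ such that
\[
\sup_{Q\in\mathcal Q}\frac{1}{|Q|}\int_Q e^{\lambda|a-m_Q(a)|}\,dx<\infty
\]
for all $|\lambda|<D$, and this local exponential integrability yields the Muckenhoupt $A_p$ condition (resp.\ the Muckenhoupt--Wheeden $A(p,q)$ condition) for $w_z$ once $|z|$ is below an absolute threshold. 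Standard weighted $L^p$-theory for Calder\'on--Zygmund operators closes (i) and the weighted Hardy--Littlewood--Sobolev inequality closes (ii).

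The main obstacle is that the weighted $A_p$ machinery invoked above has not been developed in Section \ref{s6}, so it would have to be cited from the literature or replaced by a more self-contained route. The natural alternative is the sharp maximal function method: establish the pointwise estimate
\[
M^{\#}([a,T]f)(x) \lesssim \|a\|_{\rm BMO}\bigl(M_r(Tf)(x) + M_r f(x)\bigr)\qquad (r>1),
\]
where $M_r g\equiv M(|g|^r)^{1/r}$, together with an analogous fractional-maximal variant governing $[a,I_\alpha]$, and then combine the Fefferman--Stein inequality $\|g\|_p\lesssim\|M^{\#}g\|_p$ with Theorems \ref{thm:maximal}, \ref{thm:CZ-operator2}, and \ref{thm:fractional}. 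In either route the pivotal step is the passage from BMO control of $a$ to the quantitative exponential integrability afforded by Theorem \ref{thm:John--Nirenberg}, which is precisely why that theorem appears in the preliminaries.
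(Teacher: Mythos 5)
The paper does not actually prove Theorem \ref{thm:180925-3}: it states the result and refers to \cite{Grafakos-text-09}, so there is no internal argument to compare yours against. Your sketch reproduces the two classical routes that the cited literature uses. The Coifman--Rochberg--Weiss conjugation $T_z g=e^{za}T(e^{-za}g)$ together with the Cauchy integral representation of the derivative at $z=0$ is correct in outline, and the reduction to uniform weighted bounds is the right one: by John--Nirenberg, $e^{\operatorname{Re}(za)}$ lies in $A_p$ (resp.\ satisfies the Muckenhoupt--Wheeden $A(p,q)$ condition) with constants uniform for $|z|\le\rho$ once $\|a\|_{\rm BMO}$ is normalized, and the weighted Calder\'on--Zygmund and weighted Hardy--Littlewood--Sobolev theorems then give the uniform bounds; linearity of $[a,\cdot]$ in $a$ restores the factor $\|a\|_{\rm BMO}$. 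The sharp-maximal alternative is equally standard. What each approach buys: your CRW route is short but imports the full weighted $A_p$/$A(p,q)$ machinery, which the paper's preliminaries do not contain; the sharp-maximal route stays closer to the unweighted theorems the paper does record (Theorems \ref{thm:maximal}, \ref{thm:CZ-operator2}, \ref{thm:fractional}) but needs the Fefferman--Stein inequality, also not in the preliminaries, so either way the proof is not self-contained within this paper — which is presumably why the author simply cites \cite{Grafakos-text-09}.

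Two technical points you should not gloss over if you wanted to write this out in full. First, in the sharp-maximal route the inequality $\|g\|_p\lesssim\|M^{\#}g\|_p$ is false without an a priori qualitative hypothesis (e.g.\ $g\in L^{p_0}$ for some $p_0$, or $Mg$ finite somewhere); for $f\in L^\infty_{\rm c}$ and unbounded $a$ one usually secures this by first proving the estimate for the truncations $a_N=\max(\min(a,N),-N)$, whose BMO norms are uniformly controlled by $\|a\|_{\rm BMO}$, and then passing to the limit. Second, for part (2) the phrase ``an analogous fractional-maximal variant'' hides the actual pointwise estimate, e.g.\ $M^{\#}([a,I_\alpha]f)\lesssim\|a\|_{\rm BMO}\bigl(M_r(I_\alpha f)+M_{\alpha,r}f\bigr)$ with $M_{\alpha,r}f=M_{\alpha r}(|f|^r)^{1/r}$ and $1<r<p$, after which one needs the $L^p\to L^q$ bound for $M_{\alpha,r}$ (a scaling of Theorem \ref{thm:180925-2}); this step deserves to be stated, not waved at. In the CRW route the corresponding care point is the holomorphy of $z\mapsto T_zf$ into $L^p$ (resp.\ $(I_\alpha)_zf$ into $L^q$), which again rests on local exponential integrability of $a$ and the compact support of $f$.
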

For the proof of Theorem \ref{thm:180925-3}
we refer to \cite{Grafakos-text-09}.

\section{Generalized Morrey spaces}
\label{s3}

Likewise we can replace $q$ by a function $\Phi$.

\subsection{Definition of generalized Morrey spaces}
\label{subsection:Definition of generalized Morrey spaces}

From the observation above, we are led to the following definition:
\begin{definition}\label{defi:180414-2}
Let $0<q<\infty$ and $\varphi:(0,\infty) \to [0,\infty)$
be a function which does not satisfy $\varphi \equiv 0$.
\begin{enumerate}
\item
\cite{Nakai94}
Define the generalized Morrey space
${\mathcal M}^\varphi_q({\mathbb R}^n)$
to be the set of all measurable functions $f$ such that
\begin{equation}\label{eq:131115-111}
\|f\|_{{\mathcal M}^\varphi_q}
\equiv
\sup_{x \in {\mathbb R}^n, r>0}
\varphi(r)\left(
\frac{1}{|Q(x,r)|}
\int_{Q(x,r)}|f(y)|^q\,dy\right)^{\frac1q}<\infty.
\end{equation}
\index{M phi q@${\mathcal M}^\varphi_q({\mathbb R}^n)$}
\item
\cite{Guliyev94}
Define the weak generalized Morrey space
${\rm w}{\mathcal M}^{\varphi}_q({\mathbb R}^n)$
to be the set of all measurable functions $f$ such that
\begin{align*}
\|f\|_{{\rm w}{\mathcal M}^\varphi_q}
\equiv 
\sup_{\lambda>0}
\|\lambda\chi_{(\lambda,\infty)}(|f|)\|_{{\mathcal M}^\varphi_q}<\infty.
\end{align*}
\index{w M phi q@${\rm w}{\mathcal M}^\varphi_q({\mathbb R}^n)$}
\end{enumerate}
\end{definition}
Although we torelate the case where $\varphi(t)=0$
for some $t>0$,
it turns out that there is no need to consider such possibility.

Before we go into more details,
clarifying remarks may be in order.
\begin{remark}
In \cite{Nakai94}
Nakai defined
the generalized Morrey space
${\mathcal M}^\varphi_q({\mathbb R}^n)$
to be the set of all measurable functions $f$ such that
\begin{equation}\label{eq:131115-1118}
\|f\|_{{\mathcal M}^\varphi_q}
\equiv
\sup_{x \in {\mathbb R}^n, r>0}
\frac{1}{\varphi(r)}\left(
\frac{1}{|Q(x,r)|}
\int_{Q(x,r)}|f(y)|^q\,dy\right)^{\frac1q}<\infty,
\end{equation}
or more generally
\[
\|f\|_{{\mathcal M}^\varphi_q}
\equiv
\sup_{x \in {\mathbb R}^n, r>0}
\frac{1}{\varphi(x,r)}\left(
\frac{1}{|Q(x,r)|}
\int_{Q(x,r)}|f(y)|^q\,dy\right)^{\frac1q}<\infty.
\]
See also
\cite{EUG12, Guliyev09, GAKS11, KoMi06, Nakai14, TaHe13, WNTZ12}.
Here we follow the notation by
Sawano, Sugano and Tanaka
\cite{SST09-1, SST11-1, SST11-2},
for example.
See also \cite{MiOh14-1}.
See \cite{AkKu14, DGS15, PeSa11,Sa(N)13-1,Sa(N)13-2,Shirai06-2} for another notation of generalized Morrey spaces,
for example.
Despite the difference of the notation,
the idea of defining the predual space depends on \cite{KoMi06,Shirai06-2}. 
\end{remark}

\begin{remark}
Some prefer to call $\varphi$ the weight
(see \cite{GoMu11} for example),
while
some prefer to call $\|\cdot \times w\|_{{\mathcal M}^\varphi_q}$
the weighted generalized Morrey norm
(see \cite{GuOm14} for example).
\end{remark}

We can recover the Lebesgue space $L^q({\mathbb R}^n)$
by letting $\varphi(t)\equiv t^{\frac{n}{q}}$ for $t>0$
as we have mentioned.
To compare Morrey spaces with generalized Morrey spaces,
we sometimes call Morrey spaces classical Morrey spaces.
In addition to the function $\varphi(t)=t^{\frac{n}{p}}$,
we consider the following typical functions:
\begin{example}\label{example:180611-1}\
\begin{enumerate}
\item
The function $\varphi \equiv 1$
generates $L^\infty({\mathbb R}^n)$
thanks to the Lebesgue convergence theorem.
\item
Let $0<q<\infty$ and $a \in {\mathbb R}$.
Define
$\varphi(t)=t^{\frac{n}{q}}(\log(e+t))^a$
for $t>0$.
We remark that
${\mathcal M}^\varphi_q({\mathbb R}^n) \ne \{0\}$
if and only if $a \le 0$.
In fact,
we have
\[
\|f\|_{{\mathcal M}^\varphi_q}
=
\sup_{x \in {\mathbb R}^n, r>0}
(\log(e+r))^a\|f\|_{L^q(Q(x,r))}.
\]
Thus, if $f$ is a measurable function
such that $\|f\|_{{\mathcal M}^\varphi_q}<\infty$ and if $a>0$,
then
we have
$\|f\|_{L^q(Q(x,r))}=0$
for any cube $Q(x,r)$.
Thus, $f=0$ a.e..
Convesely if $a \le 0$,
then $L^q({\mathbb R}^n) \subset 
{\mathcal M}^\varphi_q({\mathbb R}^n)$.
\item
Let $0<q \le p_1<p_2<\infty$.
Then
$\varphi(t)=t^{\frac{n}{p_1}}+t^{\frac{n}{p_2}}$,
$t>0$
can be used to express
the intersection
of ${\mathcal M}^{p_1}_q({\mathbb R}^n) 
\cap {\mathcal M}^{p_2}_q({\mathbb R}^n)$.
In general,
for $0<q<\infty$ and $\varphi_1,\varphi_2:(0,\infty)\to[0,\infty)$
satisfying $\varphi_1(t_1) \ne 0$ and $\varphi(t_2) \ne 0$
for some $t_1,t_2>0$
${\mathcal M}^{\varphi_1}_q({\mathbb R}^n)
\cap{\mathcal M}^{\varphi_2}_q({\mathbb R}^n)
={\mathcal M}^{\varphi_1+\varphi_2}_q({\mathbb R}^n)$
with equivalence of norms.
\item
Let $0<q \le p_1<p_2<\infty$.
Let $\varphi(t)=
\chi_{{\mathbb Q} \cap (0,\infty)}(t)t^{\frac{n}{p_1}}
+
\chi_{(0,\infty) \setminus {\mathbb Q}}(t)t^{\frac{n}{p_2}}
$
for $t>0$.
Then we have
${\mathcal M}^\varphi_q({\mathbb R}^n)={\mathcal M}^{p_1}_q({\mathbb R}^n) 
\cap {\mathcal M}^{p_2}_q({\mathbb R}^n)$
and for any $f \in L^0({\mathbb R}^n)$
\[
\|f\|_{{\mathcal M}^\varphi_q}
=
\max\{
\|f\|_{{\mathcal M}^{p_1}_q},
\|f\|_{{\mathcal M}^{p_2}_q}
\}.
\]
\item
We can consider the norm
\[
\sup_{x \in {\mathbb R}^n, r\in(0,1)}
|Q(x,r)|^{\frac1p}\left(
\frac{1}{|Q(x,r)|}
\int_{Q(x,r)}|f(y)|^q\,dy\right)^{\frac1q}
\]
for $0<q<p<\infty$.
In fact,
we take
$\varphi(t)=t^{\frac{n}{p}}\chi_{[0,1]}(t)$
for $t>0$.
\item
Likewise
we can consider the norm
\[
\sup_{x \in {\mathbb R}^n, r\in[1,\infty)}
|Q(x,r)|^{\frac1p}\left(
\frac{1}{|Q(x,r)|}
\int_{Q(x,r)}|f(y)|^q\,dy\right)^{\frac1q}
\]
for $0<q<p<\infty$.
In fact,
we take
$\varphi(t)=t^{\frac{n}{p}}\chi_{[1,\infty)}(t)$
for $t>0$.
\item
Let $0<q<\infty$
The uniformly locally $L^q$-integrable space
$L^q_{\rm uloc}({\mathbb R}^n)$ is the set of all measurable functions
$f$ for which
\[
\sup_{x \in {\mathbb R}^n, r\in(0,1)}
\left(\int_{Q(x,r)}|f(y)|^q\,dy\right)^{\frac1q}
=
\sup_{x \in {\mathbb R}^n}
\left(\int_{Q(x,1)}|f(y)|^q\,dy\right)^{\frac1q}
\]
is finite.
As before, if we let
$\varphi(t)=t^{\frac{n}{q}}\chi_{(0,1]}(t)$
for $t>0$,
then we obtain
$L^q_{\rm uloc}({\mathbb R}^n)
={\mathcal M}^\varphi_q({\mathbb R}^n)$.
We can define 
the weak uniformly locally $L^q$-integrable space
${\rm w}L^q_{\rm uloc}({\mathbb R}^n)$
similarly.
For $f \in L^0({\mathbb R}^n)$,
the norm is given by
\[
\|f\|_{{\rm w}L^q_{\rm uloc}}
\equiv
\sup_{\lambda>0}
\lambda\|\chi_{(\lambda,\infty]}(|f|)\|_{L^q_{\rm uloc}}.
\]
\end{enumerate}
\end{example}

The fifth example deserves a name.
We define the small Morrey space as follows:
\begin{definition}\label{defi:180611-1}
Let $0<q<p<\infty$.
The {\it small Morrey space}
$m^p_q({\mathbb R}^n)$
is the set of all measurable functions
$f$ for which the quantity
\[
\|f\|_{m^p_q}
\equiv
\sup_{x \in {\mathbb R}^n, r\in(0,1)}
|Q(x,r)|^{\frac1p}\left(
\frac{1}{|Q(x,r)|}
\int_{Q(x,r)}|f(y)|^q\,dy\right)^{\frac1q}
\]
is finite.
The {\it weak small Morrey space}
${\rm w}m^p_q({\mathbb R}^n)$
is defined similarly.
For $f \in L^0({\mathbb R}^n)$,
the norm is given by
\[
\|f\|_{{\rm w}m^p_q}
\equiv
\sup_{\lambda>0}
\lambda\|\chi_{(\lambda,\infty]}(|f|)\|_{m^p_q}.
\]
\end{definition}

\begin{example}{\rm \cite[Proposition A]{ErSa12}}\label{ex:180525-1}
Let $x \in {\mathbb R}^n$ and $r>0$.
Then
$$\displaystyle\|\chi_{Q(x,r)}\|_{{\mathcal M}^\varphi_q}
=\sup_{t>0}\varphi(t)\min(t^{-\frac{n}{q}},r^{-\frac{n}{q}}).
$$
In fact, simply observe that
\begin{align*}
\|\chi_{Q(x,r)}\|_{{\mathcal M}^\varphi_q}
&\equiv
\sup_{R>0}
\varphi(R)\left(
\frac{|Q(x,R) \cap Q(x,r)|}{|Q(x,R)|}\right)^{\frac1q}\\
&=\sup_{t>0}\varphi(t)\min(t^{-\frac{n}{q}},r^{-\frac{n}{q}}).
\end{align*}
\end{example}

The following $\min(1,q)$-triangle inequality holds:
\begin{lemma}\label{eq:141021-10}
Let $0<q<\infty$ and $\varphi:(0,\infty) \to (0,\infty)$
be a function.
Then
\[
\|f+g\|_{{\mathcal M}^\varphi_q}{}^{\min(1,q)}
\le
\|f\|_{{\mathcal M}^\varphi_q}{}^{\min(1,q)}
+
\|g\|_{{\mathcal M}^\varphi_q}{}^{\min(1,q)}
\]
for all $f,g \in {\mathcal M}^\varphi_q({\mathbb R}^n)$.
\end{lemma}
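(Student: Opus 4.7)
The plan is to reduce the inequality to the corresponding $L^q$-level inequality applied inside each cube $Q(x,r)$, and then take the supremum. Since $\min(1,q)$ changes form at $q=1$, I would split the argument into two cases.

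For the case $q \ge 1$, so that $\min(1,q)=1$, I would fix an arbitrary cube $Q(x,r)$ and apply Minkowski's inequality (a specialization of Hölder's inequality already cited in the excerpt) to the probability measure $\frac{dy}{|Q(x,r)|}$ on $Q(x,r)$ to obtain
\[
\left(\frac{1}{|Q(x,r)|}\int_{Q(x,r)}|f(y)+g(y)|^q\,dy\right)^{\frac1q}
\le
\left(\frac{1}{|Q(x,r)|}\int_{Q(x,r)}|f(y)|^q\,dy\right)^{\frac1q}
+\left(\frac{1}{|Q(x,r)|}\int_{Q(x,r)}|g(y)|^q\,dy\right)^{\frac1q}.
\]
Multiplying by $\varphi(r)$, each term on the right is bounded by the corresponding ${\mathcal M}^\varphi_q$-norm, and taking the supremum over $(x,r)$ yields the ordinary triangle inequality.

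For the case $0<q<1$, so that $\min(1,q)=q$, the key pointwise estimate is the elementary $(a+b)^q \le a^q+b^q$ for $a,b\ge 0$. Applying this to $a=|f(y)|$, $b=|g(y)|$ and integrating over $Q(x,r)$ gives
\[
\frac{1}{|Q(x,r)|}\int_{Q(x,r)}|f(y)+g(y)|^q\,dy
\le
\frac{1}{|Q(x,r)|}\int_{Q(x,r)}|f(y)|^q\,dy
+\frac{1}{|Q(x,r)|}\int_{Q(x,r)}|g(y)|^q\,dy.
\]
Multiplying by $\varphi(r)^q$ and using the definition of the norm, the right-hand side is bounded by $\|f\|_{{\mathcal M}^\varphi_q}^q+\|g\|_{{\mathcal M}^\varphi_q}^q$; taking the supremum over $(x,r)$ on the left finishes this case.

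There is no real obstacle here — the statement is essentially the $q$-triangle inequality for $L^q$ transferred pointwise in the cube parameter to a supremum norm — so the only thing to be careful about is the split at $q=1$ and the fact that multiplication by $\varphi(r)$ (respectively $\varphi(r)^q$) commutes with the supremum over $r$ for each fixed estimate inside a cube.
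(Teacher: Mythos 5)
Your proof is correct and is essentially the paper's own argument: the paper simply invokes the $\min(1,q)$-triangle inequality for $L^q$ on each cube and takes the supremum, which is exactly what you carry out (with the two cases $q\ge 1$ and $0<q<1$ written explicitly). No gaps; the supremum step works because $t\mapsto t^{\min(1,q)}$ is increasing, as you note.
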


\begin{proof}
This is similar to classical Morrey spaces:
Use the $\min(1,q)$-triangle inequality for the Lebesgue space
$L^q({\mathbb R}^n)$.
\end{proof}

\begin{proposition}\label{prop:180525-1}
Let $0<q<\infty$, and let $\varphi:(0,\infty) \to [0,\infty)$
be a function satisfying $\varphi(t_0) \ne 0$ for some $t_0>0$.
Then ${\mathcal M}^\varphi_q({\mathbb R}^n)$
is a quasi-Banach space
and if $q \ge 1$,
then ${\mathcal M}^\varphi_q({\mathbb R}^n)$
is a Banach space.
\end{proposition}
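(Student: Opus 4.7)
The plan is to verify the axioms of a (quasi-)norm first and then prove completeness via a standard Fatou argument.

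\textbf{Step 1: Quasi-norm axioms.} Homogeneity $\|\lambda f\|_{\mathcal{M}^\varphi_q}=|\lambda|\,\|f\|_{\mathcal{M}^\varphi_q}$ is immediate from the definition. The quasi-triangle inequality (ordinary triangle inequality when $q\ge 1$) comes from Lemma \ref{eq:141021-10}; raising both sides to the power $1/\min(1,q)$ yields $\|f+g\|_{\mathcal{M}^\varphi_q}\le C_q(\|f\|_{\mathcal{M}^\varphi_q}+\|g\|_{\mathcal{M}^\varphi_q})$ with $C_q=2^{\max(1/q-1,0)}$, which collapses to $C_q=1$ when $q\ge 1$. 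For definiteness, if $\|f\|_{\mathcal{M}^\varphi_q}=0$, then for every cube $Q(x,t_0)$ with the distinguished radius $t_0$ satisfying $\varphi(t_0)\ne 0$, one has $\int_{Q(x,t_0)}|f|^q\,dy=0$, so $f=0$ a.e.

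\textbf{Step 2: Extraction of a candidate limit.} Let $\{f_k\}$ be Cauchy in $\mathcal{M}^\varphi_q(\mathbb{R}^n)$. For every $x\in\mathbb{R}^n$, since $\varphi(t_0)\ne 0$, the bound
\[
\|f_k-f_\ell\|_{L^q(Q(x,t_0))}
\le \frac{|Q(x,t_0)|^{1/q}}{\varphi(t_0)}\,\|f_k-f_\ell\|_{\mathcal{M}^\varphi_q}
\]
shows that $\{f_k\}$ is Cauchy in $L^q(Q(x,t_0))$. Fix a countable cover of $\mathbb{R}^n$ by such cubes $\{Q(x_j,t_0)\}_{j\ge 1}$. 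On each cube we obtain an $L^q$-limit, and by uniqueness these limits agree on overlaps, yielding a measurable function $f\in L^q_{\rm loc}(\mathbb{R}^n)$. A standard diagonal argument produces a subsequence $\{f_{k_m}\}$ with $f_{k_m}\to f$ almost everywhere on $\mathbb{R}^n$.

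\textbf{Step 3: Convergence in norm and membership.} Given $\varepsilon>0$, choose $N$ so that $\|f_k-f_\ell\|_{\mathcal{M}^\varphi_q}<\varepsilon$ for $k,\ell\ge N$. For any $x\in\mathbb{R}^n$, $r>0$ and $k\ge N$,
\[
\varphi(r)\left(\frac{1}{|Q(x,r)|}\int_{Q(x,r)}|f_k(y)-f_{k_m}(y)|^q\,dy\right)^{1/q}<\varepsilon
\]
for all large $m$. Letting $m\to\infty$, Fatou's lemma applied to the a.e.\ convergent subsequence gives
\[
\varphi(r)\left(\frac{1}{|Q(x,r)|}\int_{Q(x,r)}|f_k(y)-f(y)|^q\,dy\right)^{1/q}\le \varepsilon.
\]
Taking the supremum in $(x,r)$ yields $\|f_k-f\|_{\mathcal{M}^\varphi_q}\le \varepsilon$ for $k\ge N$. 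In particular $f-f_N\in\mathcal{M}^\varphi_q(\mathbb{R}^n)$, hence $f\in\mathcal{M}^\varphi_q(\mathbb{R}^n)$, and $f_k\to f$ in norm.

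\textbf{Main obstacle.} The only non-routine point is producing a single a.e.\ limit $f$ defined on all of $\mathbb{R}^n$ from the local $L^q$-Cauchy information; this is handled by the covering plus diagonal extraction in Step 2. Once that is in hand, Fatou's lemma inside the supremum transfers the Cauchy estimate to the limit without effort, and the quasi-norm axioms are immediate consequences of Lemma \ref{eq:141021-10} together with the assumption $\varphi(t_0)\ne 0$.
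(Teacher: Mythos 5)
Your proof is correct and follows exactly the route the paper intends: the (quasi-)norm inequality is taken from Lemma \ref{eq:141021-10}, and the completeness argument you give (local $L^q$-Cauchy estimates on cubes $Q(x,t_0)$ using $\varphi(t_0)\ne 0$, a diagonal extraction of an a.e.\ convergent subsequence, then Fatou's lemma inside the supremum) is precisely the standard argument the paper declares routine and omits. No gaps.
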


\begin{proof}
The norm inequality follows from Lemma \ref{eq:141021-10}.
The proof of the completeness is a routine,
which we omit.
\end{proof}

Proposition \ref{prop:180525-1}
guarantees that the (quasi-)norm
of ${\mathcal M}^\varphi_q({\mathbb R}^n)$
is complete.
However, it may happen that
${\mathcal M}^\varphi_q({\mathbb R}^n)=\{0\}$.
We check that this extraordinary thing never happens
if $\varphi$ satisfies a mild condition.
\begin{proposition}{\rm \cite[Lemma 2.2(2)]{NNS15}}\label{prop:180611-1}
Let $0<q<\infty$, and let $\varphi:(0,\infty) \to [0,\infty)$
be a function satisfying $\varphi(t_0) \ne 0$ for some $t_0>0$.
Then the following are equivalent:
\begin{enumerate}
\item[$(a)$]
$L^\infty_{\rm c}({\mathbb R}^n)
\subset
{\mathcal M}^\varphi_q({\mathbb R}^n)$.
\item[$(b)$]
${\mathcal M}^\varphi_q({\mathbb R}^n) \ne \{0\}$.
\item[$(c)$] 
$\sup_{t>0}\varphi(t)\min(t^{-\frac{n}{q}},1)<\infty$.
\end{enumerate}
\end{proposition}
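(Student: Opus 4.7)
The plan is to establish the cycle $(a)\Rightarrow(b)\Rightarrow(c)\Rightarrow(a)$. The implication $(a)\Rightarrow(b)$ requires no work, since $\chi_{Q(0,1)}\in L^\infty_{\rm c}({\mathbb R}^n)$ is a nonzero element of ${\mathcal M}^\varphi_q({\mathbb R}^n)$ under (a).

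For $(c)\Rightarrow(a)$ I would invoke Example \ref{ex:180525-1}, which gives the explicit formula
\[
\|\chi_{Q(x,R)}\|_{{\mathcal M}^\varphi_q}=\sup_{t>0}\varphi(t)\min(t^{-n/q},R^{-n/q})
\]
for every cube $Q(x,R)$. A brief comparison between $\min(t^{-n/q},R^{-n/q})$ and $\min(t^{-n/q},1)$, handled by the case split $R\le 1$ versus $R>1$ and in each case by splitting the supremum along $t\le R$, an intermediate regime, and $t\ge\max(R,1)$, yields
\[
\|\chi_{Q(x,R)}\|_{{\mathcal M}^\varphi_q}\le\max(1,R^{-n/q})\,\sup_{t>0}\varphi(t)\min(t^{-n/q},1).
\]
Hence $\chi_{Q(x,R)}\in{\mathcal M}^\varphi_q({\mathbb R}^n)$ for every $R>0$ whenever (c) holds. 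Any $f\in L^\infty_{\rm c}({\mathbb R}^n)$ is pointwise dominated by $\|f\|_\infty \chi_{Q(x_0,R)}$ for some cube with $\supp f\subset Q(x_0,R)$, so the obvious monotonicity of the norm in $|f|$ delivers $f\in{\mathcal M}^\varphi_q({\mathbb R}^n)$, establishing (a).

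The main work lies in $(b)\Rightarrow(c)$. Fix a nonzero $f\in{\mathcal M}^\varphi_q({\mathbb R}^n)$. First I observe that $f\in L^q_{\rm loc}({\mathbb R}^n)$: choosing $t_0>0$ with $\varphi(t_0)>0$ and testing the Morrey norm on $Q(x,t_0)$ controls $\int_{Q(x,t_0)}|f|^q$ uniformly in $x$, and smaller cubes follow by monotonicity of the integral. Since $f\not\equiv 0$, the Lebesgue differentiation theorem produces a point $y_0$ with $|f(y_0)|>0$ at which the cube averages of $|f|^q$ converge to $|f(y_0)|^q$. Choosing $r_1\in(0,1]$ so small that the average over every $Q(y_0,r)$ with $r\le r_1$ exceeds $\tfrac12|f(y_0)|^q$, and testing the Morrey norm at such cubes, yields $\varphi(r)\le C_1$ for all $r\le r_1$. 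Setting $c:=\int_{Q(y_0,r_1)}|f|^q>0$, monotonicity of the integral in the domain gives $\int_{Q(y_0,t)}|f|^q\ge c$ for every $t\ge r_1$, and testing the norm on these cubes produces $\varphi(t)\le C_2 t^{n/q}$ for $t\ge r_1$.

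Assembling the two estimates across the cases $t\le 1$ and $t\ge 1$ (using $r_1\le 1$) produces $\sup_{t>0}\varphi(t)\min(t^{-n/q},1)\le\max(C_1,C_2)<\infty$, which is exactly (c). The principal obstacle is conceptual rather than computational: one must simultaneously control $\varphi$ on \emph{both} ends of $(0,\infty)$ from the mere existence of a single nonzero element, and the two natural mechanisms—Lebesgue differentiation at a positive-value point for the small-radius regime, and monotonicity of the integral on expanding cubes for the large-radius regime—happen to match the two regimes of $\min(t^{-n/q},1)$ appearing in (c).
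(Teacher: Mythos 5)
Your proposal is correct and follows essentially the same route as the paper: $(a)\Rightarrow(b)$ is trivial, $(b)\Rightarrow(c)$ uses the Lebesgue differentiation theorem at a point where $f$ is nonzero to bound $\varphi$ at small scales and the fixed positive mass on a small cube inside expanding cubes to bound $\varphi(t)t^{-n/q}$ at large scales, and $(c)\Rightarrow(a)$ dominates any $L^\infty_{\rm c}$ function by a multiple of an indicator of a cube. The only differences are cosmetic: you verify $\chi_{Q(x,R)}\in{\mathcal M}^\varphi_q({\mathbb R}^n)$ directly from the formula in Example \ref{ex:180525-1} rather than assembling it from unit cubes, and you keep the expanding cubes centered at the Lebesgue point (which, if anything, is cleaner than the paper's shifted cubes $Q((t+1,0,\ldots,0),t)$).
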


\begin{proof}
It is clear that $(a)$ implies $(b)$.

Assume $(b)$.
Then there exists
$f \in {\mathcal M}^\varphi_q({\mathbb R}^n) \setminus \{0\}$.
We may assume that $f(0) \ne 0$ and
that $x=0$ is the Lebesgue point of $|f|^q$.
Then since $f \in L^q_{\rm loc}({\mathbb R}^n)$,
by the Lebesgue differential theorem,
\[
\frac{1}{|Q(r)|}\int_{Q(r)}|f(y)|^q\,dy \sim 1
\]
for all $0<r<1$.
Here the implicit constants depend on $f$.
Thus,
\[
\sup_{0<t \le 1}\varphi(t)
\sim
\sup_{0<t \le 1}\varphi(t)
\left(\frac{1}{|Q(t)|}\int_{Q(t)}|f(y)|^q\,dy\right)^{\frac1q}
\le
\|f\|_{{\mathcal M}^\varphi_q}
<\infty.
\]
Here the implicit constants depend on $f$ again.
Meanwhile
\begin{align*}
\sup_{t \ge 1}t^{-\frac{n}{q}}\varphi(t)
&\lesssim
\sup_{t \ge 1}
\varphi(t)
\left(\frac{1}{|Q((t+1,0,0,\ldots,0),t)|}\int_{Q((t+1,0,0,\ldots,0),t)}|f(y)|^q\,dy\right)^{\frac1q}\\
&\le
\|f\|_{{\mathcal M}^\varphi_q}
<\infty.
\end{align*}
Thus
we conclude $(c)$.

Finally, if $(c)$ holds,
then $\chi_{Q(x,1)} \in
{\mathcal M}^\varphi_q({\mathbb R}^n)$
for any $x \in {\mathbb R}^n$.
Since ${\mathcal M}^\varphi_q({\mathbb R}^n)$
is a linear space,
 $\chi_{Q(x,m)} \in
{\mathcal M}^\varphi_q({\mathbb R}^n)$
for any $x \in {\mathbb R}^n$.
Since any function $g \in L^\infty_{\rm c}({\mathbb R}^n)$
admits the estimate of the form
$|g| \le N\chi_{Q(x,m)}$
for some $m,N \in {\mathbb N}$,
we conclude
$(a)$.
\end{proof}

Here we consider the case where
${\mathcal M}^\varphi_q({\mathbb R}^n)$
is close
to
${\mathcal M}^p_q({\mathbb R}^n)$
in a certain sense.
\begin{example}\label{example:180611-12}
Let $0<q \le p<\infty$ and ${\mathbb B}=(\beta_1,\beta_2) \in {\mathbb R}^2$.
We write
\[
\ell^{\mathbb B}(r)=\ell^{(\beta_1,\beta_2)}(r)
\equiv
\begin{cases}
(1+|\log r|)^{\beta_1}&(0< r \le 1),\\
(1+|\log r|)^{\beta_2}&(1< r<\infty).
\end{cases}
\]
\begin{enumerate}
\item
We set
\[
\varphi(t)=t^{\frac{n}{q}}\ell^{-{\mathbb B}}(t)
\quad (t>0).
\]
Note that
${\mathcal M}^\varphi_q({\mathbb R}^n) \ne \{0\}$
if and only if $\beta_2 \ge 0$.
Indeed,
according to Proposition \ref{prop:180611-1}
the case $\beta_2<0$ must be excluded
in order that
${\mathcal M}^\varphi_q({\mathbb R}^n) \ne \{0\}$.
Convesely if $\beta_2 \ge 0$,
then 
$L^\infty_{\rm c}({\mathbb R}^n) \subset
{\mathcal M}^\varphi_q({\mathbb R}^n)$.
\item
Let $0<q<p<\infty$.
We set
\[
\varphi(t)=t^{\frac{n}{p}}\ell^{-{\mathbb B}}(t)
\quad (t>0).
\]
Then
$L^\infty_{\rm c}({\mathbb R}^n) \subset
{\mathcal M}^\varphi_q({\mathbb R}^n)$.
\item
We set
\[
\varphi(t)=\ell^{-{\mathbb B}}(t)
\quad (t>0).
\]
For $a \in {\mathbb R}$,
we set
$f(x)=(1+|x|)^{-a}$,
$x \in {\mathbb R}^n$.
Let us see that
${\mathcal M}^\varphi_q({\mathbb R}^n)$
is close to $L^\infty({\mathbb R}^n)$
if $\beta_1 \ge 0$.
\begin{enumerate}
\item
By the Lebesgue differentiation theorem,
$\|f\|_\infty \le 0$ if $\beta_1<0$,
so that $f=0$ a.e..
So,
 if $\beta_1<0$,
then 
${\mathcal M}^\varphi_q({\mathbb R}^n)=\{0\}$.
\item
Let $\beta_1 \ge 0>\beta_2$.
Then
$f \in {\mathcal M}^\varphi_q({\mathbb R}^n)$
if and only if $a<0$.
\item
Let $\beta_1,\beta_2 \ge 0$.
Then
$f \in {\mathcal M}^\varphi_q({\mathbb R}^n)$
if and only if $a \le 0$.
\end{enumerate}
\end{enumerate}
\end{example}

We have the following scaling law:
\begin{lemma}{\rm \cite[Lemma 2.5]{NNS15}}\label{lem:150318-1}
Let $0<\eta,q<\infty$
and $\varphi:(0,\infty) \to (0,\infty)$
be a function.
Then $f\in {\mathcal M}^\varphi_q({\mathbb R}^n)$
if and only if 
$|f|^\eta \in {\mathcal M}^{\varphi^\eta}_{q/\eta}({\mathbb R}^n)$.
Furthemore in this case
$\||f|^\eta \|_{{\mathcal M}^{\varphi^\eta}_{q/\eta}}=\|f\|_{{\mathcal M}^\varphi_q}{}^\eta$.
\end{lemma}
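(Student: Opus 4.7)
The claim is an algebraic identity between two suprema, and the plan is to extract it directly from the definition with one elementary exponent manipulation. Concretely, I would start from the right-hand norm and substitute $|f|^\eta$ with exponent $q/\eta$:
\[
\||f|^\eta\|_{{\mathcal M}^{\varphi^\eta}_{q/\eta}}
= \sup_{x \in {\mathbb R}^n,\, r>0}
\varphi(r)^\eta
\left(\frac{1}{|Q(x,r)|}\int_{Q(x,r)}|f(y)|^{\eta \cdot q/\eta}\,dy\right)^{\eta/q}.
\]
Since $\eta \cdot (q/\eta) = q$ and $(1/q) \cdot \eta = \eta/q$, the inner quantity is exactly the $\eta$-th power of the quantity whose supremum defines $\|f\|_{{\mathcal M}^\varphi_q}$.

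The key observation is then that the map $t \mapsto t^\eta$ on $[0,\infty)$ is a continuous, strictly increasing bijection for $\eta>0$, so it commutes with taking suprema: $\sup_\alpha (a_\alpha)^\eta = (\sup_\alpha a_\alpha)^\eta$. Applying this gives
\[
\||f|^\eta\|_{{\mathcal M}^{\varphi^\eta}_{q/\eta}}
= \left(\sup_{x,r} \varphi(r)
\left(\frac{1}{|Q(x,r)|}\int_{Q(x,r)}|f(y)|^q\,dy\right)^{1/q}\right)^\eta
= \|f\|_{{\mathcal M}^\varphi_q}^{\eta},
\]
which is precisely the claimed identity of norms. The equivalence of membership, $f\in{\mathcal M}^\varphi_q({\mathbb R}^n)\iff |f|^\eta\in{\mathcal M}^{\varphi^\eta}_{q/\eta}({\mathbb R}^n)$, then follows because a nonnegative number is finite if and only if its $\eta$-th power is finite.

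There is essentially no obstacle here: no approximation, no covering, no auxiliary inequality is needed. The only thing to be careful about is bookkeeping of the exponents $q$, $\eta$, $q/\eta$, and $\eta/q$ and the observation that $\varphi$ is assumed to take values in $(0,\infty)$, so no degenerate zero values interfere with the supremum. I would therefore present the proof as a single short displayed computation followed by the remark on commuting $(\cdot)^\eta$ with $\sup$.
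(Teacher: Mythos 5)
Your proposal is correct and is essentially the same argument as the paper's: the paper also just writes out $\||f|^\eta\|_{{\mathcal M}^{\varphi^\eta}_{q/\eta}}$ over cubes, uses $\||f|^\eta\|_{L^{q/\eta}(Q)}=\|f\|_{L^q(Q)}^\eta$, and pulls the power $\eta$ out of the supremum. No differences worth noting.
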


\begin{proof}
We content ourselves with showing the equality
$\||f|^\eta \|_{{\mathcal M}^{\varphi^\eta}_{q/\eta}}=\|f\|_{{\mathcal M}^\varphi_q}{}^\eta$.
We calculate
$$
\||f|^\eta\|_{{\mathcal M}_{q/\eta}^{\varphi^\eta}}
= \sup\limits_{Q=Q(a,r)}
\frac{\varphi(r)^\eta \|f\|_{L^{q}(Q)}{}^\eta}{|Q|^{\frac{\eta}{q}}} 
=
\|f\|_{{\mathcal M}_q^\varphi}{}^{\eta}
$$
by using 
$
\||f|^\eta\|_{{\frac{q}{\eta}}}=\|f\|_{{q}}{}^\eta.
$
\end{proof}
The nesting property holds like classical Morrey spaces.
\begin{lemma}\label{lem:180307-1}
Let $0< q_1 \le q_2 <\infty$ and $\varphi:(0,\infty) \to (0,\infty)$
be a function.
Then
${\mathcal M}^\varphi_{q_2}({\mathbb R}^n)
\subset
{\mathcal M}^\varphi_{q_1}({\mathbb R}^n)$.
\end{lemma}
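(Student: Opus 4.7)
The plan is to reduce this to the elementary fact that, with respect to a probability measure, $L^{q_2}$-norms dominate $L^{q_1}$-norms whenever $q_1 \le q_2$. The key observation is that the quantity inside the supremum defining $\|\cdot\|_{\mathcal{M}^\varphi_q}$ is precisely $\varphi(r)$ times an averaged $L^q$-norm over the cube $Q(x,r)$ with respect to the normalized (hence probability) measure $dy/|Q(x,r)|$.

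First I would fix $f \in \mathcal{M}^\varphi_{q_2}(\mathbb{R}^n)$ together with an arbitrary cube $Q=Q(x,r)$, and apply Hölder's inequality (Theorem as stated earlier in the excerpt) on $Q$ with exponent pair $\bigl(\tfrac{q_2}{q_1}, \tfrac{q_2}{q_2 - q_1}\bigr)$ to the product $|f|^{q_1} \cdot 1$. This yields
\[
\int_Q |f(y)|^{q_1}\,dy \le \left(\int_Q |f(y)|^{q_2}\,dy\right)^{q_1/q_2} |Q|^{1-q_1/q_2},
\]
which, after dividing by $|Q|$ and raising to the power $1/q_1$, rearranges to
\[
\left(\frac{1}{|Q|}\int_Q |f(y)|^{q_1}\,dy\right)^{1/q_1}
\le \left(\frac{1}{|Q|}\int_Q |f(y)|^{q_2}\,dy\right)^{1/q_2}.
\]
(When $q_1 = q_2$ the inequality is trivial, and when $q_2 = \infty$ one uses the essential supremum bound directly.)

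Next I would multiply both sides by $\varphi(r)$ and take the supremum over all $x \in \mathbb{R}^n$ and $r>0$, which immediately gives
\[
\|f\|_{\mathcal{M}^\varphi_{q_1}} \le \|f\|_{\mathcal{M}^\varphi_{q_2}}.
\]
In particular $\mathcal{M}^\varphi_{q_2}(\mathbb{R}^n) \subset \mathcal{M}^\varphi_{q_1}(\mathbb{R}^n)$, which is the claim.

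There is no real obstacle here: the proof is a one-line application of Hölder's inequality on the probability space $(Q, dy/|Q|)$. The only point worth being careful about is that the Hölder step must be applied to the \emph{normalized} average so that the factor of $|Q|$ cancels correctly and leaves $\varphi(r)$ as the common multiplier on both sides, which is exactly why the exponent pair $\bigl(q_2/q_1, q_2/(q_2-q_1)\bigr)$ (equivalently, Jensen's inequality for the convex function $t \mapsto t^{q_2/q_1}$) is the right tool.
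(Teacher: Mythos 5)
Your proof is correct and is essentially the same as the paper's: both reduce the claim to H\"{o}lder's (equivalently Jensen's) inequality for the normalized probability measure $dy/|Q(x,r)|$ on each cube, which gives $\left(\frac{1}{|Q|}\int_Q |f|^{q_1}\right)^{1/q_1} \le \left(\frac{1}{|Q|}\int_Q |f|^{q_2}\right)^{1/q_2}$, and then multiply by $\varphi(r)$ and take the supremum. No gap; the remark about $q_2=\infty$ is superfluous since the lemma assumes $q_2<\infty$.
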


\begin{proof}
The proof of Lemma \ref{lem:180307-1}
hinges on the H\"{o}lder inequality.
Let $f \in L^0({\mathbb R}^n)$.
We write out the norms fully:
\begin{align}\label{eq:150709-1}
\| f \|_{{\mathcal M}^p_{q_1}}
&\equiv
\sup_{x \in {\mathbb R}^n, \, r>0}
\varphi(r)
\left(
\frac{1}{|B(x,r)|}
\int_{B(x,r)}|f(y)|^{q_1}\,dy
\right)^\frac{1}{q_1}\\
\label{eq:150709-2}
\| f \|_{{\mathcal M}^p_{q_0}}
&=
\sup_{x \in {\mathbb R}^n, \, r>0}
\varphi(r)
\left(
\frac{1}{|B(x,r)|}
\int_{B(x,r)}|f(y)|^{q_0}\,dy
\right)^\frac{1}{q_0}
\end{align}
By the H\"{o}lder inequality
(for probability measure), we have
\begin{equation}
\label{eq:150709-3}
\left(\frac{1}{|B(x,r)|}
\int_{B(x,r)}|f(y)|^{q_1}\,dy
\right)^\frac{1}{q_1}
\le
\left(\frac{1}{|B(x,r)|}
\int_{B(x,r)}|f(y)|^{q_0}\,dy
\right)^\frac{1}{q_0}.
\end{equation}
Thus by inserting inequality
(\ref{eq:150709-3}) 
into
(\ref{eq:150709-1})
and
(\ref{eq:150709-2}),
we obtain
${\mathcal M}^\varphi_{q_2}({\mathbb R}^n)
\subset
{\mathcal M}^\varphi_{q_1}({\mathbb R}^n)$.
\end{proof}

\begin{remark}
See \cite{Nakai94,Nakai08-1,SST11-2}
for the discussion of generalized Morrey spaces.
\end{remark}

\subsection{The class ${\mathcal G}_q$}

It will turn out demanding to consider all possible functions
$\varphi$.
We will single out good functions.
We answer the question 
of what functions are good.
\begin{definition}\label{defi:150824-33}
An increasing function $\varphi:(0,\infty) \to (0,\infty)$
is said to belong to the class ${\mathcal G}_q$ if
$
t^{-\frac{n}{q}} \varphi(t)
\ge
s^{-\frac{n}{q}} \varphi(s)
$
for all $0<t \le s<\infty$.
\index{G q@${\mathcal G}_q$}
\end{definition}
Remark that
${\mathcal G}_{q_1} \subset {\mathcal G}_{q_2}$
if $0<q_2<q_1<\infty$.

Here we list a series of the functions in ${\mathcal G}_q$.
\begin{example}\label{example:180611-13}
Let $0<q<\infty$.
\begin{enumerate}
\item
Let $u \in {\mathbb R}$,
and let $\varphi(t)=t^u$
for $t > 0$.
Then $\varphi$
belongs to ${\mathcal G}_q$
if and only if $0 \le u \le \frac{n}q$.
\item
Let $0<u \le \frac{n}q$, $L \gg 1$
and let $\varphi(t)=\dfrac{t^u}{\log(L+t)}$
for $t > 0$.
Then $\varphi$
belongs to ${\mathcal G}_q$.
\item
If $\varphi_1,\varphi_2 \in {\mathcal G}_q$,
then $\varphi_1+\varphi_2, \max(\varphi_1,\varphi_2)\in {\mathcal G}_q$.
\item
Let $0 \le u \ll 1$,
and let
$\varphi(t)=\dfrac{t^u}{\log(e+t)}$
for $t \ge 0$.
Then $\varphi \notin {\mathcal G}_q$
because $\varphi$ is not increasing.
\end{enumerate}
\end{example}

We start with a simple observation that 
any function in ${\mathcal G}_q$ enjoys the doubling property:
\begin{proposition}\label{prop:180611-2}
If $\varphi \in {\mathcal G}_q$ with $0<q<\infty$,
then
$\varphi(r) \le \varphi(2r) \le 2^{\frac{n}{q}}\varphi(r)$
for all $r>0$.
\end{proposition}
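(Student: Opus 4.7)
The plan is to observe that both inequalities are direct consequences of the two defining properties of the class $\mathcal{G}_q$, so no serious work is needed; the proof is essentially bookkeeping.

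First, I would establish the left inequality $\varphi(r) \le \varphi(2r)$. Since $r \le 2r$ and $\varphi$ is (by definition of $\mathcal{G}_q$) an increasing function on $(0,\infty)$, monotonicity gives this at once.

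Next, I would establish the right inequality $\varphi(2r) \le 2^{n/q}\varphi(r)$ by applying the second property in the definition with the choice $t = r$ and $s = 2r$ (which satisfies $0 < t \le s < \infty$). This yields
\[
r^{-\frac{n}{q}}\varphi(r) \ge (2r)^{-\frac{n}{q}}\varphi(2r),
\]
and multiplying both sides by $(2r)^{n/q}$ gives $\varphi(2r) \le 2^{n/q}\varphi(r)$, as desired.

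There is no real obstacle here: the statement is essentially a restatement of the two axioms of $\mathcal{G}_q$ specialized to the pair $(r,2r)$. The only thing to be mildly careful about is the direction of the inequality in the second axiom (it is the \emph{normalized} quantity $t^{-n/q}\varphi(t)$ that is \emph{decreasing} in $t$), but that is exactly what produces the factor $2^{n/q}$.
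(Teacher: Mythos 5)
Your proof is correct and follows exactly the paper's argument: the left inequality comes from monotonicity of $\varphi$, and the right one from the fact that $t \mapsto t^{-\frac{n}{q}}\varphi(t)$ is decreasing, applied at $t=r$, $s=2r$. Your version merely writes out the final multiplication by $(2r)^{n/q}$ explicitly, which the paper leaves implicit.
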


\begin{proof}
The left inequality is a consequence
of the fact that $\varphi$ is increasing,
while the right inequality follows from
the fact that $t \mapsto t^{-\frac{n}{q}}\varphi(t)$ is decreasing.
\end{proof}
The next proposition justifies
that we can naturally use the class ${\mathcal G}_q$.
\begin{proposition}{\rm \cite[p. 446]{Nakai00}}\label{prop:Nakai-norm}
Let $0<q<\infty$.
Then
for any function $\varphi:(0,\infty) \to [0,\infty)$
satisfying
$\displaystyle 0<\sup_{t>0}\varphi(t)\min(t^{-\frac{n}{q}},1)<\infty$,
there exists a function
$\varphi^*:(0,\infty) \to (0,\infty) \in {\mathcal G}_q$
such that
${\mathcal M}^\varphi_q({\mathbb R}^n)
={\mathcal M}^{\varphi^*}_q({\mathbb R}^n)$
with equivalence of norms.
\end{proposition}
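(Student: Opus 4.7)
Motivated by the formula $\|\chi_{Q(x,r)}\|_{\mathcal{M}^\varphi_q} = \sup_{t>0}\varphi(t)\min(t^{-n/q}, r^{-n/q})$ from Example \ref{ex:180525-1}, I would define the candidate
\[
\varphi^*(r) := \sup_{t>0}\varphi(t)\min\!\left(\left(\frac{r}{t}\right)^{n/q},\,1\right) = r^{n/q}\sup_{t>0}\varphi(t)\min(t^{-n/q},\,r^{-n/q}).
\]
Heuristically, $\varphi^*(r)$ is (up to $r^{n/q}$) the $\mathcal{M}^\varphi_q$-norm of the characteristic function of a radius-$r$ cube, so it is the sharpest pointwise ``regularization'' of $\varphi$ that the Morrey norm can distinguish.

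Next I would check that $\varphi^*$ is well-defined and lives in $\mathcal{G}_q$. Positivity is immediate from the hypothesis $\sup_t\varphi(t)\min(t^{-n/q},1)>0$: pick $t_0$ with $\varphi(t_0)>0$ and note $\varphi^*(r)\ge \varphi(t_0)\min((r/t_0)^{n/q},1)>0$. Finiteness is a short case analysis: setting $M:=\sup_t\varphi(t)\min(t^{-n/q},1)<\infty$, one compares $\min(t^{-n/q},r^{-n/q})$ to $\min(t^{-n/q},1)$ up to a factor $\max(1,r^{-n/q})$ and concludes $\varphi^*(r)\le M\max(r^{n/q},1)$. The class properties then fall out of the form of the defining supremum: $r\mapsto \min((r/t)^{n/q},1)$ is increasing for each $t$, so $\varphi^*$ is increasing; and $r\mapsto \min(t^{-n/q},r^{-n/q})$ is decreasing for each $t$, so $r\mapsto r^{-n/q}\varphi^*(r)$ is decreasing. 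Hence $\varphi^*\in\mathcal{G}_q$.

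It remains to establish the two-sided comparison of norms. The easy direction $\|f\|_{\mathcal{M}^\varphi_q}\le \|f\|_{\mathcal{M}^{\varphi^*}_q}$ follows from $\varphi(r)\le \varphi^*(r)$, which is obtained by taking $t=r$ in the supremum. For the reverse inequality I would fix $x\in\mathbb{R}^n$, $r>0$ and an arbitrary $t>0$, and bound
\[
\varphi(t)\min\!\left(\left(\frac{r}{t}\right)^{n/q},1\right)\left(\frac{1}{|Q(x,r)|}\int_{Q(x,r)}|f|^q\,dy\right)^{1/q}\lesssim \|f\|_{\mathcal{M}^\varphi_q},
\]
then take the supremum in $t$. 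If $t\ge r$, then $Q(x,r)\subset Q(x,t)$ and the factor $(r/t)^{n/q}=(|Q(x,r)|/|Q(x,t)|)^{1/q}$ converts the average over $Q(x,r)$ into an average over $Q(x,t)$, reducing the estimate to the definition of $\|f\|_{\mathcal{M}^\varphi_q}$. If $t<r$, the min is $1$, and I would cover $Q(x,r)$ by $N\lesssim (r/t)^n$ cubes $Q(x_i,t)$, apply subadditivity of $\int|f|^q$, and bound each inner average by $(\|f\|_{\mathcal{M}^\varphi_q}/\varphi(t))^q$; the combinatorial factor $N(t/r)^n\lesssim 1$ closes the estimate.

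The main obstacle is the case $t<r$, which is the only place where the argument uses more than bookkeeping on the form of $\varphi^*$: one needs translation invariance of the Morrey norm and a clean covering of $Q(x,r)$ by smaller cubes (together with the care that $\varphi(t)$ is only effectively used at values where it is nonzero, which is harmless since the supremum defining $\varphi^*$ ignores the rest).
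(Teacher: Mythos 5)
Your proposal is correct, and it takes a genuinely different (and arguably cleaner) route than the paper. The paper regularizes in two steps: it first replaces $\varphi$ by $\varphi_1(t)=\inf_{s\ge t}\varphi(s)$, comparing the two Morrey norms via the covering estimate (\ref{eq:180814-1}), and then sets $\varphi^*(t)=t^{\frac{n}{q}}\sup_{s\ge t}\varphi_1(s)s^{-\frac{n}{q}}$, handling this second replacement by enlarging cubes from $Q(x,r)$ to $Q(x,r')$. You collapse both regularizations into the single formula $\varphi^*(r)=\sup_{t>0}\varphi(t)\min((r/t)^{n/q},1)$, which is exactly $\|\chi_{Q(x,r)}\|_{{\mathcal M}^\varphi_q}$ (Example \ref{ex:180525-1}) and is the least ${\mathcal G}_q$-majorant of $\varphi$; your verification then uses precisely the same two geometric ingredients as the paper, but reorganized — cube enlargement for the scales $t\ge r$ and the bounded-overlap covering for $t<r$ — so nothing essential is added or lost in difficulty. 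What your version buys is an explicit, canonical $\varphi^*\ge\varphi$ together with absolute constants ($\|f\|_{{\mathcal M}^\varphi_q}\le\|f\|_{{\mathcal M}^{\varphi^*}_q}\le C_{n,q}\|f\|_{{\mathcal M}^\varphi_q}$), and it avoids the intermediate space ${\mathcal M}^{\varphi_1}_q$ altogether; that intermediate step is the delicate point of the paper's argument, since for weights that decay at infinity (e.g.\ $\varphi=\chi_{(0,1]}+\varepsilon\chi_{(1,\infty)}$) one has $\varphi_1\equiv\varepsilon\ll\varphi$ on $(0,1]$ and the comparison of $\|\cdot\|_{{\mathcal M}^\varphi_q}$ with $\|\cdot\|_{{\mathcal M}^{\varphi_1}_q}$ only holds with a $\varphi$-dependent constant, whereas your construction returns $\varphi^*\equiv 1$ there and keeps the constants dimensional. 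Two small points worth writing out in a final version: when $\|f\|_{{\mathcal M}^\varphi_q}<\infty$, the positivity of $\varphi$ at some $t_0$ (guaranteed by the hypothesis) makes all cube averages of $|f|^q$ finite, so the product $\varphi(t)\min((r/t)^{n/q},1)(\mathrm{avg})^{1/q}$ is well defined even where $\varphi(t)=0$ and the supremum over $t$ can be taken as you indicate; and the covering in the case $t<r$ should be recorded with an explicit bound such as $N\le(2r/t)^n$ so that the factor $N(t/r)^n$ is a constant depending only on $n$.
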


\begin{proof}
Let $f$ be a measurable function.

We claim that
\begin{equation}\label{eq:180814-1}
\frac1{|Q|}\int_{Q}|f(x)|^q\,dx
\le 2^n
\sup_{Q' \in {\mathcal Q}:\,Q' \subset Q,\ell(Q')=t'}
\frac1{|Q'|}\int_{Q'}|f(x)|^q\,dx
\end{equation}
for any cube $Q \in {\mathcal Q}$ and any positive number $t' \le \ell(Q)$.
In fact, let
\[
N=\left[1+\frac{\ell(Q)}{t'}\right] \in [1,\infty).
\]
We write $\displaystyle Q=\prod_{j=1}^n [a_j,a_j+\ell(Q)]$,
so that
$(a_1,a_2,\ldots,a_n)$ is the \lq \lq bottom" corner of $Q$.
Define
\[
\delta_{m_j}=\begin{cases}
0&m_j<N,\\
N t'-\ell(Q)&m_j=N
\end{cases}
\]
for $m_j \in\{1,2,\ldots,N\}$ and
\[
Q_m=\prod_{j=1}^n \left[a_j+(m_j-1)t'-\delta_{m_j},a_j+m_j t'-\delta_{m_j}\right]
\]
for $m =(m_1,m_2,\ldots,m_n)\in \{1,2,\ldots,N\}^n$.
Then
\[
1 \le \sum_{m =(m_1,m_2,\ldots,m_n)\in \{1,2,\ldots,N\}^n}\chi_{Q_m} \le 2^n
\]
almost everywhere.
As a consequence,
\begin{align*}
\frac1{|Q|}\int_{Q}|f(x)|^q\,dx
&\le
\sum_{m =(m_1,m_2,\ldots,m_n)\in \{1,2,\ldots,N\}^n}
\frac1{|Q|}\int_{Q_m}|f(x)|^q\,dx\\
&\le
\sum_{m =(m_1,m_2,\ldots,m_n)\in \{1,2,\ldots,N\}^n}
\frac{t'{}^n}{|Q|}
\frac1{|Q_m|}\int_{Q_m}|f(x)|^q\,dx.
\end{align*}
Since
\[
\frac{t'N}{\ell(Q)}
=
\frac{t'}{\ell(Q)}
\left[1+\frac{\ell(Q)}{t'}\right] 
\le
\frac{t'}{\ell(Q)}
\left(1+\frac{\ell(Q)}{t'}\right)
\le 
\frac{t'}{\ell(Q)}+1 \le 2,
\]
it follows that
\[
\frac1{|Q|}\int_{Q}|f(x)|^q\,dx
\le 2^n
\max_{m =(m_1,m_2,\ldots,m_n)\in \{1,2,\ldots,N\}^n}
\frac1{|Q_m|}\int_{Q_m}|f(x)|^q\,dx.
\]
Thus,
(\ref{eq:180814-1}) follows.

If we let
$$
\varphi_1(t') \equiv \inf_{t \ge  t'}\varphi(t)
$$
then it is easy to see that
$\varphi_1(t)\le \varphi(t)$ for all $t>0$ and hence
$
\|f\|_{{\mathcal M}^{\varphi_1}_q}
\le
\|f\|_{{\mathcal M}^\varphi_q}
$.
Furthermore,
\begin{align*}
\lefteqn{
\sup_{Q \in {\mathcal Q}}\varphi(\ell(Q))
\left(\frac1{|Q|}\int_{Q}|f(x)|^q\,dx\right)^{\frac1q}
}\\
&\le 2^{\frac{n}{q}}
\sup_{Q \in {\mathcal Q}}\inf_{t' \in (0,\ell(Q)]}
\sup_{Q' \in {\mathcal Q}:\,Q' \subset Q,\ell(Q')=t'}\varphi(\ell(Q))
\left(\frac1{|Q'|}\int_{Q'}|f(x)|^q\,dx\right)^{\frac1q}\\
&\le 2^{\frac{n}{q}}
\sup_{Q \in {\mathcal Q}}\inf_{t' \in (0,\ell(Q)]}
\sup_{Q' \in {\mathcal Q}:\, \ell(Q')=t'}\varphi(\ell(Q))
\left(\frac1{|Q'|}\int_{Q'}|f(x)|^q\,dx\right)^{\frac1q}\\
&= 2^{\frac{n}{q}}
\sup_{Q \in {\mathcal Q}}\inf_{t' \in (0,\ell(Q)]}
\sup_{Q' \in {\mathcal Q}:\, \ell(Q')=t'}\varphi_1(t')
\left(\frac1{|Q'|}\int_{Q'}|f(x)|^q\,dx\right)^{\frac1q}\\
&= 2^{\frac{n}{q}}
\sup_{Q \in {\mathcal Q}}\inf_{t' \in (0,\ell(Q)]}
\sup_{Q' \in {\mathcal Q}:\, \ell(Q')=t'}\varphi_1(\ell(Q'))
\left(\frac1{|Q'|}\int_{Q'}|f(x)|^q\,dx\right)^{\frac1q}\\
&\le 2^{\frac{n}{q}}
\|f\|_{{\mathcal M}^{\varphi_1}_q}.
\end{align*}
Thus it follows that
$$
\|f\|_{{\mathcal M}^{\varphi_1}_q}
\le
\|f\|_{{\mathcal M}^\varphi_q}
\le
2^{\frac{n}{q}}
\|f\|_{{\mathcal M}^{\varphi_1}_q}.
$$
Next, if we let
$$
\varphi^*(t)
\equiv
t^{\frac{n}{q}}\sup_{t' \ge t}\varphi_1(t')t'{}^{-\frac{n}{q}}
=
\sup_{s \ge 1}\varphi_1(s t)s^{-\frac{n}{q}}
\quad (t>0),
$$
then
$\|f\|_{{\mathcal M}^\varphi_q}
=
\|f\|_{{\mathcal M}^{\varphi^*}_q}
$.
In fact,
since $\varphi_1$ is increasing,
$\varphi^*$ is increasing.
From the definitoin of $\varphi^*$,
$\varphi^*(t) \ge \varphi_1(t)$
for all $t>0$ and thus
$\|f\|_{{\mathcal M}^{\varphi_1}_q}
\le
\|f\|_{{\mathcal M}^{\varphi^*}_q}
$.
On the other hand,
for any $r>0$ and $\varepsilon>0$,
we can find $r' \ge r$ such that
\[
\varphi^*(r)
\le
(1+\varepsilon)r^{\frac{n}{q}}\varphi_1(r')r'{}^{-\frac{n}{q}}.
\]
Thus
for any cube $Q(x,r)$,
\begin{align*}
\lefteqn{
\varphi^*(r)
\left(\frac{1}{|Q(x,r)|}\int_{Q(x,r)}|f(y)|^q\,dy\right)^{\frac1q}
}\\
&\le
(1+\varepsilon)\varphi_1(r')
\left(\frac{1}{|Q(x,r')|}\int_{Q(x,r)}|f(y)|^q\,dy\right)^{\frac1q}\\
&\le
(1+\varepsilon)\varphi_1(r')
\left(\frac{1}{|Q(x,r')|}\int_{Q(x,r')}|f(y)|^q\,dy\right)^{\frac1q}\\
&\le
(1+\varepsilon)\|f\|_{{\mathcal M}^{\varphi_1}_q}.
\end{align*}
Taking the supremum over $x$ and $r$,
we have
$\|f\|_{{\mathcal M}^\varphi_q}
\le (1+\varepsilon)
\|f\|_{{\mathcal M}^{\varphi^*}_q}
$.
Since $\varepsilon>0$ is arbitrary,
we conclude
$\|f\|_{{\mathcal M}^\varphi_q}
\le 
\|f\|_{{\mathcal M}^{\varphi^*}_q}
$.
\end{proof}
In view of Proposition \ref{prop:Nakai-norm},
it follows that we can always suppose that
$\varphi \in {\mathcal G}_q$.

\begin{remark}
Some authors suppose that there exists $\delta>0$ such that
\begin{equation}\label{eq:140812-1}
\varphi(r) \le \delta \quad (0<r \le 1)
\end{equation}
and that
\begin{equation}\label{eq:140812-2}
r^{\frac{n}{p}}\varphi(r)>\delta \quad (1<r<\infty).
\end{equation}
\end{remark}

As the next proposition shows,
${\mathcal G}_q$ is a good class
in addition to the nice property
that it naturally arises in generalized Morrey spaces.
\begin{proposition}\label{prop:131029-2}
Let $\varphi \in {\mathcal G}_q$ with $0<q<\infty$.
Then we can find a continuous function
$\varphi^* \in {\mathcal G}_q$ such that
$\varphi^*$ is strictly increasing
and that $\varphi \sim \varphi^*$.
\end{proposition}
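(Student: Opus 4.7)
The plan is to proceed in two stages: first smooth $\varphi$ by integral averaging into a continuous member of $\mathcal{G}_q$ comparable to $\varphi$, then add a carefully chosen strictly increasing $\mathcal{G}_q$-bump that is controlled by the smoothed function.

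For the smoothing step I would set
$$\varphi_1(t) \equiv \frac{1}{\log 2}\int_1^2 \varphi(ut)\frac{du}{u},$$
which coincides with $(\log 2)^{-1}\int_t^{2t}\varphi(s)s^{-1}\,ds$ and is therefore absolutely continuous, hence continuous. For each fixed $u\in[1,2]$ the map $t\mapsto \varphi(ut)$ is non-decreasing and $t\mapsto t^{-n/q}\varphi(ut)=u^{n/q}(ut)^{-n/q}\varphi(ut)$ is non-increasing, so integrating in $u$ preserves both properties and places $\varphi_1$ in $\mathcal{G}_q$. The doubling bound $\varphi(t)\le \varphi(ut)\le \varphi(2t)\le 2^{n/q}\varphi(t)$ for $u\in[1,2]$ (Proposition \ref{prop:180611-2}) gives $\varphi\le \varphi_1\le 2^{n/q}\varphi$.

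The main obstacle is that naive perturbations creating strict monotonicity typically destroy the non-increase of $t\mapsto t^{-n/q}\varphi^*(t)$: any additive correction $\eta$ must itself belong to $\mathcal{G}_q$. I would resolve this with the explicit bump
$$g(t) \equiv \frac{t^{n/q}}{1+t^{n/q}},$$
which is continuous, strictly increasing, bounded by $1$, and satisfies $t^{-n/q}g(t)=(1+t^{n/q})^{-1}$, manifestly non-increasing; hence $g\in\mathcal{G}_q$. I then define
$$\varphi^*(t) \equiv \varphi_1(t) + \varphi_1(1)\,g(t).$$
By Example \ref{example:180611-13}(3), $\varphi^*\in\mathcal{G}_q$; continuity is clear; and strict increase of $g$ combined with non-decrease of $\varphi_1$ forces $\varphi^*$ to be strictly increasing.

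It remains to check $\varphi^*\sim\varphi_1$, where the lower bound $\varphi^*\ge \varphi_1$ is immediate. For the upper bound I would split the argument: for $t\ge 1$ use $g(t)\le 1$ and $\varphi_1(t)\ge\varphi_1(1)$, which gives $\varphi_1(1)g(t)\le\varphi_1(t)$; for $t\le 1$ use $g(t)\le t^{n/q}$ together with the $\mathcal{G}_q$-estimate $\varphi_1(t)\ge t^{n/q}\varphi_1(1)$ (which comes from $t^{-n/q}\varphi_1$ being non-increasing), again producing $\varphi_1(1)g(t)\le\varphi_1(t)$. Hence $\varphi^*\le 2\varphi_1\sim\varphi$, and the proposition follows.
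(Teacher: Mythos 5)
Your proof is correct, and its second half takes a genuinely different (and shorter) route than the paper. Both arguments begin with the same idea of mollifying $\varphi$ by an integral average over $[t,2t]$ to gain continuity while staying in ${\mathcal G}_q$ up to equivalence (the paper averages against $s^{-n/q-1}\,ds$, you against $s^{-1}\,ds$; either weight works, and your explicit verification that the average stays in ${\mathcal G}_q$ and satisfies $\varphi\le\varphi_1\le 2^{n/q}\varphi$ is sound). Where you diverge is in producing strict monotonicity: the paper first adds a piecewise bump built from $\bigl(\tfrac{1-e^{-t}}{1-e^{-1}}\bigr)^{n/q}\varphi(1)$ to make the function strictly increasing only on $(1,\infty)$, and then forms the dyadic series $\varphi^*(t)=\sum_{k\ge 0}2^{-kN}\varphi(2^kt)$ with $N=\tfrac{n}{q}+1$ to propagate strict increase from $(1,\infty)$ to all of $(0,\infty)$. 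You instead add a single global bump $\varphi_1(1)\,g(t)$ with $g(t)=t^{n/q}/(1+t^{n/q})$, which is strictly increasing and lies in ${\mathcal G}_q$ outright, and you control it by the pointwise bound $g(t)\le\min(1,t^{n/q})$ together with the two ${\mathcal G}_q$-inequalities $\varphi_1(t)\ge\varphi_1(1)$ for $t\ge 1$ and $\varphi_1(t)\ge t^{n/q}\varphi_1(1)$ for $t\le 1$, giving $\varphi_1\le\varphi^*\le 2\varphi_1$ directly. Your construction buys a one-step argument with an entirely explicit comparability constant ($\varphi^*\le 2^{1+n/q}\varphi$) and no series to sum; the paper's series construction is the more generic device (it would also strictify a function that is only strictly increasing on a half-line without any a priori lower bound of the form $\min(1,t^{n/q})$), but for the class ${\mathcal G}_q$ your lower bound is automatic, so nothing is lost.
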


\begin{proof}
We look for $\varphi^*$ in a couple of steps.
\begin{enumerate}
\item
Consider
\[
\varphi_0(t) \equiv
t^{\frac{n}{q}}\int_t^{2t} \varphi(s)
\frac{\,ds}{s^{\frac{n}{q}+1}}
=\int_1^2 \varphi(t s)
\frac{\,ds}{s^{\frac{n}{q}+1}}
\quad (t>0).
\]
Since $\varphi$ is increasing,
the function $\varphi_0$ is increasing.
Also, by the fact that $\varphi$ is doubling,
we see that $\varphi_0$ and $\varphi$ are equivalent.
Thus, we may assume that $\varphi$ is continuous.
\item
We define
\[
\psi(t)=\varphi(t)\chi_{(0,1]}(t)+
\left(\frac{1-e^{-t}}{1-e^{-1}}\right)^{\frac{n}{q}}\varphi(1)\chi_{(1,\infty)}(t)
\quad (t>0),
\]
Since we can check that
$t \in (0,1] \mapsto t^{-\frac{n}{q}}\psi(t)$
and
$t \in [1,\infty)\mapsto t^{-\frac{n}{q}}\psi(t)$
is both decreasing,
$t \in (0,\infty) \mapsto t^{-\frac{n}{q}}\psi(t)$
is decreasing.
Likewise we can check that $\psi$ is increasing.
Thus,
$\psi \in {\mathcal G}_q$.
Since
$\psi+\varphi \simeq \varphi$,
we can assume that $\varphi$ is strictly increasing
in $(1,\infty)$ and that $\varphi$ is continuous.
\item
Finally, we consider
\[
\varphi^*(t)=\sum_{k=0}^\infty 
\frac{\varphi(2^k t)}{2^{k N}} \quad
(t>0),
\]
where
$\displaystyle N=\frac{n}{q}+1$.
Then it is easy to check that $\varphi^*$ is
equivalent to $\varphi$ since each term is dominated by
$2^{-k}\varphi$.
Likewise since $\varphi \in {\mathcal G}_q$,
$\varphi^* \in {\mathcal G}_q$.
Finally since $\varphi$ is strictly increasing in $(1,\infty)$,
the function $\varphi(2^k t)$ is strictly increasing
on $(2^{-k},\infty)$
for any $k \in {\mathbb N}$.
Since $k \in {\mathbb N}$ is arbitrary,
$\varphi^*$ is strictly increasing.
\end{enumerate}
\end{proof}

Thus,
we are led to the following definition:
\begin{definition}\label{defi:180611-12}
The class $W$ stands for the set of all continuous functions
$\varphi:(0,\infty) \to (0,\infty)$.
That is,
$W=C((0,\infty),(0,\infty))$.
\end{definition}

We apply what we have obtained to small Morrey spaces.
\begin{example}\label{example:180611-14}
Let $0<q<\infty$.
Let us see
how we modify 
$\psi$ in ${\mathcal M}^{\psi}_q({\mathbb R}^n)$
to obtain the equivalent space
${\mathcal M}^{\varphi}_q({\mathbb R}^n)$
with
$\varphi \in {\mathcal G}_q$.
\begin{enumerate}
\item
Let $\varphi(t)=\max(t^{\frac{n}{p}},1)$
and $\psi(t)=t^{\frac{n}{p}}\chi_{(0,1]}(t)$
for $t>0$.
Then
with the equivalence of norms
$m^p_q({\mathbb R}^n)
={\mathcal M}^{\psi}_q({\mathbb R}^n)
={\mathcal M}^{\varphi}_q({\mathbb R}^n)$.
\item
Let $\varphi(t)=\max(t^a,1)$ with $a \ge \frac{n}{q}$
and $\psi(t)=t^{\frac{n}{q}}\chi_{(0,1]}(t)$
for $t>0$.
Then
with the equivalence of norms
$L^q_{\rm uloc}({\mathbb R}^n)
={\mathcal M}^{\psi}_q({\mathbb R}^n)
={\mathcal M}^{\varphi}_q({\mathbb R}^n)$.
\end{enumerate}
Note that
$\varphi \in {\mathcal G}_q \cap W$
but that
$\psi \in {\mathcal G}_q \setminus W$
in both cases.
\end{example}

So far we have shown that
we may assume that $\varphi \in {\mathcal G}_q$.
As a result, we may assume that $\varphi$ is doubling.
This observation makes the definition of the norm
$\|\cdot\|_{{\mathcal M}^\varphi_q}$ more flexible.
\begin{remark}
In (\ref{eq:131115-111}),
cubes can be replaced with balls;
an equivalent norms will be obtained.
Precisely use the norm given by
\[
\|f\|_{{\mathcal M}^\varphi_q}
\equiv
\sup_{x \in {\mathbb R}^n, r>0}
\varphi(r)\left(
\frac{1}{|B(x,r)|}
\int_{B(x,r)}|f(y)|^q\,dy\right)^{\frac1q}
\]
to go through the same argument
given for the norm defined by means of cubes.
\end{remark}

Related to this definition,
we give some definitions related to the class ${\mathcal G}_q$.
Although we show that it is sufficient
to limit ourselves to ${\mathcal G}_q$,
we still feel that this class is too narrow as
the function of $\varphi(t)=t\log(e+t^{-1})$ shows.
So, it is convenient to relax the condition on $\varphi$.
The following definition will serve to this purpose.
\begin{definition}\label{defi:180310-110}
Let $\ell>0$.
\begin{enumerate}
\item
A function $\varphi:(0,\ell] \to (0,\infty)$ is said to be almost decreasing
if 
$\varphi(s) \lesssim \varphi(t)$
for all $0 \le t<s \le \ell$.
\item
A function $\varphi:(0,\ell] \to (0,\infty)$ is said to be almost increasing
if 
$\varphi(s) \gtrsim \varphi(t)$
for all $0 \le t<s \le \ell$.
\item
A function $\varphi:(0,\infty) \to (0,\infty)$ is said to be almost decreasing
if 
$\varphi(s) \lesssim \varphi(t)$
for all $0 \le t<s < \infty$.
\item
A function $\varphi:(0,\infty) \to (0,\infty)$ is said to be almost increasing
if 
$\varphi(s) \gtrsim \varphi(t)$
for all $0 \le t<s < \infty$.
\end{enumerate}
The implicit constants
in these inequality are called the almost
decreasing/increasing constants.
\index{almost decreasing@almost decreasing}
\index{almost increasing@almost increasing}
\end{definition}

\begin{example}\label{example:180310-114}\
\begin{enumerate}
\item
Let $a,b$ be real parameters with $b \ne 0$.
Let $\varphi_{a,b}(t)=t^a(\log(e+t))^b$ for $t>0$.
Then $\varphi_{a,b}$ is almost increasing for any $a>0$.
If $a< 0$,
then $\varphi_{a,b}$ is almost decreasing.
\item
The function
$\varphi(t)=t+\sin 2t$, $t>0$ is almost increasing
but not increasing.
\item
The function
$\varphi(t)=1+e^t(1-\cos t)$, $t>0$ is not almost increasing
because $\varphi(2\pi m)=1$ and $\varphi(2\pi m+\pi)=1+2e^{2\pi m+\pi}$
for all $m \in {\mathbb N}$.
\item
Let $0<p,q<\infty$ and $\beta_1,\beta_2 \in {\mathbb R}$.
We write
\[
\ell^{\mathbb B}(r)
\equiv
\begin{cases}
(1+|\log r|)^{\beta_1}&(0< r \le 1),\\
(1+|\log r|)^{\beta_2}&(1< r<\infty).
\end{cases}
\]
We set
\[
\varphi(t)=t^{\frac{n}{p}}\ell^{-{\mathbb B}}(t)
\quad (t>0)
\]
as we did in Example \ref{example:180611-12}.
We observe that $\varphi$ is almost increasing
for all $p>0$
and
$\beta_1,\beta_2 \in {\mathbb R}$.
We also note that $t \mapsto \varphi(t)t^{-\frac{n}{q}}$
is almost decreasing
if and only if
$p=q$ and $\beta_1 \le 0 \le \beta_2$
or
$p>q$.
Note that
$\varphi$ is an equivalent to a function
$\psi \in {\mathcal G}_q$
if and only if
either $p=q$, $\beta_1 \le 0 \le \beta_2$ or $p>q$
since we can neglect the effect of "$\log$".
\item
Let $E \subset (0,\infty)$
be a non-Lebegsgue measurable set.
Then
$\varphi=1+\chi_E$
is almost increasing and almost decreasing.
Note that $\varphi$ is not measurable.
\item
A simple but still standard example is as follows:
Let $m \in {\mathbb N}$ and define
\[
\varphi(t)\equiv \frac{t^{\frac{n}{q}}}{l_m(t)}
\quad (t>0),
\]
where $l_m(t)$ is given inductively by:
\[
l_0(t)\equiv t, \quad
l_m(t)\equiv \log(3+l_{m-1}(t)) \quad
(m=1,2,\ldots)
\]
for $t>0$.
The for all $0<q<\infty$ and $m \in {\mathbb N}$,
$\varphi \in {\mathcal G}_q$.
\end{enumerate}
\end{example}

As the following lemma shows,
we can always replace an almost increasing function
with an increasing function.
\begin{lemma}
If a function
$\varphi:(0,\infty) \to (0,\infty)$
is almost increasing with the almost increasing constant $C_0>0$,
then
there exists an increasing function
$\psi:(0,\infty) \to (0,\infty)$
such that
$\varphi(t) \le \psi(t) \le 
C_0\varphi(t)$
for all $t>0$.
\end{lemma}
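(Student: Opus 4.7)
The plan is to define $\psi$ by the running supremum
\[
\psi(t) \equiv \sup_{0<s\le t}\varphi(s)\quad (t>0),
\]
and then verify the four required properties in order: finiteness, monotonicity, lower bound, upper bound.

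First I would check that $\psi(t)<\infty$ for every $t>0$. Since $\varphi$ is almost increasing with constant $C_0$, one has $\varphi(s)\le C_0\varphi(t)$ for every $s\in(0,t]$, so the supremum is bounded by $C_0\varphi(t)<\infty$. Next, monotonicity is immediate: if $0<t_1\le t_2$, then the defining sup for $\psi(t_2)$ is taken over a larger set than that for $\psi(t_1)$, hence $\psi(t_1)\le\psi(t_2)$. The lower bound $\varphi(t)\le\psi(t)$ follows by choosing $s=t$ in the supremum.

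Finally, the upper bound $\psi(t)\le C_0\varphi(t)$ is exactly the almost-increasing inequality applied pointwise: for every $s\in(0,t]$, we have $\varphi(s)\le C_0\varphi(t)$, and taking the supremum in $s$ yields $\psi(t)\le C_0\varphi(t)$. Putting the four observations together gives the chain $\varphi(t)\le\psi(t)\le C_0\varphi(t)$, and $\psi$ is increasing by construction, completing the proof.

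There is no serious obstacle here; the only subtlety worth flagging is that the almost-increasing hypothesis is used twice—once (with $s\le t$) to ensure $\psi$ is finite, and once again (in the same form) to obtain the upper comparison constant $C_0$. No measurability assumption on $\varphi$ is needed, and the construction is symmetric to the one that replaces an almost-decreasing function by a genuinely decreasing one via $\inf_{0<s\le t}\varphi(s)$.
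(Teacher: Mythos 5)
Your proposal is correct and coincides with the paper's proof, which also sets $\psi(t)=\sup_{0<s\le t}\varphi(s)$; you have simply spelled out the routine verifications that the paper omits.
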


\begin{proof}
Simply set
$
\psi(t)=\sup_{0<s \le t}\varphi(s), t>0.
$
\end{proof}

Having set down the condition of $\varphi$,
we move on to the norm estimate of the function.
The lemma below gives an estimate for the norm of $\chi_{B(R)}$ in
${\mathcal M}^\varphi_q({{\mathbb R}^n})$.

\begin{lemma}\label{lem7}
{\rm \cite[Lemma 4.1]{KoMi06}}
Let $0<q<\infty$ and $\varphi \in {\mathcal G}_q$.
Then
$
\|\chi_{Q(x,R)}\|_{{\mathcal M}^\varphi_q} 
=\varphi(R)
$
for all $R>0$.
\end{lemma}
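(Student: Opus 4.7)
The plan is to show both inequalities separately, exploiting the two defining properties of $\varphi \in {\mathcal G}_q$: that $\varphi$ is increasing and that $t \mapsto t^{-n/q}\varphi(t)$ is decreasing. Writing out the definition for $f = \chi_{Q(x,R)}$, I need to analyze
\[
\|\chi_{Q(x,R)}\|_{{\mathcal M}^\varphi_q} = \sup_{y \in {\mathbb R}^n,\, r>0} \varphi(r)\left(\frac{|Q(y,r) \cap Q(x,R)|}{|Q(y,r)|}\right)^{1/q}.
\]

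For the lower bound, I simply take $y = x$ and $r = R$, in which case $Q(y,r) = Q(x,R)$, so the fraction inside equals $1$ and the whole expression equals $\varphi(R)$. Hence $\|\chi_{Q(x,R)}\|_{{\mathcal M}^\varphi_q} \ge \varphi(R)$.

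For the upper bound, I split into two cases based on $r$ versus $R$. If $r \le R$, the volume fraction is at most $1$ and $\varphi$ is increasing, so the quantity is bounded by $\varphi(r) \le \varphi(R)$. If $r > R$, I use $|Q(y,r) \cap Q(x,R)| \le |Q(x,R)| = (2R)^n$ and $|Q(y,r)| = (2r)^n$, giving fraction at most $(R/r)^n$. Thus the quantity is at most $\varphi(r)(R/r)^{n/q} = R^{n/q} \cdot r^{-n/q}\varphi(r)$, and the ${\mathcal G}_q$ property that $t^{-n/q}\varphi(t)$ is decreasing yields $r^{-n/q}\varphi(r) \le R^{-n/q}\varphi(R)$, so the quantity is at most $\varphi(R)$. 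Taking the supremum over $y$ and $r$ completes the upper bound.

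No serious obstacle is anticipated — this is essentially a packaging of the two monotonicity conditions in the definition of ${\mathcal G}_q$, which are tailored precisely so that small $r$ is controlled by monotonicity of $\varphi$ and large $r$ is controlled by monotonicity of $t^{-n/q}\varphi(t)$. The only mild point to be careful with is verifying that the supremum is actually attained at $r = R$ with a well-centered cube, so that the inequality is an equality rather than strict; this is immediate from the choice $y = x$, $r = R$ in the lower bound argument.
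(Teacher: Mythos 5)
Your proof is correct and follows essentially the same route as the paper's direct proof: the lower bound by testing on $Q(x,R)$ itself, and the upper bound by the same two-case split, with $r\le R$ handled by monotonicity of $\varphi$ and $r>R$ handled by the decreasing property of $t\mapsto t^{-n/q}\varphi(t)$.
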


\begin{proof}
One method is to
reexamine Example \ref{ex:180525-1}.
Here we give a
direct proof.
It is easy to see that
$
\|\chi_{Q(x,R)}\|_{{\mathcal M}^\varphi_q} 
\ge \varphi(R)$.
To prove the opposite inequality 
we consider
\[
\varphi(r)\frac{1}{|Q(y,r)|^{\frac{1}{q}}}\|\chi_{Q(y,r) \cap Q(x,R)}\|_{q}
\]
for any cube $Q=Q(y,r)$.
When $R \le r$,
then
\[
\varphi(r)\frac{\|\chi_{Q(y,r) \cap Q(x,R)}\|_{q}}{|Q(y,r)|^{\frac{1}{q}}}
\le
\varphi(r)\frac{\|\chi_{Q(x,R)}\|_{q}}{|Q(y,r)|^{\frac{1}{q}}}
\le
\varphi(R)\frac{\|\chi_{Q(x,R)}\|_{q}}{|Q(y,R)|^{\frac{1}{q}}}
=
\varphi(R).
\]
since $\varphi \in {\mathcal G}_q$.
When $R>r$,
then
\[
\varphi(r)\frac{\|\chi_{Q(y,r) \cap Q(x,R)}\|_{q}}{|Q(y,r)|^{\frac{1}{q}}}
\le
\varphi(r)\frac{\|\chi_{Q(y,r)}\|_{q}}{|Q(y,r)|^{\frac{1}{q}}}
=
\varphi(r)
\le
\varphi(R)
\]
again by virtue of the fact that $\varphi \in {\mathcal G}_q$.
\end{proof}

A direct consequence of the above quantitative information is:
\begin{corollary}{\rm \cite[Corollary 2.3]{NNS15}}\label{cor:140820-1}
Let $0<q<\infty$ and $\varphi:(0,\infty) \to (0,\infty)$
be a function in the class ${\mathcal G}_q$.
If $N_0>n/q$,
then
$(1+|\cdot|)^{-N_0}\in {\mathcal M}^\varphi_q({\mathbb R}^n)$.
\end{corollary}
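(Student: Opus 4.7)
The plan is to obtain a pointwise dyadic majorization of $f(y)=(1+|y|)^{-N_0}$ by characteristic functions of concentric cubes centered at the origin, and then estimate the norm term-by-term using Lemma \ref{lem7} together with the $\min(1,q)$-triangle inequality (Lemma \ref{eq:141021-10}). Since $|y|_\infty \le |y|$, we have $y \in Q(2^k)$ whenever $|y|\le 2^k$, so splitting $\mathbb{R}^n$ into the annuli $\{|y|\le 1\}$ and $\{2^{k-1}<|y|\le 2^k\}$ for $k\ge 1$ and using $(1+|y|)^{-N_0}\le(2^{k-1})^{-N_0}=2^{N_0}\cdot 2^{-kN_0}$ on the $k$-th annulus gives
\[
(1+|y|)^{-N_0} \lesssim \sum_{k=0}^{\infty} 2^{-k N_0}\chi_{Q(2^k)}(y)
\quad (y \in {\mathbb R}^n).
\]

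Next I would apply Lemma \ref{eq:141021-10}: the $\min(1,q)$-th power of the generalized Morrey norm is subadditive, so
\[
\|f\|_{{\mathcal M}^\varphi_q}^{\min(1,q)}
\;\lesssim\;
\sum_{k=0}^{\infty} 2^{-k N_0 \min(1,q)}
\,\|\chi_{Q(2^k)}\|_{{\mathcal M}^\varphi_q}^{\min(1,q)}.
\]
Lemma \ref{lem7} identifies the summand as $\varphi(2^k)^{\min(1,q)}$. The defining property of the class ${\mathcal G}_q$ — namely, that $t\mapsto t^{-n/q}\varphi(t)$ is decreasing — applied with $s=2^k\ge 1=t$ yields $\varphi(2^k)\le 2^{k n/q}\varphi(1)$. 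Inserting this into the series gives a geometric sum with ratio $2^{\min(1,q)(n/q-N_0)}<1$ since $N_0>n/q$, which converges. Thus $\|f\|_{{\mathcal M}^\varphi_q}<\infty$, as desired.

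There is no real obstacle in this argument; the only slightly delicate point is to be careful about the case $0<q<1$, where one must work with the $q$-triangle inequality rather than the ordinary triangle inequality. The choice $\min(1,q)$ in the exponent is precisely what makes the unified treatment go through, and the strict inequality $N_0>n/q$ is exactly the condition needed to beat the growth of $\varphi(2^k)$ guaranteed by the ${\mathcal G}_q$ condition.
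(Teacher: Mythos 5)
Your proof is correct and follows essentially the same route as the paper's: a decomposition of $(1+|\cdot|)^{-N_0}$ into characteristic functions of concentric cubes, Lemma \ref{lem7} to evaluate $\|\chi_{Q(2^k)}\|_{{\mathcal M}^\varphi_q}=\varphi(2^k)$, the ${\mathcal G}_q$ bound $\varphi(t)\le t^{n/q}\varphi(1)$ for $t\ge 1$, and convergence from $N_0>n/q$ (the paper sums over cubes $Q(j)$, $j\in{\mathbb N}$, rather than dyadic ones, which is immaterial). Your explicit use of the $\min(1,q)$-triangle inequality even treats the case $0<q<1$ more carefully than the paper's write-up does.
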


\begin{proof}
Since $\varphi \in {\mathcal G}_q$,
we have $\varphi(t)t^{-\frac{n}{q}} \le \varphi(1)$
for all $t \ge 1$.
we have
\[
\|(1+|\cdot|)^{-N_0}\|_{{\mathcal M}^\varphi_q}
\lesssim
\sum_{j=1}^\infty
\frac{\|\chi_{Q(j)}\|_{{\mathcal M}^\varphi_q}}{\max(1,j-1)^{N_0}}
\le
\sum_{j=1}^\infty
\frac{\varphi(j)}{\max(1,j-1)^{N_0}}
<\infty.
\]
Here for the second inequality
we invoked Lemma \ref{lem7}.
\end{proof}

Now we consider the role of $\varphi$.
We did not tolerate the case where $p=\infty$
when we define
${\mathcal M}^p_q({\mathbb R}^n)$.
If we define ${\mathcal M}^\infty_q({\mathbb R}^n)$
similar to ${\mathcal M}^p_q({\mathbb R}^n)$,
then we have ${\mathcal M}^\infty_q({\mathbb R}^n)=L^\infty({\mathbb R}^n)$
by the Lebesgue differentiation theorem.
The next theorem concerns a situation close to this.

\begin{theorem}{\rm\cite[Proposition 3.3]{Nakai08-1}}\label{thm:150501-1}
Let $0< q<\infty$ and $\varphi \in {\mathcal G}_q$.
Then, the following are equivalent{\rm:}
\begin{enumerate}
\item
$\inf_{t>0} \varphi(t)>0$,
\item
${\mathcal M}^\varphi_q({\mathbb R}^n) \subset L^\infty({\mathbb R}^n)$.
\end{enumerate}
If these conditions are satisfied,
then
$
{\mathcal M}^\varphi_q({\mathbb R}^n)=L^\infty({\mathbb R}^n) \cap
{\mathcal M}^{\varphi-\inf_{t>0} \varphi(t)}_q({\mathbb R}^n)
$
with equivalence of norms.
\end{theorem}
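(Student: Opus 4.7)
The plan is to establish the two implications and then derive the identification. Write $c_0 := \inf_{t>0}\varphi(t) \ge 0$ throughout. For $(1) \Rightarrow (2)$, assuming $c_0 > 0$, I would observe that for any $f \in {\mathcal M}^\varphi_q({\mathbb R}^n)$ and any cube $Q(x,r)$,
\[
\left(\frac{1}{|Q(x,r)|}\int_{Q(x,r)}|f(y)|^q\,dy\right)^{1/q} \le \frac{\|f\|_{{\mathcal M}^\varphi_q}}{\varphi(r)} \le \frac{\|f\|_{{\mathcal M}^\varphi_q}}{c_0}.
\]
Sending $r \to 0^+$ and applying the Lebesgue differentiation theorem to $|f|^q$ (which lies in $L^1_{\rm loc}$ since $\varphi \not\equiv 0$) yields $\|f\|_\infty \le \|f\|_{{\mathcal M}^\varphi_q}/c_0$; equivalently $c_0\|f\|_\infty \le \|f\|_{{\mathcal M}^\varphi_q}$, which I will reuse.

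For $(2) \Rightarrow (1)$, I would invoke the closed graph theorem. Both ${\mathcal M}^\varphi_q({\mathbb R}^n)$ (a quasi-Banach space by Proposition \ref{prop:180525-1}) and $L^\infty({\mathbb R}^n)$ are F-spaces, and convergence in either norm implies a.e.\ convergence along a subsequence on each cube; hence the inclusion has closed graph and is therefore continuous. So there exists $C > 0$ with $\|f\|_\infty \le C\|f\|_{{\mathcal M}^\varphi_q}$ for all $f$. Testing on $f = \chi_{Q(x,R)}$ and invoking Lemma \ref{lem7}, which gives $\|\chi_{Q(x,R)}\|_{{\mathcal M}^\varphi_q} = \varphi(R)$, we get $1 \le C\varphi(R)$ for every $R > 0$, so $c_0 \ge 1/C > 0$.

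For the norm identification under $(1)$, I would decompose $\varphi = (\varphi - c_0) + c_0$ inside the defining supremum,
\[
\varphi(r)\left(\tfrac{1}{|Q(x,r)|}\int_{Q(x,r)}|f|^q\right)^{1/q} \le (\varphi(r)-c_0)\left(\tfrac{1}{|Q(x,r)|}\int_{Q(x,r)}|f|^q\right)^{1/q} + c_0\|f\|_\infty,
\]
and take suprema to obtain $\|f\|_{{\mathcal M}^\varphi_q} \le \|f\|_{{\mathcal M}^{\varphi-c_0}_q} + c_0\|f\|_\infty$. The converse is immediate: $\|f\|_{{\mathcal M}^{\varphi-c_0}_q} \le \|f\|_{{\mathcal M}^\varphi_q}$ from $\varphi - c_0 \le \varphi$, and $c_0\|f\|_\infty \le \|f\|_{{\mathcal M}^\varphi_q}$ from the first paragraph.

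The main subtlety I anticipate is the use of the closed graph theorem in the quasi-Banach regime $0 < q < 1$: this is justified since ${\mathcal M}^\varphi_q$ is an F-space under the metric induced by $\|\cdot\|_{{\mathcal M}^\varphi_q}^{\min(1,q)}$, but deserves a remark. As a fallback, one can avoid closed graph by a direct construction when $c_0 = 0$: choose $r_k \downarrow 0$ with $\varphi(r_k)$ decaying sufficiently fast, take widely separated $x_k$, and verify that $f = \sum_k 2^k \chi_{Q(x_k,r_k)}$ lies in ${\mathcal M}^\varphi_q \setminus L^\infty$, using the decreasing property of $r^{-n/q}\varphi(r)$ to control the averages over large cubes.
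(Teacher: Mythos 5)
Your proposal is correct and follows essentially the same route as the paper: the implication $(1)\Rightarrow(2)$ via the Lebesgue differentiation theorem applied to $|f|^q$, and $(2)\Rightarrow(1)$ via the closed graph theorem tested on characteristic functions of cubes together with Lemma \ref{lem7} (the paper uses balls, an immaterial difference). The only divergence is in the final identification: the paper simply writes $\varphi=\varphi_1+\varphi_2$ with $\varphi_1=\inf_{t>0}\varphi(t)$ and quotes the general formula ${\mathcal M}^{\varphi_1+\varphi_2}_q({\mathbb R}^n)={\mathcal M}^{\varphi_1}_q({\mathbb R}^n)\cap{\mathcal M}^{\varphi_2}_q({\mathbb R}^n)$ (together with ${\mathcal M}^{c}_q({\mathbb R}^n)=L^\infty({\mathbb R}^n)$ for constant $c$), whereas you verify the two-sided norm comparison directly, reusing the bound $c_0\|f\|_\infty\le\|f\|_{{\mathcal M}^\varphi_q}$ from the first step; this is just an unpacked version of the same computation and buys a self-contained argument at the cost of not exhibiting the general intersection principle. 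Your explicit remark that the closed graph theorem is legitimate in the quasi-Banach range $0<q<1$ (F-space structure via $\|\cdot\|_{{\mathcal M}^\varphi_q}^{\min(1,q)}$, plus a.e.\ subsequence convergence to identify limits) is a genuine point of rigor that the paper leaves implicit, and the sketched constructive alternative for the case $\inf_{t>0}\varphi(t)=0$ is a reasonable, though unnecessary, fallback.
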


\begin{proof}
If $\inf_{t>0} \varphi(t)>0$,
then by the Lebesgue differentiation theorem we get
\begin{align*}
|f(x)|^q
&=\lim_{r \downarrow 0}
\frac{1}{|B(x,r)|}\int_{B(x,r)}|f(y)|^q\,dy\\
&\le \frac{1}{\inf_{t>0} \varphi(t)^q}
\lim_{r \downarrow 0}
\frac{\varphi(r)^q}{|B(x,r)|}\int_{B(x,r)}|f(y)|\,dy\\
&\le
\frac{1}{\inf_{t>0} \varphi(t)^q}\|f\|_{{\mathcal M}^\varphi_q}{}^q
\end{align*}
for all $f \in {\mathcal M}^\varphi_q({\mathbb R}^n)$
and all Lebesgue points $x$ of $f$.
Therefore, $f \in L^\infty$.

Assume that ${\mathcal M}^\varphi_q({\mathbb R}^n) \subset L^\infty({\mathbb R}^n)$.
Then by the closed graph theorem,
$\|f\|_{\infty} \lesssim \|f\|_{{\mathcal M}^\varphi_q}$
for all $f \in {\mathcal M}^\varphi_q({\mathbb R}^n)$.
If we choose $f=\chi_{B(a,r)}$,
then we have 
$1 \lesssim \varphi(r)$.
This shows that $\inf_{t>0} \varphi(t)>0$.

Finally, by taking $\varphi_1=\inf_{t>0} \varphi(t) $ and $\varphi_2=\varphi-\varphi_1$, we obtain
$
{\mathcal M}^\varphi_q=L^\infty({\mathbb R}^n) \cap
{\mathcal M}^{\varphi-\inf_{t>0} \varphi(t)}_q({\mathbb R}^n)
$
with equivalence of norms
from the general formula
$
{\mathcal M}^{\varphi_1+\varphi_2}_q({\mathbb R}^n)
=
{\mathcal M}^{\varphi_1}_q({\mathbb R}^n)
\cap
{\mathcal M}^{\varphi_2}_q({\mathbb R}^n)
$
with equivalence of norms.
\end{proof}
We also investigate the reverse inclusion
to Theorem \ref{thm:150501-1}.
\begin{theorem}{\rm\cite[Proposition 3.3]{Nakai08-1}}\label{thm:150501-11}
Let $0< q<\infty$ and $\varphi \in {\mathcal G}_q$.
Then, the following are equivalent{\rm:}
\begin{enumerate}
\item
$\sup_{t>0} \varphi(t)<\infty$,
\item
${\mathcal M}^\varphi_q({\mathbb R}^n) \supset L^\infty({\mathbb R}^n)$.
\end{enumerate}
\end{theorem}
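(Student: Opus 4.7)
The plan is to prove the two implications separately by direct estimates, paralleling the structure of the proof of Theorem \ref{thm:150501-1} but without needing the closed graph theorem.

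For the implication $(1)\Rightarrow(2)$, I would start from an arbitrary $f\in L^\infty({\mathbb R}^n)$ and observe the pointwise bound $|f(y)|\le \|f\|_\infty$ almost everywhere. Inserting this into the defining expression of the Morrey norm gives
\[
\varphi(r)\left(\frac{1}{|Q(x,r)|}\int_{Q(x,r)}|f(y)|^q\,dy\right)^{\frac1q}
\le \varphi(r)\|f\|_\infty
\]
for every $x\in{\mathbb R}^n$ and $r>0$. Taking the supremum over $(x,r)$ yields $\|f\|_{{\mathcal M}^\varphi_q}\le \bigl(\sup_{t>0}\varphi(t)\bigr)\|f\|_\infty<\infty$, so $f\in {\mathcal M}^\varphi_q({\mathbb R}^n)$.

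For the converse $(2)\Rightarrow(1)$, rather than invoking the closed graph theorem as in Theorem \ref{thm:150501-1}, I would test the inclusion against the constant function $\mathbf{1}\equiv 1$, which lies in $L^\infty({\mathbb R}^n)$ with $\|\mathbf{1}\|_\infty=1$. A direct computation from the definition gives
\[
\|\mathbf{1}\|_{{\mathcal M}^\varphi_q}
=\sup_{x\in{\mathbb R}^n,\,r>0}\varphi(r)\left(\frac{1}{|Q(x,r)|}\int_{Q(x,r)}1\,dy\right)^{\frac1q}
=\sup_{t>0}\varphi(t).
\]
Since the hypothesis $L^\infty({\mathbb R}^n)\subset {\mathcal M}^\varphi_q({\mathbb R}^n)$ forces $\mathbf{1}\in {\mathcal M}^\varphi_q({\mathbb R}^n)$, this supremum must be finite.

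There is no substantive obstacle: the constant function is the natural extremal test because its local $L^q$-averages exactly equal $1$, so the Morrey norm collapses to $\sup_{t>0}\varphi(t)$ and renders the two conditions quantitatively identical. The assumption $\varphi\in{\mathcal G}_q$ is not needed in either direction, although it guarantees that $\sup_{t>0}\varphi(t)=\lim_{t\to\infty}\varphi(t)$ since $\varphi$ is increasing.
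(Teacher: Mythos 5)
Your proof is correct, and for the implication $(2)\Rightarrow(1)$ it takes a genuinely different and more elementary route than the paper. The paper first invokes the closed graph theorem to convert the set-theoretic inclusion $L^\infty({\mathbb R}^n)\subset{\mathcal M}^\varphi_q({\mathbb R}^n)$ into a bounded embedding, and then tests the characteristic functions $\chi_{Q(r)}$, using $\|\chi_{Q(r)}\|_{{\mathcal M}^\varphi_q}=\varphi(r)$ (Lemma \ref{lem7}) to conclude $\varphi(r)\lesssim 1$ uniformly in $r$; the closed graph theorem is what supplies the uniformity there, since each $\chi_{Q(r)}$ separately has finite norm no matter what $\varphi$ is. You avoid this functional-analytic step entirely by testing the single function $\mathbf{1}$, whose Morrey norm is exactly $\sup_{t>0}\varphi(t)$, so mere membership of $\mathbf{1}$ in ${\mathcal M}^\varphi_q({\mathbb R}^n)$ already forces the supremum to be finite. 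This buys a purely quantitative, constant-free argument that needs no completeness or closed-graph machinery, and it also makes transparent your (correct) observation that $\varphi\in{\mathcal G}_q$ is not actually used. For $(1)\Rightarrow(2)$ your direct estimate $\|f\|_{{\mathcal M}^\varphi_q}\le\bigl(\sup_{t>0}\varphi(t)\bigr)\|f\|_\infty$ is essentially the same as the paper's argument, which phrases it through the constant majorant $\psi\equiv\sup_{s>0}\varphi(s)$ and the monotonicity of the norm in $\varphi$.
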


\begin{proof}
If $\sup_{t>0} \varphi(t)<\infty$,
then by letting $\psi(t)\equiv \sup_{s>0} \varphi(s), t>0$
we have
$
L^\infty({\mathbb R}^n)
=
{\mathcal M}^{\psi}_q({\mathbb R}^n)
$
with equivalence of norms.
Since
$\varphi \le \psi$,
we have
$
{\mathcal M}^{\psi}_q({\mathbb R}^n)
\subset
{\mathcal M}^{\varphi}_q({\mathbb R}^n)
$.
Thus
$L^\infty({\mathbb R}^n)
\subset
{\mathcal M}^{\varphi}_q({\mathbb R}^n)
$.

Conversely,
if 
$L^\infty({\mathbb R}^n)
\subset
{\mathcal M}^{\varphi}_q({\mathbb R}^n)
$,
then by the closed graph theorem
the embedding norm is finite and hence
$\varphi(r)=\|\chi_{Q(r)}\|_{{\mathcal M}^{\varphi}_q}
\lesssim
\|\chi_{Q(r)}\|_{L^\infty}=1$
for all $r>0$.
Then $\sup\varphi<\infty$.
\end{proof}

By combining these two theorems,
we obtain the following result.
\begin{corollary}{\rm\cite[Proposition 3.3]{Nakai08-1}}\label{cor:180521-1}
Let $0< q<\infty$ and $\varphi \in {\mathcal G}_q$.
Then, the following are equivalent{\rm:}
\begin{enumerate}
\item
$\log \varphi \in L^\infty(0,\infty)$, i.e.,
$0<\inf_{t>0} \varphi(t) \le \sup_{t>0} \varphi(t)<\infty$.
\item
$L^\infty({\mathbb R}^n)
=
{\mathcal M}^{\varphi}_q({\mathbb R}^n)
$,
\end{enumerate}
\end{corollary}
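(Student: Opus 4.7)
The plan is to observe that this corollary is essentially just a conjunction of the two preceding theorems, so the argument reduces to splitting condition (1) into its two halves and applying each to one of the inclusions in condition (2).

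First I would rewrite (1) in the form $\inf_{t>0} \varphi(t) > 0$ \emph{and} $\sup_{t>0} \varphi(t) < \infty$, which is exactly what the statement already spells out. Similarly, the equality $L^\infty(\mathbb{R}^n) = \mathcal{M}^\varphi_q(\mathbb{R}^n)$ in (2) is the conjunction of the two inclusions $\mathcal{M}^\varphi_q(\mathbb{R}^n) \subset L^\infty(\mathbb{R}^n)$ and $L^\infty(\mathbb{R}^n) \subset \mathcal{M}^\varphi_q(\mathbb{R}^n)$.

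Then I would invoke Theorem \ref{thm:150501-1} to identify $\inf_{t>0} \varphi(t) > 0$ with the inclusion $\mathcal{M}^\varphi_q(\mathbb{R}^n) \subset L^\infty(\mathbb{R}^n)$, and Theorem \ref{thm:150501-11} to identify $\sup_{t>0} \varphi(t) < \infty$ with the inclusion $L^\infty(\mathbb{R}^n) \subset \mathcal{M}^\varphi_q(\mathbb{R}^n)$. Taking the conjunction of the two equivalences yields the desired equivalence between (1) and (2).

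There is no real obstacle here, since both directions are supplied by the previous theorems; the only thing to check is that the set equality in (2) is understood as a topological identification, i.e., the two quasi-Banach spaces coincide with equivalence of norms. This follows automatically from the closed graph theorem arguments already used inside the proofs of Theorems \ref{thm:150501-1} and \ref{thm:150501-11}, so nothing new is required.
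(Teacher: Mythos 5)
Your proposal is correct and is exactly the paper's argument: the corollary is stated immediately after Theorems \ref{thm:150501-1} and \ref{thm:150501-11} with the remark that it follows ``by combining these two theorems,'' i.e., by splitting condition (1) into $\inf_{t>0}\varphi(t)>0$ and $\sup_{t>0}\varphi(t)<\infty$ and matching each with the corresponding inclusion. Nothing further is needed.
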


So, if $\log \varphi$ grows or decays slowly we can say that 
${\mathcal M}^{\varphi}_q({\mathbb R}^n)$
is close to $L^\infty({\mathbb R}^n)$.

The next example shows that
when the support of the functions are torn apart,
the norm does not increase even in the case
of generalized Morrey spaces.
\begin{example}\label{prop:140520-1}
Let $0<q<\infty$ and $\varphi \in {\mathcal G}_q$.
Suppose that we have
a collection of cubes $\{Q_j\}_{j=1}^\infty=\{Q(a_j,r_j)\}_{j=1}^\infty$
such that
$\{3Q_j\}_{j=1}^\infty=\{Q(a_j,3r_j)\}_{j=1}^\infty$
is disjoint and 
is contained in $Q=Q(a_0,r_0)$.
Let $\{f_j\}_{j=1}^\infty$ be a collection
of functions in ${\mathcal M}^\varphi_q({\mathbb R}^n)$
satisfying
\begin{gather}
\label{eq:180311-41}
\|f_j\|_{q} \le \varphi(r_0)^{-1}r_j{}^{\frac{n}{q}}, \quad
{\rm supp}(f_j) \subset Q_j,\\
\label{eq:180311-42}
\|f_j\|_{{\mathcal M}^\varphi_q} \le 1
\end{gather}
for each $j \in {\mathbb N}$.
Then
$\displaystyle
f\equiv\sum_{j=1}^\infty f_j \in {\mathcal M}^\varphi_q({\mathbb R}^n).
$
Since $f$ is supported in $Q$,
we have only to consider cubes
contained in $3Q$;
if a cube $Q'$ intersects both $Q$ and ${\mathbb R}^n \setminus 3Q$,
then its triple $3Q'$ engulfs $Q$ and the radius of $Q$ is smaller than
that of $Q'$.
Thus we have
\[
\|f\|_{{\mathcal M}^\varphi_q}
\lesssim
\sup_{x \in {\mathbb R}^n, r>0, \, Q(x,r) \subset 3Q}
\varphi(r)\left(
\frac{1}{|Q(x,r)|}
\int_{Q(x,r)}|f(y)|^q\,dy\right)^{\frac1q}.
\]
We fix a cube $Q(x,r)$ and estimate
\[
\varphi(r)\left(
\frac{1}{|Q(x,r)|}
\int_{Q(x,r)}|f(y)|^q\,dy\right)^{\frac1q}.
\]
We let
\begin{align*}
J_1&\equiv
\{j \in {\mathbb N}\,:\,Q(x,r) \cap Q(a_j,r_j) \ne \emptyset,\,
r_j \le r\},
\\
J_2&\equiv
\{j \in {\mathbb N}\,:\,Q(x,r) \cap Q(a_j,r_j) \ne \emptyset,\,
r_j > r\}.
\end{align*}
Accordingly
we set
\[
{\rm I}_i\equiv
\varphi(r)
\left(
\frac{1}{|Q(x,r)|}
\int_{Q(x,r)}\left|\sum_{j \in J_i}f_j(y)\right|^q\,dy\right)^{\frac1q}
\]
for $i=1,2$.

As for $I_1$, we use 
(\ref{eq:180311-41})
and the fact that $\{Q(a_j,r_j)\}_{j=1}^\infty$ is disjoint:
\[
{\rm I}_1
\le
\varphi(r)\left(
\frac{1}{|Q(x,r)|}
\sum_{j \in J_1}\varphi(r_0)^{-q} r_j{}^n\right)^{\frac1q}
\le
\left(
\frac{1}{|Q(x,r)|}
\sum_{j \in J_1}r_j{}^n\right)^{\frac1q}
\lesssim 1.
\]

As for $I_2$,
we have only to consider only one summand,
since $Q(a_j,3r_j)$ engulfs $Q(x,r)$ if $j \in H_2$:
we can deduce
${\rm I}_2 \le
1$
using 
$(\ref{eq:180311-42})$.
\end{example}

As an application of Example \ref{prop:140520-1},
we present another example.
Denote by $[t]$ the integer part of $t \in {\mathbb R}$.
For positive sequences $\{A_k\}_{k=1}^\infty$ and $\{B_k\}_{k=1}^\infty$
\lq \lq $A_k \sim B_k$ as $k\to \infty$" means that
$\{\log(A_k/B_k)\}_{k=1}^\infty$ is a bounded sequence.

\begin{example}\cite[Lemma 4.1]{Nakai08-1}\label{example:180310-116}
Let
$\varphi \in {\mathcal G}_{q}$ with $0<q<\infty$.
We fix $a \in {\mathbb R}^n$.
Let $\{s_k\}_{k=1}^\infty \subset (0,1]$ be a sequence
which decreases to $0$.

Keeping in mind
\[
\inf_{0<t \le 1}\varphi(t)^{q/n}t^{-1}
=
\varphi(1)^{q/n}, \quad
\inf_{0<t<1}\varphi(t)^{-q/n}=\varphi(1)^{-q/n},
\]
we let
\begin{align*}
\ell_k&\equiv [1+\varphi(s_k)^{q/n}s_k{}^{-1}]\,
(\sim \varphi(s_k)^{q/n}s_k{}^{-1})\\
m_k&\equiv[1+\varphi(s_k)^{-q/n}]\,
(\sim \varphi(s_k)^{-q/n}).
\end{align*}
Then
\begin{equation}\label{eq:180311-31}
\ell_k s_k m_k \sim \varphi(s_k)^{\frac{q}{n}}s_k{}^{-1}s_k\varphi(s_k)^{-\frac{q}{n}}=1,
\quad s_k \downarrow 0
\end{equation} 
as $k \to \infty$.
Hence
\begin{equation}\label{eq:180311-51}
\varphi((\ell_k m_k)^{-1})^{-q}m_k{}^{-n}
\sim
\varphi(s_k)^{-q}s_k{}^n\ell_k{}^n
\sim 1
\end{equation}
as $k \to \infty$.

Devide equally $a+[0,1]^n$ into $\ell_k{}^n$ cubes
to have a non-overlapping collection
$$\{b_{k,j}+[0,\ell_k{}^{-1}]^n\}_{j=1,2,\ldots,\ell_k{}^n}.$$
Furthermore,
we devide $[0,\ell_k{}^{-1}]^n$ equally into $m_k{}^n$ cubes
to obtain a collection
$$\{e_{k,j}+[0,m_k{}^{-1}]^n\}_{j=1,2,\ldots,m_k{}^n}$$
of cubes.
Then
$$
b_{k,j}+[0,\ell_k{}^{-1}]^n
=
\bigcup_{i=1}^{m_k{}^n}
(b_{k,j}+e_{k,i}+[0,(\ell_k m_k)^{-1}]^n).
$$
If we set
\[
g_{k,i,j,a}=
\frac{1}{\varphi((\ell_k m_k)^{-1})}
\chi_{b_{k,j}+e_{k,i}+[0,(\ell_k m_k)^{-1}]^n}.
\]
and
\begin{equation}\label{eq:140520-10}
f_{k,i,a}=f_{k,i}=
\sum_{j=1}^{\ell_k{}^n}
\frac{\chi_{b_{k,j}+e_{k,i}+[0,(\ell_k m_k)^{-1}]^n}}{\varphi((\ell_k m_k)^{-1})}=
\sum_{j=1}^{\ell_k{}^n}
g_{k,i,j,a}.
\end{equation}
Then, 
each 
$f_{k,i,a}$ is supported in $a+[0,1]^n$
and
each
$g_{k,i,j,a}$ is supported in $b_{k,j}+[0,\ell_k{}^{-1}]^n$.
We will show  that
$\{f_{k,i,a}\}_{a \in {\mathbb R}^n, k \in {\mathbb N}, \, i=1,2,\ldots,m_k{}^n}$
forms a bounded set in ${\mathcal M}^\varphi_{q}({\mathbb R}^n)$.
This is achieved
by verifying the assumption of
Example \ref{prop:140520-1}.
Let
$F_1,F_2,\ldots,F_{3^n}$
be a partition of $\{1,2,\ldots,\ell_k{}^n\}$
such that 
$\{b_{k,j}+[-\ell_k{}^{-1},2\ell_k{}^{-1}]^n\}_{j \in F_{l'}{}^n}$
is not overlapping
for each $l'=1,2,\ldots,3^n$.
To check this we need to verify
(\ref{eq:180311-41}) and (\ref{eq:180311-42}).

From Lemma \ref{lem7},
we have (\ref{eq:180311-41}).
Meanwhile,
\[
\|g_{k,i,j,a}\|_{q}
=\varphi((\ell_k m_k)^{-1})(\ell_k m_k)^{-\frac{n}{q}}
\sim \ell_k{}^{-\frac{n}{q}}.
\]
Thus
for each $l'=1,2,\ldots,3^n$
\[
\left\{
\frac{1}{\varphi((\ell_k m_k)^{-1})}\sum_{j \in F_{l'}}
\chi_{b_{k,j}+e_{k,i}+[0,(\ell_k m_k)^{-1}]^n}
\right\}_{a \in {\mathbb R}^n, k \in {\mathbb N}, \, i=1,2,\ldots,m_k{}^n}
\]
is a bounded set.
So
$\{f_{k,i,a}\}_{a \in {\mathbb R}^n, k \in {\mathbb N}, \, i=1,2,\ldots,m_k{}^n}$
forms a bounded set in ${\mathcal M}^\varphi_{q}({\mathbb R}^n)$.
\end{example}

From these examples, we obtain the following conclusion:
\begin{corollary}{\rm \cite[Corollary 4.11]{Nakai08-1}}\label{cor:180521-2}
Let $0<q_1,q_2<\infty$,
and let 
$\varphi_1 \in {\mathcal G}_{q_1}$
and
$\varphi_2 \in {\mathcal G}_{q_2}$.
Assume in addition that $\inf_{t>0}\varphi_1(t)=0$.
Then
$
{\mathcal M}^{\varphi_1}_{q_1}({\mathbb R}^n)
\subset
{\mathcal M}^{\varphi_2}_{q_2}({\mathbb R}^n)
$
if and only if
$q_1 \ge q_2$ and
$\varphi_1 \gtrsim \varphi_2$.
Furthemore,
if $q_1<q_2$,
there exists a compactly supported function
$f \in 
{\mathcal M}^{\varphi_2}_{q_2}({\mathbb R}^n)
\setminus
{\mathcal M}^{\varphi_1}_{q_1}({\mathbb R}^n)$.
In particular, for
$0<q_1 \le p_1<\infty$
and
$0<q_2 \le p_2<\infty$,
$
{\mathcal M}^{p_1}_{q_1}({\mathbb R}^n)
\subset
{\mathcal M}^{p_2}_{q_2}({\mathbb R}^n)
$
if and only if
$q_1 \ge q_2$ and
$p_1=p_2$.
\end{corollary}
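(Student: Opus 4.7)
I would structure the proof as: sufficiency (a short chain of embeddings), then the two parts of necessity, and finally the two corollary assertions.

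For sufficiency, assume $q_1 \ge q_2$ and $\varphi_1 \gtrsim \varphi_2$. Lemma \ref{lem:180307-1} applied with $\varphi=\varphi_1$ yields ${\mathcal M}^{\varphi_1}_{q_1}({\mathbb R}^n) \hookrightarrow {\mathcal M}^{\varphi_1}_{q_2}({\mathbb R}^n)$, and the pointwise inequality $\varphi_2 \lesssim \varphi_1$ inserted into the defining supremum gives ${\mathcal M}^{\varphi_1}_{q_2}({\mathbb R}^n) \hookrightarrow {\mathcal M}^{\varphi_2}_{q_2}({\mathbb R}^n)$. Composing these yields the asserted inclusion.

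For necessity, both spaces are quasi-Banach by Proposition \ref{prop:180525-1}, so the assumed inclusion is automatically a continuous embedding by the closed graph theorem, giving $\|f\|_{{\mathcal M}^{\varphi_2}_{q_2}} \lesssim \|f\|_{{\mathcal M}^{\varphi_1}_{q_1}}$. Testing this on $f = \chi_{Q(0,r)}$ and invoking Lemma \ref{lem7}, which asserts $\|\chi_{Q(0,r)}\|_{{\mathcal M}^{\varphi_i}_{q_i}} = \varphi_i(r)$, immediately forces $\varphi_2(r) \lesssim \varphi_1(r)$ uniformly in $r>0$, i.e., $\varphi_1 \gtrsim \varphi_2$.

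The harder step is ruling out $q_1 < q_2$, and this is where the hypothesis $\inf_{t>0}\varphi_1(t) = 0$ together with Example \ref{example:180310-116} are essential. Suppose for contradiction that $q_1 < q_2$. Apply Example \ref{example:180310-116} with $\varphi=\varphi_1$ and $q=q_1$ to any sequence $s_k \downarrow 0$; since $\varphi_1 \in {\mathcal G}_{q_1}$ is increasing with infimum zero, $\varphi_1(s_k) \downarrow 0$ and hence $m_k \sim \varphi_1(s_k)^{-q_1/n} \to \infty$. The example produces a family $\{f_{k,i,a}\}$ bounded in ${\mathcal M}^{\varphi_1}_{q_1}({\mathbb R}^n)$ and supported in $a+[0,1]^n$. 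Writing $c_k \equiv \varphi_1((\ell_k m_k)^{-1})^{-1}$, the scaling relation (\ref{eq:180311-51}) gives $c_k \sim m_k^{n/q_1}$, and the representation (\ref{eq:140520-10}) shows that $f_{k,i,a}$ equals $c_k$ on a disjoint union of $\ell_k{}^n$ cubes of sidelength $(\ell_k m_k)^{-1}$. Consequently
\[
\|f_{k,i,a}\|_{L^{q_2}}^{q_2} = \ell_k{}^n \cdot c_k^{q_2}(\ell_k m_k)^{-n} = c_k^{q_2} m_k^{-n} \sim m_k^{n(q_2-q_1)/q_1},
\]
which tends to $\infty$ since $q_2 > q_1$. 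As every $f_{k,i,a}$ is supported in a fixed unit cube, the ${\mathcal M}^{\varphi_2}_{q_2}$-norm dominates a constant multiple of the $L^{q_2}$-norm, so $\|f_{k,i,a}\|_{{\mathcal M}^{\varphi_2}_{q_2}} \to \infty$, contradicting continuity of the embedding. The main obstacle of the whole argument is identifying Example \ref{example:180310-116} as the right test family and executing this $L^{q_2}$-computation under the scaling relations of the example; everything else is formal.

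The compactly supported separator of the ``furthermore'' is then built from the same family by an aggregation (condensation of singularities): choose disjoint shifts $a_k$ confined to a bounded region and weights $\lambda_k>0$ so that $\sum_k \lambda_k f_{k,1,a_k}$ converges in ${\mathcal M}^{\varphi_1}_{q_1}({\mathbb R}^n)$ via the $\min(1,q_1)$-quasi-triangle inequality of Lemma \ref{eq:141021-10} while its ${\mathcal M}^{\varphi_2}_{q_2}$-quasi-norm blows up, producing a compactly supported function separating the two spaces. Finally, the ``in particular'' is immediate from the main iff: with $\varphi_i(t) = t^{n/p_i}$ (which lie in ${\mathcal G}_{q_i}$ and satisfy $\inf \varphi_1 = 0$), the relation $\varphi_1 \gtrsim \varphi_2$ reads $t^{n/p_1} \gtrsim t^{n/p_2}$ on $(0,\infty)$, which forces $p_1=p_2$ by letting $t\to 0$ and $t\to \infty$.
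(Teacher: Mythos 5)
Your argument is correct and follows essentially the same route as the paper: sufficiency via the nesting Lemma \ref{lem:180307-1} and pointwise comparison of $\varphi_1,\varphi_2$, necessity of $\varphi_1\gtrsim\varphi_2$ by testing on $\chi_{Q(0,r)}$ with Lemma \ref{lem7}, and exclusion of $q_1<q_2$ by the very same test family of Example \ref{example:180310-116} with the identical $L^{q_2}$-blow-up computation ($\sim m_k^{\frac{n}{q_1}-\frac{n}{q_2}}\to\infty$, driven by $\inf\varphi_1=0$); your explicit closed-graph and aggregation steps merely spell out what the paper leaves implicit. Note only that your compactly supported separator lies in ${\mathcal M}^{\varphi_1}_{q_1}({\mathbb R}^n)\setminus{\mathcal M}^{\varphi_2}_{q_2}({\mathbb R}^n)$, which is the direction actually supported by the construction (and by the paper's own proof), the opposite order displayed in the statement being evidently a misprint.
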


\begin{proof}
The \lq \lq if" part is trivial
thanks to Lemma \ref{lem:180307-1}.
Let us concentrate on the \lq \lq only if" part.

Assume $q_1<q_2$.
We employ Example \ref{example:180310-116}
with $q=q_1$ and $\varphi=\varphi_1$.
Then
\[
\left\|\sum_{j \in F_{l'}}
\chi_{b_{k,j}+e_{k,i}+[0,(\ell_k m_k)^{-1}]^n}
\right\|_{L^{q_2}}
\sim
\left(
\ell_k{}^n(\ell_k m_k)^{-n}
\right)^{\frac{1}{q_2}}
=
m_k{}^{-\frac{n}{q_2}}.
\]
As a result,
\begin{align*}
\frac{1}{\varphi((\ell_k m_k)^{-1})}
\left\|\sum_{j \in F_{l'}}
\chi_{b_{k,j}+e_{k,i}+[0,(\ell_k m_k)^{-1}]^n}
\right\|_{L^{q_2}}
&\sim
\frac{1}{\varphi((\ell_k m_k)^{-1})m_k{}^{\frac{n}{q_2}}}\\
&=
m_k{}^{\frac{n}{q}-\frac{n}{q_2}}
\to \infty,
\end{align*}
since
$\inf_{t>0}\varphi_1(t)=0$.
Thus,
${\mathcal M}^{\varphi_1}_{q_1}({\mathbb R}^n)$
contains a function which does not belong to
$L^{q_2}_{\rm loc}({\mathbb R}^n)$.
Consequently, if
$
{\mathcal M}^{\varphi_1}_{q_1}({\mathbb R}^n)
\subset
{\mathcal M}^{\varphi_2}_{q_2}({\mathbb R}^n),
$
then $q_1 \ge q_2$.
Finally,
we must have
$\varphi_1 \gtrsim \varphi_2$
from Lemma \ref{lem7}
if
$
{\mathcal M}^{\varphi_1}_{q_1}({\mathbb R}^n)
\subset
{\mathcal M}^{\varphi_2}_{q_2}({\mathbb R}^n).
$
\end{proof}
When $\inf_{t>0}\varphi_1(t)>0$,
the situation is different.
\begin{example}\label{example:180611-15}
When $\varphi_1$ is constant,
we use Theorem \ref{thm:150501-11} to have the following characterization.
Let $0<q_1,q_2<\infty$,
and let 
$\varphi_1=1 \in {\mathcal G}_{q_1}$
and
$\varphi_2 \in {\mathcal G}_{q_2}$.
Then
$L^\infty({\mathbb R}^n)=
{\mathcal M}^{\varphi_1}_{q_1}({\mathbb R}^n)
\subset
{\mathcal M}^{\varphi_2}_{q_2}({\mathbb R}^n)
$
if and only if
$\sup_{t>0} \varphi(t)_2<\infty$.
\end{example}

For the later purpose we use
the following characterization of 
$\overset{*}{{\mathcal M}}{}^{\varphi}_{q}({\mathbb R}^n)$,
which is defined to be the closure
of 
${\mathcal M}^{\varphi}_{q}({\mathbb R}^n) \cap L^0_{\rm c}({\mathbb R}^n)$
in 
${\mathcal M}^{\varphi}_{q}({\mathbb R}^n)$.
\index{$\overset{*}{{\mathcal M}}{}^{\varphi}_{q}({\mathbb R}^n)$}
\begin{lemma}\label{lem:150526-1}
For $0< q\le p<\infty$, we have
\begin{equation}\label{eq:150526-1}
\overset{*}{{\mathcal M}}{}^{\varphi}_{q}({\mathbb R}^n)
=\left\{ f\in {\mathcal M}^{\varphi}_{q}({\mathbb R}^n): 
\lim\limits_{R\to \infty} 
\|\chi_{{\mathbb R}^n\setminus B(R)} f \|_{{\mathcal M}^{\varphi}_{q}}
=0\right\}.
\end{equation}
\end{lemma}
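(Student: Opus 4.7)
The plan is to prove the two inclusions of (\ref{eq:150526-1}). Denote the right-hand side by $X$. The tools I will use are the $\min(1,q)$-triangle inequality of Lemma \ref{eq:141021-10}, together with the immediate monotonicity property $\|\chi_E g\|_{{\mathcal M}^{\varphi}_{q}} \le \|g\|_{{\mathcal M}^{\varphi}_{q}}$ for every measurable set $E$, which follows from the definition of the norm because $|\chi_E g|^q \le |g|^q$ pointwise so that the integral over each cube only decreases upon multiplication by $\chi_E$.

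For the inclusion $X \subset \overset{*}{{\mathcal M}}{}^{\varphi}_{q}({\mathbb R}^n)$, I would approximate an arbitrary $f \in X$ by the truncations $f_R \equiv f\chi_{B(R)}$. Each $f_R$ belongs to ${\mathcal M}^{\varphi}_{q}({\mathbb R}^n) \cap L^0_{\rm c}({\mathbb R}^n)$ by the monotonicity observation and by the obvious fact that $f_R$ has compact support, and
\[
\|f - f_R\|_{{\mathcal M}^{\varphi}_{q}} = \|\chi_{{\mathbb R}^n \setminus B(R)}f\|_{{\mathcal M}^{\varphi}_{q}} \longrightarrow 0 \quad (R \to \infty)
\]
by the assumption $f \in X$. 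Hence $f$ lies in the closure of ${\mathcal M}^{\varphi}_{q}({\mathbb R}^n) \cap L^0_{\rm c}({\mathbb R}^n)$.

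For the reverse inclusion, I first note that every $g \in {\mathcal M}^{\varphi}_{q}({\mathbb R}^n) \cap L^0_{\rm c}({\mathbb R}^n)$ trivially belongs to $X$: if $\mathrm{supp}(g) \subset B(R_0)$ then $\chi_{{\mathbb R}^n \setminus B(R)}g \equiv 0$ for $R \ge R_0$. So it is enough to verify that $X$ is closed in ${\mathcal M}^{\varphi}_{q}({\mathbb R}^n)$. Suppose $f_k \in X$ and $f_k \to f$ in the quasinorm. By Lemma \ref{eq:141021-10},
\[
\|\chi_{{\mathbb R}^n \setminus B(R)}f\|_{{\mathcal M}^{\varphi}_{q}}^{\min(1,q)} \le \|f - f_k\|_{{\mathcal M}^{\varphi}_{q}}^{\min(1,q)} + \|\chi_{{\mathbb R}^n \setminus B(R)}f_k\|_{{\mathcal M}^{\varphi}_{q}}^{\min(1,q)},
\]
where I used monotonicity on the first summand. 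Sending first $R \to \infty$ (so the second term vanishes since $f_k \in X$) and then $k \to \infty$ yields $f \in X$.

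I do not anticipate a genuine obstacle here: the argument is a straightforward approximation plus a closedness verification. The only mild care needed is to raise everything to the power $\min(1,q)$ in the quasi-Banach regime $q < 1$, and to keep the order of limits (first $R \to \infty$, then $k \to \infty$) straight in the closedness step.
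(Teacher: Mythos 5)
Your proposal is correct and takes essentially the same route as the paper: the inclusion $X\subset\overset{*}{{\mathcal M}}{}^{\varphi}_{q}({\mathbb R}^n)$ via the truncations $f\chi_{B(R)}$, and the reverse inclusion via approximation by a compactly supported $g$ with ${\rm supp}(g)\subset B(R)$ together with the (quasi-)triangle inequality and the monotonicity $\|\chi_E h\|_{{\mathcal M}^\varphi_q}\le\|h\|_{{\mathcal M}^\varphi_q}$, merely repackaged as the statement that the right-hand side is closed. Your explicit use of the $\min(1,q)$-power is in fact slightly more careful than the paper's plain triangle inequality in the quasi-Banach regime $q<1$.
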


\begin{proof}
Assume that
$f\in {\mathcal M}^{\varphi}_{q}({\mathbb R}^n)$
satisfies
\[
\lim\limits_{R\to \infty} 
\|\chi_{{\mathbb R}^n\setminus B(R)} f \|_{{\mathcal M}^{\varphi}_{q}}
=0.
\]
Then 
$\displaystyle
f=\lim\limits_{R\to \infty} 
\chi_{B(R)} f
$
in ${\mathcal M}^\varphi_q({\mathbb R}^n)$.
Conversely,
if $f \in \overset{*}{{\mathcal M}}{}^{\varphi}_{q}({\mathbb R}^n)$
and $\varepsilon>0$,
then we can find
$g \in {\mathcal M}^{\varphi}_{q}({\mathbb R}^n) \cap L^0_{\rm c}({\mathbb R}^n)$
such that
$\|f-g\|_{{\mathcal M}^\varphi_q}<\varepsilon$.
By the triangle inequality
\[
\|\chi_{{\mathbb R}^n\setminus B(R)} f \|_{{\mathcal M}^{\varphi}_{q}}
\le
\|\chi_{{\mathbb R}^n\setminus B(R)} g \|_{{\mathcal M}^{\varphi}_{q}}
+
\|f-g \|_{{\mathcal M}^{\varphi}_{q}}.
\]
Let $R>0$ be large enough as to have ${\rm supp}(g) \subset B(R)$.
Then the above inequality reads as:
\[
\|\chi_{{\mathbb R}^n\setminus B(R)} f \|_{{\mathcal M}^{\varphi}_{q}}
\le
\|f-g \|_{{\mathcal M}^{\varphi}_{q}}(<\varepsilon),
\]
as required.
\end{proof}

We also have the following characterization of the space
$\overline{\mathcal M}^\varphi_q({\mathbb R}^n)$,
the closure of 
$L^\infty({\mathbb R}^n) \cap {\mathcal M}^\varphi_q({\mathbb R}^n)$
in ${\mathcal M}^\varphi_q({\mathbb R}^n)$:
\index{$\overline{\mathcal M}^\varphi_q({\mathbb R}^n)$}
\begin{lemma}\label{lem:150805-1}
Let $0<q<\infty$ and $\varphi \in {\mathcal G}_q$.
If $f\in \overline{\mathcal M}{}^\varphi_q({\mathbb R}^n)$, then 
\begin{align}\label{eq:150805-1}
\lim_{R\to \infty}
\|\chi_{\{|f|>R\}}f\|_{{\mathcal M}^\varphi_q}
=0.
\end{align}
\end{lemma}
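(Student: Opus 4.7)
The plan is to exploit the defining approximation property of $\overline{\mathcal M}^\varphi_q({\mathbb R}^n)$, splitting $\chi_{\{|f|>R\}}f$ into a controlled error term and a bounded piece whose measure of effective support shrinks as $R\to\infty$.

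Fix $\varepsilon>0$. By definition of $\overline{\mathcal M}^\varphi_q({\mathbb R}^n)$, I can choose $g\in L^\infty({\mathbb R}^n)\cap {\mathcal M}^\varphi_q({\mathbb R}^n)$ with $\|f-g\|_{{\mathcal M}^\varphi_q}<\varepsilon$. I then write
\[
\chi_{\{|f|>R\}}f=\chi_{\{|f|>R\}}(f-g)+\chi_{\{|f|>R\}}g
\]
and estimate the two pieces separately via the $\min(1,q)$-triangle inequality (Lemma \ref{eq:141021-10}). The first piece is immediate: since pointwise $|\chi_{\{|f|>R\}}(f-g)|\le |f-g|$, one has $\|\chi_{\{|f|>R\}}(f-g)\|_{{\mathcal M}^\varphi_q}\le \|f-g\|_{{\mathcal M}^\varphi_q}<\varepsilon$.

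For the second piece I use that $g$ is bounded. Assume $R>2\|g\|_\infty$. On $\{|f|>R\}$ we have $|f-g|\ge |f|-\|g\|_\infty>R/2$, so $\{|f|>R\}\subset \{|f-g|>R/2\}$. Therefore
\[
|\chi_{\{|f|>R\}}g|\le \|g\|_\infty\,\chi_{\{|f-g|>R/2\}}\le \frac{2\|g\|_\infty}{R}\,|f-g|
\]
pointwise, the second inequality being Chebyshev's trick $\lambda\chi_{\{|h|>\lambda\}}\le |h|$. Taking ${\mathcal M}^\varphi_q$-norms gives
\[
\|\chi_{\{|f|>R\}}g\|_{{\mathcal M}^\varphi_q}\le \frac{2\|g\|_\infty}{R}\|f-g\|_{{\mathcal M}^\varphi_q}<\frac{2\|g\|_\infty\varepsilon}{R},
\]
which is $<\varepsilon$ as soon as $R>2\|g\|_\infty$ is enlarged accordingly.

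Combining the two estimates through the $\min(1,q)$-triangle inequality yields $\|\chi_{\{|f|>R\}}f\|_{{\mathcal M}^\varphi_q}^{\min(1,q)}\le 2\varepsilon^{\min(1,q)}$ for all sufficiently large $R$, which proves (\ref{eq:150805-1}) since $\varepsilon$ was arbitrary. The only subtlety I anticipate is keeping track of the quasi-norm powers when $q<1$; the argument itself is robust because the key step—replacing the level set $\{|f|>R\}$ by a level set of the approximation error $f-g$—works pointwise, so it is compatible with any (quasi-)Banach function norm with the lattice property.
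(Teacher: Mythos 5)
Your proposal is correct and follows essentially the same route as the paper: approximate $f$ by a bounded $g$, split $\chi_{\{|f|>R\}}f$ into $\chi_{\{|f|>R\}}(f-g)+\chi_{\{|f|>R\}}g$, and dominate the second piece pointwise by a multiple of $|f-g|$ once $R$ exceeds a multiple of $\|g\|_\infty$ (the paper uses $|g|\le R\le|f-g|$ on $\{|f|>2R\}$, you use the Chebyshev bound $\|g\|_\infty\chi_{\{|f-g|>R/2\}}\le\frac{2\|g\|_\infty}{R}|f-g|$, which is the same idea). Your extra care with the $\min(1,q)$-powers for $q<1$ is a harmless refinement of what the paper does.
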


\begin{proof}
Let $\varepsilon>0$ be fixed.
Since $f\in \overline{\mathcal M}{}^\varphi_q({\mathbb R}^n)$,
we can choose
$g \in L^\infty({\mathbb R}^n) \cap {\mathcal M}{}^\varphi_q({\mathbb R}^n)$
so that
$\|g-f\|_{{\mathcal M}^\varphi_q}<\varepsilon$.
Let $R>\|g\|_{\infty}$
We estimate
\[
\|\chi_{\{|f|>2R\}}f\|_{{\mathcal M}^\varphi_q}
\le
\|f-g\|_{{\mathcal M}^\varphi_q}
+
\|\chi_{\{|f|>2R\}}g\|_{{\mathcal M}^\varphi_q}
\]
If $|g| \le R$ and $|f|>2R$,
then $|g| \le R \le |f-g|$.
Thus
\begin{align*}
\|\chi_{\{|f|>2R\}}f\|_{{\mathcal M}^\varphi_q}
&\le
2\|f-g\|_{{\mathcal M}^\varphi_q}
+
\|\chi_{\{|g|>R\}}g\|_{{\mathcal M}^\varphi_q}\\
&=
2\varepsilon.
\end{align*}
As a result
(\ref{eq:150805-1}) holds.
\end{proof}
The generalized tilde subspace
$\widetilde{\mathcal M}{}^\varphi_q({\mathbb R}^n)$
is defined to be the completion
of $L^\infty({\mathbb R}^n) \cap {\mathcal M}^\varphi_q({\mathbb R}^n)$
in ${\mathcal M}^\varphi_q({\mathbb R}^n)$.
\index{$\widetilde{\mathcal M}{}^\varphi_q({\mathbb R}^n)$}
\begin{proposition}\label{prop:180611-21}
Let $0<q<\infty$ and $\varphi \in {\mathcal G}_q$.
Then
$\widetilde{\mathcal M}{}^\varphi_q({\mathbb R}^n)
=\overline{\mathcal M}{}^\varphi_q({\mathbb R}^n)
\cap
\overset{*}{\mathcal M}{}^\varphi_q({\mathbb R}^n)$.
\end{proposition}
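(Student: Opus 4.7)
The plan is to prove the equality by showing the two inclusions separately, interpreting $\widetilde{\mathcal M}{}^\varphi_q({\mathbb R}^n)$ as the closure in ${\mathcal M}^\varphi_q({\mathbb R}^n)$ of $L^\infty_{\rm c}({\mathbb R}^n)\cap{\mathcal M}^\varphi_q({\mathbb R}^n)$ (which sits inside both $L^\infty({\mathbb R}^n)\cap{\mathcal M}^\varphi_q({\mathbb R}^n)$ and $L^0_{\rm c}({\mathbb R}^n)\cap{\mathcal M}^\varphi_q({\mathbb R}^n)$).

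\textbf{Inclusion $\subset$.} This is immediate from the definitions: since $L^\infty_{\rm c}\cap{\mathcal M}^\varphi_q$ is contained in both $L^\infty\cap{\mathcal M}^\varphi_q$ and $L^0_{\rm c}\cap{\mathcal M}^\varphi_q$, taking closures in ${\mathcal M}^\varphi_q({\mathbb R}^n)$ gives $\widetilde{\mathcal M}{}^\varphi_q\subset\overline{\mathcal M}{}^\varphi_q$ and $\widetilde{\mathcal M}{}^\varphi_q\subset\overset{*}{\mathcal M}{}^\varphi_q$, hence the claimed containment in their intersection.

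\textbf{Inclusion $\supset$.} Let $f\in\overline{\mathcal M}{}^\varphi_q\cap\overset{*}{\mathcal M}{}^\varphi_q$ and fix $\varepsilon>0$. Since $f\in\overline{\mathcal M}{}^\varphi_q$, Lemma \ref{lem:150805-1} furnishes $R>0$ such that $\|\chi_{\{|f|>R\}}f\|_{{\mathcal M}^\varphi_q}<\varepsilon$, and since $f\in\overset{*}{\mathcal M}{}^\varphi_q$, Lemma \ref{lem:150526-1} furnishes $S>0$ with $\|\chi_{{\mathbb R}^n\setminus B(S)}f\|_{{\mathcal M}^\varphi_q}<\varepsilon$. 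Define the simultaneous height/support truncation
\[
g\equiv f\,\chi_{B(S)}\,\chi_{\{|f|\le R\}},
\]
so that $\|g\|_{\infty}\le R$ and $\operatorname{supp}(g)\subset\overline{B(S)}$, giving $g\in L^\infty_{\rm c}\cap{\mathcal M}^\varphi_q$. The decomposition
\[
f-g=\chi_{{\mathbb R}^n\setminus B(S)}f+\chi_{B(S)\cap\{|f|>R\}}f
\]
together with the pointwise bound $|\chi_{B(S)\cap\{|f|>R\}}f|\le|\chi_{\{|f|>R\}}f|$ and the monotonicity of the Morrey norm under $|\cdot|$ forces each summand on the right to have ${\mathcal M}^\varphi_q$-norm at most $\varepsilon$. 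Applying the $\min(1,q)$-triangle inequality of Lemma \ref{eq:141021-10} yields
\[
\|f-g\|_{{\mathcal M}^\varphi_q}^{\min(1,q)}\le 2\,\varepsilon^{\min(1,q)},
\]
so $f$ is approximable by elements of $L^\infty_{\rm c}\cap{\mathcal M}^\varphi_q$ in the Morrey norm; that is, $f\in\widetilde{\mathcal M}{}^\varphi_q$.

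\textbf{Main obstacle.} There is no deep difficulty here; the proposition essentially repackages the two one-sided characterisations already recorded in Lemmas \ref{lem:150526-1} and \ref{lem:150805-1}. The only point requiring care is to truncate $f$ simultaneously in height and in spatial support while keeping the two error terms under separate control, which is achieved by the pointwise domination $|\chi_{B(S)\cap\{|f|>R\}}f|\le|\chi_{\{|f|>R\}}f|$ that turns the height-truncation cutoff into a bound independent of $S$.
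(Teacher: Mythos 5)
Your argument is correct and follows essentially the same route as the paper: the paper also obtains the nontrivial inclusion by truncating simultaneously in space and in height, writing $f=\lim_{R\to\infty}\chi_{B(R)\cap\{|f|\le R\}}f$ and invoking Lemmas \ref{lem:150526-1} and \ref{lem:150805-1}, which is exactly your $g=f\,\chi_{B(S)}\,\chi_{\{|f|\le R\}}$ with the two error terms controlled by those lemmas. The only difference is presentational: you use two truncation parameters and spell out the pointwise domination and the $\min(1,q)$-triangle inequality, details the paper leaves implicit.
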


\begin{proof}
It is easy to show that
$\widetilde{\mathcal M}{}^\varphi_q({\mathbb R}^n)
\subset
\overline{\mathcal M}{}^\varphi_q({\mathbb R}^n)
\cap
\overset{*}{\mathcal M}{}^\varphi_q({\mathbb R}^n)$.
Let
$f \in 
\overline{\mathcal M}{}^\varphi_q({\mathbb R}^n)
\cap
\overset{*}{\mathcal M}{}^\varphi_q({\mathbb R}^n)$.
By Lemmas \ref{lem:150526-1} and \ref{lem:150805-1},
we see that
\[
f=\lim_{R \to \infty}\chi_{B(R) \cap \{|f|>R\}}f.
\]
Thus $f \in \widetilde{\mathcal M}{}^\varphi_q({\mathbb R}^n)$.
\end{proof}
Although we can define many other closed subspaces
as we did for Morrey spaces,
we content ourselves with these three definitions,
which we actually use.

\begin{remark}
Here the space $\widetilde{\mathcal M}^\varphi_q({\mathbb R}^n)$
is introduced.
See \cite{AkKu14, DGS15, LoHa16, Sa(N)13-1,Sa(N)13-2} for a different class of closed subspaces.
\end{remark}

\section{Boundedness properties of the operators in generalized Morrey spaces}
\label{s4}

Having set down the boundedness properties of generalized Morrey spaces,
we are now interested in the boundedness properties of the operators.

\subsection{Maximal operator in generalized Morrey spaces and the class ${\mathbb Z}_0$}
\label{subsection:Boundedness of the Hardy--Littlewood maximal operator on generalized Morrey spaces}

After we generalize the parameter $p$
in the space
${\mathcal M}^p_q({\mathbb R}^n)$,
we realize that the boundedness of the maximal operator
is obtained due to the condition on $q \in (1,\infty)$.

\begin{theorem}
\label{thm:140525-3}
{\rm \cite[Theorem 1]{Nakai94}, \cite[Corollary 5.6]{Nakai08-1}, \cite[Theorem 2.3]{Sawano08-1}}
Let $1<q<\infty$ and $\varphi \in {\mathcal G}_q$.
Then
\[
\|M f\|_{{\mathcal M}^\varphi_q}
\lesssim
\|f\|_{{\mathcal M}^\varphi_q}
\]
for all measurable functions $f$.
\end{theorem}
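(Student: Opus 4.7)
The plan is to prove the boundedness via the classical decomposition $f = f_1 + f_2$, where $f_1 \equiv f \chi_{2Q}$ and $f_2 \equiv f - f_1$, adapted to a fixed cube $Q = Q(x,r)$. Since $M$ is sublinear, $Mf \le Mf_1 + Mf_2$, and by Lemma \ref{eq:141021-10} it suffices to control the ${\mathcal M}^\varphi_q$-norms of $Mf_1$ and $Mf_2$ separately. Taking supremum over $Q = Q(x,r)$ at the end will give the result.

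For the local piece I would invoke the $L^q({\mathbb R}^n)$-inequality of Theorem \ref{thm:maximal} (which is where the hypothesis $1 < q < \infty$ is used):
\[
\|Mf_1\|_{L^q(Q)} \le \|Mf_1\|_{L^q({\mathbb R}^n)} \lesssim \|f_1\|_{L^q} = \|f \chi_{2Q}\|_{L^q}.
\]
Dividing by $|Q|^{1/q}$ and multiplying by $\varphi(r)$, and noting $|Q| \simeq |2Q|$, one gets
\[
\varphi(r)\left(\frac{1}{|Q|} \int_Q (Mf_1)^q\right)^{1/q} \lesssim \varphi(r)\left(\frac{1}{|2Q|}\int_{2Q}|f|^q\right)^{1/q} \lesssim \varphi(2r)\left(\frac{1}{|2Q|}\int_{2Q}|f|^q\right)^{1/q} \le \|f\|_{{\mathcal M}^\varphi_q},
\]
where the doubling inequality $\varphi(r) \le \varphi(2r) \le 2^{n/q}\varphi(r)$ from Proposition \ref{prop:180611-2} absorbs the change of cube.

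For the global piece the key observation is geometric: if $y \in Q(x,r)$ and $B$ is any ball containing $y$ with $B \cap \supp(f_2) \neq \emptyset$, then $B$ must reach out of $2Q$, so $r(B) \gtrsim r$; consequently $B$ sits inside some $Q(x,R)$ with $R \gtrsim r$. Hence
\[
Mf_2(y) \lesssim \sup_{R \gtrsim r} \frac{1}{|Q(x,R)|}\int_{Q(x,R)}|f(z)|\,dz.
\]
Applying H\"older's inequality on the probability measure $|Q(x,R)|^{-1}\,dz$ (this is the second place $q>1$ enters) bounds the inner average by $\left(|Q(x,R)|^{-1}\int_{Q(x,R)}|f|^q\right)^{1/q} \le \|f\|_{{\mathcal M}^\varphi_q}/\varphi(R)$. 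Since $\varphi$ is increasing and doubling, $\varphi(R) \gtrsim \varphi(r)$ for all such $R$, so $Mf_2(y) \lesssim \|f\|_{{\mathcal M}^\varphi_q}/\varphi(r)$ uniformly on $Q$, which immediately yields
\[
\varphi(r)\left(\frac{1}{|Q|}\int_Q (Mf_2)^q\right)^{1/q} \lesssim \|f\|_{{\mathcal M}^\varphi_q}.
\]

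I do not expect any real obstacle: the only subtle point is the reduction from arbitrary balls to concentric cubes in the pointwise estimate of $Mf_2$, which requires the enclosing cube to be larger than $B$ by only a constant factor so that the lower bound $R \gtrsim r$ and the doubling of $\varphi$ can be exploited. Once this reduction is in place, the two properties packaged in $\varphi \in {\mathcal G}_q$ — that $\varphi$ is increasing (controls $Mf_2$) and that $\varphi$ is doubling (controls $Mf_1$) — do all the work, and one simply takes the supremum in $x,r$ to conclude $\|Mf\|_{{\mathcal M}^\varphi_q} \lesssim \|f\|_{{\mathcal M}^\varphi_q}$.
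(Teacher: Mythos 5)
Your proposal is correct and follows essentially the same route as the paper: the same decomposition relative to a fixed cube ($2Q$ versus the paper's $5Q$ is immaterial), the $L^q({\mathbb R}^n)$-boundedness of $M$ plus the doubling of $\varphi$ for the local part, and a pointwise bound of $Mf_2$ on $Q$ by averages over enlarged concentric cubes combined with the monotonicity of $\varphi$ for the far part. Your direct use of H\"older's inequality on the probability measure there is exactly the content of the paper's appeal to the nesting property $\mathcal{M}^\varphi_q({\mathbb R}^n)\subset\mathcal{M}^\varphi_1({\mathbb R}^n)$, so the two arguments coincide in substance.
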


We observe that we did not requere
anything other than $\varphi \in {\mathcal G}_q$
as an evidence of the fact that the parametre $q$
play a central role for the boundedness of the 
Hardy--Littlewood maximal operator.
Theorem \ref{thm:140525-3} extends
the result in \cite{ChFr87}
from Morrey spaces to generalized Morrey spaces.
\begin{proof}
Once we assume $\varphi \in {\mathcal G}_q$,
the proof of this theorem will be an adaptation
of the classical case.
Fix a cube $Q(x,r)$.
We need to prove
\[
\varphi(r)
\left(\frac{1}{|Q(x,r)|}\int_{Q(x,r)}Mf(y)^q\,dy\right)^{\frac1q}
\lesssim
\|f\|_{{\mathcal M}^\varphi_q}.
\]
We let $f_1 \equiv \chi_{Q(x,5r)}f$ and $f_2 \equiv f-f_1$.
We need to prove
\begin{equation}\label{eq:131113-11}
\varphi(r)
\left(\frac{1}{|Q(x,r)|}\int_{Q(x,r)}Mf_1(y)^q\,dy\right)^{\frac1q}
\lesssim
\|f\|_{{\mathcal M}^\varphi_q}
\end{equation}
and
\begin{equation}\label{eq:131113-12}
\varphi(r)
\left(\frac{1}{|Q(x,r)|}\int_{Q(x,r)}Mf_2(y)^q\,dy\right)^{\frac1q}
\lesssim
\|f\|_{{\mathcal M}^\varphi_q}.
\end{equation}
The proof of (\ref{eq:131113-11}) follows
from the boundedness of the Hardy--Littlewood maximal operator
and the fact that $\varphi(5r) \simeq \varphi(r)$
for any $r>0$.
As for (\ref{eq:131113-12}),
we use 
\[
M[\chi_{{\mathbb R}^n \setminus 5Q}f](y)
\lesssim \sup_{R:Q \subset R \in {\mathcal Q}}
\frac{1}{|R|}\int_R|f(z)|\,dz
\quad (y \in Q)
\]
with $Q=Q(x,r)$.
Then by virtue of the fact that $\varphi$ is increasing,
we obtain
\begin{align*}
\varphi(r)
\left(\frac{1}{|Q(x,r)|}\int_{Q(x,r)}Mf_2(y)^q\,dy\right)^{\frac1q}
&\lesssim
\varphi(r)
\times
\sup_{R:Q \subset R \in {\mathcal Q}}
\frac{1}{|R|}\int_R|f(z)|\,dz\\
&\lesssim
\sup_{R:Q \subset R \in {\mathcal Q}}
\varphi(r)
\times
\frac{1}{|R|}\int_R|f(z)|\,dz\\
&\lesssim
\|f\|_{{\mathcal M}^\varphi_1}\\
&\le
\|f\|_{{\mathcal M}^\varphi_q},
\end{align*}
where for the last inequality,
we used the nesting property
of ${\mathcal M}^\varphi_q({\mathbb R}^n), 1 \le q<\infty$.
\end{proof}

As we have seen,
any function $\varphi:(0,\infty) \to (0,\infty)$ will do.
So we have the following boundedness for small Morrey spaces.
\begin{example}\label{example:180611-31}
Let $1<q \le p<\infty$.
Then
$M$ is bounded on $m^p_q({\mathbb R}^n)$
and hence $L^q_{\rm uloc}({\mathbb R}^n)$.
In fact,
$M$ is bounded on ${\mathcal M}^\varphi_q({\mathbb R}^n)
\sim m^p_q({\mathbb R}^n)$
thanks to Theorem \ref{thm:140525-3},
where $\varphi(t)=\max(t^{\frac{n}{p}},1)$,
$t>0$.
\end{example}

Similar to Theorem \ref{thm:140525-3},
we can prove the following theorem:
\begin{theorem}{\rm \cite[Theorem 1]{Nakai94}, \cite[Corollary 6.2]{Nakai08-1}}
\label{thm:140525-333}
Let $1 \le q<\infty$ and $\varphi \in {\mathcal G}_q$.
Then
\[
\|M f\|_{{\rm w}{\mathcal M}^\varphi_q}
\lesssim
\|f\|_{{\mathcal M}^\varphi_q}
\]
for all measurable functions $f$.
\end{theorem}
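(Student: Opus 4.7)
The plan is to mimic the proof of Theorem \ref{thm:140525-3} above, replacing the strong $L^q$-bound for $M$ (Theorem \ref{thm:maximal}) by a weak-type substitute that is available all the way down to $q=1$. Unwinding the definition of the weak generalized Morrey norm, it suffices to prove that for every $\lambda>0$ and every cube $Q(x,r)\subset{\mathbb R}^n$,
\[
\lambda\,\varphi(r)\left(\frac{|Q(x,r)\cap\{Mf>\lambda\}|}{|Q(x,r)|}\right)^{\frac1q}
\lesssim \|f\|_{{\mathcal M}^\varphi_q}.
\]
Fix such $\lambda$ and $Q=Q(x,r)$, and decompose $f=f_1+f_2$ with $f_1\equiv f\chi_{5Q}$ and $f_2\equiv f\chi_{{\mathbb R}^n\setminus 5Q}$, so that $\{Mf>\lambda\}\subset\{Mf_1>\lambda/2\}\cup\{Mf_2>\lambda/2\}$.

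For the local piece $f_1$, I would use the weak $(q,q)$-boundedness of the Hardy--Littlewood maximal operator. For $q=1$ this is Theorem \ref{thm:1208-2} directly; for $q>1$ it follows from Theorem \ref{thm:maximal} together with Chebyshev's inequality. In either case,
\[
|\{Mf_1>\lambda/2\}|
\lesssim \lambda^{-q}\|f_1\|_q^q
= \lambda^{-q}\int_{Q(x,5r)}|f(y)|^q\,dy.
\]
Multiplying by $|Q(x,r)|^{-1}$, taking the $q$-th root, and using $|Q(x,5r)|=5^n|Q(x,r)|$ together with the doubling estimate $\varphi(r)\le\varphi(5r)\lesssim\varphi(r)$ (Proposition \ref{prop:180611-2}), one recognizes the resulting quantity as $5^{n/q}\varphi(5r)\left(\frac{1}{|Q(x,5r)|}\int_{Q(x,5r)}|f|^q\right)^{1/q}$, which is at most $\lesssim\|f\|_{{\mathcal M}^\varphi_q}$.

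For the global piece $f_2$ I would recycle the key pointwise estimate already used in the proof of Theorem \ref{thm:140525-3}: for every $y\in Q$,
\[
Mf_2(y)
\lesssim \sup_{R\in{\mathcal Q},\,Q\subset R}\frac{1}{|R|}\int_R |f(z)|\,dz
\le \sup_{R\in{\mathcal Q},\,Q\subset R}\left(\frac{1}{|R|}\int_R|f(z)|^q\,dz\right)^{\frac1q}
\]
by Hölder's inequality. Since $\varphi\in{\mathcal G}_q$ is increasing, each summand is bounded by $\varphi(\ell(R)/2)^{-1}\|f\|_{{\mathcal M}^\varphi_q}\le \varphi(r)^{-1}\|f\|_{{\mathcal M}^\varphi_q}$, so that $Mf_2(y)\le C_0\varphi(r)^{-1}\|f\|_{{\mathcal M}^\varphi_q}$ uniformly on $Q$. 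This produces a clean dichotomy: if $\lambda/2>C_0\varphi(r)^{-1}\|f\|_{{\mathcal M}^\varphi_q}$, then $Q\cap\{Mf_2>\lambda/2\}=\emptyset$; otherwise $\lambda\varphi(r)\lesssim\|f\|_{{\mathcal M}^\varphi_q}$ and one simply bounds the density ratio by $1$.

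The main conceptual point to watch is that one cannot use the strong $L^q$-bound when $q=1$, so the whole argument must be phrased in terms of distributional (level-set) estimates, and the $f_2$-contribution must be handled by the $L^\infty$-on-$Q$ bound rather than by an integral average. Once this is in place, the argument is essentially the proof of Theorem \ref{thm:140525-3} with the average $\left(|Q|^{-1}\int_Q|Mf|^q\right)^{1/q}$ replaced by its weak analogue $\lambda\left(|Q|^{-1}|Q\cap\{Mf>\lambda\}|\right)^{1/q}$, and every appeal to $\varphi\in{\mathcal G}_q$ (monotonicity and doubling) plays the same role as before.
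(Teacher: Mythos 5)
Your argument is correct and is exactly the route the paper intends: its proof of Theorem \ref{thm:140525-333} is the one-line instruction to invoke the weak-$(1,1)$ (more generally weak-$(q,q)$) bound for $M$ and modify the proof of Theorem \ref{thm:140525-3}, which is precisely what you carry out — the same $f=\chi_{5Q}f+f_2$ splitting, the weak $L^q$ estimate for the local piece, and the pointwise bound $Mf_2\lesssim\varphi(r)^{-1}\|f\|_{{\mathcal M}^\varphi_q}$ on $Q$ handled via the level-set dichotomy. No gaps; your write-up simply supplies the details the paper leaves to the reader.
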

Once again any function 
${\mathcal G}_q$
will do as long as $\varphi \in {\mathcal G}_q$.
Theorem \ref{thm:140525-333} extends
the result in \cite{ChFr87}
from Morrey spaces to generalized Morrey spaces.
\begin{proof}
Simply resort to the weak-$(1,1)$ boundedness
of $M$ and modify
the proof of Theorem \ref{thm:140525-3}.
\end{proof}

We move on to the vector-valued inequality.
We use the following estimate:
\begin{lemma}\label{lem:150824-21}
For all measurable functions $f$ and cubes $Q$,
we have
\begin{equation}\label{eq:131109-69A}
M[\chi_{{\mathbb R}^n \setminus 5Q}f](y)
\lesssim \sup_{Q \subset R \in {\mathcal Q}}
m_R(|f|)
\quad (y \in Q).
\end{equation}
\end{lemma}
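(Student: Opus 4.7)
The plan is to unfold the definition of $M$ and split the cubes $R' \ni y$ appearing in the defining supremum into two kinds: those contained in $5Q$, whose contribution vanishes, and those protruding outside $5Q$, which I will dominate by the average over a suitable enlarged cube $R \supset Q$. Throughout, I write $Q = Q(x_0, r_0)$, so that $5Q = Q(x_0, 5r_0)$.

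Fix $y \in Q$ and a cube $R' = Q(y', s') \ni y$. If $R' \subset 5Q$, then $\chi_{{\mathbb R}^n \setminus 5Q}f \equiv 0$ on $R'$ and the contribution to $M[\chi_{{\mathbb R}^n \setminus 5Q}f](y)$ is zero. Otherwise pick some $z \in R' \setminus 5Q$, so that $|z_{j_0} - x_{0,j_0}| > 5r_0$ for some coordinate $j_0$. Combining $|z_{j_0}-y'_{j_0}| \le s'$, $|y_{j_0}-y'_{j_0}| \le s'$ and $|y_{j_0}-x_{0,j_0}| \le r_0$ via the triangle inequality,
\[
5 r_0 \;<\; |z_{j_0}-x_{0,j_0}| \;\le\; 2s' + r_0,
\]
so $s' > 2 r_0$. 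This is the one geometric estimate doing the real work.

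Next I enlarge $R'$ to a cube $R := Q(y, 2 s')$. The inclusion $R' \subset R$ is immediate from $y \in R'$ and the triangle inequality, while $Q \subset R$ follows from $|w_j - y_j| \le 2 r_0 < 2 s'$ for $w \in Q$. Since $|R|/|R'| = 2^n$,
\[
\frac{1}{|R'|}\int_{R'} \chi_{{\mathbb R}^n \setminus 5Q}(z)\,|f(z)|\,dz
\;\le\; \frac{2^n}{|R|}\int_{R}|f(z)|\,dz
\;=\; 2^n\, m_R(|f|)
\;\le\; 2^n \sup_{Q \subset R \in {\mathcal Q}} m_R(|f|).
\]
Taking the supremum over cubes $R' \ni y$ yields the stated inequality with implicit constant $2^n$. (If one prefers to argue with balls in accordance with the definition of $M$ given earlier, the same argument goes through with the obvious adjustments, picking up an additional dimensional constant from the standard comparison between balls and cubes.) I do not foresee a genuine obstacle: once $s' > 2 r_0$ is established, everything else is book-keeping with cube sidelengths.
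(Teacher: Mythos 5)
Your proof is correct and follows essentially the same route as the paper: any cube in the defining supremum that contributes must meet both $Q$ and ${\mathbb R}^n\setminus 5Q$, hence has sidelength at least $2\ell(Q)$, and enlarging it by a factor $2$ produces a cube containing $Q$ at the cost of the constant $2^n$. Your only deviation is centering the enlarged cube at $y$ rather than doubling about the original center, which is immaterial.
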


\begin{proof}
We write out $M[\chi_{{\mathbb R}^n \setminus 5Q}f](y)$
in full:
\[
M [\chi_{{\mathbb R}^n \setminus 5Q}f](y)
=
\sup_{R \in {\mathcal Q}}
\frac{\chi_{R}(y)}{|R|}\|f\|_{L^1(R \setminus 5Q)},
\]
where $R$ runs over all cubes.
In order that 
$\chi_{R}(y)\|f\|_{L^1(R \setminus 5Q)}$
be nonzero,
we need to have $y \in R$ and
$R \setminus 5Q \ne \emptyset$.
Thus, $R$ meets both $Q$ and ${\mathbb R}^n \setminus 5Q$.
If $R \in {\mathcal Q}$ is a cube
that meets both $Q$ and ${\mathbb R}^n \setminus 5Q$,
then $\ell(R) \ge 2\ell(Q)$ and $2R \supset Q$.
Thus, (\ref{eq:131109-69A}) follows.
\end{proof}

Unlike the usual maximal inequality,
we need the integral condition
(\ref{eq:Nakai-19}).
\begin{theorem}\label{thm:180524-1}
Let $1<q<\infty$, $1<u \le \infty$ and $\varphi \in {\mathcal G}_q$.
Assume in addition that
\begin{equation}\label{eq:Nakai-19}
\int_r^\infty \frac{ds}{\varphi(s)s}
\lesssim \frac{1}{\varphi(r)} \quad (r>0).
\end{equation}
Then
for all $\{f_j\}_{j=1}^\infty \subset {\mathcal M}^\varphi_q({\mathbb R}^n)$,
\begin{equation}\label{eq:180311-11111}
\left\|\left(\sum_{j=1}^\infty M f_j{}^u\right)^{\frac1u}\right\|_{{\mathcal M}^\varphi_q}
\lesssim
\left\|\left(\sum_{j=1}^\infty |f_j|^u\right)^{\frac1u}\right\|_{{\mathcal M}^\varphi_q}.
\end{equation}
\end{theorem}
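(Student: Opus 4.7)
My plan is to adapt the argument of Theorem \ref{thm:140525-3} to the vector-valued setting, using the classical Fefferman--Stein vector-valued maximal inequality on $L^q(\mathbb{R}^n)$ as the base case. Fix a cube $Q=Q(x,r)$ and set $g \equiv (\sum_{j=1}^\infty |f_j|^u)^{1/u}$. Decompose each $f_j = g_j + h_j$ with $g_j \equiv f_j\chi_{5Q}$ and $h_j \equiv f_j - g_j$. It suffices to estimate
\[
\varphi(r)\left(\frac{1}{|Q|}\int_Q \Bigl(\sum_j (Mg_j)^u\Bigr)^{q/u}dy\right)^{1/q}
\quad \text{and} \quad
\varphi(r)\left(\frac{1}{|Q|}\int_Q \Bigl(\sum_j (Mh_j)^u\Bigr)^{q/u}dy\right)^{1/q}
\]
separately by $\|g\|_{{\mathcal M}^\varphi_q}$.

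For the local contribution, I would invoke the Fefferman--Stein inequality on $L^q(\mathbb{R}^n)$, valid for $1<q<\infty$ and $1<u\le\infty$:
\[
\left\|\Bigl(\sum_j (Mg_j)^u\Bigr)^{1/u}\right\|_{L^q}
\lesssim
\left\|\Bigl(\sum_j |g_j|^u\Bigr)^{1/u}\right\|_{L^q}
=\|g\chi_{5Q}\|_{L^q}.
\]
Dividing by $|Q|^{1/q}$, using $|Q|\simeq |5Q|$, and exploiting the doubling of $\varphi$ (Proposition \ref{prop:180611-2}) to replace $\varphi(r)$ by $\varphi(5r)$, the local piece is dominated by $\|g\|_{{\mathcal M}^\varphi_q}$.

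For the non-local piece, the key is the pointwise bound established in Lemma \ref{lem:150824-21}, applied to each $h_j$: for $y\in Q$,
\[
M h_j(y)\lesssim \sup_{R\supset Q,\, R\in{\mathcal Q}} m_R(|f_j|).
\]
Since $u\ge 1$, Minkowski's inequality in $\ell^u$ yields
\[
\Bigl(\sum_j (M h_j(y))^u\Bigr)^{1/u}
\lesssim \sup_{R\supset Q} \Bigl(\sum_j (m_R|f_j|)^u\Bigr)^{1/u}
\le \sup_{R\supset Q} m_R(g),
\]
a $y$-independent quantity on $Q$. Multiplying by $\varphi(r)$ and using that $\varphi$ is increasing so that $\varphi(r)\le\varphi(\ell(R))$ for every $R\supset Q$, one gets $\varphi(r)\sup_{R\supset Q} m_R(g)\le \|g\|_{{\mathcal M}^\varphi_1}$, which is controlled by $\|g\|_{{\mathcal M}^\varphi_q}$ via the nesting Lemma \ref{lem:180307-1}. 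Combining the two pieces through Lemma \ref{eq:141021-10} and taking the supremum over $Q$ finishes the argument.

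The step I expect to be the most delicate is the transition from the scalar pointwise estimate to its vector-valued counterpart: one must verify that the sup over $R\supset Q$ can be pulled out of the $\ell^u$-norm, which is exactly where the hypothesis $u\ge 1$ enters through Minkowski. The Nakai-type integral condition \eqref{eq:Nakai-19} is not required for the route above; however, it would be essential if one instead dominated $Mh_j(y)$ by the dyadic series $\sum_{k\ge 0} m_{2^k Q}(|f_j|)$ and then summed, in which case one needs the discrete equivalent $\sum_k \varphi(r)/\varphi(2^k r)\lesssim 1$ that \eqref{eq:Nakai-19} provides. I would present the sup-based argument, noting \eqref{eq:Nakai-19} as the natural condition under which a dyadic series approach would succeed as well.
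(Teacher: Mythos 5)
Your local estimate is fine, but the non-local estimate contains a genuine gap at exactly the step you flag as delicate. From Lemma \ref{lem:150824-21} you get, for $y \in Q$, $M h_j(y) \lesssim \sup_{R \supset Q} m_R(|f_j|)$, where the supremum may be attained at a \emph{different} cube $R$ for each $j$. Taking $\ell^u$ norms therefore gives
\[
\Bigl(\sum_j (M h_j(y))^u\Bigr)^{\frac1u} \lesssim \Bigl(\sum_j \bigl(\sup_{R \supset Q} m_R(|f_j|)\bigr)^u\Bigr)^{\frac1u},
\]
and the passage from this to $\sup_{R\supset Q}\bigl(\sum_j (m_R|f_j|)^u\bigr)^{1/u}$ is not Minkowski's inequality; the true inequality between these two quantities goes in the opposite direction (for each fixed $R$ one has $\bigl(\sum_j (m_R|f_j|)^u\bigr)^{1/u} \le \bigl(\sum_j (\sup_{R'} m_{R'}|f_j|)^u\bigr)^{1/u}$, so the supremum of the $\ell^u$-sums is dominated by the $\ell^u$-sum of the suprema, not vice versa). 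Once the suprema decouple across $j$, the $\ell^u$-sum can be of size (number of relevant scales)$^{1/u}$ times $\|g\|_{{\mathcal M}^\varphi_q}$, with no $R$-uniform control. This is not a repairable technicality: Proposition \ref{prop:180311-1} and Example \ref{example:180611-16} show that for $1<u<\infty$ condition (\ref{eq:Nakai-19}) is \emph{necessary} even for the weak-type version of (\ref{eq:180311-11111}), so any argument that, as you claim, avoids (\ref{eq:Nakai-19}) must be flawed. Your sup-based route is valid only for $u=\infty$, where the supremum commutes with the $\ell^\infty$-norm; that case indeed follows from the scalar Theorem \ref{thm:140525-3} without the integral condition, as the paper notes.

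The correct route is the one you mention only in passing: dominate $M h_j(y)$ for $y\in Q$ by the dyadic series $\sum_{k\ge 1} m_{2^kQ}(|f_j|)$, apply Minkowski in $\ell^u$ scale by scale (so that the cube $2^kQ$ is the same for all $j$ at each fixed $k$), obtaining $\bigl(\sum_j (Mh_j)^u\bigr)^{1/u} \lesssim \sum_{k}m_{2^kQ}(g)$, and then sum the series $\sum_k \varphi(\ell(Q))/\varphi(2^k\ell(Q)) \lesssim 1$ using (\ref{eq:Nakai-19}) together with $\varphi\in{\mathcal G}_q$. This dyadic argument is precisely the paper's proof, and it is where the integral condition genuinely enters.
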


\begin{proof}
When $u=\infty$,
the result is clear from $(\ref{thm:140525-3})$.
The proof is essentially the same
as the classical case
except that we truly use (\ref{eq:Nakai-19}).
The proof of the estimate of the inner term
remains unchanged
except in that we need to generalize the parameter $p$
to the function $\varphi$.
Let $f_{j,1}=\chi_{5Q}f_j$ and $f_{j,2}=f_j-f_{j,1}$.
We can handle $f_{j,1}$'s in a standard manner as before.
Going through a similar argument to the classical case
and using Lemma \ref{lem:150824-21},
we will have
\begin{align*}
\frac{\varphi(\ell(Q))}{|Q|}\int_Q
\left(\sum_{j=1}^\infty Mf_{j,2}(y)^u\right)^\frac{1}{u}\,dy
&\lesssim
\sum_{k=1}^\infty
\frac{\varphi(\ell(Q))}{|2^k Q|}\int_{2^k Q}
\left(\sum_{j=1}^\infty |f_j(z)|^u\right)^\frac{1}{u}\,dz.
\end{align*}
If we use the definition of the Morrey norm,
we obtain
\begin{align}
\label{eq:180310-33335}
\frac{\varphi(\ell(Q))}{|Q|}\int_Q
\left(\sum_{j=1}^\infty Mf_{j,2}(y)^u\right)^\frac{1}{u}\,dy
&\lesssim
\sum_{k=1}^\infty
\frac{\varphi(\ell(Q))}{\varphi(2^k \ell(Q))}
\left\|\left(\sum_{j=1}^\infty |f_j|^u\right)^{\frac1u}\right\|_{{\mathcal M}^\varphi_q}.
\end{align}
Since $\varphi \in {\mathcal G}_q$,
we obtain
\[
\sum_{k=1}^\infty
\frac{\varphi(\ell(Q))}{\varphi(2^k \ell(Q))}
\lesssim
\int_{\ell(Q)}^\infty 
\frac{\varphi(\ell(Q))}{\varphi(t)t}\,dt.
\]
If we use (\ref{eq:Nakai-19})
and $\varphi \in {\mathcal G}_q$,
then we have
\[
\sum_{k=1}^\infty
\frac{\varphi(\ell(Q))}{\varphi(2^k \ell(Q))}
\lesssim
1.
\]
Inserting this estimate into (\ref{eq:180310-33335}),
we obtain the counterpart to the classical case.
\end{proof}
Since (\ref{eq:Nakai-19}) is an important condition,
we are interested in its characterization.
In fact, we have the following useful one.
\begin{theorem}\label{thm:180611-16}
Assume that $\varphi:(0,\infty) \to (0,\infty)$ is an almost increasing function.
Then the following are equivalent:
\begin{enumerate}
\item
$\varphi$ satisfies $(\ref{eq:Nakai-19})$.
\item
There exists $m_0 \in {\mathbb N}$
such that
\begin{equation}\label{eq:150219-1}
\varphi(2^{m_0}r)>2\varphi(r)
\end{equation}
for all $r>0$.
\end{enumerate}
\end{theorem}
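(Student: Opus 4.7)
The plan is to prove the two implications separately, using the almost-increasing hypothesis to convert pointwise values of $\varphi$ along a dyadic sequence into integral bounds, and vice versa.

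For the direction $(2)\Rightarrow(1)$, I would first iterate (\ref{eq:150219-1}) to get $\varphi(2^{k m_0} r) > 2^k \varphi(r)$ for every $k \in \mathbb{N}$ and $r>0$. Then I would split the integral dyadically:
\[
\int_r^\infty \frac{ds}{\varphi(s)s}
= \sum_{k=0}^\infty \int_{2^{k m_0}r}^{2^{(k+1)m_0}r} \frac{ds}{\varphi(s)s}.
\]
On each block, the almost-increasing property gives $\varphi(s)\gtrsim \varphi(2^{k m_0}r)> 2^k \varphi(r)$, so the $k$-th integral is controlled by $\frac{m_0 \log 2}{2^k \varphi(r)}$ (up to the almost-increasing constant). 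Summing the geometric series yields (\ref{eq:Nakai-19}).

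For the direction $(1)\Rightarrow(2)$, I would argue by contradiction. Suppose no such $m_0$ exists; then for every $m\in\mathbb{N}$ one can choose $r_m>0$ with $\varphi(2^m r_m)\le 2\varphi(r_m)$. Since $\varphi$ is almost increasing with some constant $C_0>0$, for every $s\in[r_m,2^m r_m]$ we have $\varphi(s) \le C_0^{-1}\varphi(2^m r_m) \le 2C_0^{-1}\varphi(r_m)$. Hence
\[
\int_{r_m}^\infty \frac{ds}{\varphi(s)s}
\ge \int_{r_m}^{2^m r_m} \frac{ds}{\varphi(s)s}
\ge \frac{C_0}{2\varphi(r_m)}\int_{r_m}^{2^m r_m}\frac{ds}{s}
= \frac{C_0 m \log 2}{2\varphi(r_m)}.
\]
Combining with (\ref{eq:Nakai-19}) gives $\frac{C_0 m \log 2}{2\varphi(r_m)} \le \frac{C}{\varphi(r_m)}$, i.e., $m \le \frac{2C}{C_0\log 2}$, which fails for $m$ sufficiently large. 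This contradiction produces the desired $m_0$.

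The main obstacle is keeping the direction of the almost-increasing constant straight: the hypothesis yields only $\varphi(s)\gtrsim\varphi(t)$ for $s>t$, so in $(2)\Rightarrow(1)$ it supplies the needed \emph{lower} bound on $\varphi(s)$, whereas in $(1)\Rightarrow(2)$ one must flip it to an \emph{upper} bound $\varphi(s)\le C_0^{-1}\varphi(2^m r_m)$ on the interval $[r_m,2^m r_m]$. Once this bookkeeping is done, the two halves are just dyadic splitting and a direct lower estimate, both of which are routine.
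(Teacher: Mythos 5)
Your proof is correct and follows essentially the same route as the paper: the implication $(2)\Rightarrow(1)$ via iterating the doubling-type inequality and splitting the integral into dyadic blocks of length $m_0$, and $(1)\Rightarrow(2)$ via the lower bound $\int_{r}^{2^{m}r}\frac{ds}{\varphi(s)s}\gtrsim \frac{m}{\varphi(r)}$ obtained from the almost-increasing upper bound on $[r,2^{m}r]$. The only cosmetic difference is that you phrase the second implication as a contradiction with a sequence $r_m$, whereas the paper bounds the admissible exponent uniformly in $r$ and takes $m_0=M+1$; the underlying estimate is identical.
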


\begin{proof}
Assume that (\ref{eq:Nakai-19}) holds.
Let $m_0'$ be a positive integer such that
$\varphi(2^{m_0'-1}r)\le 2\varphi(r)$
for all $r>0$.
Thus since $\varphi$ is almost increasing,
\[
\frac{\log m_0'}{2\varphi(r)}
\le \int_r^{2^{m_0'-1}r}\frac{ds}{\varphi(s)s} \lesssim 
\frac{1}{\varphi(r)}.
\]
Since $\varphi(r)>0$, 
we have an upper bound $M$
for $m_0'$.
Thus if we set $m_0=M+1$,
we obtain the desired number $m_0$.

If (\ref{eq:Nakai-19}) holds,
then
\[
\int_r^\infty \frac{ds}{\varphi(s)s}
=
\sum_{j=1}^\infty
\int_{2^{m_0(j-1)}r}^{2^{m_0 j}r}\frac{ds}{\varphi(s)s}
\lesssim
\sum_{j=1}^\infty
\int_{2^{m_0(j-1)}r}^{2^{m_0 j}r}\frac{ds}{2^{j}\varphi(r)s}
\lesssim \frac{1}{\varphi(r)},
\]
as required.
\end{proof}

As we have mentioned,
we need (\ref{eq:Nakai-19})
for the vector-valued maximal inequality.
We give an example showing that (\ref{eq:Nakai-19})
is absolutely necessary:
By no means
(\ref{eq:Nakai-19}) is artificial
as the following proposition shows:\begin{proposition}\label{prop:180311-1}
Let $1<q<\infty$, $1<u<\infty$ and $\varphi \in {\mathcal G}_q$.
Assume in addition that
\begin{equation}\label{eq:180815-2}
\left\|\left(\sum_{j=1}^\infty M f_j{}^u\right)^{\frac1u}\right\|_{{\rm w}{\mathcal M}^\varphi_q}
\lesssim
\left\|\left(\sum_{j=1}^\infty |f_j|^u\right)^{\frac1u}\right\|_{{\mathcal M}^\varphi_q}.
\end{equation}
holds for all sequences of measurable functions
$\{f_j\}_{j=1}^\infty$.
Then
$(\ref{eq:Nakai-19})$
holds.
\end{proposition}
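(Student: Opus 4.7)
The plan is to test (\ref{eq:180815-2}) against indicator functions of well-separated cubes placed at geometrically increasing scales. Fix $r>0$ and $N\in {\mathbb N}$, set $a_j=(10\cdot 2^j r,0,\ldots,0)$, $Q_j=Q(a_j,2^j r)$ for $j=0,1,\ldots,N$, and define $f_j=\chi_{Q_j}$ (and $f_j=0$ for $j>N$). A direct coordinate check shows that the $Q_j$ are pairwise disjoint, so at every point at most one $f_j$ is nonzero, and hence $\bigl(\sum_j |f_j|^u\bigr)^{1/u}=\chi_{\bigcup_j Q_j}$.

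The first estimate is the upper bound $\bigl\|\chi_{\bigcup Q_j}\bigr\|_{{\mathcal M}^\varphi_q}\lesssim \varphi(2^N r)$. For a test cube $Q(x,s)$ with $s\le 2^N r$, split the $Q_j$ into those with $2^j r\le s$ (whose total measure is $\lesssim s^n$) and those with $2^j r>s$ (of which at most two can intersect $Q(x,s)$ by disjointness, each contributing at most $|Q(x,s)|$), yielding ratio $\lesssim 1$ and contribution $\lesssim \varphi(s)\le \varphi(2^{N+1}r)\sim\varphi(2^N r)$ by doubling. For $s>2^N r$ one uses $|\bigcup Q_j|\sim (2^N r)^n$ and the fact that $\varphi(s)s^{-n/q}$ is decreasing (since $\varphi\in {\mathcal G}_q$) to get $\varphi(s)((2^N r)^n/s^n)^{1/q}\lesssim \varphi(2^N r)$. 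Combining, $\bigl\|(\sum_j |f_j|^u)^{1/u}\bigr\|_{{\mathcal M}^\varphi_q}\lesssim \varphi(2^N r)$.

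The second estimate is a pointwise lower bound for $(\sum_j Mf_j^u)^{1/u}$ on $Q(0,r)$. For any $y\in Q(0,r)$ the cube $Q(y,13\cdot 2^j r)$ contains $Q_j$, so $Mf_j(y)\ge |Q_j|/|Q(y,13\cdot 2^j r)|=13^{-n}$ for every $j=0,1,\ldots,N$. Therefore
\[
\Bigl(\sum_{j=0}^N Mf_j(y)^u\Bigr)^{1/u}\ge 13^{-n}(N+1)^{1/u}\quad (y\in Q(0,r)).
\]
Choosing $\lambda=\tfrac12\cdot 13^{-n}(N+1)^{1/u}$, the level set $\{(\sum_j Mf_j^u)^{1/u}>\lambda\}$ contains $Q(0,r)$, and Lemma \ref{lem7} gives
\[
\Bigl\|\Bigl(\sum_j Mf_j^u\Bigr)^{1/u}\Bigr\|_{{\rm w}{\mathcal M}^\varphi_q}
\ge \lambda\,\|\chi_{Q(0,r)}\|_{{\mathcal M}^\varphi_q}
\gtrsim (N+1)^{1/u}\varphi(r).
\]

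Feeding both estimates into (\ref{eq:180815-2}) yields $(N+1)^{1/u}\varphi(r)\lesssim \varphi(2^N r)$ with implicit constant $C$ independent of $r$ and $N$. Since $u>1$, we may fix $m_0\in {\mathbb N}$ large enough that $(m_0+1)^{1/u}>2C$, and conclude $\varphi(2^{m_0}r)>2\varphi(r)$ for all $r>0$. By Theorem \ref{thm:180611-16} this is equivalent to (\ref{eq:Nakai-19}). The main obstacle in carrying this out rigorously is the careful verification of the upper bound on $\|\chi_{\bigcup Q_j}\|_{{\mathcal M}^\varphi_q}$ uniformly across all scales of test cubes; the rest is essentially a reduction via Theorem \ref{thm:180611-16}.
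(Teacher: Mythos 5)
Your proof is correct and follows essentially the same route as the paper's: test the vector-valued inequality on finitely many disjoint indicator functions living at geometrically increasing scales, note that each $M f_j\gtrsim 1$ on a common small cube so the weak norm on the left picks up a factor $(N+1)^{1/u}\varphi(r)$ (via Lemma \ref{lem7}) while the right-hand side is $\lesssim\varphi(2^N r)$, and then conclude through the characterization of $(\ref{eq:Nakai-19})$ in Theorem \ref{thm:180611-16}. The differences are cosmetic: the paper uses concentric annuli $\chi_{B(2^j r_m)\setminus B(2^{j-1}r_m)}$ and argues by contradiction, using the assumed failure $\varphi(2^m r_m)\le 2\varphi(r_m)$ to bound the right-hand side by $\varphi(r_m)$, whereas you use well-separated cubes and argue directly; also, your case analysis for the upper bound is unnecessary, since $\bigcup_j Q_j\subset Q(0,11\cdot 2^N r)$ gives $\|\chi_{\bigcup_j Q_j}\|_{{\mathcal M}^\varphi_q}\le\varphi(11\cdot 2^N r)\lesssim\varphi(2^N r)$ at once from Lemma \ref{lem7} and the doubling property of $\varphi\in{\mathcal G}_q$.
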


We exclude the case where $u=\infty$,
where $(\ref{eq:180815-2})$ still holds without
$(\ref{eq:Nakai-19})$.

\begin{proof}
Assume to the contrary;
for all $m\in {\mathbb N}\cap [2,\infty)$, there would exist $r_m>0$
such that
$\varphi(2^m r_m) \le 2\varphi(r_m)$ for $r=r_m$. 
Let us consider
$f_j=\chi_{[1,j]}(m)\chi_{B(2^jr_m) \setminus B(2^{j-1}r_m)}$
for $j \in {\mathbb N}$.
Observe 
\[
\left\|\left(\sum_{j=1}^\infty |f_j|^u\right)^{\frac1u}\right\|_{{\mathcal M}^\varphi_q}
=
\left\|\left(
\sum_{j=1}^m \chi_{B(2^jr_m) \setminus B(2^{j-1}r_m)}
\right)^{\frac1u}\right\|_{{\mathcal M}^\varphi_q}.
\]
As a result,
\[
\left\|\left(\sum_{j=1}^\infty |f_j|^u\right)^{\frac1u}\right\|_{{\mathcal M}^\varphi_q}
\le
\|\chi_{B( 2^mr_m)}\|_{{\mathcal M}^\varphi_q}
\le \varphi(2^mr_m)\lesssim \varphi(r_m).
\]
Let $x\in B(r_m)$. 
For $j>m$, we have $M f_j(x)=0$.
Meanwhile for $1\le j\le m$, we have
\begin{align*}
M f_j(x)
&\ge 
\frac{1}{|B(x,2^{j+1}r_m)|}
\int_{B(x,2^{j+1}r_m)}
f_j(y) \ dy \\
&\ge 
\frac{1}{|B(x,2^{j+1}r_m)|}
\int_{B(2^jr_m)}
 \chi_{B(2^jr_m) \setminus B(2^{j-1}r_m) }(y) \ dy \\
&= \frac{2^n-1}{4^n} \ge \frac{1}{4^n}.
\end{align*}
Consequently, 
\[
\left\|\left(\sum_{j=1}^\infty M f_j{}^u\right)^{\frac1u}\right\|_{{\rm w}{\mathcal M}^\varphi_q}
\ge 
\varphi(r_m)
\sum_{j=1}^m\left(\frac{1}{4^n}\right)^u
\sim 
\varphi(r_m) \ m^{\frac1u}.
\]
By our assumption, we have
\begin{align*}
\varphi(r_m)m^{\frac1u}
\lesssim 
\left\|\left(\sum_{j=1}^\infty M f_j{}^u\right)^{\frac1u}\right\|_{{\rm w}{\mathcal M}^\varphi_q}
\lesssim 
\left\|\left(\sum_{j=1}^\infty |f_j|^u\right)^{\frac1u}\right\|_{{\mathcal M}^\varphi_q}
\lesssim \varphi(r_m),
\end{align*}
or equivalently 
$$ m \le D$$
where $D$ does not depend on $m$. 
This contradicts
to the fact that $m\in {\mathbb N} \cap [2,\infty)$ is arbitrary.
\end{proof}

\begin{example}\label{example:180611-16}
Let $1\ \le q \le p<\infty$.
Then
\[
\left\|\left(\sum_{j=1}^\infty M f_j{}^u\right)^{\frac1u}\right\|_{{\rm w}m^p_q}
\lesssim
\left\|\left(\sum_{j=1}^\infty |f_j|^u\right)^{\frac1u}\right\|_{m^p_q}
\quad
(\{f_j\}_{j=1}^\infty \subset m^p_q({\mathbb R}^n))
\]
fails.
In particular,
\[
\left\|\left(\sum_{j=1}^\infty M f_j{}^u\right)^{\frac1u}\right\|_{{\rm w}L^q_{\rm uloc}}
\lesssim
\left\|\left(\sum_{j=1}^\infty |f_j|^u\right)^{\frac1u}\right\|_{L^q_{\rm uloc}}
\quad
(\{f_j\}_{j=1}^\infty \subset L^q_{\rm uloc}({\mathbb R}^n))
\]
fails.
In fact,
let $\varphi(t)=\max(t^{\frac{n}{p}},1)$ as before.
Then
$\varphi$ fails (\ref{eq:Nakai-19})
because
$\displaystyle
\int_1^\infty \frac{dr}{\varphi(r)}=\infty.
$
\end{example}
Proposition \ref{prop:180311-1}
led us to the conclusion that
$(\ref{eq:Nakai-19})$ is fundamental.
The following proposition will be fundamental
in the study of
the boundedness of the operators in generalized Morrey spaces.
\begin{theorem}{\rm \cite[Lemma 2]{Nakai94}}\label{prop:Nakai-a}
If a nonnegative locally integrable function
$\psi$ and a positive constant $D>0$
satisfy
\[
\int_r^\infty \psi(t)\frac{dt}{t}
\le D
\psi(r)
\quad (r>0)
\]
then
\begin{equation}\label{eq:Nakai-2}
\int_r^\infty \psi(t)t^{\varepsilon}\,\frac{dt}{t}
\le \frac{r^\varepsilon}{1-D\varepsilon}\cdot
\int_r^\infty \psi(t)t^{-1}\,dt
\le \frac{D}{1-D\varepsilon}\cdot
\psi(r)r^\varepsilon \quad (r>0)
\end{equation}
for all $0<\varepsilon<D^{-1}$.
\end{theorem}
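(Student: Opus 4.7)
The plan is to prove the stronger (first) inequality by reducing it to a self-referential estimate, then deduce the weaker (second) one directly from the hypothesis. Write $\Psi(r)\equiv \int_r^\infty \psi(t)\,\frac{dt}{t}$, so the hypothesis reads $\Psi(r)\le D\psi(r)$, and the target first inequality becomes
\[
\int_r^\infty \psi(t)t^{\varepsilon-1}\,dt \le \frac{r^\varepsilon}{1-D\varepsilon}\,\Psi(r).
\]
The second inequality in $(\ref{eq:Nakai-2})$ is then immediate from $\Psi(r)\le D\psi(r)$, so the only work is the first inequality.

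To bypass any concern about finiteness, I would first work on truncated intervals $(r,R)$ with $R<\infty$. Using the identity $t^\varepsilon = r^\varepsilon + \varepsilon\int_r^t s^{\varepsilon-1}\,ds$ valid for $t\ge r$, insert it into
\[
I_R \equiv \int_r^R \psi(t)t^{\varepsilon-1}\,dt = r^\varepsilon\int_r^R \psi(t)\frac{dt}{t} + \varepsilon\int_r^R\!\!\int_r^t s^{\varepsilon-1}\,ds\,\psi(t)\frac{dt}{t}.
\]
The double integral is over $\{(s,t):r\le s\le t\le R\}$; an application of Fubini rewrites it as
\[
\varepsilon\int_r^R s^{\varepsilon-1}\left(\int_s^R \psi(t)\frac{dt}{t}\right)ds.
\]

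Now apply the hypothesis twice: to the first term via $\int_r^R \psi(t)/t\,dt\le\Psi(r)$, and to the inner integral in the Fubini step via $\int_s^R\psi(t)/t\,dt\le\Psi(s)\le D\psi(s)$. The upshot is the self-referential estimate
\[
I_R \;\le\; r^\varepsilon\,\Psi(r) + \varepsilon D\int_r^R s^{\varepsilon-1}\psi(s)\,ds \;=\; r^\varepsilon\,\Psi(r) + \varepsilon D\,I_R.
\]
Since $\psi$ is locally integrable, $I_R<\infty$, so we may rearrange using $1-\varepsilon D>0$ to obtain $I_R\le \frac{r^\varepsilon}{1-\varepsilon D}\,\Psi(r)$, a bound independent of $R$. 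Letting $R\to\infty$ by monotone convergence yields the first inequality in $(\ref{eq:Nakai-2})$, and combining with $\Psi(r)\le D\psi(r)$ gives the second.

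The only real subtlety is the self-referential step, which is why I would insist on truncating first: without the a priori finiteness of $I_R$ one cannot legitimately subtract $\varepsilon D\,I_R$ from both sides. Everything else is a clean application of Fubini and the hypothesis, so the truncation-and-limit device is where the proof would actually earn its keep.
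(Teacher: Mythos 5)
Your proof is correct and follows essentially the same route as the paper: truncate to $\int_r^R$, rearrange so that the hypothesis $\Psi(s)\le D\psi(s)$ produces the self-referential bound $I_R\le r^\varepsilon\Psi(r)+\varepsilon D\,I_R$, absorb using $1-D\varepsilon>0$ and the finiteness of $I_R$, then let $R\to\infty$. The only difference is cosmetic: the paper carries out the rearrangement by integrating by parts with $\Psi(t)$, whereas you use Fubini on the identity $t^\varepsilon=r^\varepsilon+\varepsilon\int_r^t s^{\varepsilon-1}\,ds$, which amounts to the same computation (and, if anything, avoids having to justify the integration-by-parts formula for a merely locally integrable $\psi$).
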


\begin{proof}
Let
\[
\Psi(r)=
\int_r^\infty \psi(t)t^{-1}\,dt
\quad (r>0).
\]
For $0<r<R$
\begin{align*}
\int_r^R \psi(t)t^{\varepsilon}\,\frac{dt}{t}
=
[-\Psi(t)t^\varepsilon]^R_r
+
\int_r^R \Psi(t)\varepsilon t^{\varepsilon}\,\frac{dt}{t}
\le \Psi(r)r^{\varepsilon}
+
\varepsilon D\int_r^R \psi(t)t^{\varepsilon}\,\frac{dt}{t}.
\end{align*}
Therefore
\[
\int_r^R \psi(t)t^{\varepsilon}\,\frac{dt}{t}
\le \frac{1}{1-\varepsilon D}\Psi(r)r^{\varepsilon}
\le \frac{D}{1-\varepsilon D}
\psi(r)r^{\varepsilon}.
\]
It remains to let $R \to \infty$.
\end{proof}

We change variables to have the following variant:
\begin{theorem}{\rm \cite[Lemma 2]{Nakai94}}\label{thm:Nakai-a}
Let $\psi:(0,\infty) \to (0,\infty)$
be a measurable function satisfying
\[
\int_0^r \psi(t)\frac{dt}{t}
\le D
\psi(r)
\quad (r>0)
\]
for some $D>0$ independent of $r>0$.
If $0<\varepsilon<D^{-1}$,
then
\[
\int_0^r \psi(t)\frac{dt}{t^{1+\varepsilon}}
\le \frac{1}{1-D\varepsilon}
r^{-\varepsilon}
\int_0^{r} \psi(t)\frac{dt}{t}
\le \frac{D}{1-D\varepsilon}
r^{-\varepsilon}\psi(r).
\]
\end{theorem}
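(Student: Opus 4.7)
My plan is to prove Theorem \ref{thm:Nakai-a} by reducing it to Theorem \ref{prop:Nakai-a} via the change of variables $t = 1/s$, exactly as the paragraph introducing the statement suggests. Define $\tilde\psi(s) := \psi(1/s)$ for $s > 0$. Under $t = 1/s$ one has $dt/t = -ds/s$, so $\int_0^r \psi(t)\,dt/t = \int_{1/r}^\infty \tilde\psi(s)\,ds/s$, and the hypothesis $\int_0^r \psi(t)\,dt/t \le D\psi(r)$ transforms, after setting $\rho = 1/r$, into $\int_\rho^\infty \tilde\psi(s)\,ds/s \le D\tilde\psi(\rho)$ for all $\rho > 0$. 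This is precisely the hypothesis of Theorem \ref{prop:Nakai-a} applied to $\tilde\psi$ with the same constant $D$.

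Applying that theorem for $0 < \varepsilon < D^{-1}$ yields
\[
\int_\rho^\infty \tilde\psi(s) s^\varepsilon\,\frac{ds}{s} \le \frac{\rho^\varepsilon}{1-D\varepsilon}\int_\rho^\infty \tilde\psi(s)\,\frac{ds}{s} \le \frac{D}{1-D\varepsilon}\tilde\psi(\rho)\rho^\varepsilon.
\]
Changing back with $s = 1/t$ and $\rho = 1/r$, one computes $dt/t^{1+\varepsilon} = -s^{\varepsilon}\,ds/s$, so the left-hand integral becomes $\int_0^r \psi(t)\,dt/t^{1+\varepsilon}$, while $\rho^\varepsilon = r^{-\varepsilon}$ and $\tilde\psi(\rho) = \psi(r)$. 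This produces exactly the chain of inequalities stated in Theorem \ref{thm:Nakai-a}.

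I do not expect a real obstacle: the only small point to verify is that the substitution is legitimate, which amounts to $\tilde\psi$ being measurable and positive on $(0,\infty)$, both inherited immediately from $\psi$. As a cross-check, the same bound can be obtained directly by integration by parts in $\int_{r'}^r \psi(t) t^{-\varepsilon}\,dt/t$ against the primitive $\Psi(r) := \int_0^r \psi(t)\,dt/t$: the boundary contribution gives $\Psi(r)r^{-\varepsilon} - \Psi(r')r'{}^{-\varepsilon}$, and using $\Psi \le D\psi$ to absorb the resulting $\varepsilon\int_{r'}^r \Psi(t) t^{-\varepsilon}\,dt/t$ term into the left-hand side and then sending $r' \downarrow 0$ mirrors verbatim the proof of Theorem \ref{prop:Nakai-a}.
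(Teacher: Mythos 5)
Your proposal is correct and follows essentially the same route as the paper: the paper's proof also sets $\eta(t)=\psi(1/t)$, notes the hypothesis transforms into that of Theorem \ref{prop:Nakai-a}, applies it, and changes variables back. The substitution bookkeeping in your argument matches the intended proof, so there is nothing to add.
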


\begin{proof}
Set
\[
\eta(t)=\psi\left(\frac{1}{t}\right) \quad(t>0).
\]
Then our assumption reads as:
\[
\int_{r}^\infty \eta(t)\frac{dt}{t}
\le D
\eta(r)
\quad (r>0).
\]
Thus, 
\[
\int_{r}^\infty \eta(t)\frac{dt}{t^{1-\varepsilon}}
\le \frac{1}{1-D\varepsilon}
r^{\varepsilon}
\int_{r}^\infty \eta(t)\frac{dt}{t}
\le \frac{D}{1-D\varepsilon}
r^{\varepsilon}\eta(r)
\quad (r>0)
\]
according to Theorem \ref{prop:Nakai-a}.
If we express this inequality
in terms of $\psi$,
we obtain the desired result.
\end{proof}

In the next proposition,
we further characterize and apply our key assumption
(\ref{eq:Nakai-19}).
\begin{proposition}{\rm \cite[Proposition 2.7]{NNS15}}\label{prop:150312-1}
Let $\varphi$ be a nonnegative locally integrable function such that
$\varphi(s) \lesssim \varphi(r)$
for all $r,s>0$ with $\dfrac12 \le \dfrac{r}{s} \le 2$.
There exists a constant $\varepsilon>0$ such that
\begin{equation}\label{eq:Nakai-3}
\frac{t^\varepsilon}{\varphi(t)} 
\lesssim
\frac{r^{\varepsilon}}{\varphi(r)}
\quad (t \ge r)
\end{equation}
holds
if and only if
holds $(\ref{eq:Nakai-19})$, or equivalently,
$\varphi$ satisfies $(\ref{eq:Nakai-2})$
for some $\varepsilon>0$.
If one of these conditions is satisfied,
then
\begin{equation}\label{eq:Nakai-49}
\int_r^\infty \frac{ds}{\varphi(s)^us}
\lesssim \frac{1}{\varphi(r)^u} \quad (r>0)
\end{equation}
for all $0<u<\infty$,
where the implicit constant depends only on $u$.
\end{proposition}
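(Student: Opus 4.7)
The plan is to establish a cycle of implications linking (\ref{eq:Nakai-3}), (\ref{eq:Nakai-19}), and the condition that $\psi=1/\varphi$ satisfies (\ref{eq:Nakai-2}), and then deduce the $u$--version by the same estimate used for the reverse direction. The doubling hypothesis $\varphi(s)\sim\varphi(r)$ whenever $\frac12\le r/s\le 2$ plays a role only in one step, namely in upgrading an averaged (integral) bound to a pointwise bound.

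First, I would show that (\ref{eq:Nakai-19}) implies (\ref{eq:Nakai-3}). Setting $\psi(t)=1/\varphi(t)$, condition (\ref{eq:Nakai-19}) is precisely the hypothesis $\int_r^\infty\psi(t)\,dt/t\le D\psi(r)$ of Theorem \ref{prop:Nakai-a}. Applying that theorem for any $\varepsilon\in(0,D^{-1})$ gives the integral form
\[
\int_r^\infty \frac{t^\varepsilon}{\varphi(t)}\,\frac{dt}{t}\;\lesssim\;\frac{r^\varepsilon}{\varphi(r)} \qquad (r>0),
\]
which is (\ref{eq:Nakai-2}) applied to $\psi=1/\varphi$. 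To pass from this averaged estimate to the pointwise bound (\ref{eq:Nakai-3}), I would use the doubling of $\varphi$: for $t\ge r$, the map $s\mapsto s^{\varepsilon}/\varphi(s)$ is comparable to $t^\varepsilon/\varphi(t)$ uniformly on $(t,2t)$, so
\[
\frac{t^\varepsilon}{\varphi(t)}\;\sim\;\int_t^{2t}\frac{s^\varepsilon}{\varphi(s)}\,\frac{ds}{s}\;\le\;\int_r^\infty\frac{s^\varepsilon}{\varphi(s)}\,\frac{ds}{s}\;\lesssim\;\frac{r^\varepsilon}{\varphi(r)}.
\]

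Next, I would show (\ref{eq:Nakai-3}) implies (\ref{eq:Nakai-19}). From the pointwise estimate, $1/\varphi(s)\lesssim (r/s)^\varepsilon/\varphi(r)$ for $s\ge r$, and pulling the constant outside,
\[
\int_r^\infty\frac{ds}{\varphi(s)\,s}\;\lesssim\;\frac{r^\varepsilon}{\varphi(r)}\int_r^\infty\frac{ds}{s^{1+\varepsilon}}\;=\;\frac{1}{\varepsilon\,\varphi(r)}.
\]
This closes the equivalence and, incidentally, shows that whenever (\ref{eq:Nakai-3}) holds with exponent $\varepsilon$, condition (\ref{eq:Nakai-19}) holds with constant $1/\varepsilon$.

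Finally, for the last conclusion I would simply repeat the previous computation with $\varepsilon$ replaced by $u\varepsilon$: assuming (\ref{eq:Nakai-3}),
\[
\int_r^\infty\frac{ds}{\varphi(s)^u\,s}\;\lesssim\;\left(\frac{r^\varepsilon}{\varphi(r)}\right)^{\!u}\int_r^\infty\frac{ds}{s^{1+u\varepsilon}}\;=\;\frac{1}{u\varepsilon\,\varphi(r)^u},
\]
with the implicit constant depending only on $u$ and on the constant in (\ref{eq:Nakai-3}). I do not expect any real obstacle; the only subtlety is the doubling-based upgrade of an integral bound to a pointwise bound in the first implication, and that step is immediate from the doubling hypothesis explicitly included in the statement.
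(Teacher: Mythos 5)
Your argument is correct and follows essentially the same route as the paper: you invoke Theorem \ref{prop:Nakai-a} with $\psi=1/\varphi$ to get the integral bound, use the doubling hypothesis to upgrade it to the pointwise estimate (\ref{eq:Nakai-3}), close the cycle by the direct computation $\int_r^\infty \frac{ds}{\varphi(s)s}\lesssim \frac{r^\varepsilon}{\varphi(r)}\int_r^\infty s^{-1-\varepsilon}\,ds$, and obtain (\ref{eq:Nakai-49}) by running the same estimate with exponent $u\varepsilon$, which is exactly the paper's remark that $\varphi^u$ inherits (\ref{eq:Nakai-3}). No gaps.
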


\begin{proof}
The implication $(\ref{eq:Nakai-19}) \Longrightarrow (\ref{eq:Nakai-2})$
follows from Proposition \ref{prop:Nakai-a}.

Assume (\ref{eq:Nakai-2}).
Then we have
\[
\frac{t^\varepsilon}{\varphi(t)} 
{\lesssim}
\int_t^{2t}\frac{dv}{v^{1-\varepsilon}\varphi(v)}
{\lesssim}
\frac{r^\varepsilon}{\varphi(r)}
\]
thanks to the doubling property of $\varphi$,
proving (\ref{eq:Nakai-3}).

If we assume (\ref{eq:Nakai-3}), then we have
\[
\int_r^\infty \frac{ds}{\varphi(s)s}
=
\int_r^\infty \frac{s^\varepsilon}{\varphi(s)}
\frac{ds}{s^{1+\varepsilon}}
\lesssim
\int_r^\infty \frac{r^\varepsilon}{\varphi(r)}
\frac{ds}{s^{1+\varepsilon}}
= \frac{1}{\varepsilon\varphi(r)},
\]
which implies (\ref{eq:Nakai-19}).
Note that (\ref{eq:Nakai-3}) also implies (\ref{eq:Nakai-49})
because $\varphi^u$ satisfies (\ref{eq:Nakai-3}) as well.
\end{proof}

Let $0<u<\infty$ be fixed.
Inequality (\ref{eq:Nakai-49}) is 
necessary for (\ref{eq:Nakai-19});
simply apply
Proposition \ref{prop:150312-1}
to $\varphi^u$.

\begin{example}\label{example:180613-1}
Let $\varphi \in {\mathcal G}_q$ satisfy
$(\ref{eq:Nakai-49})$.
Then we have
\[
\int_r^\infty \frac{1}{\varphi(s)s^{1-\varepsilon}}\,ds \lesssim \frac{r^{\varepsilon}}{\varphi(r)}
\quad (r>0).
\]
Hence
\[
\frac{s^{\varepsilon}}{\varphi(s)} \lesssim \frac{r^{\varepsilon}}{\varphi(r)}
\]
whenever $0<r \le s<\infty$.
As a result,
\[
\int_0^1 \varphi(r)\frac{d r}{r}<\infty.
\]
\end{example}
We generalize condition
(\ref{eq:Nakai-19}) as follows:

\begin{definition}\label{defi:180611-13}
Let $\gamma \in {\mathbb R}$.
\begin{enumerate}
\item
The {\it $($upper$)$ Zygmund class} ${\mathbb Z}^\gamma$
is defined to be the set of all measurable functions
$\varphi:(0,\infty) \to (0,\infty)$ for which
$\displaystyle
\lim_{r \downarrow 0}\varphi(r)=0
$
and
\[
\int_0^r \varphi(t)t^{-\gamma-1}\,dt
\lesssim
\varphi(r)r^{-\gamma} \quad (r > 0),
\]
\index{${\mathbb Z}^\gamma$}
\item
The {\it $($lower$)$ Zygmund class} ${\mathbb Z}_\gamma$
is defined to be the set of all measurable functions
$\varphi:(0,\infty) \to (0,\infty)$ for which
$\displaystyle
\lim_{r \downarrow 0}\varphi(r)=0
$
and
\[
\int_r^\infty \varphi(t)t^{-\gamma-1}\,dt
\lesssim
\varphi(r)r^{-\gamma} \quad (r > 0).
\]
\index{${\mathbb Z}_\gamma$}
\end{enumerate}
\end{definition}

Note that (\ref{eq:Nakai-19})
reads as $\frac{1}{\varphi}\in {\mathbb Z}_0$.
\begin{example}\label{example:180611-17}
Let $\varphi(t)=t^p$ with $1<p<\infty$,
and let $\gamma \in {\mathbb R}$.
\begin{enumerate}
\item
$\varphi \in {\mathbb Z}^\gamma$
if and only if 
$p>\gamma$.
\item
$\varphi \in {\mathbb Z}_\gamma$
if and only if
$p<\gamma$.
\item
$1 \notin {\mathbb Z}^\gamma$
if and only if $\gamma<0$.
\end{enumerate}
\end{example}

We present an example of the functions in ${\mathcal M}^\varphi_q({\mathbb R}^n)$.
Let $0<\eta<\infty$.
We define the {\it powered Hardy--Littlewood maximal operator}
$M^{(\eta)}$ by
\[
M^{(\eta)}f(x)
\equiv \sup_{R>0}
\left(
\frac{1}{|B(x,R)|}\int_{B(x,R)}|f(y)|^\eta{\rm d}y
\right)^\frac{1}{\eta} \quad (x \in {\mathbb R}^n).
\]

\begin{example}{\rm \cite[Proposition 2.11]{NNS15}}\label{example:180310-A1}
Let $0<q<\infty$ and $\varphi \in {\mathcal G}_q$.
Define
\[
f \equiv 
\sum_{j=-\infty}^\infty \frac{\chi_{[2^{-j-1},2^{-j}]^n}}{\varphi(2^{-j})}, 
\quad
g \equiv 
\sup_{j \in {\mathbb Z}}\frac{\chi_{[0,2^{-j}]^n}}{\varphi(2^{-j})}.
\]
We claim that the following are equivalent;
\begin{enumerate}
\item[$(a)$]
$f \in {\mathcal M}^\varphi_q({\mathbb R}^n)$,
\item[$(b)$]
$g \in {\mathcal M}^\varphi_q({\mathbb R}^n)$,
\item[$(c)$]
$\frac{1}{\varphi} \in {\mathbb Z}^{\frac{n}{q}}$.
\end{enumerate}
Let $0<u<q$.
Note that
$f \le g \le 2^n M^{(u)}f$,
where $M^{(u)}$ denotes the powered Hardy--Littlewood maximal operator.
Observe that $M^{(u)}$ is bounded on ${\mathcal M}^\varphi_q({\mathbb R}^n)$.
Thus $(a)$ and $(b)$ are equivalent.
Since $f$ is expressed as $f=f_0(\|\cdot\|_\infty)$,
that is, there exists a function $f_0:[0,\infty) \to {\mathbb R}$ 
such that $f(x)=f_0(\|x\|_{\infty})$
for all $x\in{\mathbb R}^n$,
where $\|\cdot\|_\infty$ denotes the $\ell^\infty$-norm,
it follows that $(a)$ and $(c)$ are equivalent.
\end{example}

\begin{example}{\rm \cite[Proposition 2.11]{NNS15}}\label{example:180310-A2}
Let $0<q<\infty$ and $\varphi \in {\mathcal G}_q$.
Define a decreasing function $\varphi^\dagger$ by: 
\begin{equation}\label{eq:180310-201}
\varphi^\dagger(t){\equiv}\varphi(t)t^{-\frac{n}{q}}
\end{equation} 
for $t>0$.
Define
$\displaystyle
h \equiv 
\sum_{j=-\infty}^\infty \frac{\chi_{[0,2^{-j}]^n}}{\varphi(2^{-j})}.
$
Then
$\displaystyle
\sum_{j=l}^\infty \frac{1}{\varphi(2^{-j})} \lesssim \frac{1}{\varphi(2^{-l})}
$
and that
\[
\sum_{j=\infty}^l \frac{1}{\varphi^\dagger(2^{-j})} \lesssim \frac{1}{\varphi^\dagger(2^{-l})}
\]
for all $l \in {\mathbb Z}$ if and only if
$h \in {\mathcal M}^\varphi_q({\mathbb R}^n)$.

To verify this,
we let $f,g$ be as in Example \ref{example:180310-A1}.
Suppose first $h \in {\mathcal M}^\varphi_q({\mathbb R}^n)$.
Then
\[
\varphi(2^{-l})\left(\frac{1}{|[0,2^{-l}]^n|}\int_{[0,2^{-l}]^n}
\left(\sum_{j=l}^\infty \frac{1}{\varphi(2^{-j})}\right)^q\,dx\right)^{\frac{1}{q}}
\le {\|h\|_{{\mathcal M}^\varphi_q}}.
\]
Thus
$\displaystyle
\sum_{j=l}^\infty \frac{1}{\varphi(2^{-j})} \le 
\frac{\|h\|_{{\mathcal M}^\varphi_q}}{\varphi(2^{-l})}
$
for all $l \in {\mathbb Z}$.
This implies that $f \le g \le h \lesssim f$,
where $f$ and $g$ are defined in Example \ref{example:180310-A1}.
Thus from Example \ref{example:180310-A1}, 
$\displaystyle
\sum_{j=\infty}^l \frac{1}{\varphi^\dagger(2^{-j})} 
\lesssim \frac{1}{\varphi^\dagger(2^{-l})}
$
holds as well.

Conversely, 
assume that 
\begin{equation}\label{eq:150824-3}
\sum_{j=\infty}^l \frac{1}{\varphi^\dagger(2^{-j})} 
\lesssim \frac{1}{\varphi^\dagger(2^{-l})}
\end{equation}
and
\begin{equation}\label{eq:150824-4}
\sum_{j=l}^\infty \frac{1}{\varphi(2^{-j})} \lesssim \frac{1}{\varphi(2^{-l})}
\end{equation}
hold for all $l \in {\mathbb Z}$.
Then we have $h \sim f$ from {(\ref{eq:150824-4})}.
Thus $f\in {\mathcal M}^\varphi_q({\mathbb R}^n)$ by {(\ref{eq:150824-3})},
from which it follows that $h \in {\mathcal M}^\varphi_q({\mathbb R}^n)$.
\end{example}

We further present some examples of the functions
in ${\mathcal M}^\varphi_p({{\mathbb R}^n})$.
\begin{lemma}{\rm \cite[Lemma 2.4]{EGNS14}}\label{lem9}
Let $0<q<\infty$ and $\varphi \in {\mathcal G}_q \cap {\mathbb Z}^{-\frac{n}{q}}$.
Then the function $\psi(x)=\varphi(|x|)$
belongs to ${\mathcal M}^\varphi_q({{\mathbb R}^n})$.
\end{lemma}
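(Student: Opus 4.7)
The plan is to show that, for every cube $Q(x,r)$, the weighted local $L^q$-average $\varphi(r)\bigl(\frac{1}{|Q(x,r)|}\int_{Q(x,r)}\varphi(|y|)^q\,dy\bigr)^{1/q}$ stays uniformly controlled. The argument splits naturally according to whether the cube $Q(x,r)$ lies near the origin or far from it.

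First, suppose $|x|\le 2\sqrt{n}\,r$, so that $Q(x,r)\subset B(0,3\sqrt{n}\,r)$. Since $\varphi$ is increasing, passing to polar coordinates gives
$$\int_{Q(x,r)}\varphi(|y|)^q\,dy\;\le\;c_n\int_0^{3\sqrt{n}\,r}\varphi(s)^q s^{n-1}\,ds.$$
The core technical step is to bound this by $\varphi(r)^q r^n$. For $q\ge 1$, I would pull out $\varphi(s)^{q-1}\le \varphi(3\sqrt{n}\,r)^{q-1}\lesssim \varphi(r)^{q-1}$ using monotonicity together with the doubling property (Proposition \ref{prop:180611-2}); what remains is $\int_0^{Cr}\varphi(s)s^{n-1}\,ds$. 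Writing $s^{n-1}=s^{n/q-1}\cdot s^{n-n/q}$ and using $s^{n-n/q}\le(Cr)^{n-n/q}$ on the interval of integration, the hypothesis $\varphi\in{\mathbb Z}^{-n/q}$ gives $\int_0^{Cr}\varphi(s)s^{n/q-1}\,ds\lesssim \varphi(Cr)(Cr)^{n/q}\lesssim \varphi(r)r^{n/q}$, so that $\int_0^{Cr}\varphi(s)s^{n-1}\,ds\lesssim \varphi(r)r^n$. For $0<q<1$ the bound $\varphi(s)^q\le\varphi(Cr)^q\lesssim\varphi(r)^q$ suffices directly. Either way one obtains $\bigl(|Q(x,r)|^{-1}\int_{Q(x,r)}\varphi(|y|)^q\,dy\bigr)^{1/q}\lesssim \varphi(r)$, which, combined with Lemma \ref{lem7}, handles this case.

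Second, suppose $|x|>2\sqrt{n}\,r$. Then every $y\in Q(x,r)$ satisfies $|x|/2\le |y|\le 2|x|$, so by the doubling of $\varphi\in{\mathcal G}_q$ we have $\varphi(|y|)\sim \varphi(|x|)$ uniformly on $Q(x,r)$. Thus the local $q$-average is $\sim \varphi(|x|)$, and the ${\mathcal G}_q$-comparison between $\varphi(|x|)$ and $\varphi(r)$ (coming from the fact that both $\varphi$ and $t\mapsto t^{-n/q}\varphi(t)$ are monotone) shows that this contribution is dominated in the same way.

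The main obstacle, as indicated above, is in the near-origin case: one has to upgrade the first-power Zygmund condition $\varphi\in{\mathbb Z}^{-n/q}$ to the $q$-th power statement $\int_0^R\varphi(s)^q s^{n-1}\,ds\lesssim \varphi(R)^q R^n$. The idea of extracting $q-1$ powers of $\varphi$ by monotonicity (reducing to the linear case) and then inserting the polynomial weight $s^{n-n/q}$ to rewrite $s^{n-1}$ in the form required by the Zygmund integral is the bridge that makes the hypothesis $\varphi\in {\mathbb Z}^{-n/q}$ exactly sufficient.
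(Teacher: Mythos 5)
The decisive gap is in your far-from-origin case. For $|x|>2\sqrt{n}\,r$ you correctly get $\varphi(|y|)\sim\varphi(|x|)$ on $Q(x,r)$, so the local $q$-average is comparable to $\varphi(|x|)$; but $\varphi\in{\mathcal G}_q$ is \emph{increasing}, and the two monotonicities only give $\varphi(r)\le\varphi(|x|)\le(|x|/r)^{n/q}\varphi(r)$, an inequality pointing the wrong way. Since $|x|/r$ can be arbitrarily large, such cubes are not ``dominated in the same way'': with $\varphi(t)=t^{n/p}$, $q<p$ (which satisfies all the hypotheses), $r=1$ and $|x|\to\infty$, the quantity you are estimating blows up. Moreover, even in the near-origin case the conclusion does not follow from what you prove: you show that the average of $\varphi(|y|)^q$ over $Q(x,r)$ is $\lesssim\varphi(r)^q$, but the norm (\ref{eq:131115-111}) multiplies the $q$-th root of this average by $\varphi(r)$, so what is needed is a bound by $\varphi(r)^{-q}$; your estimate yields $\varphi(r)\cdot\varphi(r)$, unbounded whenever $\varphi$ is unbounded, and Lemma \ref{lem7} does not repair this (what your Case 1 actually controls is Nakai's norm (\ref{eq:131115-1118}), where one divides by $\varphi(r)$). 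A further warning sign is that for increasing $\varphi$ your ``core technical step'' is trivial: $\varphi(s)\le\varphi(Cr)$ for $0<s\le Cr$ already gives $\int_0^{Cr}\varphi(s)^q s^{n-1}\,ds\lesssim\varphi(r)^q r^n$ for every $q>0$, so the hypothesis $\varphi\in{\mathbb Z}^{-n/q}$ does no work in your argument.

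The underlying trouble is a convention mismatch that your proof inherits rather than resolves. The paper's own proof (carried over from \cite{EGNS14}) treats $\varphi(|\cdot|)$ as radially \emph{decreasing}: it records that the hypothesis is equivalent to (\ref{eq:120918-1}), uses the rearrangement-type inequality (\ref{eq:120918-24}) saying that averages over $B(a,r)$ are largest at $a=0$, and finishes in polar coordinates; no far-cube case is needed because origin-centered balls are the worst case. That argument is coherent in the convention of \cite{EGNS14}, where the weight is (almost) decreasing and the Morrey norm divides by it, and it is in that form that the lemma is actually used later (for $1/\varphi(|\cdot|)$ in the necessity part of Theorem \ref{thm:Hendra}). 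Read literally with Definition \ref{defi:180414-2} and an increasing $\varphi\in{\mathcal G}_q$, the statement itself fails (the example $\varphi(t)=t^{n/p}$ above), so no patching of your two-case scheme can succeed; the estimate has to be run for the reciprocal function, equivalently in the decreasing-weight convention, which is exactly what the quoted proof does.
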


\begin{proof}
First note that 
$\varphi \in {\mathbb Z}^{-\frac{n}{q}}$ is equivalent to
\begin{equation}\label{eq:120918-1}
\frac{1}{r^n}\int_0^r\varphi(t)^q t^{n-1}\,dt
\lesssim
\varphi(r)^q\quad(r>0).
\end{equation}
Note that $\varphi(|\cdot|)$
is radial decreasing,
so that
for
all
$a \in {\mathbb R}^n$ and $r>0$,
\begin{align}\label{eq:120918-24}
\left(\frac{1}{|B(a,r)|}\int_{B(a,r)} \varphi(|x|)^q dx
\right)^{\frac{1}{q}}
\le
\left(\frac{1}{|B(r)|}\int_{B(r)} \varphi(|x|)^q dx
\right)^{\frac{1}{q}}.
\end{align}
Combining (\ref{eq:120918-1}) and (\ref{eq:120918-24})
and using the spherical coordinate,
we obtain the desired result.
\end{proof}

\begin{remark}
See \cite[Theorem 2.1]{SMG12}
for the weak boundedness of the maximal operators,
where the integral conditions is assumed.
\end{remark}

\begin{remark}
See \cite[Theorem 4.2]{Guliyev09}
and
\cite[Theorem 3.4]{AGM12}
for the strong boundedness of the maximal operators,
where the integral conditions is assumed.
See \cite{Nakamura16, Nakai08-1,Sawano08-1}
for the strong boundedness of the maximal operators,
where the integral conditions is not assumed.
\end{remark}

\begin{remark}
See 
\cite[Theorem 1]{BuLi11},
\cite[Theorem 2.9]{ShTa12},
\cite[Theorem 1.9]{TaHe13}
and
\cite[Theorem 2.1]{YuTa14}
for the boundedness of the maximal operator
on generalized Morrey spaces
in the multilinear setting.
\end{remark}

\subsection{Singular integral operators on generalized Morrey spaces}

Let $T$ be a singular integral operator.
To define the function $T f$
for $f \in {\mathcal M}^\varphi_q({\mathbb R}^n)$
we follow the same strategy 
as the one for
$f \in {\mathcal M}^p_q({\mathbb R}^n)$.
To this end, we need to establish the following estimate:
\begin{lemma}\label{lem:180414-9}
Let $1<q<\infty$ and $\varphi \in {\mathcal G}_q$
satisfy $\frac{1}{\varphi}\in {\mathbb Z}_0$.
Then
$
\|T f \|_{{\mathcal M}^\varphi_q}
\lesssim
\|f\|_{{\mathcal M}^\varphi_q}
$
for all $f \in L^\infty_{\rm c}({\mathbb R}^n)$.
\end{lemma}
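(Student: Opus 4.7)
The plan is to mimic the splitting argument used for the Hardy--Littlewood maximal operator in the proof of Theorem \ref{thm:140525-3}, now using the $L^q$-boundedness of $T$ (Theorem \ref{thm:CZ-operator2}) on the local part and the size condition on the kernel for the nonlocal part.

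Fix a cube $Q = Q(x,r)$ and decompose $f = f_1 + f_2$ with $f_1 \equiv \chi_{5Q}f$ and $f_2 \equiv f - f_1$. It suffices to bound
\[
\varphi(r)\Bigl(\frac{1}{|Q|}\int_Q |Tf_i(y)|^q\,dy\Bigr)^{1/q}, \quad i=1,2,
\]
by a constant multiple of $\|f\|_{{\mathcal M}^\varphi_q}$. For $i=1$, the $L^q$-boundedness of $T$ gives $\|Tf_1\|_{L^q(Q)} \le \|Tf_1\|_{L^q} \lesssim \|f\|_{L^q(5Q)}$, and since $\varphi \in {\mathcal G}_q$ is doubling (Proposition \ref{prop:180611-2}), $\varphi(r) \simeq \varphi(5r)$, so after dividing by $|Q|^{1/q} \simeq |5Q|^{1/q}$ we obtain a quantity bounded by $\|f\|_{{\mathcal M}^\varphi_q}$.

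For $i=2$, use the integral representation (\ref{eq:081112-401}) or (\ref{eq:sing1}), which is valid since $y \in Q$ lies outside $\mathrm{supp}(f_2)$, together with the size condition $|K(y,z)| \lesssim |y-z|^{-n}$. Since for $y \in Q$ and $z \notin 5Q$ we have $|y-z| \simeq |c(Q)-z|$, a dyadic decomposition gives
\[
|Tf_2(y)| \lesssim \sum_{k=2}^\infty \frac{1}{|2^k Q|}\int_{2^{k+1}Q}|f(z)|\,dz \lesssim \sum_{k=2}^\infty \Bigl(\frac{1}{|2^{k+1}Q|}\int_{2^{k+1}Q}|f(z)|^q\,dz\Bigr)^{1/q},
\]
where the second inequality is H\"older's. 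Each term is bounded by $\varphi(2^{k+1}r)^{-1}\|f\|_{{\mathcal M}^\varphi_q}$, so $Tf_2$ is essentially constant on $Q$ and
\[
\varphi(r)\Bigl(\frac{1}{|Q|}\int_Q |Tf_2(y)|^q\,dy\Bigr)^{1/q} \lesssim \|f\|_{{\mathcal M}^\varphi_q}\sum_{k=2}^\infty \frac{\varphi(r)}{\varphi(2^{k+1}r)}.
\]

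The key step is bounding the series by a constant, and this is exactly where the assumption $\tfrac{1}{\varphi} \in {\mathbb Z}_0$, i.e.\ (\ref{eq:Nakai-19}), is used: since $\varphi$ is doubling,
\[
\sum_{k=2}^\infty \frac{1}{\varphi(2^{k+1}r)} \lesssim \int_{r}^\infty \frac{ds}{\varphi(s)s} \lesssim \frac{1}{\varphi(r)},
\]
and taking the supremum over $Q(x,r)$ yields the desired inequality. The main obstacle, as for the vector-valued maximal inequality (Theorem \ref{thm:180524-1} and Proposition \ref{prop:180311-1}), is the passage from the dyadic sum to a bound independent of $r$; once (\ref{eq:Nakai-19}) is invoked, the remainder of the argument is a routine adaptation of the classical Morrey-space proof.
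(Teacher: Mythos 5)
Your proposal is correct and follows essentially the same route as the paper: split $f$ into a near part handled by the $L^q({\mathbb R}^n)$-boundedness of $T$ together with the doubling of $\varphi$, and a far part handled by the size condition of the kernel plus the hypothesis $\frac{1}{\varphi}\in{\mathbb Z}_0$, i.e.\ (\ref{eq:Nakai-19}). The only cosmetic differences are that you cut at $5Q$ rather than $2Q$ and estimate the far part via dyadic annuli and H\"older's inequality, whereas the paper passes directly to the continuous integral $\int_{\ell(Q)}^\infty\frac{ds}{s\varphi(s)}$ using the nesting ${\mathcal M}^\varphi_q\subset{\mathcal M}^\varphi_1$; both reductions are equivalent.
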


We note that
$\frac{1}{\varphi}\in {\mathbb Z}_0$
appeared once again.
\begin{proof}
Let $Q$ be a fixed cube.
Then we need to prove
\[
\varphi(\ell(Q))
\left(
\frac{1}{|Q|}\int_Q|T f(y)|^q\,dy
\right)^{\frac1q}
\lesssim
\|f\|_{{\mathcal M}^\varphi_q}.
\]
To this end, we decompose $f$
according to $2Q$:
$f_1=\chi_{2Q}f$,
$f_2=f-f_1$.
As for $f_1$,
we have
\begin{align*}
\varphi(\ell(Q))
\left(
\frac{1}{|Q|}\int_Q|T f_1(y)|^q\,dy
\right)^{\frac1q}
&\le
\varphi(\ell(Q))
\left(
\frac{1}{|Q|}\int_{{\mathbb R}^n}|T f_1(y)|^q\,dy
\right)^{\frac1q}\\
&\lesssim
\varphi(\ell(Q))
\left(
\frac{1}{|Q|}\int_{{\mathbb R}^n}|f_1(y)|^q\,dy
\right)^{\frac1q}\\
&\lesssim
\varphi(\ell(Q))
\left(
\frac{1}{|Q|}\int_{2Q}|f(y)|^q\,dy
\right)^{\frac1q}\\
&\lesssim
\|f\|_{{\mathcal M}^\varphi_q}.
\end{align*}
As for $f_2$, we use the size condition of $K$,
the integral kernel of $T$,
to have the local estimate:
\[
|T f_2(y)| \lesssim
\int_{{\mathbb R}^n \setminus 2Q}
\frac{|f(y)|\,dy}{|y-c(Q)|^n}
\lesssim
\int_{\ell(Q)}^\infty \left(\frac{1}{\ell^{n+1}}\int_{B(c(Q),\ell)}|f(y)|\,dy\right)\,d\ell.
\]
By the definition of the norm,
Lemma \ref{lem:180307-1} and $(\ref{eq:Nakai-19})$,
we obtain
\[
|T f_2(y)| \lesssim
\int_{\ell(Q)}^\infty \frac{1}{r\varphi(r)}\,dr
\cdot
\|f\|_{{\mathcal M}^\varphi_q}
\lesssim
\frac{1}{\varphi(\ell(Q))}
\|f\|_{{\mathcal M}^\varphi_q}.
\]
It remains to integrate this pointwise estimate.
\end{proof}

To carry out our program of proving the boundedness
of the singular integral operators,
we need to investigate the predual and its predual.
\begin{definition}\label{defi:180414-1}
Let $1<q<\infty$ and $\varphi \in {\mathcal G}_q$.
\begin{enumerate}
\item
{\rm \cite[Definition 2.3]{KoMi06}, \cite[Definition 4]{Shirai06-2}}
A $(\varphi,q)$-block is said to be a measurable function $A$
supported on a cube $Q$ satisfying
$\|A\|_{L^{q'}} \le |Q|^{-\frac1q}\varphi(\ell(Q))$.
In this case
$A$ is said to be a $(\varphi,q)$-block supported on $Q$.
\item
{\rm \cite[Definition 2.5]{KoMi06}, \cite[Definition 5]{Shirai06-2}}
The block space ${\mathcal H}^\varphi_{q'}({\mathbb R}^n)$
is the set of all measurable functions $f$
for which it can be written
\[
f=\sum_{j=1}^\infty \lambda_j A_j,
\]
for some sequence $\{A_j\}_{j=1}^\infty$ of $(\varphi,q)$-blocks
and $\{\lambda_j\}_{j=1}^\infty \in \ell^1({\mathbb R}^n)$.
The norm
$\|f\|_{{\mathcal H}^{\varphi}_{q'}}$
is the infimum of $\|\{\lambda_j\}_{j=1}^\infty\|_{\ell^1}$
where
$\{A_j\}_{j=1}^\infty$
and
$\{\lambda_j\}_{j=1}^\infty$ run over all expressions above.
\end{enumerate}
\end{definition}

\begin{example}\label{example:180611-29}
Let $1<q<\infty$ and $\varphi \in {\mathcal G}_q$.
Let $A$ be a non-zero $L^{q'}({\mathbb R}^n)$-function supported on a cube $Q$.
Then
$B=\dfrac{\varphi(\ell(Q))}{|Q|^{\frac1q}\|A\|_{L^{q'}}}A$
is a $(\varphi,q)$-block supported on $Q$.
\end{example}

\begin{proposition}\label{prop:180613-1}
Let $1<q<\infty$ and $\varphi \in {\mathcal G}_q$
be such that $\varphi(t) \gtrsim t^{\frac{n}{q}}$ for alll $t>0$.
Then a measurable function $f$ belongs to
${\mathcal H}^\varphi_{q'}({\mathbb R}^n)$
if and only if $f$ admits a decomposition:
\[
f=\lambda_0 B+\sum_{j=1}^\infty \lambda_j A_j,
\]
for some sequence $\{A_j\}_{j=1}^\infty$ of $(\varphi,q)$-blocks
supported on cubes of volume less than or equal to $1$
and $\{\lambda_j\}_{j=1}^\infty \in \ell^1({\mathbb R}^n)$
and $B \in L^{q'}({\mathbb R}^n)$ with unit norm.
Furthermore
the norm
$\|f\|_{{\mathcal H}^{\varphi}_{q'}}$
is the infimum of $\|\{\lambda_j\}_{j=0}^\infty\|_{\ell^1}$
where
$\{A_j\}_{j=1}^\infty$,
$B$
and
$\{\lambda_j\}_{j=1}^\infty$ run over all expressions above.
\end{proposition}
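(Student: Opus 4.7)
The plan is to prove both implications by exploiting one structural consequence of the hypotheses: since $\varphi \in {\mathcal G}_q$, the function $t \mapsto t^{-n/q}\varphi(t)$ is decreasing, so for $t \ge 1$ it is bounded above by $\varphi(1)$, i.e.\ $\varphi(t) \le \varphi(1) t^{n/q}$; combined with $\varphi(t) \gtrsim t^{n/q}$ this yields $\varphi(t) \sim t^{n/q}$ on $[1,\infty)$. Consequently, for every cube $Q$ with $|Q| \ge 1$ the normalising constant $M(Q) := |Q|^{-1/q}\varphi(\ell(Q))$ satisfies $M(Q) \sim 1$. Thus a $(\varphi,q)$-block on such a $Q$ is nothing more than an $L^{q'}$-function of norm at most a universal constant, and conversely any $L^{q'}$-function supported on such a $Q$ is, after rescaling by $M(Q)^{-1} \sim 1$, a $(\varphi,q)$-block on $Q$. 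This equivalence on big cubes is the engine driving both directions.

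For the forward direction, fix $\varepsilon > 0$ and pick a decomposition $f = \sum_{j=1}^\infty \mu_j C_j$ as in Definition \ref{defi:180414-1} with $\sum_j |\mu_j| \le (1+\varepsilon)\|f\|_{{\mathcal H}^{\varphi}_{q'}}$, and split the index set into $J_s = \{j : |Q_j| \le 1\}$ and $J_b = \{j : |Q_j| > 1\}$. The part $\sum_{j \in J_s} \mu_j C_j$ is already in the required form. For the big-cube part, the uniform bound $\|C_j\|_{L^{q'}} \lesssim 1$ for $j \in J_b$ implies that $G := \sum_{j \in J_b} \mu_j C_j$ converges absolutely in $L^{q'}({\mathbb R}^n)$ with $\|G\|_{L^{q'}} \lesssim \sum_{j \in J_b} |\mu_j|$. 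If $G \ne 0$, set $\lambda_0 := \|G\|_{L^{q'}}$ and $B := G/\lambda_0$; otherwise, any unit-norm $B$ with $\lambda_0 = 0$ will do. The total $\ell^1$-cost $|\lambda_0| + \sum_{j \in J_s} |\mu_j|$ is then dominated by a universal multiple of $\sum_j |\mu_j|$, producing the desired representation with cost controlled by $\|f\|_{{\mathcal H}^{\varphi}_{q'}}$.

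The reverse direction reduces to showing that every $B \in L^{q'}({\mathbb R}^n)$ with $\|B\|_{L^{q'}} = 1$ satisfies $\|B\|_{{\mathcal H}^{\varphi}_{q'}} \lesssim 1$, since the $A_j$-terms automatically obey $\|A_j\|_{{\mathcal H}^{\varphi}_{q'}} \le 1$. By the absolute continuity of the Lebesgue integral, I would choose integers $0 = k_0 < k_1 < k_2 < \cdots$ so rapidly that $\|B - B\chi_{Q(2^{k_j})}\|_{L^{q'}} \le 2^{-j}$ and telescope
\[
B = B\chi_{Q(1)} + \sum_{j=1}^\infty B\chi_{Q(2^{k_j}) \setminus Q(2^{k_{j-1}})}.
\]
Set $D_0 = B\chi_{Q(1)}$ and $D_j = B\chi_{Q(2^{k_j}) \setminus Q(2^{k_{j-1}})}$ for $j \ge 1$; the triangle inequality gives $\|D_0\|_{L^{q'}} \le 1$ and $\|D_j\|_{L^{q'}} \le 2^{-(j-1)}$. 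Each $D_j$ is supported in $Q(2^{k_j})$, a cube of volume $\ge 1$, so $D_j = (\|D_j\|_{L^{q'}}/M(Q(2^{k_j}))) A_j^{*}$ for a $(\varphi,q)$-block $A_j^{*}$, and the $\ell^1$-sum of the coefficients is geometric. Hence $B \in {\mathcal H}^{\varphi}_{q'}({\mathbb R}^n)$ with universal norm bound.

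The one bookkeeping point to check is that the telescoping expansion for $B$ converges to $B$ in ${\mathcal H}^{\varphi}_{q'}({\mathbb R}^n)$, not merely in $L^{q'}$. This follows because absolute convergence of a block expansion in ${\mathcal H}^{\varphi}_{q'}({\mathbb R}^n)$ forces convergence in $L^1_{\rm loc}({\mathbb R}^n)$ (each block is $L^{q'}$-bounded on a cube, hence locally integrable), and the same partial sums converge to $B$ in $L^{q'}({\mathbb R}^n)$ and thus in $L^1_{\rm loc}({\mathbb R}^n)$, so the two limits coincide. Taking the infimum over admissible representations in each direction then yields the claimed equivalence of the two norms; I do not anticipate any deeper obstacle than this identification.
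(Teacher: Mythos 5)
Your argument is correct and rests on the same observation as the paper's proof: since $\varphi\in{\mathcal G}_q$ together with $\varphi(t)\gtrsim t^{\frac{n}{q}}$ forces $\varphi(t)\sim t^{\frac{n}{q}}$ for $t\ge 1$, a $(\varphi,q)$-block on a cube of volume at least $1$ is, up to uniform constants, just an $L^{q'}({\mathbb R}^n)$-function of norm about $1$, so all large-cube blocks can be merged into the single term $\lambda_0 B$. Your treatment is in fact more complete than the paper's terse proof, since you also supply the converse ingredient it leaves implicit, namely that a unit-norm $B\in L^{q'}({\mathbb R}^n)$ lies in ${\mathcal H}^\varphi_{q'}({\mathbb R}^n)$ with bounded norm, via the telescoping decomposition over rapidly growing cubes that makes the block coefficients summable.
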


\begin{proof}
Let $A$ be a $(\varphi,q)$-block supported on $Q$ with $\ell(Q) \gg 1$.
Then
we can say that $A$
is a $(\varphi,q)$-block suppported on $Q$
is and only if $2A$ has the $L^{q'}({\mathbb R}^n)$-norm $1$
and $A$ is supported on $Q$.
So, in the decomposition in Definition \ref{defi:180414-1}
any block $A_j$ with the cube $Q_j$ satisfying $|Q_j|\gg 1$
can be combined into a block supported on \lq \lq ${\mathbb R}^n$".
\end{proof}

The following lemma justifies the definition above.
\begin{lemma}{\rm \cite[Lemma 2]{Shirai06-2}}\label{lem:180815-1}
Let $1<q<\infty$ and $\varphi \in {\mathcal G}_q$.
If $A$ is a $(\varphi,q)$-block and $f \in {\mathcal M}^\varphi_q({\mathbb R}^n)$,
then
$\|A\cdot f\|_{L^1} \le \|f\|_{{\mathcal M}^\varphi_q}$.
\end{lemma}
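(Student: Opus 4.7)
The plan is to prove this by a single application of Hölder's inequality, using the defining size bound of a $(\varphi,q)$-block together with the definition of the generalized Morrey norm.

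First, since $A$ is supported on its defining cube $Q$, I would write
\[
\|A\cdot f\|_{L^1}=\int_{Q}|A(y)||f(y)|\,dy
\]
and apply Hölder's inequality with exponents $q'$ and $q$ to obtain
\[
\|A\cdot f\|_{L^1}\le \|A\|_{L^{q'}}\,\|f\chi_{Q}\|_{L^{q}}.
\]
The block definition gives directly $\|A\|_{L^{q'}}\le |Q|^{-1/q}\varphi(\ell(Q))$.

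Next, I would bound $\|f\chi_Q\|_{L^q}$ by testing the Morrey supremum against $Q$: writing $Q=Q(c(Q),\ell(Q)/2)$, the definition of $\|f\|_{{\mathcal M}^\varphi_q}$ yields
\[
\varphi\!\left(\tfrac{\ell(Q)}{2}\right)\left(\frac{1}{|Q|}\int_Q |f(y)|^q\,dy\right)^{1/q}\le \|f\|_{{\mathcal M}^\varphi_q},
\]
so $\|f\chi_Q\|_{L^q}\le |Q|^{1/q}\varphi(\ell(Q)/2)^{-1}\|f\|_{{\mathcal M}^\varphi_q}$. Combining the two estimates kills the volume factors and leaves
\[
\|A\cdot f\|_{L^1}\le \frac{\varphi(\ell(Q))}{\varphi(\ell(Q)/2)}\,\|f\|_{{\mathcal M}^\varphi_q}.
\]

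The one remaining issue is the ratio $\varphi(\ell(Q))/\varphi(\ell(Q)/2)$, which is the doubling ratio of $\varphi$. Since $\varphi \in {\mathcal G}_q$, Proposition \ref{prop:180611-2} bounds this ratio by $2^{n/q}$; more importantly, reparametrising the Morrey supremum in terms of sidelength rather than radius absorbs the factor (equivalently, one may supremize over all cubes, not just those in the $Q(x,r)$-parametrisation, giving the norm directly in terms of $\varphi(\ell(Q))$). With that harmless reinterpretation of the supremum, the constant is exactly $1$ and the lemma follows. I do not expect any substantive obstacle here — the entire content is Hölder's inequality plus matching the normalisations of the block and the Morrey norm on the same cube $Q$.
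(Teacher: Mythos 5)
Your proposal is essentially the paper's proof: Hölder's inequality with exponents $q'$ and $q$ on the supporting cube $Q$, the block size bound $\|A\|_{L^{q'}}\le |Q|^{-1/q}\varphi(\ell(Q))$, and testing the Morrey norm on $Q$ itself. The radius-versus-sidelength normalisation you flag is the only subtlety; the paper simply asserts $|Q|^{-1/q}\varphi(\ell(Q))\|f\chi_Q\|_{L^q}\le\|f\|_{{\mathcal M}^\varphi_q}$ (implicitly reading the supremum with $\varphi$ evaluated at the sidelength), so your resolution via the doubling bound of Proposition \ref{prop:180611-2}, or the reparametrised supremum, matches its intent.
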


\begin{proof}
Since $A$ is a $(\varphi,q)$-block,
we can find a cube $Q$ such that
${\rm supp}(A) \subset Q$ and that
$\|A\|_{L^{q'}} \le |Q|^{-\frac1q}\varphi(\ell(Q))$.
By the H\"{o}lder inequality,
\[
\|A\cdot f\|_{L^1}
=\|A\cdot f\chi_Q\|_{L^1}
\le
\|A\|_{L^{q'}}
\|f\chi_Q\|_{L^q}
\le|Q|^{-\frac1q}\varphi(\ell(Q))
\|f\chi_Q\|_{L^q}
 \le \|f\|_{{\mathcal M}^\varphi_q},
\]
as required.
\end{proof}

About the definition above,
the following proposition is fundamental:
\begin{proposition}{\rm \cite[Lemma 4.2]{KoMi06}}\label{prop:180310-1111}
Let $1<q<\infty$ and $\varphi \in {\mathcal G}_q$.
Let $f \in {\mathcal M}^\varphi_q({\mathbb R}^n)$
and
$g \in {\mathcal H}^{\varphi}_{q'}({\mathbb R}^n)$.
Then
$\|f \cdot g\|_{L^1} \le \|f\|_{{\mathcal M}^\varphi_q}\|g\|_{{\mathcal H}^\varphi_{q'}}$.
\end{proposition}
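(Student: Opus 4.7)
The plan is to expand $g$ using the atomic decomposition that defines ${\mathcal H}^{\varphi}_{q'}({\mathbb R}^n)$, apply the block-by-block pairing given by Lemma \ref{lem:180815-1}, and then take the infimum over all admissible decompositions.

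First I would fix $\varepsilon>0$ and, by the definition of $\|g\|_{{\mathcal H}^\varphi_{q'}}$ in Definition \ref{defi:180414-1}, choose a decomposition
\[
g=\sum_{j=1}^\infty \lambda_j A_j,
\]
where each $A_j$ is a $(\varphi,q)$-block and
\[
\sum_{j=1}^\infty |\lambda_j|\le \|g\|_{{\mathcal H}^\varphi_{q'}}+\varepsilon.
\]
Next, I would invoke Lemma \ref{lem:180815-1} block by block: for each $j$ the function $A_j$ is a $(\varphi,q)$-block, so $\|A_j\cdot f\|_{L^1}\le \|f\|_{{\mathcal M}^\varphi_q}$. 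Summing after multiplying by $|\lambda_j|$ and using the triangle inequality (or monotone convergence applied to the partial sums of $\sum_j |\lambda_j|\,|A_j|\,|f|$) gives
\[
\|f\cdot g\|_{L^1}
\le \sum_{j=1}^\infty |\lambda_j|\,\|A_j\cdot f\|_{L^1}
\le \Bigl(\sum_{j=1}^\infty |\lambda_j|\Bigr)\|f\|_{{\mathcal M}^\varphi_q}
\le \bigl(\|g\|_{{\mathcal H}^\varphi_{q'}}+\varepsilon\bigr)\|f\|_{{\mathcal M}^\varphi_q}.
\]
Letting $\varepsilon\downarrow 0$ and taking the infimum over admissible decompositions yields the desired inequality.

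The only mildly delicate point is justifying that the representation $g=\sum_j \lambda_j A_j$, which a priori is an $\ell^1$-type decomposition, allows the termwise bound $\|f\cdot g\|_{L^1}\le \sum_j |\lambda_j|\,\|A_j\cdot f\|_{L^1}$. This is handled by first noting that $\sum_j |\lambda_j|\,|A_j|\cdot|f|$ is a sum of nonnegative measurable functions, so the Tonelli/monotone convergence theorem gives
\[
\int_{{\mathbb R}^n}\Bigl(\sum_{j=1}^\infty |\lambda_j|\,|A_j(x)|\Bigr)|f(x)|\,dx
=\sum_{j=1}^\infty |\lambda_j|\int_{{\mathbb R}^n}|A_j(x)|\,|f(x)|\,dx,
\]
and then the pointwise bound $|g|\le \sum_j |\lambda_j|\,|A_j|$ a.e.\ (valid because the series converges in ${\mathcal H}^\varphi_{q'}$) lets us dominate $\|f\cdot g\|_{L^1}$ by this expression. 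No further analytic ingredient is required; the entire argument is driven by Lemma \ref{lem:180815-1}, which is the duality pairing between a single block and a generalized Morrey function.
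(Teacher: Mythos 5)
Your argument is correct and is essentially the paper's own proof: decompose $g$ into $(\varphi,q)$-blocks with $\ell^1$ coefficients nearly attaining $\|g\|_{{\mathcal H}^\varphi_{q'}}$, apply Lemma \ref{lem:180815-1} block by block, sum, and let $\varepsilon\downarrow 0$. Your extra remark on interchanging the sum and the integral via monotone convergence only makes explicit a point the paper leaves implicit.
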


\begin{proof}
Let $\varepsilon>0$ be fixed.
Then we can decompose $f$ as
$$
f=\sum_{k\in K}\lambda_kb_k
$$
where $K\subset{\mathbb N}$ is an index set,
\begin{equation}\label{eq:140814-5}
\sum_{k\in K}|\lambda_k| \le (1+\varepsilon)\|f\|_{{\mathcal H}^{\varphi}_{q'}}
\end{equation}
and each $b_k$ is a $(\varphi,q)$-block.
According to Lemma \ref{lem:180815-1},
we have
\[
\|f \cdot g\|_{L^1}
\le
\sum_{k \in K}|\lambda_k|\|f\|_{{\mathcal M}^\varphi_q}
\le(1+\varepsilon)\|f\|_{{\mathcal M}^\varphi_q}\|g\|_{{\mathcal H}^\varphi_{q'}}.
\]
Consequently
\[
\|f \cdot g\|_{L^1}
\le(1+\varepsilon)
\|f\|_{{\mathcal M}^\varphi_q}\|g\|_{{\mathcal H}^\varphi_{q'}}.
\]
Since $\varepsilon>0$ is arbitrary,
it follows that
\[
\|f \cdot g\|_{L^1}
\le
\|f\|_{{\mathcal M}^\varphi_q}\|g\|_{{\mathcal H}^\varphi_{q'}}.
\]
\end{proof}

It is easy to see that
${\mathcal H}^{\varphi}_{q'}({\mathbb R}^n)$
is a normed space.
Similar to the classical case,
we can prove the following theorem:
\begin{theorem}\label{theorem2.2-generalized}
Let $1<q<\infty$, $\varphi \in {\mathcal G}_q$, 
and let
$f\in{\mathcal H}^{\varphi}_{q'}({\mathbb R}^n)$.
Then $f$ can be decomposed as
$$
f=\sum_{Q\in{\mathcal D}}\lambda(Q)b(Q),
$$
where $\lambda(Q)$ is a non-negative number with
$$
\sum_{Q\in{\mathcal D}}\lambda(Q)\le 9^n
\|f\|_{{\mathcal H}^{\varphi}_{q'}}
$$
and $b(Q)$ is a $(\varphi,q)$-block supported in $Q$.
\end{theorem}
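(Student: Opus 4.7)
The plan is to take an arbitrary block decomposition of $f$ supported on arbitrary cubes and refine it into one supported on dyadic cubes, at the cost of a dimensional constant. First I would fix $\varepsilon>0$ and start from a decomposition $f=\sum_{k}\lambda_k A_k$ where each $A_k$ is a $(\varphi,q)$-block supported on a cube $Q_k$ and $\sum_k\lambda_k\le (1+\varepsilon)\|f\|_{{\mathcal H}^{\varphi}_{q'}}$. The main task is to rewrite each single block $A_k$ as a finite sum of blocks supported on dyadic cubes of comparable size, with the total coefficient bounded by a dimensional constant, and then reassemble.

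The geometric ingredient is the standard observation that every cube $Q$ of sidelength $r$ is covered by at most $2^n$ disjoint dyadic cubes of sidelength $2^k$, where $k$ is the unique integer with $2^{k-1}<r\le 2^k$. Applying this to $Q_k$, I obtain dyadic cubes $R_{k,1},\dots,R_{k,N_k}$ with $N_k\le 2^n$ and $\ell(R_{k,j})<2\ell(Q_k)$ whose union contains $Q_k$. Then I write $A_k=\sum_j A_k\chi_{R_{k,j}}$ and normalize each piece: set
\[
\mu_{k,j}\equiv\|A_k\chi_{R_{k,j}}\|_{L^{q'}}\cdot\frac{|R_{k,j}|^{1/q}}{\varphi(\ell(R_{k,j}))},\qquad \tilde b_{k,j}\equiv\mu_{k,j}^{-1}A_k\chi_{R_{k,j}}
\]
(with $\tilde b_{k,j}\equiv 0$ when $\mu_{k,j}=0$). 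Then $\tilde b_{k,j}$ is a $(\varphi,q)$-block supported in $R_{k,j}$, and using $\|A_k\chi_{R_{k,j}}\|_{L^{q'}}\le|Q_k|^{-1/q}\varphi(\ell(Q_k))$ together with $|R_{k,j}|\le 2^n|Q_k|$ and the monotonicity of $\varphi$ (from $\varphi\in{\mathcal G}_q$), we obtain $\mu_{k,j}\le 2^{n/q}$. Hence $\sum_j\mu_{k,j}\le 2^n\cdot 2^{n/q}\le 9^n$.

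Next I would reassemble: for every dyadic cube $Q\in{\mathcal D}$ define
\[
\lambda(Q)\equiv\sum_{(k,j):\,R_{k,j}=Q}\lambda_k\mu_{k,j},\qquad b(Q)\equiv\lambda(Q)^{-1}\sum_{(k,j):\,R_{k,j}=Q}\lambda_k\mu_{k,j}\tilde b_{k,j}
\]
(and $b(Q)\equiv 0$ when $\lambda(Q)=0$). Since $b(Q)$ is a convex combination of $(\varphi,q)$-blocks on $Q$, the triangle inequality in $L^{q'}$ shows it is itself a $(\varphi,q)$-block on $Q$. Then $f=\sum_Q\lambda(Q)b(Q)$, and
\[
\sum_{Q\in{\mathcal D}}\lambda(Q)=\sum_{k,j}\lambda_k\mu_{k,j}\le 9^n\sum_k\lambda_k\le 9^n(1+\varepsilon)\|f\|_{{\mathcal H}^{\varphi}_{q'}}.
\]
Letting $\varepsilon\downarrow 0$ yields the claim.

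The main technical point is the step that bounds $\mu_{k,j}$: it must combine the covering-cube estimate $|R_{k,j}|\le 2^n|Q_k|$ with $\varphi(\ell(Q_k))\le\varphi(\ell(R_{k,j}))$, which is precisely where the hypothesis $\varphi\in{\mathcal G}_q$ is used. Everything else (the disjoint dyadic covering, the convex-combination observation that the class of $(\varphi,q)$-blocks on a fixed cube is closed under convex combinations, and the bookkeeping when reindexing) is routine; the constant $9^n$ is far from tight, with the above argument actually yielding $2^{n(1+1/q)}\le 4^n$.
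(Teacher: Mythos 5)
Your argument is correct, and it executes the dyadic reorganization differently from the paper. The paper keeps each block of a near-optimal decomposition whole and assigns it to a dyadic cube $Q$ with ${\rm supp}(b_k)\subset 3Q$ and $|Q_k|\ge|Q|$, then merges all blocks assigned to the same $Q$; the merged functions are blocks supported on the \emph{triples} $3Q$ only up to a multiplicative constant, and the factor $9^n$ arises from this bookkeeping. You instead cut each block along the dyadic grid at the comparable scale (at most $2^n$ dyadic cubes of sidelength $2^m$ with $2^{m-1}<\ell(Q_k)\le 2^m$), renormalize each piece into a block genuinely supported in its dyadic cube — this is where $|R_{k,j}|\le 2^n|Q_k|$ and the monotonicity of $\varphi\in{\mathcal G}_q$ enter, giving $\mu_{k,j}\le 2^{n/q}$ — and then recombine by convexity of the set of $(\varphi,q)$-blocks on a fixed cube. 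Your route has two advantages: it yields blocks supported in the dyadic cubes themselves, which matches the statement literally (the paper's proof only gets $3Q$ modulo a constant), and it gives the sharper constant $2^{n(1+1/q)}\le 4^n$. Two cosmetic remarks: the signs (or phases) of the $\lambda_k$ should be absorbed into the blocks at the outset so that all coefficients are nonnegative, and the final \lq\lq let $\varepsilon\downarrow 0$" is not needed (nor does it literally produce a limiting decomposition) — since your constant is at most $4^n$, any fixed $\varepsilon$ with $(1+\varepsilon)4^n\le 9^n$, e.g.\ $\varepsilon=1$, already gives the stated bound.
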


\begin{proof}
We suppose that $\|f\|_{{\mathcal H}^{\varphi}_{q'}}<1$.
Here and below we let $D \gg 1$ is a fixed constant depending on $\varphi$.
It suffices to find a decomposition 
$$
f=\sum_{Q\in{\mathcal D}}\lambda(Q)b(Q),
$$
where $\lambda(Q)$ is a non-negative number with
$$
\sum_{Q\in{\mathcal D}}\lambda(Q)<D.
$$
First, decompose $f$ as
$$
f=\sum_{k\in K}\lambda_kb_k
$$
where $K\subset{\mathbb N}$ is an index set,
\begin{equation}\label{eq:140814-5a}
\sum_{k\in K}|\lambda_k|<1
\end{equation}
and each $b_k$ is a $(\varphi,q)$-block.
We will divide $K$ into the disjoint sets
$K(Q)\subset{\mathbb N}$, $Q\in{\mathcal D}$,
as
$$
K=\bigcup_{Q\in{\mathcal D}}K(Q)
$$
so that
${\rm supp }(b_k)\subset 3Q$
and 
$|Q_k|\ge|Q|$
whenever 
$k\in K(Q)$.
We achieve this as follows:
Let
${\mathcal D}=\{Q^{(j)}\}_{j=1}^\infty$
be an emumeration of ${\mathcal D}$.
For each $k \in K$,
we write
\[
j_k\equiv
\min\{j\,:\,{\rm supp}(b_k) \subset 3Q, \, |Q_k| \ge |Q|\}
\]
for each $k$.
We set
\[
K(Q^{(j)})\equiv\{l \in L\,:\,j_k=j\}.
\]
We set
\begin{align*}
\lambda(Q)\equiv 3^n\sum_{k\in K(Q)}|\lambda_k|, \quad
b(Q)\equiv
\begin{cases}
\dfrac{1}{\lambda(Q)}
\sum_{k\in K(Q)}\lambda_kb_k&\lambda(Q) \ne 0,\\
0&\lambda(Q) \ne 0.
\end{cases}
\end{align*}

We now rewrite $f$ as
\begin{align*}
f
&=
\sum_{k\in K}\lambda_kb_k
=
\sum_{Q\in{\mathcal D}}
\left(\sum_{k\in K(Q)}\lambda_kb_k\right)
\\ &=
\sum_{Q\in{\mathcal D}}
\left\{3^n\sum_{k\in K(Q)}|\lambda_k|\right\}
\cdot
\left\{
\left(3^n\sum_{k\in K(Q)}|\lambda_k|\right)^{-1}
\sum_{k\in K(Q)}\lambda_kb_k
\right\}.
\end{align*}
By (\ref{eq:140814-5a}),
we have
$$
\sum_{Q\in{\mathcal D}}\lambda(Q)
=
3^n
\sum_{Q\in{\mathcal D}}
\left(\sum_{k\in K(Q)}|\lambda_k|\right)
=
3^n\sum_{k\in K}|\lambda_k|
<3^n.
$$
Since each $b_k$ is a $(\varphi,q)$-block,
we obtain
\begin{align*}
\left(3^n\sum_{k\in K(Q)}|\lambda_k|\right)^{-1}
\left\|\sum_{k\in K(Q)}\lambda_k b_k\right\|_{{q'}}
&\le
\left(3^n\sum_{k\in K(Q)}|\lambda_k|\right)^{-1}
\sum_{k\in K(Q)}
|\lambda_k|\cdot \|b_k\|_{{q'}}
\\ &\le
\left(3^n\sum_{k\in K(Q)}|\lambda_k|\right)^{-1}
|Q|^{\frac1p-\frac1q}
\sum_{k\in K(Q)}|\lambda_k|\\
&\lesssim_{p,q}|3Q|^{\frac1p-\frac1q},
\end{align*}
which implies that
$b(Q)$ is a $(\varphi,q)$-block supported in $3Q$
modulo a multiplicative constant.
These complete the proof.
\end{proof}

\begin{example}\label{example:180611-291}
Let $1<q<\infty$, $\varphi \in {\mathcal G}_q$, 
and let
$f\in{\mathcal H}^{\varphi}_{q'}({\mathbb R}^n)$.
Assume that $\inf_{t>0}t^{-\frac{n}{q}}\varphi(t)>0$,
so that Proposition \ref{prop:180613-1} is applicable.
Then $f$ can be decomposed as
$$
f=B+\sum_{Q\in{\mathcal D}, |Q| \le 1}\lambda(Q)b(Q),
$$
where $\lambda(Q)$ is a non-negative number with
$$
\|B\|_{L^{q'}}+
\sum_{Q\in{\mathcal D}}\lambda(Q)\lesssim
\|f\|_{{\mathcal H}^{\varphi}_{q'}}
$$
and $b(Q)$ is a $(\varphi,q)$-block supported in $Q$.
\end{example}

\begin{corollary}\label{cor:180414-16}
Let $1<q<\infty$ and $\varphi \in {\mathcal G}_q$.
Then every function in ${\mathcal H}^{\varphi}_{q'}({\mathbb R}^n)$
is locally integrable.
\end{corollary}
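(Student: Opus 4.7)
The plan is to derive local integrability directly from the duality estimate in Proposition \ref{prop:180310-1111}, which bounds the $L^1$-pairing between $\mathcal{M}^\varphi_q(\mathbb{R}^n)$ and $\mathcal{H}^\varphi_{q'}(\mathbb{R}^n)$. Since every compact set of $\mathbb{R}^n$ is contained in some cube $Q(x,R)$, it suffices to show that $\int_{Q(x,R)}|g(y)|\,dy<\infty$ for every $g\in\mathcal{H}^\varphi_{q'}(\mathbb{R}^n)$ and every $x\in\mathbb{R}^n$, $R>0$.

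First, observe that since $\varphi\in\mathcal{G}_q$ takes values in $(0,\infty)$, the indicator $\chi_{Q(x,R)}$ belongs to $\mathcal{M}^\varphi_q(\mathbb{R}^n)$; indeed, Lemma \ref{lem7} yields the exact identity $\|\chi_{Q(x,R)}\|_{\mathcal{M}^\varphi_q}=\varphi(R)<\infty$. Then I would apply Proposition \ref{prop:180310-1111} with $f=\chi_{Q(x,R)}$ to conclude
\[
\int_{Q(x,R)}|g(y)|\,dy
=\|\chi_{Q(x,R)}\cdot g\|_{L^1}
\le \|\chi_{Q(x,R)}\|_{\mathcal{M}^\varphi_q}\|g\|_{\mathcal{H}^\varphi_{q'}}
=\varphi(R)\,\|g\|_{\mathcal{H}^\varphi_{q'}}<\infty.
\]
As $x$ and $R$ are arbitrary, this gives $g\in L^1_{\mathrm{loc}}(\mathbb{R}^n)$.

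There is no real obstacle here; the only point that warrants care is that Proposition \ref{prop:180310-1111} is stated for $\|f\cdot g\|_{L^1}$ where the product could in principle be signed, but its proof proceeds via the atomic decomposition of $g$ and Lemma \ref{lem:180815-1}, and therefore actually controls $\int|f|\,|g|\,dy$. Consequently, taking $f=\chi_{Q(x,R)}\ge 0$ legitimately produces the desired bound on $\int_{Q(x,R)}|g|$, and the corollary follows at once.
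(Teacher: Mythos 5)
Your proof is correct and follows essentially the same route as the paper: the paper's proof also simply combines Proposition \ref{prop:180310-1111} with the membership $\chi_Q \in {\mathcal M}^\varphi_q({\mathbb R}^n)$, for which your appeal to Lemma \ref{lem7} gives the precise norm $\varphi(R)$. Your cautionary remark about signed products is not even needed, since $\|f\cdot g\|_{L^1}=\int |f|\,|g|$ by definition of the $L^1$-norm.
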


\begin{proof}
Simply combine
Lemma \ref{lem:180307-1},
Proposition \ref{prop:180310-1111}
and the fact that 
$\chi_Q \in {\mathcal M}^\varphi_q({\mathbb R}^n)$
for any cube $Q$.
\end{proof}

\begin{proposition}\label{prop:180310-1112}
Let
$1<q<\infty$ and $\varphi \in {\mathcal G}_q$.
Assume in addition that $\varphi$ satisfies 
$(\ref{eq:Nakai-19})$.
Suppose that
$f$ and $f_k$, $(k=1,2,\ldots)$,
are nonnegative,
that each $f_k \in {\mathcal H}^{\varphi'}_{q'}({\mathbb R}^n)$,
that $\|f_k\|_{{\mathcal H}^{\varphi'}_{q'}}\le 1$
and that $f_k\uparrow f$ a.e. Then
$f\in{\mathcal H}^{\varphi'}_{q'}({\mathbb R}^n)$ and
$\|f\|_{{\mathcal H}^{\varphi'}_{q'}}\le 1$.
\end{proposition}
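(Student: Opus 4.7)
My approach is to reduce the claim to the monotone convergence theorem by pairing with ${\mathcal M}^\varphi_q({\mathbb R}^n)$ via Proposition \ref{prop:180310-1111}. For any nonnegative $g \in {\mathcal M}^\varphi_q({\mathbb R}^n)$ with $\|g\|_{{\mathcal M}^\varphi_q} \le 1$, Proposition \ref{prop:180310-1111} gives $\int_{{\mathbb R}^n} f_k g\,dx \le \|f_k\|_{{\mathcal H}^\varphi_{q'}}\|g\|_{{\mathcal M}^\varphi_q} \le 1$ for every $k$. Since $0 \le f_k g \uparrow fg$ almost everywhere, the monotone convergence theorem yields
\[
\int_{{\mathbb R}^n} fg\,dx = \lim_{k\to\infty}\int_{{\mathbb R}^n} f_k g\,dx \le 1
\]
for every such $g$.

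To translate this into a bound on $\|f\|_{{\mathcal H}^\varphi_{q'}}$ itself, I would invoke the K\"othe-type duality characterization
\[
\|f\|_{{\mathcal H}^\varphi_{q'}} = \sup\left\{\int_{{\mathbb R}^n} fg\,dx : 0 \le g \in {\mathcal M}^\varphi_q({\mathbb R}^n),\ \|g\|_{{\mathcal M}^\varphi_q} \le 1\right\},
\]
valid for nonnegative $f$. One direction ($\ge$) is precisely Proposition \ref{prop:180310-1111}; the reverse direction is the substantive content. Combining the two steps gives $\|f\|_{{\mathcal H}^\varphi_{q'}} \le 1$, and in particular $f \in {\mathcal H}^\varphi_{q'}({\mathbb R}^n)$, as claimed.

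\textbf{Main obstacle.} The nontrivial $\le$ direction of the above duality is the heart of the matter. The standard route is to extend the positive functional $h \mapsto \int fh$, which by the first step has norm at most $1$ on $L^\infty_{\rm c}({\mathbb R}^n) \cap {\mathcal M}^\varphi_q({\mathbb R}^n)$, to a bounded functional on all of ${\mathcal M}^\varphi_q({\mathbb R}^n)$ via Hahn-Banach, and then to identify this functional with a block-decomposable function using the atomic decomposition of Theorem \ref{theorem2.2-generalized}. The hypothesis $\varphi \in {\mathcal G}_q$ guarantees via Lemma \ref{lem7} that $\chi_Q \in {\mathcal M}^\varphi_q({\mathbb R}^n)$ for every cube $Q$, providing a rich enough class of test functions, while the assumption $\frac{1}{\varphi} \in {\mathbb Z}_0$ (that is, condition (\ref{eq:Nakai-19})) controls the block space. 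A purely constructive alternative is to apply Theorem \ref{theorem2.2-generalized} to each $f_k$ to obtain decompositions $f_k = \sum_{Q \in \mathcal{D}} \lambda_k(Q) b_k(Q)$, extract weak limits of $\lambda_k(Q) b_k(Q) \in L^{q'}({\mathbb R}^n)$ by reflexivity of $L^{q'}$, Banach-Alaoglu and a diagonal argument over the countable family $\mathcal{D}$, and apply Fatou for nonnegative sums to $\sum_Q \lambda_k(Q)$; this alternative is more concrete but typically yields only $\|f\|_{{\mathcal H}^\varphi_{q'}} \le 9^n$ rather than the sharp bound $\le 1$, so the duality-based route is needed for the precise statement.
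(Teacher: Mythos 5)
There is a genuine gap, and it sits exactly where you locate the ``main obstacle.'' Your first step (pairing with nonnegative $g$, Proposition \ref{prop:180310-1111}, monotone convergence) is fine, but the pivot of the argument --- the exact K\"othe-duality formula $\|f\|_{{\mathcal H}^\varphi_{q'}}=\sup\{\int_{{\mathbb R}^n} fg\,dx:\ 0\le g,\ \|g\|_{{\mathcal M}^\varphi_q}\le1\}$ asserted for an arbitrary nonnegative measurable $f$ not yet known to lie in ${\mathcal H}^\varphi_{q'}({\mathbb R}^n)$ --- is not available at this point. Its nontrivial direction is equivalent to saying that a bounded functional on $\widetilde{\mathcal M}^\varphi_q({\mathbb R}^n)$ (here, the extension of $h\mapsto\int fh$ from $L^\infty_{\rm c}$) is represented by a block function with the same norm; that is Theorem \ref{thm:180611-13}(2), and in this paper its proof \emph{uses} Proposition \ref{prop:180310-1112}, so invoking it here is circular. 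The Hahn--Banach sketch does not repair this: an abstract extension is merely a functional, and converting a functional into an element of ${\mathcal H}^\varphi_{q'}({\mathbb R}^n)$ with controlled norm is precisely the Fatou-type step you are being asked to prove; Theorem \ref{theorem2.2-generalized} decomposes functions already known to belong to the block space and does not identify functionals.

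The ``constructive alternative'' you set aside is in fact the paper's proof, and your reason for discarding it (it only yields a constant like $2\cdot 9^n$) misses how the sharp bound is recovered. The weak-compactness/diagonalization/Fatou argument --- together with the verification that $\int_{Q_0}f_{k_j}\,dx\to\int_{Q_0}f_0\,dx$ for every dyadic cube $Q_0$ (using the block estimates, the hypotheses on $\varphi$, and then the Lebesgue differentiation theorem to get $f=f_0$ a.e.) --- is used only to establish the qualitative membership $f\in{\mathcal H}^\varphi_{q'}({\mathbb R}^n)$. Once membership is known, the bound $\|f\|_{{\mathcal H}^\varphi_{q'}}\le1$ follows from the isometric duality $({\mathcal H}^\varphi_{q'})^*={\mathcal M}^\varphi_q$, i.e.\ (\ref{eq:5-1a}) in Theorem \ref{thm:180611-13}(1), whose proof does not rely on Proposition \ref{prop:180310-1112}, combined with exactly your monotone-convergence computation $\sup_g\int fg=\sup_k\sup_g\int f_k g\le1$. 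So the correct structure is: carry out the constructive limit argument in full to get $f\in{\mathcal H}^\varphi_{q'}({\mathbb R}^n)$, and only then run your step 1 to upgrade the constant to $1$. As written, your proposal leaves the essential step --- membership of $f$ in the block space --- unproven.
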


\begin{proof}
By Theorem \ref{theorem2.2-generalized}
$f_k$ can be decomposed as
$$
f_k
=
\sum_{Q\in{\mathcal D}}\lambda_k(Q)b_k(Q),
$$
where $\lambda_k(Q)$ is a non-negative number with
\begin{equation}\label{2.1-generalized}
\sum_{Q\in{\mathcal D}}\lambda_k(Q)\le 2\cdot 9^n
\end{equation}
and $b_k(Q)$ is a $(\varphi,q)$-block supported in $Q$ and
\begin{equation}\label{2.2-generalized}
\|b_k(Q)\|_{{q'}}
\le
\frac{\varphi(\ell(Q))}{|Q|^{\frac1q}}.
\end{equation}
Using \eqref{2.1-generalized}, \eqref{2.2-generalized} and
the weak-compactness of the Lebesgue space
$L^{q'}(Q)$ 
we now apply a diagonalization argument and, hence,
we can select an 
increasing sequence 
$\{k_j\}_{j=1}^\infty$
of integers
that satisfies the following:
\begin{gather}
\label{2.3-generalized}
\lim_{j\to\infty}\lambda_{k_j}(Q)=\lambda(Q),
\\ 
\label{2.4-generalized}
\lim_{j\to\infty}b_{k_j}(Q)=b(Q)
\text{ in the weak-topology of }
L^{q'}(Q),
\end{gather}
where $b(Q)$ is a $(\varphi,q)$-block supported in $Q$.
We set
$$
f_0\equiv \sum_{Q\in{\mathcal D}}\lambda(Q)b(Q).
$$
Then, by the Fatou theorem and \eqref{2.1-generalized},
\begin{equation}\label{2.51-generalized}
\sum_{Q\in{\mathcal D}}\lambda(Q)
\le \liminf_{j\to\infty}
\sum_{Q\in{\mathcal D}}\lambda_{k_j}(Q)
\le 2\cdot 9^n,
\end{equation}
which implies
$f_0\in{\mathcal H}^{\varphi}_{q'}({\mathbb R}^n)$.

We will verify that
\begin{equation}\label{2.65-generalized}
\lim_{j\to\infty}
\int_{Q_0}f_{k_j}(x)\,dx
=
\int_{Q_0}f_0(x)\,dx
\end{equation}
for all $Q_0\in{\mathcal D}$.
Once \eqref{2.65-generalized} is established,
we will see that $f=f_0$ and hence
$f\in{\mathcal H}^{\varphi}_{q'}({\mathbb R}^n)$
by virtue
of the Lebesgue differentiation theorem
because at least we know that
$f_0$ locally in $L^{q'}({\mathbb R}^n)$.
By the definition of $f_{k_j}$,
we have
\begin{align*}
\int_{Q_0}f_{k_j}(x)\,dx
&=
\sum_{l=-\infty}^\infty
\sum_{\substack{Q \in {\mathcal D}_l\\ Q_0 \cap Q \ne \emptyset}}
\lambda_{k_j}(Q)
\int_{Q_0}b_{k_j}(Q)(x)\,dx.
\end{align*}
Note that
\begin{equation}\label{eq:131108-152-generalized}
\|b_{k_j}(Q)\|_{1}
\le
|Q_0 \cap Q|^{\frac1q}\|b_{k_j}(Q)\|_{{q'}}
\le 
\frac{\varphi(\ell(Q))|Q \cap Q_0|^{\frac1q}}{|Q|^{\frac1q}}
\le 
\frac{\varphi(\ell(Q))}{|Q|^{\frac1q}}
\end{equation}
for any cube $Q$ containing $Q_0$.
If 
\[
\lim_{t \to \infty}\varphi(t)t^{-\frac{n}{q}}=0,
\]
then 
for all $\varepsilon>0$
there exists $l \in {\mathbb N}$ such that
\[
\sum_{l=N}^\infty
\sum_{\substack{Q \in {\mathcal D}_l\\ Q_0 \cap Q \ne \emptyset}}
\left|\lambda_{k_j}(Q)
\int_{Q_0}b_{k_j}(Q)(x)\,dx\right|<\varepsilon.
\]
Thus, we are in the position of using the Lebesgue convergence theorem
based on Example \ref{example:180613-1}.
Thus
we obtain (\ref{2.65-generalized}).
If 
\[
\lim_{t \to \infty}\varphi(t)t^{-\frac{n}{q}}>0,
\]
then we go through a similar argument
using Example \ref{example:180611-291}
to obtain (\ref{2.65-generalized}).

Since $f_k\uparrow f$ a.e.,
we must have by \eqref{2.65-generalized}
$$
\int_{Q_0}f(x)\,dx
=
\int_{Q_0}f_0(x)\,dx
$$
for all $Q_0\in{\mathcal D}$.
This yields $f=f_0$ a.e.,
by the Lebesgue differentiation theorem, and, hence,
$f\in{\mathcal H}^{\varphi}_{q'}({\mathbb R}^n)$.
Since we have verified
$f\in{\mathcal H}^{\varphi}_{q'}({\mathbb R}^n)$,
it follows that
$$
\|f\|_{{\mathcal H}^{\varphi}_{q'}}
=
\sup\left\{
\left|\int_{{\mathbb R}^n}f_k(x)g(x)\,dx\right|:\,
k=1,2,\ldots,\,
\|g\|_{{\mathcal M}^\varphi_q}\le 1
\right\}
\le 1.
$$
This completes the proof of the theorem.

\end{proof}

The proof of the following theorem
is completely the same as the classical case
once Proposition \ref{prop:180310-1112} is proved.
\begin{theorem}\label{thm:180611-13}
Let
$1<q<\infty$, 
and let $\varphi \in {\mathcal G}_q$
satisfy $\frac{1}{\varphi} \in {\mathbb Z}_0$.
\begin{enumerate}
\item
The dual of ${\mathcal H}^\varphi_{q'}({\mathbb R}^n)$
is ${\mathcal M}^\varphi_q({\mathbb R}^n)$.
More precisely, we have the following mappings:
\begin{enumerate}
\item
Any $f \in {\mathcal M}^\varphi_q({\mathbb R}^n)$
defines a continuous functional $L_f$ by:
$$\displaystyle
L_f: g \mapsto \int_{{\mathbb R}^n} f(x)g(x)\,dx \in {\mathbb C}
$$
on ${\mathcal H}_{q'}^\varphi({\mathbb R}^n)$.
\item
Conversely, every continuous functional $L$
on ${\mathcal H}^\varphi_{q'}({\mathbb R}^n)$ can be realized
with $f \in {\mathcal M}^\varphi_q({\mathbb R}^n)$
as $L=L_f$.
\item
The correspondence
$f \in {\mathcal M}^\varphi_q({\mathbb R}^n)
\mapsto
L_f \in
({\mathcal H}^\varphi_{q'}({\mathbb R}^n))^*$
is an isomorphism.
Furthermore
\begin{equation}\label{eq:5-1}
\|f \|_{{\mathcal M}^\varphi_q}
=\sup\left\{
\left|\int_{{\mathbb R}^n} f(x)g(x)\,dx\right|
\,:\,g \in {\mathcal H}^\varphi_{q'}({\mathbb R}^n), 
\| g \|_{{\mathcal H}^\varphi_{q'}}=1
\right\}
\end{equation}
and
\begin{equation}\label{eq:5-1a}
\| g \|_{{\mathcal H}^\varphi_{q'}}
=\sup\left\{
\left|\int_{{\mathbb R}^n} f(x)g(x)\,dx\right|
\,:\,f \in {\mathcal M}^\varphi_q({\mathbb R}^n), 
\|f \|_{{\mathcal M}^\varphi_q}=1\right\}.
\end{equation}
\end{enumerate}
\item
The dual of 
$\widetilde{\mathcal M}^\varphi_q({\mathbb R}^n)$
is
${\mathcal H}^\varphi_{q'}({\mathbb R}^n)$
in the following sense.
\begin{enumerate}
\item
Any $f \in {\mathcal H}^\varphi_{q'}({\mathbb R}^n)$
defines a continuous functional $L'_f$ by:
$$\displaystyle
L_f: g \mapsto \int_{{\mathbb R}^n} f(x)g(x)\,dx \in {\mathbb C}
$$
on $\widetilde{\mathcal M}_{q}^\varphi({\mathbb R}^n)$.
\item
Conversely, every continuous functional $L$
on $\widetilde{\mathcal M}^\varphi_{q}({\mathbb R}^n)$ can be realized
with $f \in {\mathcal H}^\varphi_{q'}({\mathbb R}^n)$
as $L=L'_f$.
\item
The correspondence
$f \in {\mathcal H}^\varphi_{q'}({\mathbb R}^n)
\mapsto
L'_f \in
(\widetilde{\mathcal M}^\varphi_{q}({\mathbb R}^n))^*$
is an isomorphism.
Furthermore
\begin{equation}\label{eq:5-1p}
\|f \|_{\widetilde{\mathcal M}^\varphi_q}
=\sup\left\{
\left|\int_{{\mathbb R}^n} f(x)g(x)\,dx\right|
\,:\,g \in {\mathcal H}^\varphi_{q'}({\mathbb R}^n), 
\| g \|_{{\mathcal H}^\varphi_{q'}}=1
\right\}
\end{equation}
and
\begin{equation}\label{eq:5-1a p}
\| g \|_{{\mathcal H}^\varphi_{q'}}
=\sup\left\{
\left|\int_{{\mathbb R}^n} f(x)g(x)\,dx\right|
\,:\,f \in \widetilde{\mathcal M}^\varphi_q({\mathbb R}^n), 
\|f \|_{\widetilde{\mathcal M}^\varphi_q}=1\right\}.
\end{equation}
\end{enumerate}
\end{enumerate}
\end{theorem}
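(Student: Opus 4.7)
The plan is to mimic the classical proof of Morrey-block duality, leveraging three tools already in place: the H\"older-type pairing (Proposition \ref{prop:180310-1111}), the atomic decomposition (Theorem \ref{theorem2.2-generalized}), and the Fatou property of the block space (Proposition \ref{prop:180310-1112}).

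For the first assertion of Theorem \ref{thm:180611-13}, we handle part (a) immediately from Proposition \ref{prop:180310-1111}: it yields $|L_f(g)|\le \|f\|_{{\mathcal M}^\varphi_q}\|g\|_{{\mathcal H}^\varphi_{q'}}$, so $\|L_f\|\le \|f\|_{{\mathcal M}^\varphi_q}$. For part (b), we fix $L\in({\mathcal H}^\varphi_{q'}({\mathbb R}^n))^*$ and exploit the key observation that, for any cube $Q$, every nonzero $h\in L^{q'}(Q)$ is a scalar multiple of a $(\varphi,q)$-block: indeed $A\equiv \varphi(\ell(Q))h/(|Q|^{1/q}\|h\|_{q'})$ satisfies $\|A\|_{q'}=\varphi(\ell(Q))|Q|^{-1/q}$, so $\|h\|_{{\mathcal H}^\varphi_{q'}}\le |Q|^{1/q}\varphi(\ell(Q))^{-1}\|h\|_{q'}$. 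Consequently $L|_{L^{q'}(Q)}$ is continuous in the $L^{q'}$-norm, and classical $L^q$-$L^{q'}$ duality produces $f_Q\in L^q(Q)$ with $L(h)=\int_Q f_Q\cdot h\,dx$ and $\|f_Q\|_{L^q(Q)}\le |Q|^{1/q}\varphi(\ell(Q))^{-1}\|L\|$. Uniqueness of Riesz representatives makes the family $\{f_Q\}$ consistent under restriction; gluing yields a global $f\in L^q_{\rm loc}({\mathbb R}^n)$ with $\|f\|_{{\mathcal M}^\varphi_q}\le \|L\|$. Extending the identity $L(g)=\int fg\,dx$ from $g\in L^{q'}_{\rm c}({\mathbb R}^n)$ to all of ${\mathcal H}^\varphi_{q'}({\mathbb R}^n)$ will go through the atomic decomposition of Theorem \ref{theorem2.2-generalized} together with a dominated-convergence argument justified by Proposition \ref{prop:180310-1111}. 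Part (c), including the norm formulas \eqref{eq:5-1} and \eqref{eq:5-1a}, follows by combining both directions.

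For the second assertion, part (a) is again Proposition \ref{prop:180310-1111}. For part (b), given $L'\in(\widetilde{\mathcal M}^\varphi_q({\mathbb R}^n))^*$, we restrict $L'$ to $L^\infty_{\rm c}({\mathbb R}^n)\subset L^\infty({\mathbb R}^n)\cap {\mathcal M}^\varphi_q({\mathbb R}^n)\subset \widetilde{\mathcal M}^\varphi_q({\mathbb R}^n)$. The ${\mathcal G}_q$-condition on $\varphi$ supplies a continuous embedding $L^\infty_{\rm c}(B(N))\hookrightarrow \widetilde{\mathcal M}^\varphi_q({\mathbb R}^n)$ for each $N$, and standard measure-theoretic arguments then deliver a locally integrable $f$ satisfying $L'(g)=\int fg\,dx$ for every $g\in L^\infty_{\rm c}({\mathbb R}^n)$. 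Using the duality formula \eqref{eq:5-1a} already proved in the first assertion, we estimate
\[
\|f\chi_{B(N)}\|_{{\mathcal H}^\varphi_{q'}}=\sup\left\{\left|\int f\chi_{B(N)}\,g\,dx\right|:\|g\|_{{\mathcal M}^\varphi_q}=1\right\}\le\|L'\|
\]
uniformly in $N$. Since $|f|\chi_{B(N)}\uparrow|f|$ pointwise, Proposition \ref{prop:180310-1112} then promotes $f$ into ${\mathcal H}^\varphi_{q'}({\mathbb R}^n)$ with $\|f\|_{{\mathcal H}^\varphi_{q'}}\le\|L'\|$. The identity $L'=L'_f$ on all of $\widetilde{\mathcal M}^\varphi_q({\mathbb R}^n)$ follows by density, and part (c) goes through exactly as before.

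The principal obstacle is the second assertion's part (b). Extracting the locally integrable representative from $L'$ is delicate because $L^\infty_{\rm c}({\mathbb R}^n)$ does not embed into $\widetilde{\mathcal M}^\varphi_q({\mathbb R}^n)$ through the $L^{q'}$-norm; Riesz representation must first be applied via the $L^\infty$-bound and only then upgraded through the block-space structure. The heart of the argument is the coupling of the Fatou property (Proposition \ref{prop:180310-1112}) with the duality formula \eqref{eq:5-1a}: the latter supplies the uniform block-norm bound on the truncations $f\chi_{B(N)}$, and the former converts these into a global element of ${\mathcal H}^\varphi_{q'}({\mathbb R}^n)$. Both the ${\mathcal G}_q$-decay of $\varphi$ and the assumption $\frac{1}{\varphi}\in {\mathbb Z}_0$ are used essentially---the former to control local ${\mathcal M}^\varphi_q$-norms of test functions, and the latter to make Proposition \ref{prop:180310-1112} applicable.
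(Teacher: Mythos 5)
Your proposal is correct in outline, and for assertion (1) it is essentially the paper's own argument: part (a) is Proposition~\ref{prop:180310-1111}, and part (b) is the same local $L^q$--$L^{q'}$ duality on an exhausting family of cubes followed by gluing, with your block normalization $A=\varphi(\ell(Q))h/(|Q|^{1/q}\|h\|_{q'})$ playing exactly the role of the paper's test function $g=\chi_Q\,{\rm sgn}(f)|f|^{q-1}$; the two computations are equivalent, and the norm identities in (c) follow in both treatments by combining (a) and (b). The genuine divergence is in assertion (2)(b): the paper uses $\tfrac1\varphi\in{\mathbb Z}_0$ to get $\varphi(t)\lesssim t^{n/P}$ near the origin, embeds compactly supported $L^P$-functions into $\widetilde{\mathcal M}^\varphi_q({\mathbb R}^n)$, and produces the representative by $L^P$--$L^{P'}$ duality on the cubes $[-k,k]^n$, whereas you restrict $L'$ to $L^\infty_{\rm c}({\mathbb R}^n)$ and extract $f$ by a Radon--Nikodym argument. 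Your route is viable, but the phrase ``standard measure-theoretic arguments'' hides the one point that must actually be checked: since the dual of $L^\infty$ is not $L^1$, you need the set function $A\mapsto L'(\chi_A)$ to be countably additive and absolutely continuous, i.e.\ $\|\chi_A\|_{{\mathcal M}^\varphi_q}\to0$ as $|A|\to0$; this follows from $\|\chi_A\|_{{\mathcal M}^\varphi_q}\le\varphi(c|A|^{1/n})$ (as in Example~\ref{ex:180525-1}, using $\varphi\in{\mathcal G}_q$) together with $\varphi(0+)=0$, which is a consequence of $\tfrac1\varphi\in{\mathbb Z}_0$ and not of the ${\mathcal G}_q$-condition alone, so the Zygmund hypothesis enters your argument earlier than you indicate. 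A second caveat: you invoke \eqref{eq:5-1a} for $f\chi_{B(N)}$ before it is known to belong to ${\mathcal H}^\varphi_{q'}({\mathbb R}^n)$; the clean fix is to truncate in height as well, observe that each truncation is a multiple of a $(\varphi,q)$-block, bound its block norm by $\|L'\|$ via \eqref{eq:5-1a} (with a monotone-convergence step to pass from test functions $h\in L^\infty_{\rm c}({\mathbb R}^n)$ to general $g\in{\mathcal M}^\varphi_q({\mathbb R}^n)$), and only then let the truncations increase and apply Proposition~\ref{prop:180310-1112}. This is exactly the role the Fatou property plays in the paper's proof, which is equally terse at the corresponding step (it asserts $\|f_k\|_{{\mathcal H}^\varphi_{q'}}\le\|L\|$ directly from testing against $L^\infty_{\rm c}$), so your sketch is at the same level of rigor; the concluding density argument and part (c) match the paper.
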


\begin{proof}
\
\begin{enumerate}
\item
\begin{enumerate}
\item[{\it (a)}]
This is a consequence of Proposition \ref{prop:180310-1111}.
\item[{\it (b)}]
We let $Q_j=2^{j}[-1,1]^n$.
For the sake of the simplicity we denote
$L^{q'}(Q_j)$ by the set of $L^{q'}$ functions
supported on $Q_j$.
The functional
$\displaystyle g \mapsto L(g)$
is well defined and bounded on $L^{q'}(Q_j)$.
Thus by the duality $L^{p'}$-$L^p$ 
there exists $f_j$ such that
$$L(g)=\int_{Q_j} f_j(x)g(x)\,dx$$
for all $g \in L^{q}(Q_j)$.
By the uniqueness of this theorem
we can find an $L^q_{\rm loc}({\mathbb R}^n)$ function $f$
such that $f|_{Q_j}=f_j$ a.e.
for any $j$.

We will prove $f \in {\mathcal M}^p_q({\mathbb R}^n)$.
For this purpose, we take an arbitrary $Q$ and estimate;
\begin{equation}
I\equiv \varphi(\ell(Q))
\left(\frac{1}{|Q|}\int_Q |f(x)|^{q}\,dx\right)^{\frac{1}{q}}.
\end{equation}

For a fixed cube $Q$ and a fixed function $f$ we set
$$
g(x)\equiv \chi_Q(x){\rm sgn}(f(x))|f(x)|^{q-1}
\quad x \in {\mathbb R}^n.
$$
Then we can write
\begin{equation}
I=\varphi(\ell(Q))
\left(\frac{1}{|Q|}\int_Qf(x)g(x)\,dx\right)^{\frac{1}{q}}
=\varphi(\ell(Q))
\left(\frac{1}{|Q|}L(g)\right)^{\frac{1}{q}}.
\end{equation}
Notice that the function
$\frac{|Q|^{\frac{1}{p}-\frac{1}{q}}}{\|g\|_{{q'}}}g$
is a $(p',q')$-block.
Hence, we have
$|L(g)|\le \varphi(\ell(Q))^{-1}\|L\|_*|Q|^{\frac{1}{q}}\|g\|_{q'}$.
As a result we have $I \le \|L\|_*$.
This is the desired result.
\item[{\it (c)}]
Carefully reexamine the proof
of {\it (a)} and {\it (b)}.
The proof of {\it (c)} is already included in them.
\end{enumerate}
\item
The heart of the matters is to prove {\it (b)};
{\it (a)} and {\it (c)} are obtained similarly to (1).
Let $L':\widetilde{\mathcal M}^\varphi_q({\mathbb R}^n)
\to {\mathbb C}$ be a bounded linear mapping.
Since $\varphi^{-1} \in {\mathbb Z}_0$,
we have $\varphi(t) \lesssim t^{1/P}, 0<t \le 1$ for some $P>1$.
For such $P>0$,
we have $L^P({\mathbb R}^n) \cap L^0_{\rm c}({\mathbb R}^n)
\hookrightarrow {\mathcal M}^\varphi_q({\mathbb R}^n)$.
As a consequence, for each $k \in {\mathbb N}$
the mapping $f \in L^P({\mathbb R}^n) \mapsto L'(\chi_{[-k,k]^n}f) \in {\mathbb C}$
is a bounded linear mapping that can be realized by
some $f_k \in L^{P'}({\mathbb R}^n)$.
Furthermore,
$f_k=f_{k+1}\chi_{[-k,k]^n}$
from the $L~{P}({\mathbb R}^n)$-$L^{P'}({\mathbb R}^n)$-duality.
So,
$\displaystyle f=\lim_{k \to \infty}f_k$ exists almost everywhere.
Let $h \in L^\infty_{\rm c}({\mathbb R}^n)$
with ${\rm supp}(h) \subset [-k,k]^n$ for some $k \in {\mathbb N}$.
As a result 
\[
\int_{{\mathbb R}^n}|h(x)f_k(x)|\,dx
=
L(h\overline{\rm sgn}(f_k))
\le
\|L\|_{\widetilde{\mathcal M}^\varphi_{q} \to{\mathbb C}}\|h\|_{\widetilde{\mathcal M}^\varphi_q}.
\]
Since $h$ is arbitrary,
it follows that 
$\|f_k\|_{{\mathcal H}^\varphi_{q'}} \le 
\|L\|_{\widetilde{\mathcal M}^\varphi_{q} \to{\mathbb C}}$.
Using $f_k=f_{k+1}\chi_{[-k,k]^n}$
and Proposition \ref{prop:180310-1112},
we see that
$f \in {\mathcal H}^\varphi_{q'}({\mathbb R}^n)$
and that
$L_f'|L^\infty_{\rm c}({\mathbb R}^n)=L|L^\infty_{\rm c}({\mathbb R}^n)$.
Since
$L^\infty_{\rm c}({\mathbb R}^n)$
is dense in
$\widetilde{\mathcal M}^\varphi_q({\mathbb R}^n)$,
it follows that 
$L=L_f'$,
as required.
\end{enumerate}
\end{proof}

As we discussed for classical Morrey spaces,
we have the following conclusion.
\begin{theorem}{\rm \cite[Theorem 2]{Nakai94}}\label{thm:180611-12}
Let $1<q<\infty$
and let $\varphi \in {\mathcal G}_q$
satisfy $\frac{1}{\varphi} \in {\mathbb Z}_0$.
Then any singular integral operator,
which is initially defined for $L^\infty_{\rm c}({\mathbb R}^n)$-functions,
can be naturally extended to a bounded linear operator on
${\mathcal M}^\varphi_q({\mathbb R}^n)$.
More precisely, $T$ has the following properties:
\begin{enumerate}
\item
For all $f \in L^\infty_{\rm c}({\mathbb R}^n)$,
$T f \in \widetilde{\mathcal M}^\varphi_q({\mathbb R}^n)$
and
satisfies
$\|T f\|_{\widetilde{\mathcal M}^\varphi_q} \lesssim \|f\|_{\widetilde{\mathcal M}^\varphi_q}$.
In particular $T$ extends to a bounded linear operator
on $\widetilde{\mathcal M}^\varphi_q({\mathbb R}^n)$.
\item
The adjoint operator $T^*$ is bounded
on ${\mathcal H}^\varphi_{q'}({\mathbb R}^n)$.
\item
The adjoint $T^{**}$ of $T^{*}$
is a bounded linear operator
on ${\mathcal M}^\varphi_q({\mathbb R}^n)$.
\end{enumerate}
\end{theorem}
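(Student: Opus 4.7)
The plan is to prove part (1) first and then read off parts (2) and (3) by two successive applications of the duality identifications established in Theorem \ref{thm:180611-13}. The decisive ingredient for part (1) is Lemma \ref{lem:180414-9}, which already delivers the norm estimate $\|Tf\|_{{\mathcal M}^\varphi_q}\lesssim\|f\|_{{\mathcal M}^\varphi_q}$ for $f \in L^\infty_{\rm c}({\mathbb R}^n)$; the new content is to verify that $Tf$ actually lies in the subspace $\widetilde{\mathcal M}^\varphi_q({\mathbb R}^n)$.

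To carry out this verification I appeal to Proposition \ref{prop:180611-21} and show separately that $Tf \in \overset{*}{\mathcal M}{}^\varphi_q({\mathbb R}^n)$ and $Tf \in \overline{\mathcal M}{}^\varphi_q({\mathbb R}^n)$. For the first, the size condition on the kernel $K$ yields the pointwise bound $|Tf(x)| \lesssim |x|^{-n}$ once $|x|$ exceeds twice the radius of ${\rm supp}(f)$; a direct case analysis on cubes, exploiting the fact that $\varphi(r)/r^{n/q}$ is decreasing, then shows $\|\chi_{{\mathbb R}^n \setminus B(R)}Tf\|_{{\mathcal M}^\varphi_q}\to 0$ as $R\to\infty$, and Lemma \ref{lem:150526-1} supplies the desired membership. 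For the second, Calder\'on--Zygmund theory gives $Tf \in L^p({\mathbb R}^n)$ for every $p \in (1,\infty)$, while Example \ref{example:180613-1} supplies $\varphi(r) \lesssim r^\varepsilon$ near $0$; choosing $p$ so large that $\varphi(\ell(Q))|Q|^{-1/p}$ remains bounded on the relevant range of $\ell(Q)$, H\"older's inequality together with the eventual containment $\{|Tf|>R\} \subset B(2R_0)$ and the $L^p$-convergence $\|\chi_{\{|Tf|>R\}}Tf\|_{L^p}\to 0$ will yield $\|\chi_{\{|Tf|>R\}}Tf\|_{{\mathcal M}^\varphi_q} \to 0$, thereby placing $Tf$ in $\overline{\mathcal M}{}^\varphi_q$.

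Once part (1) is in hand for $f \in L^\infty_{\rm c}({\mathbb R}^n)$, I extend $T$ by continuity: for any $f \in \widetilde{\mathcal M}^\varphi_q({\mathbb R}^n)$, Proposition \ref{prop:180611-21} together with Lemmas \ref{lem:150526-1} and \ref{lem:150805-1} guarantees that $\chi_{B(R)\cap\{|f|\le R\}}f \in L^\infty_{\rm c}({\mathbb R}^n)$ converges to $f$ in ${\mathcal M}^\varphi_q$, so $L^\infty_{\rm c}({\mathbb R}^n)$ is dense in $\widetilde{\mathcal M}^\varphi_q({\mathbb R}^n)$ and the extension is automatic. For part (2), Theorem \ref{thm:180611-13}(2) identifies $(\widetilde{\mathcal M}^\varphi_q)^* = {\mathcal H}^\varphi_{q'}$, so the adjoint $T^*$ is bounded on ${\mathcal H}^\varphi_{q'}({\mathbb R}^n)$ with the same operator norm. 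For part (3), Theorem \ref{thm:180611-13}(1) identifies $({\mathcal H}^\varphi_{q'})^* = {\mathcal M}^\varphi_q$, so the further adjoint $T^{**}$ is automatically bounded on ${\mathcal M}^\varphi_q({\mathbb R}^n)$.

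The main obstacle will be the two Morrey-norm decay estimates used in verifying $Tf \in \widetilde{\mathcal M}^\varphi_q$: a naive $L^p$-argument does not transfer to the Morrey scale without further input, so the pointwise decay of $Tf$ from the size condition on $K$ and the polynomial growth control $\varphi(r) \lesssim r^\varepsilon$ extracted from $\frac{1}{\varphi} \in {\mathbb Z}_0$ must be combined with a cube-by-cube analysis to produce the necessary uniform bounds.
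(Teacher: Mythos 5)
Your proposal is correct, and its overall architecture (prove part (1) for $f \in L^\infty_{\rm c}({\mathbb R}^n)$, extend by density of $L^\infty_{\rm c}({\mathbb R}^n)$ in $\widetilde{\mathcal M}^\varphi_q({\mathbb R}^n)$, then read off (2) and (3) from the two dualities in Theorem \ref{thm:180611-13}) coincides with the paper's. The difference lies in how the membership $Tf \in \widetilde{\mathcal M}^\varphi_q({\mathbb R}^n)$ is verified, and in the norm bound. The paper reproves the estimate $\|Tf\|_{{\mathcal M}^\varphi_q}\lesssim\|f\|_{{\mathcal M}^\varphi_q}$ inside the proof (the $3Q$ decomposition, size condition and $\frac1\varphi\in{\mathbb Z}_0$), whereas you simply cite Lemma \ref{lem:180414-9}, which is legitimate and cleaner. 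For the membership, the paper uses a single pointwise domination $|Tf|\lesssim \chi_{Q(2R)}|Tf|+(R+|\cdot|)^{-n}\int_{Q(R)}|f|$, placing the near term in $\widetilde{\mathcal M}^\varphi_q$ through the embedding of compactly supported $L^P$-functions (possible since $\frac1\varphi\in{\mathbb Z}_0$ forces $\varphi(t)\lesssim t^{1/P}$ near $0$) and the far term through the comparison $(R+|\cdot|)^{-n\eta}\lesssim (M\chi_{Q(R)})^\eta$ with $q^{-1}<\eta<1$ and the boundedness of the maximal operator combined with the scaling law. You instead check the two conditions of Proposition \ref{prop:180611-21} directly: the spatial tail estimate by a cube-by-cube computation using the kernel decay and the monotonicity of $t\mapsto\varphi(t)t^{-n/q}$ (this does go through, and in fact needs only $\varphi\in{\mathcal G}_q$ and $q>1$), and the height-truncation estimate by $Tf\in L^p$, H\"older on cubes, and again $\varphi(t)\lesssim t^{\varepsilon}$ near $0$. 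Both routes rest on the same two inputs, the decay $|Tf(x)|\lesssim|x|^{-n}$ at infinity and the smallness of $\varphi$ near the origin extracted from $\frac1\varphi\in{\mathbb Z}_0$; yours trades the powered maximal function for explicit cube estimates, while the paper's domination argument outsources the tail computation to Theorem \ref{thm:140525-3}. One small point to make explicit: Lemma \ref{lem:150805-1} is stated only as a necessary condition for membership in $\overline{\mathcal M}{}^\varphi_q({\mathbb R}^n)$, so when you conclude $Tf\in\overline{\mathcal M}{}^\varphi_q({\mathbb R}^n)$ from $\|\chi_{\{|Tf|>R\}}Tf\|_{{\mathcal M}^\varphi_q}\to0$ you should note the (trivial) converse, namely that the truncations $\chi_{\{|Tf|\le R\}}Tf$ lie in $L^\infty({\mathbb R}^n)\cap{\mathcal M}^\varphi_q({\mathbb R}^n)$ and converge to $Tf$ in norm.
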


Theorem \ref{thm:180611-12}
extends the result by Peetre in \cite{Peetre69}
from Morrey spaces
to generalized Morrey spaces.
The proof is based on \cite{RoSc16}.
\begin{proof}
Since $\frac{1}{\varphi} \in {\mathbb Z}_0$,
$L^P({\mathbb R}^n) \cap L^\infty_{\rm c}({\mathbb R}^n)$
is a subset of $\widetilde{\mathcal M}^\varphi_q({\mathbb R}^n)$.
Let $f \in L^\infty_{\rm c}({\mathbb R}^n)$.
Then there exists $R>0$ such that $f$ is supported on $Q(R)$.
We can decompose
\[
|T f|\lesssim\chi_{Q(2R)}|T f|+(R+|\cdot|)^{-n}\int_{Q(R)}|f(y)|\,dy
\in \widetilde{\mathcal M}^\varphi_q({\mathbb R}^n),
\]
since
\[
(R+|\cdot|)^{-n\eta} \lesssim (M\chi_{Q(R)})^\eta \in {\mathcal M}^\varphi_q({\mathbb R}^n) 
\quad (q^{-1}<\eta<1)
\]
and hence
\[
(R+|\cdot|)^{-n}=\lim_{L \to \infty}
\chi_{Q(L)}
(R+|\cdot|)^{-n}
\]
in ${\mathcal M}^\varphi_q({\mathbb R}^n)$.
It thus remains to show that
$\|T f\|_{\widetilde{\mathcal M}^\varphi_q} \lesssim \|f\|_{\widetilde{\mathcal M}^\varphi_q}$
for all $f \in L^\infty_{\rm c}({\mathbb R}^n)$.
We argue as follows:
Let $Q$ be a cube.
We need to show
\[
\varphi(\ell(Q))\left(
\frac{1}{|Q|}
\int_{Q}|T f(y)|^q\,dy\right)^{\frac1q}
\lesssim
\|f\|_{{\mathcal M}^\varphi_q}.
\]
We decompose this estimate according to $3Q$:
Since $T$ is known to be $L^q({\mathbb R}^n)$-bounded,
\begin{align*}
\varphi(\ell(Q))\left(
\frac{1}{|Q|}
\int_{Q}|T[\chi_{3Q}f](y)|^q\,dy\right)^{\frac1q}
&\le
\varphi(\ell(Q))\left(
\frac{1}{|Q|}
\int_{{\mathbb R}^n}|T[\chi_{3Q}f](y)|^q\,dy\right)^{\frac1q}\\
&\lesssim
\varphi(\ell(Q))\left(
\frac{1}{|Q|}
\int_{3Q}|f(y)|^q\,dy\right)^{\frac1q}\\
&\le
\varphi(3\ell(Q))\left(
\frac{1}{|Q|}
\int_{3Q}|f(y)|^q\,dy\right)^{\frac1q}\\
&\lesssim
\|f\|_{{\mathcal M}^\varphi_q}.
\end{align*}
Meanwhile, using the size condition of $T$,
we have
\begin{align*}
\varphi(\ell(Q))\left(
\frac{1}{|Q|}
\int_{Q}|T[\chi_{{\mathbb R}^n \setminus 3Q}f](y)|^q\,dy\right)^{\frac1q}
&\le
\varphi(\ell(Q))
\sup_{x \in Q}|T[\chi_{{\mathbb R}^n \setminus 3Q}f](x)|\\
&\lesssim
\varphi(\ell(Q))
\int_{{\mathbb R}^n \setminus 3Q}
\frac{|f(y)|}{|y-c(Q)|^n}\,dy\\
&\lesssim
\varphi(\ell(Q))
\sum_{m=1}^\infty
\frac{1}{|2^m Q|}
\int_{2^{m+1}Q \setminus 2^{m}Q}|f(y)|\,dy.
\end{align*}
Since $\frac{1}{\varphi} \in {\mathbb Z}_0$,
we obtain
\begin{align*}
\lefteqn{
\varphi(\ell(Q))\left(
\frac{1}{|Q|}
\int_{Q}|T[\chi_{{\mathbb R}^n \setminus 3Q}f](y)|^q\,dy\right)^{\frac1q}
}\\
&\lesssim
\varphi(\ell(Q))
\sum_{m=1}^\infty
\frac{1}{\varphi(2^m \ell(Q))}
\frac{\varphi(2^m\ell(Q))}{|2^m Q|}
\int_{2^{m+1}Q \setminus 2^{m}Q}|f(y)|\,dy\\
&\lesssim
\varphi(\ell(Q))
\sum_{m=1}^\infty
\frac{1}{\varphi(2^m \ell(Q))}
\|f\|_{{\mathcal M}^\varphi_q}\\
&\lesssim
\varphi(\ell(Q))
\|f\|_{{\mathcal M}^\varphi_q}
\int_{\ell(Q)}^\infty\frac{1}{\varphi(s)s}\,ds\\
&\lesssim
\|f\|_{{\mathcal M}^\varphi_q}.
\end{align*}
Thus we have
$\|T f\|_{\widetilde{\mathcal M}^\varphi_q} \lesssim \|f\|_{\widetilde{\mathcal M}^\varphi_q}$
for all $f \in L^\infty_{\rm c}({\mathbb R}^n)$.
\end{proof}

We did not use
$(\ref{eq:Nakai-19})$
for the proof of boundedness of the Hardy--Littlewood maximal operator.
However
for the proof of boundedness of the singular integral operators,
$(\ref{eq:Nakai-19})$ is absolutely necessary as the following proposition shows:

\begin{proposition}\label{prop:180310-1113}
Let $1<q<\infty$ and $\varphi \in {\mathcal G}_q$.
If 
$\|R_1 f\|_{{\rm w}{\mathcal M}^\varphi_q} \lesssim \|f\|_{{\mathcal M}^\varphi_q}$
for all $f \in L^\infty_{\rm c}({\mathbb R}^n)$,
then $\frac{1}{\varphi} \in {\mathbb Z}_0$,
where $R_1$ denotes the first Riesz transform.
\end{proposition}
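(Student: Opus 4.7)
The plan is to argue by contradiction using Theorem \ref{thm:180611-16}: if $\frac{1}{\varphi} \notin {\mathbb Z}_0$, then for every $m \in {\mathbb N}$ there exists $r_m > 0$ with $\varphi(2^m r_m) \le 2\varphi(r_m)$. Following the strategy of Proposition \ref{prop:180311-1}, I will produce, at each such scale $r_m$, a test function $f = f^{(m)} \in L^\infty_{\rm c}({\mathbb R}^n)$ satisfying
\[
\|f\|_{{\mathcal M}^\varphi_q} \lesssim \varphi(r_m), \qquad \|R_1 f\|_{{\rm w}{\mathcal M}^\varphi_q} \gtrsim m\, \varphi(r_m),
\]
which contradicts the assumed weak-type bound once $m$ is large enough.

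Fix $m$, set $r = r_m$, and for $j = 1, 2, \ldots, m$ let
\[
Q_j \equiv Q(2^{j+1} r\, e_1,\, 2^{j-1} r),
\]
a cube of sidelength $2^j r$ contained in $\{y : 3 \cdot 2^{j-1} r \le y_1 \le 5 \cdot 2^{j-1} r\}$; the $Q_j$ are then pairwise disjoint and $\bigcup_{j=1}^m Q_j \subset Q(0, 2^{m+2} r)$. Put $f \equiv \sum_{j=1}^m \chi_{Q_j}$. Since $0 \le f \le \chi_{Q(0, 2^{m+2} r)}$, combining Lemma \ref{lem7}, the doubling of $\varphi \in {\mathcal G}_q$ provided by Proposition \ref{prop:180611-2}, and the hypothesis $\varphi(2^m r) \le 2\varphi(r)$ yields $\|f\|_{{\mathcal M}^\varphi_q} \lesssim \varphi(r)$.

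The core estimate is a pointwise lower bound for $R_1 f_j \equiv R_1[\chi_{Q_j}]$ on $Q(0, r)$: the geometry gives $x_1 - y_1 \in [-3 \cdot 2^j r, -2^j r]$ and $|x - y| \le \sqrt{n+8}\,2^j r$ for every $x \in Q(0, r)$ and $y \in Q_j$. Consequently the integrand of $R_1 f_j(x) = c_n \int_{Q_j}(x_1 - y_1)|x - y|^{-n-1}\,dy$ is negative and of absolute value $\gtrsim_n (2^j r)^{-n}$ throughout $Q_j$ (volume $(2^j r)^n$), giving $R_1 f_j(x) \le -c_n'$ for some constant $c_n' > 0$ independent of $j$, $m$ and $x$. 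Because all summands share the same sign, no cancellation occurs in $R_1 f(x) = \sum_{j=1}^m R_1 f_j(x)$, so $|R_1 f(x)| \ge c_n'\, m$ on $Q(0, r)$. Taking $\lambda = c_n' m/2$ and invoking Lemma \ref{lem7} for the level set,
\[
\|R_1 f\|_{{\rm w}{\mathcal M}^\varphi_q}
\ge \lambda \|\chi_{Q(0, r)}\|_{{\mathcal M}^\varphi_q}
= \tfrac{c_n'}{2}\, m\, \varphi(r),
\]
which together with the previous paragraph yields $m \lesssim 1$, the required contradiction.

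The main technical point is the uniformity of the lower bound $|R_1 f_j(x)| \ge c_n'$ in $j$, $m$ and $x \in Q(0,r)$. Sign alignment of the summands is forced by $y_1 \gg |x_1|$ throughout the relevant domain, so no delicate oscillation argument is required; the positive lower bound reflects the scale invariance of the Riesz kernel under the simultaneous scaling of $Q_j$ and of its distance to the origin, and only requires careful book-keeping of dimensional constants when comparing $|x_1 - y_1|$ from below with $|x - y|$ from above.
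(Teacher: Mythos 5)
Your proposal is correct, and it reaches the contradiction by a genuinely different construction than the paper. The paper takes a single test function $f_m=\chi_{V\cap B(2^{m-1}r_m)\setminus B(2r_m)}$ supported on a cone--annulus, shows $R_1f_m\gtrsim \log m$ on $V\cap B(r_m)$ by integrating $|y|^{-n}$ over the dyadic annuli, and then, because the level set is only a cone piece rather than a full cube, it must pass through the pointwise bound $\chi_{B(r_m)}\lesssim M\chi_{V\cap B(r_m)}$ and the boundedness of $M$ on ${\mathcal M}^\varphi_q({\mathbb R}^n)$ (Theorem \ref{thm:140525-3}) to convert this into the lower bound $\log m\,\varphi(r_m)\lesssim\|R_1f_m\|_{{\rm w}{\mathcal M}^\varphi_q}$. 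You instead spread the mass over $m$ pairwise disjoint cubes $Q_j$ of sidelength $2^jr_m$ marching along the $x_1$-axis; each contributes a sign-aligned amount $\gtrsim_n 1$ to $R_1 f$ on the full cube $Q(0,r_m)$ (your bounds $|x_1-y_1|\in[2^jr_m,3\cdot 2^jr_m]$, $|x-y|\le\sqrt{n+8}\,2^jr_m$ are correct and uniform in $j,m$), so the level set contains a genuine cube and Lemma \ref{lem7} applies directly, yielding $\|R_1f\|_{{\rm w}{\mathcal M}^\varphi_q}\gtrsim m\,\varphi(r_m)$ against $\|f\|_{{\mathcal M}^\varphi_q}\lesssim\varphi(2^{m+2}r_m)\lesssim\varphi(r_m)$ by doubling and the negated condition from Theorem \ref{thm:180611-16}. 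Your route is thus somewhat more self-contained (no appeal to the maximal-operator theorem) and produces a linear rather than logarithmic divergence in $m$, while the paper's single cone-annulus function is the more classical counterexample; both give the same conclusion, and all the side points (disjointness of the $Q_j$, $x\notin{\rm supp}\,f$ so no principal-value issue, monotonicity of the Morrey norm for the level set) check out.
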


\begin{proof}
Let $V=\{x=(x_1,x_2,\ldots,x_n) \in {\mathbb R}^n\,:\,2x_1>|x|\}$.
Assume that 
$\frac1\varphi \notin {\mathbb Z}_0$,
so that
for any $m \in {\mathbb N}\cap [3,\infty)$
there exists $r_m>0$ such that
$\varphi(2^m r_m) \le 2\varphi(r_m)$.
Then, consider 
$f_m=\chi_{V \cap B( 2^{m-1}r_m) \setminus B(2r_m)}$.
Let $x \in V \cap B(r_m)$.
If $y \in V \cap B( 2^{m-1}r_m) \setminus B(2r_m)$, then $x-y\in V$ and $r_m \le |x-y| \le 2^{m}r_m$.
Thus,
\begin{align}
R_1 f_m(x)
&=\int_{{\mathbb R}^n}\frac{x_1-y_1}{|x-y|^{n+1}}f_m(y)\,dy\notag\\
&=\int_{V \cap B( 2^{m-1}r_m) \setminus B(2r_m)} \frac{y_1}{|y|^{n+1}}\,dy\notag\\
&\ge C\int_{V \cap B( 2^{m-1}r_m) \setminus B(2r_m)} \frac{1}{|y|^{n}}\,dy\notag\\
\label{eq:180707-41}
&=C\log m.
\end{align}
Since $V$ is a cone, we have 
\begin{equation}\label{eq:180707-42}
\chi_{B(r_m)} \lesssim M\chi_{V\cap B(r_m)}.
\end{equation}
We use this estimate and the boundedness of $M$ on 
${\mathcal M}^\varphi_q({\mathbb R}^n)$ to obtain
\[
\varphi(r_m)
\lesssim \|\chi_{B(r_m)}\|_{{\mathcal M}^\varphi_q}
\lesssim \|M\chi_{V\cap B(r_m)}\|_{{\mathcal M}^\varphi_q}
\lesssim \|\chi_{V \cap B(r_m)}\|_{{\mathcal M}^\varphi_q}.
\]
By using the inequality 
$\log m \lesssim |R_1 f_m(x)|$ for $x\in V\cap B(r_m)$
and the boundedness of $R_1$ on ${\mathcal M}^\varphi_q(\ell^u)$, we have
\begin{align*}
\log(m) \varphi(r_m)
&\lesssim 
\|R_1 f_m\|_{{\rm w}{\mathcal M}^\varphi_q}\\
&\lesssim
\|f_m\|_{{\mathcal M}^\varphi_q}\\
&\le
\|\chi_{B(2^m r_m)}\|_{{\mathcal M}^\varphi_q}\\
&\lesssim
\varphi(2^mr_m) \\
&\sim \varphi(r_m).
\end{align*}
This implies $\log m \le D$ where $D$ is independent of $m$, 
contradictory to the fact that $m\ge 3$ is arbitrary.
Hence, there exists some $m_0 \in {\mathbb N}$ such that $\varphi\left(2^{m_0}r \right)>2\varphi(r)$.
Thus the integral condition $(\ref{eq:Nakai-19})$ holds.
\end{proof}

We disprove that $T$ can not be exteded to a bounded linear operator
on $m^p_q({\mathbb R}^n)$.
\begin{example}\label{example:180611-28}
Let $1 \le q \le p<\infty$.
Then
\[
\|R_1 f\|_{{\rm w}m^p_q}
\lesssim
\|f\|_{m^p_q}
\quad (f \in L^\infty_{\rm c}({\mathbb R}^n))
\]
and
\[
\|R_1 f\|_{{\rm w}L^q_{\rm uloc}}
\lesssim
\|f\|_{L^q_{\rm uloc}}
\quad (f \in L^\infty_{\rm c}({\mathbb R}^n))
\]
fail.
In fact,
let $\varphi(t)=\min(t^{\frac{n}{p}},1)$,
$t>0$ as in Example \ref{example:180611-1}.
Then
$\varphi$ fails (\ref{eq:Nakai-19})
because
$\displaystyle
\int_1^\infty \frac{dr}{\varphi(r)}=\infty.
$
\end{example}

We end this section with extension to the vector-valued inequality.
\begin{theorem}\label{thm:180611-1}
Let $1<q<\infty$, $1<u<\infty$ and $\varphi \in {\mathcal G}_q$.
Let also $T$ be a singular integral operartor.
Assume in addition that
$(\ref{eq:Nakai-19})$ holds.
Then
for all $\{f_j\}_{j=1}^\infty \subset {\mathcal M}^\varphi_q({\mathbb R}^n)$,
\[
\left\|\left(\sum_{j=1}^\infty |T f_j|^u\right)^{\frac1u}\right\|_{{\mathcal M}^\varphi_q}
\lesssim
\left\|\left(\sum_{j=1}^\infty |f_j|^u\right)^{\frac1u}\right\|_{{\mathcal M}^\varphi_q}
\]
for all $\{f_j\}_{j=1}^\infty \subset {\mathcal M}^\varphi_q({\mathbb R}^n)$.
\end{theorem}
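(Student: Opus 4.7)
The plan is to adapt the proof of Theorem \ref{thm:180611-12} to the vector-valued setting by decomposing each $f_j$ into a local and a non-local piece relative to a fixed cube, handling the local piece with the classical $L^q({\mathbb R}^n;\ell^u)$ boundedness of $T$, and handling the tail with the size condition of the kernel together with the key integral assumption $(\ref{eq:Nakai-19})$.

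First I would fix a cube $Q$, set $f_{j,1}\equiv\chi_{3Q}f_j$ and $f_{j,2}\equiv f_j-f_{j,1}$, and write $F\equiv\bigl(\sum_{j}|f_j|^u\bigr)^{1/u}$. It suffices to show that
\[
\varphi(\ell(Q))\left(\frac{1}{|Q|}\int_Q\Bigl(\sum_{j=1}^\infty |T f_j(y)|^u\Bigr)^{q/u}dy\right)^{1/q}\lesssim \|F\|_{{\mathcal M}^\varphi_q}.
\]
For the local piece I would invoke the classical vector-valued boundedness $T:L^q({\mathbb R}^n;\ell^u)\to L^q({\mathbb R}^n;\ell^u)$ (a standard Fefferman--Stein type consequence of the $L^2$ theory combined with the Banach-space-valued Calder\'{o}n--Zygmund machinery), together with the doubling of $\varphi$ from Proposition \ref{prop:180611-2}, to bound
\[
\varphi(\ell(Q))\left(\frac{1}{|Q|}\int_{{\mathbb R}^n}\Bigl(\sum_j |T f_{j,1}(y)|^u\Bigr)^{q/u}dy\right)^{1/q}
\lesssim \varphi(3\ell(Q))\left(\frac{1}{|Q|}\int_{3Q}F(y)^q\,dy\right)^{1/q}\lesssim \|F\|_{{\mathcal M}^\varphi_q}.
\]

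For the non-local piece, the size condition on the kernel of $T$ gives, for every $y\in Q$,
\[
|T f_{j,2}(y)|\lesssim \sum_{m=1}^\infty \frac{1}{|2^m Q|}\int_{2^{m+1}Q\setminus 2^m Q}|f_j(z)|\,dz.
\]
Since $u\ge 1$, Minkowski's inequality in $\ell^u$ pushes the sum over $j$ inside the integrals, yielding
\[
\Bigl(\sum_{j=1}^\infty |T f_{j,2}(y)|^u\Bigr)^{1/u}\lesssim \sum_{m=1}^\infty \frac{1}{|2^{m+1}Q|}\int_{2^{m+1}Q}F(z)\,dz\lesssim \sum_{m=1}^\infty \frac{\|F\|_{{\mathcal M}^\varphi_q}}{\varphi(2^{m+1}\ell(Q))},
\]
where the last step uses the nesting property of Lemma \ref{lem:180307-1}. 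Multiplying by $\varphi(\ell(Q))$ and comparing the discrete sum with $\int_{\ell(Q)}^\infty \frac{ds}{\varphi(s)s}$ via Proposition \ref{prop:180611-2}, then invoking $(\ref{eq:Nakai-19})$, gives a bound by $\|F\|_{{\mathcal M}^\varphi_q}$ that is independent of $y\in Q$. Combining the two pieces and taking the supremum over $Q$ yields the result.

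The main obstacle is not the generalized-Morrey machinery — which here is essentially identical to the scalar argument in Theorem \ref{thm:180611-12} — but the availability of the scalar-$L^q$ input, namely the vector-valued Calder\'{o}n--Zygmund inequality $\|T:L^q(\ell^u)\to L^q(\ell^u)\|<\infty$. This inequality is classical for convolution-type singular integrals and extends to generalized Calder\'{o}n--Zygmund operators through Banach-valued extrapolation (using that $\ell^u$ is UMD for $1<u<\infty$), but it is not formally recorded in the preliminaries and must be imported to complete the argument.
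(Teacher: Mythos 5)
Your argument is essentially the paper's: the paper proves this by repeating the decomposition scheme of Theorem \ref{thm:180524-1} (local piece via the classical vector-valued inequality $\|(\sum_j|Tf_j|^u)^{1/u}\|_{L^q}\lesssim\|(\sum_j|f_j|^u)^{1/u}\|_{L^q}$, which it explicitly imports as a well-known fact, and tail piece via the kernel estimate plus $(\ref{eq:Nakai-19})$), which is exactly your local/non-local split with Minkowski and the dyadic-sum-to-integral comparison. So the proposal is correct and takes the same route; the vector-valued $L^q(\ell^u)$ input you flag is precisely what the paper also cites without proof.
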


\begin{proof}
Similar to Theorem \ref{thm:180524-1}.
We also use a well-known inequality:
\[
\left\|\left(\sum_{j=1}^\infty |T f_j|^u\right)^{\frac1u}\right\|_{L^q}
\lesssim
\left\|\left(\sum_{j=1}^\infty |f_j|^u\right)^{\frac1u}\right\|_{L^q}
\]
for all $\{f_j\}_{j=1}^\infty \subset L^q({\mathbb R}^n)$.
\end{proof}

\begin{remark}
One can consider the singular integral operators
having rough kernel.
Let $\Omega:S^{n-1}=\{|x|=1\} \to {\mathbb C}$
be a measurable function having enough integrability
and having integral zero.
Then define $T_\Omega$ by
\[
T_\Omega f(x)=\int_{{\mathbb R}^n}
\frac{\Omega(x-y)}{|x-y|^n}f(y)\,dy.
\]
See \cite{BGGS17, Guliyev13-2} for example.
\end{remark}

\begin{remark}
N. Samko introduced a method
of defining singular integral operators
on generalized Morrey spaces
by considering $p$-admissible singular integral operators
in \cite[Definition 3.3]{Sa(N)13-1}.
See also 
\cite[Theorem 4.5]{GAK11}
and
\cite[Theorem 4.3]{AGM12}.
See
\cite{KGS14} for a similar approach in the weighted setting.
\end{remark}

\begin{remark}
See \cite{Peetre69},
where Peetre proved the boundedness
of the singular integral operators on generalized Morrey--Campanato spaces.
\end{remark}

\begin{remark}
See \cite[Theorem 6.2]{Guliyev09}
and \cite{Softova06}
for the boundedness of the singular integral operators.
\end{remark}

\begin{remark}
See 
\cite[Theorems 5.3, 5.6 and 6.3]{GAKS11}
and
\cite[Theorem 5.6]{GAK11}
for the boundedness of the commutators
generated by singular integral operators and BMO.
\end{remark}

\begin{remark}
Chen and Ding dealt with the parabolic singular integral operators
in \cite{ChDi14}.
\end{remark}

\begin{remark}
Pang, Li and Wang dealt with the oscillatory integral operators
in \cite{PLW13}.
\end{remark}

\begin{remark}
See 
\cite[Theorems 2.3, 2.4 and 3.1]{ShTa12},
\cite[Theorem 1.7]{TaHe13}
and
\cite[Theorem 3.1]{YuTa14}
for the boundedness of the singular integral operator
on generalized Morrey spaces
in the multilinear setting.
\end{remark}

\begin{remark}
\
\begin{enumerate}
\item
Wang discussed the boundedness of the intrinsic square functions
in \cite{Wang14-1},
where Wang did not have to resort to the duality argument.
See also 
\cite[Corollary 6.7]{GAK11},
\cite[Theorems 5 and 6]{GaWu14},
\cite[Section 5.2]{KGS14} and
\cite[Theorem 1.4]{GOS15}
for similar approaches.
See
\cite{CDW10, WuZh14}
for the case of the commutators.
\item
Karaman, Guliyev and Serbetci handled
the pseudo-differential operators in \cite[Section 5.1]{KGS14}.
\item
Karaman, Guliyev and Serbetci handled
the Marcinkiewicz operators in \cite[Section 5.3]{KGS14}.
\item
Karaman, Guliyev and Serbetci handled
the Bochner--Riesz operators in \cite[Section 5.4]{KGS14}.
\end{enumerate}
\end{remark}

\begin{remark}
Let $T$ be a singular integral operator.
In \cite[Theorem 3.2]{KoMi06}
Komori and Mizuhara considered
the operator of the form $(f,g) \in
{\mathcal H}^\varphi_{q'}({\mathbb R}^n) \times 
{\mathcal M}^\varphi_{q}({\mathbb R}^n) \mapsto f \cdot T g-T f \cdot g
\in H^1({\mathbb R}^n)$
and obtained a factorization theorem
as an application of the ${\mathcal M}^\varphi_{q'}({\mathbb R}^n)$-boundednes of
the commutators.
\end{remark}

\subsection{Generalized fractional integral operators in generalized Morrey spaces}
\label{subsection:Boundedness of the generalized fractional integral operator on generalized Morrey spaces}

To consider the operator like $(1-\Delta)^{-1}$,
we are oriented to considering
$$
I_{\rho}f(x)
=
\int_{{\mathbb R}^n}f(y)\frac{\rho(|x-y|)}{|x-y|^n}\,dy
$$
for any suitable function $f$ on ${{\mathbb R}^n}$,
where $\rho:(0,\infty) \to (0,\infty)$ is a suitable measurable function.
Generalized Morrey spaces allow us to consider
more general fractional integral operators.

Let us discuss what condition we need in order to guarantee
that $I_\rho$ enjoys some boundedness property.
As we did in \cite[p. 761]{EGNS14},
we always assume that $\rho$ satisfies the ``$Dini\ condition$"
for $I_\rho$.
\begin{equation}\label{150513-1}
\int_{0}^{1}\frac{\rho(s)}{s}ds<\infty,
\end{equation}
so that $I_\rho\chi_Q(x)$ is finite for any cube $Q$
and $x \in {\mathbb R}^n$. 

In addition, we also assume that $\rho$ satisfies the ``$growth\ condition$":
there exist constants $C>0$ and $0<2k_1<k_2<\infty$ such that
\begin{equation}\label{150513-2}
\sup_{\frac{r}{2}<s\le r}\rho(s)\lesssim \int_{k_1r}^{k_2r}\frac{\rho(s)}{s}ds,\quad r>0,
\end{equation}
as was proposed by Perez \cite{Perez95}.
Condition (\ref{150513-2}) is weaker than the usual {\it doubling condition}:
there exists a constant $D>0$ such that
\begin{equation}\label{eq:D}
\frac{1}{D}\le
\frac{\rho(r)}{\rho(s)}\le D
\end{equation}
whenever
$r>0$ and $s>0$ satisfy
$r \le 2s \le 4r$.
\begin{proposition}\label{prop:180611-22}
If $\rho:(0,\infty) \to (0,\infty)$ satisfies the doubling condition
$(\ref{eq:D})$,
then
\[
\sup_{\frac{r}{2} \le s \le r}\rho(s)
\le 2D^2\int_{\frac{r}{2}}^r \rho(s)\frac{ds}{s}.
\]
\end{proposition}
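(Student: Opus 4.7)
The plan is to exploit the doubling condition to pointwise bound $\rho(s_0)$ for any fixed $s_0\in[r/2,r]$ by $D\,\rho(s)$ uniformly in $s\in[r/2,r]$, and then turn the supremum into the integral by averaging against $ds/s$ over the interval $[r/2,r]$, whose logarithmic length is $\log 2$.

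First I would verify that any pair $s_0,s\in[r/2,r]$ satisfies the hypothesis of the doubling condition $(\ref{eq:D})$, namely $s_0\le 2s\le 4s_0$. This is immediate from the interval endpoints: $s_0\le r$ and $s\ge r/2$ give $s_0\le 2s$, while $s_0\ge r/2$ and $s\le r$ give $2s\le 4s_0$. Therefore $(\ref{eq:D})$ yields
\[
\rho(s_0)\le D\,\rho(s)\qquad (s_0,s\in[r/2,r]).
\]

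Next I would divide by $s$ and integrate over $s\in[r/2,r]$ to get
\[
\rho(s_0)\,\log 2
=\rho(s_0)\int_{r/2}^{r}\frac{ds}{s}
\le D\int_{r/2}^{r}\rho(s)\,\frac{ds}{s},
\]
and then take the supremum over $s_0\in[r/2,r]$, obtaining
\[
\sup_{\frac r2\le s\le r}\rho(s)\le \frac{D}{\log 2}\int_{r/2}^{r}\rho(s)\,\frac{ds}{s}.
\]
Finally, since the doubling condition forces $D\ge 1$ and $1/\log 2<2$, the factor $D/\log 2$ is dominated by $2D\le 2D^{2}$, giving the stated inequality.

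There is no real obstacle here; the only point that deserves care is verifying that every pair in $[r/2,r]$ actually falls within the admissible range of the doubling condition, and the slightly generous constant $2D^{2}$ (rather than the sharper $D/\log 2$) simply absorbs the factor $1/\log 2$ into a multiple of $D$.
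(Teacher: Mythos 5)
Your proof is correct and follows essentially the same idea as the paper: use the doubling condition to compare values of $\rho$ within $[r/2,r]$ and the fact that $\int_{r/2}^{r}\frac{ds}{s}=\log 2$ to trade the supremum for the integral. The only (harmless) difference is that you apply doubling once between arbitrary pairs $s_0,s\in[r/2,r]$, obtaining the slightly sharper constant $D/\log 2$, whereas the paper passes through $\rho(r)$ and applies doubling twice, arriving at $D^{2}/\log 2\le 2D^{2}$.
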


\begin{proof}
Keeping in mind $\displaystyle\int_{\frac12r}^{r}\frac{ds}{s}=\log 2<1$,
we calculate
$$\displaystyle
\sup_{\frac{r}{2} \le s \le r}\rho(s)
\le D\rho(r)
=
\frac{D}{\log 2}
\int_{\frac{r}{2}}^r \rho(r)\frac{ds}{s}
\le
\frac{D^2}{\log 2}
\int_{\frac{r}{2}}^r \rho(s)\frac{ds}{s}
\le
2D^2
\int_{\frac{r}{2}}^r \rho(s)\frac{ds}{s}.
$$
\end{proof}
\begin{example}\label{example:180414-111}
Let $0<\alpha<\infty$.
\begin{enumerate}
\item
$\rho(t)=t^\alpha$,
which generates $I_\alpha$, 
satisfies the doubling condition.
\item
$\rho(t)=\dfrac{t^\alpha}{\log(e+t)}$ satisfies the doubling condition.
\item
$\rho(t)=\dfrac{t^\alpha}{1+t^M}$ satisfies the doubling condition.
See \cite{KNS00} for an application to Schr\"{o}dinger equations.
\item
$\rho(t)=t^\alpha e^{-t}$ satisfies the growth condition
but fails the doubling condition.
\item
Let $0 \le \gamma<\infty$ and $\beta_1,\beta_2 \in {\mathbb R}$.
We set
\[
\ell^{\mathbb B}(r)
\equiv
\begin{cases}
(1+|\log r|)^{\beta_1}&(0< r \le 1),\\
(1+|\log r|)^{\beta_2}&(1< r<\infty).
\end{cases}
\]
as before.
Then
$\rho(t)=t^\gamma \ell^{\mathbb B}(t)$
satisfies
(\ref{150513-1})
if and only if
$\gamma=0>-1>\beta_1$
or
$\gamma>0$.
Meanwhile
(\ref{eq:D})
is always satisfied.
Noteworthy is the fact that we can tolerate the case
$\gamma=0$ if $\beta_1<-1$.
\end{enumerate}
\end{example}

To check that our example is not so artificial

\begin{definition}
One defines
$(1-\Delta)^{-\frac{s}{2}}f$
by
\[
(1-\Delta)^{-\frac{s}{2}}f=G_s*f,
\]
where $G_s$ is given by:
\[
G_s(x)=\lim_{\varepsilon \downarrow 0}
\frac{1}{(2\pi)^n}
\int_{{\mathbb R}^n}
\frac{\exp(-|\varepsilon \xi|^2)e^{i x \cdot \xi}}{(1+|\xi|^2)^{\frac{s}{2}}}\,d\xi
\quad (x \in {\mathbb R}^n).
\]
The function $G_s$ is called the Bessel kernel.
\end{definition}
The above examples are natural in some sense
but somewhat artificial because
the second example and the third one do not appear
naturally in the context of other areas of mathematics.
Here we present some other examples related
to partial differential equations.
\begin{example}\label{example:180611-27}
Note that the solution to $(1-\Delta)f=g$,
where $f$ is an unknown function and 
$g$ is a give function is given by:
\[
g=(1-\Delta)^{-1}f.
\]
\end{example}

Although it is impossible to find $I_\rho \chi_{B(x,r)}(y), y \in {\mathbb R}^n$,
we still have a partial but important estimate.

\begin{lemma}\label{lem1}
Let $\rho:(0,\infty) \to (0,\infty)$ be a measurable function.
Then
inequality
$
\tilde{\rho}(R/2)\lesssim \,I_\rho \chi_{B(R)}(x)
$
holds whenever $x\in B(R/2)$ and $R>0$.
\end{lemma}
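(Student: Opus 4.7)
The plan is to exploit the inclusion $B(x,R/2) \subset B(R)$ whenever $x \in B(R/2)$, so that we can bound $I_\rho \chi_{B(R)}(x)$ from below by an integral over a ball centered at $x$, which reduces to a one-dimensional radial integral that is exactly (a constant multiple of) $\tilde\rho(R/2) = \int_0^{R/2} \rho(s)\,\frac{ds}{s}$.

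First, I would verify the containment: if $x \in B(R/2)$ and $|y - x| \le R/2$, then by the triangle inequality $|y| \le |y - x| + |x| < R$, so $y \in B(R)$. This gives
\[
I_\rho \chi_{B(R)}(x) = \int_{B(R)} \frac{\rho(|x-y|)}{|x-y|^n}\,dy \ge \int_{B(x,R/2)} \frac{\rho(|x-y|)}{|x-y|^n}\,dy.
\]
Next, I would change variables by setting $z = y - x$, which maps $B(x, R/2)$ onto $B(0, R/2)$, yielding
\[
\int_{B(x,R/2)} \frac{\rho(|x-y|)}{|x-y|^n}\,dy = \int_{B(0, R/2)} \frac{\rho(|z|)}{|z|^n}\,dz.
\]

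Finally, passing to polar coordinates, the last integral equals
\[
\sigma_{n-1} \int_0^{R/2} \frac{\rho(s)}{s^n}\, s^{n-1}\,ds = \sigma_{n-1} \int_0^{R/2} \frac{\rho(s)}{s}\,ds = \sigma_{n-1}\,\tilde\rho(R/2),
\]
where $\sigma_{n-1}$ is the surface measure of the unit sphere. The Dini condition $(\ref{150513-1})$ (combined with local integrability of $\rho$ on $(0,\infty)$) ensures the right-hand side is finite, so the chain of inequalities is meaningful. The only subtle point is tracking that we only need one-sided containment (not equality of balls), but even this is immediate from the triangle inequality; there is no real obstacle, and no appeal to the growth condition $(\ref{150513-2})$ or doubling $(\ref{eq:D})$ is needed for this lower bound.
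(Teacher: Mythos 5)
Your proof is correct and follows essentially the same route as the paper: establish $B(x,R/2)\subseteq B(R)$ by the triangle inequality, bound $I_\rho\chi_{B(R)}(x)$ from below by the integral over $B(x,R/2)$, and evaluate that integral in spherical coordinates to obtain a constant multiple of $\tilde\rho(R/2)$. Your additional remarks (the explicit change of variables and the observation that neither the growth condition nor doubling is needed) are accurate but do not change the argument.
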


\begin{proof}
Take $x\in B(R/2)$. We write the integral in full:
\begin{align*}
I_\rho \chi_{B(R)}(x)
= \int_{{\mathbb R}^n} \frac{\rho(|x-y|)}{|x-y|^n}\,\chi_{B(R)}(y)\,dy
= \int_{B(R)} \frac{\rho(|x-y|)}{|x-y|^n}\,dy.
\end{align*}
A geometric observation shows that $B(x,R/2)\subseteq B(R)$. Hence, we have
\begin{align*}
I_\rho \chi_{B(R)}(x)
\ge \int_{B(x,R/2)} \frac{\rho(|x-y|)}{|x-y|^n}\,dy
= C \int_0^{R/2} \frac{\rho(s)}{s}\,ds.
\end{align*}
Note that we only use the spherical coordinates to obtain the last integral.
\end{proof}
In the case of the radially symmetric functions,
we can calculate $I_\rho g_R(x)$ for $x$ small.
\begin{lemma}{\rm \cite[Lemma 2.2]{EGNS14}}\label{lem8}
For every $R>0$ and a measurable function
$\theta:(0,\infty) \to [0,\infty)$
satisfying the doubling condition
\begin{equation}\label{eq:Nakai-4}
\theta(s) \sim \theta(r)
\quad (0<r \le s \le 2r),
\end{equation}
the inequality
\[
\int_{2R}^\infty \frac{\theta(t)\rho(t)}{t}\,dt
\lesssim
I_\rho g_R(x) \lesssim
\int_{2R/3}^\infty \frac{\theta(t)\rho(t)}{t}\,dt
\]
holds whenever $x\in \displaystyle B\left(\frac{R}{3}\right)$,
where $g_R(x)\equiv \theta(|x|)\chi_{B(R)^{\rm c}}(x)$.
\end{lemma}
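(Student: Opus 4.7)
The plan is to make the change of variables $z = x-y$, so that
\[
I_\rho g_R(x) = \int_{|x-z|>R} \frac{\rho(|z|)}{|z|^n}\theta(|x-z|)\,dz,
\]
and then exploit the fact that, for $x \in B(R/3)$, the integrand depends only on $|z|$ up to a multiplicative constant coming from the doubling property of $\theta$.

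First I would record the geometric facts. Fix $x \in B(R/3)$. If $|x-z|>R$ then $|z| > R - R/3 = 2R/3$. Conversely, if $|z| > 4R/3$ then $|x-z| \ge |z| - |x| > 4R/3 - R/3 = R$. Hence the chain of inclusions
\[
\{|z|>4R/3\} \subset \{|x-z|>R\} \subset \{|z|>2R/3\}
\]
holds. Moreover, whenever $|z|>2R/3>2|x|$, one has $|z|/2 \le |x-z| \le 3|z|/2$, so iterating the doubling hypothesis $\theta(s) \sim \theta(r)$ for $0<r \le s \le 2r$ a bounded number of times gives $\theta(|x-z|) \sim \theta(|z|)$.

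For the upper bound I would bound the set of integration by the larger set $\{|z|>2R/3\}$, use the equivalence $\theta(|x-z|) \sim \theta(|z|)$ just observed, and then pass to spherical coordinates:
\[
I_\rho g_R(x) \le \int_{|z|>2R/3} \frac{\rho(|z|)}{|z|^n}\theta(|x-z|)\,dz
\lesssim \int_{|z|>2R/3} \frac{\rho(|z|)\theta(|z|)}{|z|^n}\,dz
\simeq \int_{2R/3}^\infty \frac{\rho(t)\theta(t)}{t}\,dt.
\]
For the lower bound I would restrict to the smaller set $\{|z|>4R/3\}$, apply the doubling equivalence again, switch to spherical coordinates, and finally use $4R/3 \le 2R$ to truncate the lower endpoint:
\[
I_\rho g_R(x) \ge \int_{|z|>4R/3} \frac{\rho(|z|)}{|z|^n}\theta(|x-z|)\,dz
\gtrsim \int_{4R/3}^\infty \frac{\rho(t)\theta(t)}{t}\,dt
\ge \int_{2R}^\infty \frac{\rho(t)\theta(t)}{t}\,dt.
\]

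The argument is essentially bookkeeping; the only nontrivial point is to make sure that the doubling hypothesis, which is stated for ratios in $[1,2]$, can be iterated a fixed number of times to cover the ratio range $[1/2, 3/2]$ between $|x-z|$ and $|z|$. Since this requires only finitely many iterations, the resulting constants depend only on the doubling constant of $\theta$ and not on $R$ or $x$, which is what the $\lesssim$ notation tolerates. No hypothesis on $\rho$ is used beyond the fact that the integrals in question are well defined, which is why only the Dini condition is needed in the surrounding discussion.
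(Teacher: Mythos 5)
Your proposal is correct and follows essentially the same route as the paper's proof: translate by $x$, compare the region of integration with the balls $B\left(\frac{2R}{3}\right)$ and $B\left(\frac{4R}{3}\right)$, use the doubling of $\theta$ to replace $\theta(|x-z|)$ by $\theta(|z|)$, and finish with spherical coordinates. The only difference is that the paper writes out just the upper bound and declares the lower bound similar, whereas you spell out both sides explicitly.
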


\begin{proof}
We prove the right-hand inequality,
the left-hand inequality being similar.
A geometric observation shows that
$|x-y| \sim |y|$
for all $x \in \displaystyle B\left(\frac{R}{3}\right)$ and
$y \in {\mathbb R}^n \setminus \displaystyle B\left(\frac{2R}{3}\right)$.
Since $\theta$ satisfies $(\ref{eq:Nakai-4})$,
then
\begin{align*}
I_\rho g_R(x)
&=
\int_{{{\mathbb R}^n} \setminus B(R)}\frac{\theta(|y|)\rho(|x-y|)}{|x-y|^n}\,dy\\
&\le
\int_{{{\mathbb R}^n} \setminus B(x,2R/3)}\theta(|y|)
\frac{\rho(|x-y|)}{|x-y|^n}\,dy\\
&=
\int_{{{\mathbb R}^n} \setminus \displaystyle B\left(\frac{2R}{3}\right)}
\theta(|x-y|)\frac{\rho(|y|)}{|y|^n}\,dy\\
&\lesssim
\int_{2R/3}^\infty \frac{\theta(t)\rho(t)}{t}\,dt
\mbox{ for } x \in \displaystyle B\left(\frac{R}{3}\right).
\end{align*}
It remains to write the most right-hand side
in terms of the spherical coordinates.
\end{proof}

For covenience, write 
\begin{equation}
\tilde{\rho}(r)\equiv \int_0^r \frac{\rho(t)}{t}\,dt.
\end{equation}
Sometimes, we are interested in the case
where matters are reduced to the classical fractional integral operators.
\begin{proposition}\label{prop:180611-23}
Let $\rho:(0,\infty) \to (0,\infty)$ be a measurable function
satisfying
$(\ref{150513-2})$.
Then the following are equivalent{\rm:}
\begin{enumerate}
\item[$(a)$]
$\rho(r) \lesssim \,r^{\alpha}$ for all $r>0$.
\item[$(b)$]
$\tilde{\rho}(r)\lesssim r^{\alpha}$ for all $r>0$.
\end{enumerate}
\end{proposition}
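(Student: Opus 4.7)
The plan is to prove the two implications separately; neither direction is deep, but they showcase the roles of the two standing hypotheses on $\rho$.

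For $(a)\Rightarrow(b)$, I would simply insert the pointwise bound $\rho(t)\lesssim t^{\alpha}$ into the definition of $\tilde\rho$ and integrate:
\[
\tilde\rho(r)=\int_0^r\frac{\rho(t)}{t}\,dt\lesssim\int_0^r t^{\alpha-1}\,dt=\frac{r^{\alpha}}{\alpha}\quad (r>0).
\]
This is a one-line computation and requires only $\alpha>0$, which is the natural range given the Dini condition (\ref{150513-1}).

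For $(b)\Rightarrow(a)$, the growth condition (\ref{150513-2}) is the essential input. Since $s=r$ lies in the interval of the supremum, one has $\rho(r)\le\sup_{r/2<s\le r}\rho(s)$, and then (\ref{150513-2}) yields
\[
\rho(r)\;\lesssim\;\int_{k_1 r}^{k_2 r}\frac{\rho(s)}{s}\,ds\;\le\;\int_0^{k_2 r}\frac{\rho(s)}{s}\,ds\;=\;\tilde\rho(k_2 r).
\]
Applying the hypothesis $(b)$ to the right-hand side gives $\tilde\rho(k_2 r)\lesssim (k_2 r)^{\alpha}\sim r^{\alpha}$, which is $(a)$.

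There is no real obstacle here; the only point worth stressing is that one cannot dispense with (\ref{150513-2}) in the implication $(b)\Rightarrow(a)$, since without it the pointwise values of $\rho$ are not controlled by its integral average (an oscillatory $\rho$ could spike without affecting $\tilde\rho$). The converse direction $(a)\Rightarrow(b)$, by contrast, uses neither the growth condition nor the Dini condition beyond the convergence of the elementary integral.
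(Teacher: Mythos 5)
Your proof is correct and follows essentially the same route as the paper: the implication $(a)\Rightarrow(b)$ is the same one-line integration, and for $(b)\Rightarrow(a)$ you use exactly the paper's chain $\rho(r)\le\sup_{r/2<s\le r}\rho(s)\lesssim\int_{k_1r}^{k_2r}\rho(s)\,\frac{ds}{s}\le\tilde{\rho}(k_2r)\lesssim r^{\alpha}$. Nothing further is needed.
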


\begin{proof}
Clearly $(a)$ implies $(b)$,
since
$\displaystyle \int_0^r s^{\alpha-1}\,ds=\frac{1}{\alpha}r^{\alpha}$.
Let us see $(b)$ implies $(a)$.
Combining
$(\ref{150513-2})$
with
$(b)$, we obtain
\[
\rho(r) \le
\sup_{\frac{r}{2}<s\le r}\rho(s)\lesssim \int_{k_1r}^{k_2r}\frac{\rho(s)}{s}ds
\lesssim \int_0^{k_2 r}\frac{\rho(s)}{s}ds
\lesssim
(k_2r)^{\alpha}
\sim
r^{\alpha}.
\]
\end{proof}

Now we present three different criteria
for the boundedness of $I_\rho $.
We prove the following three theorems
on the boundedness of $I_\rho$
on generalized Morrey spaces.

For the case of $q=1$,
we have the following simple result:
\begin{theorem}{\rm \cite[Theorem 1.3]{EGNS14}}\label{thm:Nakai-2}
Let $1 \le p<\infty$,
and let $\varphi \in {\mathcal G}_p$ and $\psi \in {\mathcal G}_1$.
Let also $\rho:(0,\infty) \to (0,\infty)$ be a measurable function
satisfying
$(\ref{150513-2})$.
Then $I_\rho$ is bounded from
${\mathcal M}^\varphi_p({{\mathbb R}^n})$
to
${\mathcal M}^\psi_1({{\mathbb R}^n})$
only if 
\begin{equation}\label{eq:Nakai-10}
\frac{1}{\varphi(r)}\int_0^r \frac{\rho(t)}{t}\,dt
+
\int_r^\infty \frac{\rho(t)}{t\varphi(t)}\,dt
\lesssim
\frac{1}{\psi(r)} \quad(r>0).
\end{equation}
Furthermore, if $(\ref{eq:Nakai-10})$ is satisfied,
then
$I_\rho$ is bounded from
${\mathcal M}^\varphi_1({{\mathbb R}^n})$
to
${\mathcal M}^\psi_1({{\mathbb R}^n})$.
\end{theorem}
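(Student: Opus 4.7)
My plan is to split the proof into the necessity part (boundedness implies the integral inequality) and the sufficiency part (the integral inequality implies boundedness from ${\mathcal M}^\varphi_1$).

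For the necessity, I would test the operator on carefully chosen functions. To extract the first half $\tilde\rho(r)/\varphi(r)\lesssim 1/\psi(r)$, I plug $f=\chi_{B(R)}$ into the assumed bound. By Lemma \ref{lem7} we have $\|\chi_{B(R)}\|_{{\mathcal M}^\varphi_p}\sim\varphi(R)$, and by Lemma \ref{lem1} we have the pointwise bound $I_\rho\chi_{B(R)}(x)\gtrsim\tilde\rho(R/2)$ for $x\in B(R/2)$. Testing the $\mathcal M^\psi_1$-norm on the cube $Q(0,R/2)$ then gives $\psi(R/2)\tilde\rho(R/2)\lesssim\varphi(R)$, and the doubling of $\varphi,\psi$ delivers the first piece. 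To extract the second half $\int_r^\infty \rho(t)/(t\varphi(t))\,dt\lesssim 1/\psi(r)$, I would apply Lemma \ref{lem8} with $\theta=1/\varphi$ (which is doubling because $\varphi\in{\mathcal G}_p$ is), using truncations $g_{R,N}(x)=\theta(|x|)\chi_{B(N)\setminus B(R)}(x)$ to stay inside ${\mathcal M}^\varphi_p$; a uniform-in-$N$ estimate on $\|g_{R,N}\|_{{\mathcal M}^\varphi_p}$ together with the lower bound on $I_\rho g_{R,N}(x)$ for $x\in B(R/3)$ would yield $\psi(R)\int_{2R}^N\rho(t)\theta(t)/t\,dt\lesssim 1$, and letting $N\to\infty$ finishes the job.

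For the sufficiency, fix a cube $Q=Q(x_0,r)$ and decompose $f=f_1+f_2$ with $f_1\equiv f\chi_{2Q}$, $f_2\equiv f\chi_{{\mathbb R}^n\setminus 2Q}$. For the local term, Fubini gives
\[
\int_Q I_\rho|f_1|(y)\,dy=\int_{2Q}|f(z)|\int_Q\frac{\rho(|y-z|)}{|y-z|^n}\,dy\,dz
\lesssim \tilde\rho(3r)\int_{2Q}|f(z)|\,dz,
\]
and the definition of the norm yields $\int_{2Q}|f|\lesssim |2Q|\,\|f\|_{{\mathcal M}^\varphi_1}/\varphi(2r)$; multiplying by $\psi(r)/|Q|$ and using the first half of (\ref{eq:Nakai-10}) bounds this contribution by $\|f\|_{{\mathcal M}^\varphi_1}$. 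For the far-away term, since $|y-z|\sim|z-c(Q)|$ when $y\in Q$ and $z\notin 2Q$, a dyadic decomposition on annuli $2^{k+1}Q\setminus 2^k Q$ together with the doubling property $(\ref{150513-2})$ of $\rho$ gives the pointwise bound
\[
I_\rho|f_2|(y)\lesssim\sum_{k=1}^\infty\frac{\rho(2^k r)}{(2^k r)^n}\int_{2^{k+1}Q}|f(z)|\,dz
\lesssim \sum_{k=1}^\infty\frac{\rho(2^k r)}{\varphi(2^k r)}\|f\|_{{\mathcal M}^\varphi_1}
\lesssim\|f\|_{{\mathcal M}^\varphi_1}\int_r^\infty\frac{\rho(t)}{t\varphi(t)}\,dt,
\]
which is controlled by $\|f\|_{{\mathcal M}^\varphi_1}/\psi(r)$ by the second half of (\ref{eq:Nakai-10}). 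Averaging over $Q$ and multiplying by $\psi(r)$ yields the required bound on $\|I_\rho f\|_{{\mathcal M}^\psi_1}$.

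The main obstacle is the necessity of the second inequality in (\ref{eq:Nakai-10}): one must produce a test function in ${\mathcal M}^\varphi_p$ whose norm is uniformly controlled yet whose image under $I_\rho$ realizes the full tail integral $\int_r^\infty\rho(t)/(t\varphi(t))\,dt$ on $B(r)$. The delicate point is that the natural candidate $1/\varphi(|x|)\chi_{B(R)^c}$ need not lie in ${\mathcal M}^\varphi_p$ without truncation, so one has to truncate, bound the norm uniformly using the fact that $\varphi(t)t^{-n/p}$ is decreasing, and then pass to the limit. The sufficiency part, by contrast, is a clean two-piece decomposition once both halves of (\ref{eq:Nakai-10}) are in hand.
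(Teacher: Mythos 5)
Your architecture is the same as the paper's: the first half of (\ref{eq:Nakai-10}) is obtained exactly as in the text (test $\chi_{B(R)}$, Lemma \ref{lem1} together with $\|\chi_{B(R)}\|_{{\mathcal M}^\varphi_p}\sim\varphi(R)$ from Lemma \ref{lem7}), the tail is attacked via Lemma \ref{lem8} with $\theta=1/\varphi$, and your sufficiency argument (Fubini for $f\chi_{2Q}$, dyadic annuli plus the growth condition (\ref{150513-2}) for the far part, then the two halves of (\ref{eq:Nakai-10}) combined with the doubling of $\varphi$ and $\psi$) is the paper's proof almost verbatim and is sound.

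The problem lies in the step you yourself flag as delicate. The claim that the truncations $g_{R,N}=\varphi(|\cdot|)^{-1}\chi_{B(N)\setminus B(R)}$ have ${\mathcal M}^\varphi_p$-norm bounded uniformly in $N$ ``because $t\mapsto t^{-n/p}\varphi(t)$ is decreasing'' is false in general. Already for $\varphi(t)=t^{n/p}$, which belongs to ${\mathcal G}_p$ and for which ${\mathcal M}^\varphi_p=L^p({\mathbb R}^n)$ up to a constant, one computes $\|g_{R,N}\|_{{\mathcal M}^\varphi_p}{}^p\sim\int_R^N t^{-1}\,dt=\log(N/R)\to\infty$, and the same logarithmic divergence appears whenever $\varphi(t)^p t^{-n}$ is essentially constant over long ranges. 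Consequently, after normalizing $g_{R,N}$ the lower bound from Lemma \ref{lem8} acquires a factor $(\log(N/R))^{-1/p}$, and letting $N\to\infty$ gives nothing when the tail integral converges; your limiting argument therefore does not recover the second half of (\ref{eq:Nakai-10}). The genuine issue is membership of the test function in the source space, and the ${\mathcal G}_p$ monotonicity alone does not provide it.

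For comparison, the paper keeps the untruncated $g_r=\varphi(|\cdot|)^{-1}\chi_{{\mathbb R}^n\setminus B(r)}$ and estimates its norm in the \emph{larger} space ${\mathcal M}^\varphi_1({\mathbb R}^n)$: when $p>1$ the inequality $\int_r^s t^{n-1}\varphi(t)^{-1}\,dt\lesssim s^n\varphi(s)^{-1}$ does follow from $\varphi\in{\mathcal G}_p$, whence $\sup_{r>0}\|g_r\|_{{\mathcal M}^\varphi_1}<\infty$ (this fails at the borderline $p=1$, e.g.\ $\varphi(t)=t^n$). In other words, the hypothesis that is actually being exploited for the tail is boundedness of $I_\rho$ on ${\mathcal M}^\varphi_1({\mathbb R}^n)$, or else one must impose an additional integral condition on $\varphi$ guaranteeing that $\varphi(|\cdot|)^{-1}$ lies in ${\mathcal M}^\varphi_p({\mathbb R}^n)$ with uniform control --- exactly the role played by the extra assumption $\varphi\in{\mathbb Z}^{-\frac{n}{p}}$ in the necessity part of Theorem \ref{thm:Hendra}. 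If you want to salvage your version of the necessity for general $\varphi\in{\mathcal G}_p$, you should either measure the test function in ${\mathcal M}^\varphi_1$ (and accordingly weaken the hypothesis you use), or add such a Zygmund-type condition; truncation by itself does not close the gap.
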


Note that
the left-hand side of (\ref{eq:Nakai-10}) equals
$$
\frac{1}{\varphi(r)}\int_0^r \frac{\rho(t)}{t}\,dt
+
\int_r^\infty \frac{\rho(t)}{t\varphi(t)}\,dt
=
\int_0^\infty \frac{\rho(t)}{t\varphi(\max(r,t))}\,dt.
$$
\begin{proof}[Proof of Theorem \ref{thm:Nakai-2}$
(${\rm Necessity}$)$]
Assume that $I_\rho$ is a bounded linear operator
from ${\mathcal M}^\varphi_p({{\mathbb R}^n})$ to
${\mathcal M}^{\psi}_1({{\mathbb R}^n})$.
Let $r>0$.
By Lemma \ref{lem1} and the doubling property of $\psi$,
we obtain
\begin{align*}
\tilde{\rho}(r)
\lesssim
\frac{1}{r^n}\int_{B(r/2)}I_\rho \chi_{B(r)}(x)\,dx
\le
\frac{1}{r^n}\int_{B(r/2)}I_\rho \chi_{B(r)}(x)\,dx
\le \frac{1}{\psi(r)}
\|I_\rho \chi_{B(r)}\|_{{\mathcal M}^{\psi}_1}
\end{align*}
Since $\psi \in {\mathcal G}_1$ and $I_\rho$ is assumed bounded
from ${\mathcal M}^\varphi_p({{\mathbb R}^n})$ to
${\mathcal M}^{\psi}_1({{\mathbb R}^n})$,
it follows that
$$
\tilde{\rho}(r)
\lesssim 
\frac{1}{\psi(r)}
\|\chi_{B(r)}\|_{{\mathcal M}^\varphi_p}.
$$
Since
$\|\chi_{B(r)}\|_{{\mathcal M}^\varphi_p} \sim \varphi(r)$,
we conclude
$$
\tilde{\rho}(r)
\lesssim
\frac{\varphi(r)}{\psi(r)}.
$$
Let $g_r(x)=\frac{\chi_{B(r)^c}(x)}{\varphi(|x|)}$
for $x \in {\mathbb R}^n$.
By Lemma \ref{lem8} with $\theta=\dfrac{1}{\varphi}$,
we have
\[
\int_r^\infty \frac{\rho(t)}{t\varphi(t)}\,dt
\lesssim
\psi\left(\frac{r}{6}\right)^{-1}\|I_\rho g_r\|_{{\mathcal M}^{\psi}_1}
\lesssim
\psi(r)^{-1}\|g_r\|_{{\mathcal M}^\varphi_1}
\lesssim
\psi(r)^{-1}.
\]
Thus Theorem \ref{thm:Nakai-2} is proved.
\end{proof}

\begin{proof}[Proof of Theorem \ref{thm:Nakai-2}$(${\rm Sufficiency}$)$]
For a ball $B(z,r)$, we let $f_1 \equiv f\chi_{B(z,2r)}$ and $f_2 \equiv f-f_1$.
Then a geometric observation shows
$B(z,r) \subset B(y,3r)$
for all $y \in B(z,2r)$. Hence by the Fubini theorem and the normalization,
\begin{align*}
\int_{B(z,r)}|I_\rho f_1(x)|\,dx
&\le
\int_{B(z,r)}
\left(\int_{B(z,2r)}|f(y)|\frac{\rho(|x-y|)}{|x-y|^n}\,dy\right)\,dx\\
&\le
\int_{B(z,2r)}\left(\int_{B(y,3r)}|f(y)|
\frac{\rho(|x-y|)}{|x-y|^n}\,dx\right)\,dy\\
&=
\int_{B(z,2r)}|f(y)|\,dy
\times
\int_{B(3r)}
\frac{\rho(|x|)}{|x|^n}\,dx.
\end{align*}
By the use of the definition of the Morrey norm,
(\ref{eq:Nakai-10}) and the doubling condition of $\psi$,
we obtain
\begin{align*}
\int_{B(z,r)}|I_\rho f_1(x)|\,dx
&\lesssim \tilde{\rho}(3r)\varphi(2r)^{-1}r^n\\
&\lesssim \tilde{\rho}(3r)\varphi(3r)^{-1}r^n\\
&\lesssim \psi(3r)^{-1}r^n\\
&\lesssim \psi(r)^{-1}r^n.
\end{align*}
Thus the estimate for $f_1$ is valid.
As for $f_2$, we let $x \in B(z,r)$.
Then we have
\[
|I_\rho f_2(x)|
\le
\int_{B(z,2r)^{\rm c}}|f(y)|\frac{\rho(|x-y|)}{|x-y|^n}\,dy
\le
\int_{B(x,r)^{\rm c}}|f(y)|\frac{\rho(|x-y|)}{|x-y|^n}\,dy
\]
and decomposing the right-hand side dyadically
we obtain
\begin{align*}
|I_\rho f_2(x)|
\le \sum_{j=1}^\infty
\int_{B(x,2^jr) \setminus B(x,2^{j-1}r)}
|f(y)|\frac{\rho(|x-y|)}{|x-y|^n}\,dy
\lesssim \int_{2k_1r}^\infty \frac{\rho(t)}{t\varphi(t)}\,dt.
\end{align*}
If we use (\ref{eq:Nakai-10}) once again
and the doubling condition on
$\psi$, then we obtain
$|I_\rho f_2(x)| \lesssim \psi(r)^{-1}$.
Thus the estimate for $f_2$ is valid as well.
\end{proof}

In the following example,
we consider why we need generalized Morrey spaces.

\begin{example}{\rm \cite[Theorem 5.1]{SaWa13}, \cite[Example 5.1]{EGNS14}}\label{p1.3}
Let $s \in (0,n)$ and $\kappa>0$.
Define
$\displaystyle
\psi(r)\equiv \frac{(1+r)^s}{\max(1,\log r^{-1})}
\quad (r>0).
$
Let
$
\rho(r)=
r^{s}\exp(-\kappa r), \,
\varphi(r)=r^{s}
$
for $r>0$.
Then
$\rho$ is a measurable function
satisfying
$(\ref{150513-2})$.
Furthermore, if $0<r<1$,
$\displaystyle
\frac{1}{\varphi(r)}\int_0^r \frac{\rho(t)}{t}\,dt
+
\int_r^\infty \frac{\rho(t)}{t\varphi(t)}\,dt
\sim
\frac{1}{\psi(r)}
$
and if $r\ge 1$,
$\displaystyle
\frac{1}{\varphi(r)}\int_0^r \frac{\rho(t)}{t}\,dt
+
\int_r^\infty \frac{\rho(t)}{t\varphi(t)}\,dt
\lesssim
\frac{1}{\psi(r)}.
$
Thus
$
\|(1-\Delta)^{-\frac{s}{2}}f\|_{{\mathcal M}^{\psi}_1}
\lesssim_{s}
\|I_\rho f\|_{{\mathcal M}^{\psi}_1}
\lesssim
\|f\|_{{\mathcal M}_1^{\frac{n}{s}}}
$
 for all $f\in {\mathcal M}_1^{\frac{n}{s}}({\mathbb R}^n)$.
This calculation shows
we cannot delete $\max(1,\log r^{-1})$
and that
$(1-\Delta)^{-\frac{s}{2}}$ does not map ${\mathcal M}^p_q({\mathbb R}^n)$
to $L^\infty({\mathbb R}^n)$ when
$\frac{n}{p}=s$ and $1<q \le p<\infty$.
\end{example}

Example \ref{p1.3} convince us that
generalized Morrey spaces occur naturally.

\begin{example}\label{example:180611-24}
Let $\dfrac1s=\dfrac1p-\dfrac{\alpha}{n}$
with $1<p<s<\infty$ and $0<\alpha<n$.
Then
$I_\alpha$ does not map
$m^p_1({\mathbb R}^n)$
to
$m^s_1({\mathbb R}^n)$,
since
$\rho(t)=t^\alpha$
and
$\varphi(t)=\max(t^{\frac{n}{p}},1)$
satisfies
\[
\int_r^\infty \frac{\rho(t)}{t\varphi(t)}\,dt=\infty
\]
instead of
\[
\int_r^\infty \frac{\rho(t)}{t\varphi(t)}\,dt\lesssim \frac{1}{\varphi(t)^{\frac{s}{p}}}.
\]
If we consider the truncated fractional maximal operator
$i_\alpha$ given by
\[
i_\alpha f(x)=\int_{|y| \le 1}\frac{f(x-y)}{|y|^{n-\alpha}}\,dy,
\]
then $i_\alpha$ maps 
$m^p_1({\mathbb R}^n)$
to
$m^s_1({\mathbb R}^n)$,
since
\[
\frac{1}{\varphi(r)}\int_0^r \frac{\rho(t)\chi_{(0,1)}(t)}{t}\,dt
+
\int_r^\infty \frac{\rho(t)\chi_{(0,1)}(t)}{t\varphi(t)}\,dt
\lesssim
\frac{1}{\varphi(t)^{\frac{s}{p}}}.
\]
\end{example}

\begin{example}{\rm \cite[Theorem 5.1]{SaWa13}, \cite[Example 5.1]{EGNS14}}\label{ex:5.5}
Let $0<s<n$.
Define
$\varphi(r)\equiv r^{s}$ and $\psi(r)\equiv (1+r)^{-s}\ell^{(-1,0)}(r)$
for $r>0$.
Let $\rho(r)\equiv r^n G_{s}(r)$,
where $G_{s}$ denotes the Bessel kernel,
the kernel of $(1-\Delta)^{\frac{s}{2}}$.
Observe that
$\tilde{\rho}(r) \sim \min(r^{s},1)
$
and hence
$\frac{\tilde{\rho}(r)}{\varphi(r)} \sim \min(1,r^{-s}).$
Note also that
$$
\int_r^\infty \frac{\rho(t)}{t\varphi(t)}\,dt
\sim
\begin{cases}
\log(e/r)&(r<1),\\
r^{n-s}G_{s}(r)&(r \ge 1).
\end{cases}
$$
Then 
$$
\frac{\tilde{\rho}(r)}{\varphi(r)}
+\int_r^\infty \frac{\rho(t)}{t\varphi(t)}\,dt
\sim
\frac{1}{\psi(r)} \quad (r>0).
$$
Hence it follows from Theorem \ref{thm:Nakai-2}
that $\|I_\rho f\|_{{\mathcal M}^\psi_1}
\lesssim \|f\|_{{\mathcal M}^\varphi_1}$,
extending Proposition \ref{p1.3}.
This triple $(\rho,\varphi,\psi)$ fulfills the assumption
$(\ref{eq:Nakai-10})$.
However,
$\frac{\rho}{\varphi} \notin {\mathbb Z}_0$
since $\frac{\rho(r)}{\varphi(r)}={\rm o}(1)$
as $r\downarrow 0$
and $(\ref{eq:Nakai-1})$ fails.
\end{example}

We give a result, which improves Example \ref{p1.3}.

We move on to the Adams type estimate.
\begin{theorem}{\rm \cite[Theorems 1.1 and 1.2]{EGNS14}}\label{thm:Hendra}
Let $1<p<q<\infty$ and $\varphi \in {\mathcal G}_p$.
Assume
that
$\rho:(0,\infty) \to (0,\infty)$ 
satisfies
$(\ref{150513-2})$.
\begin{enumerate}
\item
The operator $I_\rho $ is bounded
from
${\mathcal M}^\varphi_p({{\mathbb R}^n})$
to
${\mathcal M}^{\varphi^{p/q}}_{q}({{\mathbb R}^n})$
if 
\begin{equation}\label{eq:Nakai-1}
\frac{1}{\varphi(r)}\int_0^r \frac{\rho(t)}{t}\,dt
+
\int_r^\infty \frac{\rho(t)}{t\varphi(t)}\,dt
\lesssim\frac{1}{\varphi(r)^{\frac{p}{q}}}
\end{equation}
for all $r>0$.
If $\varphi \in {\mathbb Z}^{-\frac{n}{p}}$,
then $(\ref{eq:Nakai-1})$ is necessary
for the boundedness of $I_\rho $ 
from
${\mathcal M}^\varphi_p({{\mathbb R}^n})$
to
${\mathcal M}^{\varphi^{p/q}}_{q}({{\mathbb R}^n})$.
\item
Assume
$\frac{\rho}{\varphi} \in {\mathbb Z}_0$.
Then $I_\rho$ is bounded
from
${\mathcal M}^\varphi_p({{\mathbb R}^n})$ to
${\mathcal M}^{\varphi^{p/q}}_{q}({{\mathbb R}^n})$ if and only if
\begin{equation}\label{eq:180423-22}
\tilde{\rho}(r) \lesssim
\varphi(r)^{1-p/q} \quad (r>0).
\end{equation}
So,
if
$\frac{\rho}{\varphi} \in {\mathbb Z}_0$,
condition $(\ref{eq:Nakai-1})$
simplies to $(\ref{eq:180423-22})$.
\end{enumerate}
\end{theorem}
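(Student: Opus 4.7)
The core strategy is the Hedberg--Adams device: derive a pointwise estimate of the form $|I_\rho f(x)| \lesssim \|f\|_{\mathcal{M}^\varphi_p}^{1-p/q} (Mf(x))^{p/q}$ and then invoke the scaling lemma (Lemma \ref{lem:150318-1}) together with the $\mathcal{M}^\varphi_p$-boundedness of $M$ (Theorem \ref{thm:140525-3}). For the pointwise bound, I would fix $x$ and $r>0$ and split
\[
I_\rho f(x) = \int_{|x-y|<r}\frac{\rho(|x-y|)}{|x-y|^n}f(y)\,dy + \int_{|x-y|\ge r}\frac{\rho(|x-y|)}{|x-y|^n}f(y)\,dy.
\]
A dyadic decomposition of the near part, combined with the growth condition (\ref{150513-2}) to pass from $\rho$ on an annulus to the integral $\tilde\rho$, yields the estimate $\lesssim \tilde\rho(r)\,Mf(x)$. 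A dyadic decomposition of the far part, together with $\frac{1}{|B(x,2^k r)|}\int_{B(x,2^k r)}|f| \le \|f\|_{\mathcal{M}^\varphi_1}/\varphi(2^k r) \lesssim \|f\|_{\mathcal{M}^\varphi_p}/\varphi(2^k r)$ (Lemma \ref{lem:180307-1}), gives $\lesssim \|f\|_{\mathcal{M}^\varphi_p}\int_r^\infty \rho(t)/(t\varphi(t))\,dt$.

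Now I would invoke hypothesis (\ref{eq:Nakai-1}) to bound $\tilde\rho(r) \lesssim \varphi(r)^{1-p/q}$ and the integral tail by $\varphi(r)^{-p/q}\|f\|_{\mathcal{M}^\varphi_p}$. Choosing $r$ so that $\varphi(r) \simeq \|f\|_{\mathcal{M}^\varphi_p}/Mf(x)$ balances the two terms and yields $|I_\rho f(x)| \lesssim \|f\|_{\mathcal{M}^\varphi_p}^{1-p/q}(Mf(x))^{p/q}$. Raising to the power $q$, integrating, and using Lemma \ref{lem:150318-1} gives $\|I_\rho f\|_{\mathcal{M}^{\varphi^{p/q}}_q} \lesssim \|f\|_{\mathcal{M}^\varphi_p}^{1-p/q}\|Mf\|_{\mathcal{M}^\varphi_p}^{p/q} \lesssim \|f\|_{\mathcal{M}^\varphi_p}$ by Theorem \ref{thm:140525-3}; note we use $p>1$ crucially here.

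For the necessity part of (1), I would test the boundedness on two carefully chosen functions, mirroring the proof of Theorem \ref{thm:Nakai-2}. Testing on $\chi_{B(r)}$ and applying Lemma \ref{lem1} together with the bound $\|\chi_{B(r)}\|_{\mathcal{M}^\varphi_p} \simeq \varphi(r)$ forces $\tilde\rho(r)/\varphi(r) \lesssim \varphi(r)^{-p/q}$. Testing on $\psi_r(x)\equiv \varphi(|x|)^{-1}\chi_{B(r)^c}(x)$ and applying Lemma \ref{lem8} with $\theta = 1/\varphi$, while using Lemma \ref{lem9} (which requires exactly the hypothesis $\varphi \in \mathbb{Z}^{-n/p}$) to ensure $\varphi(|\cdot|) \in \mathcal{M}^\varphi_p$ and hence $\psi_r \in \mathcal{M}^\varphi_1 \subset \mathcal{M}^\varphi_p$, forces $\int_r^\infty \rho(t)/(t\varphi(t))\,dt \lesssim \varphi(r)^{-p/q}$. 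Summing the two gives (\ref{eq:Nakai-1}).

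For part (2), the assumption $\rho/\varphi \in \mathbb{Z}_0$ directly yields $\int_r^\infty \rho(t)/(t\varphi(t))\,dt \lesssim \rho(r)/\varphi(r)$, and then the growth condition (\ref{150513-2}) on $\rho$ gives $\rho(r) \lesssim \tilde\rho(r)$ after a doubling argument, so the tail integral in (\ref{eq:Nakai-1}) is dominated by $\tilde\rho(r)/\varphi(r)$. Thus (\ref{eq:Nakai-1}) reduces to the single inequality (\ref{eq:180423-22}), and the equivalence follows by applying part (1). The main obstacle is the necessity portion: verifying that $\psi_r$ truly belongs to $\mathcal{M}^\varphi_p$ with bounded norm uniformly in $r$, which is precisely why the hypothesis $\varphi \in \mathbb{Z}^{-n/p}$ and Lemma \ref{lem9} are indispensable; a secondary subtlety is the passage from $\rho(r)\lesssim\tilde\rho(r)$ via (\ref{150513-2}) in part (2), which requires careful bookkeeping of the constants $k_1, k_2$.
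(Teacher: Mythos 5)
Your overall route is the paper's: a Hedberg-type pointwise estimate plus the ${\mathcal M}^\varphi_p$-boundedness of $M$ for sufficiency, testing on $\chi_{B(r)}$ (Lemma \ref{lem1}) and on $\chi_{B(r)^{\rm c}}/\varphi(|\cdot|)$ (Lemmas \ref{lem8} and \ref{lem9}, which is where $\varphi\in{\mathbb Z}^{-n/p}$ enters) for necessity, and the ${\mathbb Z}_0$ reduction of (\ref{eq:Nakai-1}) to (\ref{eq:180423-22}) for part (2). But there is one step that, as you state it, fails: \lq\lq choosing $r$ so that $\varphi(r)\simeq \|f\|_{{\mathcal M}^\varphi_p}/Mf(x)$.\rq\rq\ A function $\varphi\in{\mathcal G}_p$ need not be surjective onto $(0,\infty)$; in particular $\sup_{t>0}\varphi(t)<\infty$ is allowed (this is exactly the situation of Examples \ref{ex:5.3} and \ref{ex:5.4}, and the text remarks explicitly that the term $\inf_{r>0}\varphi(r)^{-p/q}$ cannot simply be dropped). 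When $\|f\|_{{\mathcal M}^\varphi_p}=1$ one always has $Mf(x)\le \sup_r \varphi(r)^{-1}$, so the upper end is harmless, but if $Mf(x)<\inf_r\varphi(r)^{-1}$ (possible precisely when $\sup\varphi<\infty$) there is no admissible $r$ and your balancing argument produces nothing. The paper's Lemma \ref{lem:150824-20} instead proves the weaker bound $|I_\rho f(x)|\lesssim Mf(x)^{p/q}+\inf_{r>0}\varphi(r)^{-p/q}$ and absorbs the extra constant after integration, since $\varphi(\ell(B))^{p}\inf_r\varphi(r)^{-p}\le 1$. Your clean estimate can in fact be salvaged in that regime (let $r\to\infty$: the far term, being the tail of a convergent integral, vanishes, and the near term is $\lesssim\tilde\rho(\infty)Mf(x)\lesssim(\sup\varphi)^{1-p/q}Mf(x)\le Mf(x)^{p/q}$ because $\sup\varphi<1/Mf(x)$), but some such case distinction must be supplied; as written the step is a genuine gap.

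Two smaller points. In the necessity argument you write $\psi_r\in{\mathcal M}^\varphi_1\subset{\mathcal M}^\varphi_p$; the nesting (Lemma \ref{lem:180307-1}) goes the other way, ${\mathcal M}^\varphi_p\subset{\mathcal M}^\varphi_1$, and what you actually need is the uniform bound $\|\psi_r\|_{{\mathcal M}^\varphi_p}\lesssim 1$, which is exactly what Lemma \ref{lem9} (under $\varphi\in{\mathbb Z}^{-n/p}$) is invoked for. In part (2), the \lq\lq only if\rq\rq\ direction should not be quoted from part (1), whose necessity statement assumes $\varphi\in{\mathbb Z}^{-n/p}$ — an assumption absent in part (2); it follows instead from the ball test alone (Theorem \ref{thm:Nakai-2} with $\psi=\varphi^{p/q}$), which yields (\ref{eq:180423-22}) without any Zygmund-class hypothesis, and this is how the paper argues.
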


\begin{remark}
\
\begin{enumerate}
\item
The first half of \lq \lq only if" part
$(\ref{eq:180423-22})$ is clear
from Theorem \ref{thm:Nakai-2}
with $\psi=\varphi^{p/q}$.
\item
Once we 
assume
$\frac{\rho}{\varphi} \in {\mathbb Z}_0$,
it is eacy to check that
$(\ref{eq:180423-22})$ implies
$(\ref{eq:Nakai-1})$.
Indeed,
if we use
$\frac{\rho}{\varphi} \in {\mathbb Z}_0$
and
$\varphi \in {\mathcal G}_p$,
then we have
\[
\int_{r}^\infty \frac{\rho(s)}{s\varphi(s)}\,ds
\lesssim
\frac{\rho(r)}{\varphi(r)}.
\]
Since $\rho$ satisfies the growth condition,
we have
\[
\int_{r}^\infty \frac{\rho(s)}{s\varphi(s)}\,ds
\lesssim
\frac{\tilde{\rho}(k_2 r)}{\varphi(r)}.
\]
If we use (\ref{eq:180423-22})
and the doubling condition on $\varphi$,
then we obtain
\[
\int_r^\infty \frac{\rho(t)}{t\varphi(t)}\,dt
\lesssim\frac{1}{\varphi(r)^{\frac{p}{q}}}.
\]
\item
For the \lq \lq if" part
we only need the following estimate
of Hedberg-type,
see Lemma \ref{lem:150824-20} below.
\end{enumerate}
\end{remark}

\begin{proof}[Proof of Theorem \ref{thm:Hendra}, necessity]
According to Theorem \ref{thm:Nakai-2},
we have only to show
$$
\int_{2R}^\infty \frac{\rho(t)}{t\varphi(t)}\,dt
\lesssim
\varphi(2R)^{{-p/q}}.
$$
By virtue of Lemma \ref{lem8},
we obtain
\[
\int_{2R}^\infty \frac{\rho(t)}{t\varphi(t)}\,dt \sim
\left(\frac{1}{R^n}\int_{B\left(\frac{R}{3}\right)}I_\rho
g(x)^q\,dx\right)^{\frac{1}{q}} \lesssim \varphi(R)^{{-p/q}}
\|I_\rho g_R\|_{{\mathcal M}^{\varphi^{p/q}}_q}.
\]
Since $I_{\rho}$ is bounded,
we obtain
$$
\int_{2R}^\infty \frac{\rho(t)}{t\varphi(t)}\,dt
\lesssim
\varphi(R)^{{-p/q}}\| g_R\|_{{\mathcal M}^\varphi_p}
\lesssim
\varphi(R)^{{-p/q}}
\left\| \frac{1}{\varphi(|\cdot|)}\right\|_{{\mathcal M}^\varphi_p}.
$$
Recall that we are assuming
$\varphi \in {\mathbb Z}^{-\frac{n}{p}}$.
Now we invoke Lemma \ref{lem9} to conclude
$$
\int_{2R}^\infty \frac{\rho(t)}{t\varphi(t)}\,dt
\lesssim
\varphi(R)^{{-p/q}}
\lesssim
\varphi(2R)^{{-p/q}}.
$$
Thus necessity is proven.
\end{proof}

As we have mentioned,
we want an estimate of Hedberg-type.
We may ask ourselves whether 
$\inf_{r>0}\frac{1}{\varphi(r)^{\frac{p}{q}}}$
can be removed, that is,
we may assume
$\sup_{t>0} \varphi(t)=\infty$.
However, it can happen that
$\sup_{t>0} \varphi(t)<\infty$
as example below shows.
\begin{lemma}{\rm \cite[Lemma 3.1]{EGNS14}}\label{lem:150824-20}
Let $1 \le p<q<\infty$, 
and let $\varphi \in {\mathcal G}_p \cap W$
satisfy $(\ref{eq:Nakai-1})$.
Let $\rho:(0,\infty) \to (0,\infty)$ be a measurable function
satisfying
$(\ref{150513-2})$.
If a measurable function $f$ satsfies $\|f\|_{{\mathcal M}^\varphi_p}=1$,
then 
\begin{equation}\label{eq:180525-11}
|I_\rho f(x)|
\lesssim \,
[Mf(x)]^{p/q}+\inf_{r>0}\varphi(r)^{-\frac{p}{q}}
\end{equation}
for $x\in{{\mathbb R}^n}$.
\end{lemma}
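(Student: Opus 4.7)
The plan is to carry out a Hedberg-type splitting argument. Fix $x \in {\mathbb R}^n$ and $r>0$ (to be chosen at the end depending on $Mf(x)$), and decompose
\[
I_\rho f(x)
= \int_{|x-y|<r}\frac{\rho(|x-y|)}{|x-y|^n}f(y)\,dy
+\int_{|x-y|\ge r}\frac{\rho(|x-y|)}{|x-y|^n}f(y)\,dy
= J_1(r)+J_2(r).
\]

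For $J_1(r)$, I would dyadically decompose the ball $B(x,r)$ into annuli $B(x,2^{-j}r)\setminus B(x,2^{-j-1}r)$, $j=0,1,2,\ldots$. On each annulus $\rho(|x-y|)\le \sup_{2^{-j-1}r<s\le 2^{-j}r}\rho(s)$, so by the growth condition (\ref{150513-2}) each term is controlled by $Mf(x)\int_{k_1 2^{-j-1}r}^{k_2 2^{-j-1}r}\rho(s)\,ds/s$. Summing, and using that the intervals $(k_1 2^{-j-1}r,k_2 2^{-j-1}r)$ overlap at most a fixed finite number of times (because $2k_1<k_2$), yields $J_1(r)\lesssim Mf(x)\,\tilde\rho(r)$. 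For $J_2(r)$, I would dyadically decompose the complement using $B(x,2^{j+1}r)\setminus B(x,2^j r)$ and again the growth condition, and then apply H\"older's inequality together with the Morrey bound $\|f\chi_{B(x,2^{j+1}r)}\|_{L^p}\le |B(x,2^{j+1}r)|^{1/p}/\varphi(2^{j+1}r)$ coming from $\|f\|_{{\mathcal M}^\varphi_p}=1$. The doubling of $\varphi$ then gives $J_2(r)\lesssim \int_r^\infty \rho(s)/(s\,\varphi(s))\,ds$.

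Now I invoke the standing hypothesis (\ref{eq:Nakai-1}). Its first term yields $\tilde\rho(r)\lesssim \varphi(r)^{1-p/q}$, so $J_1(r)\lesssim Mf(x)\,\varphi(r)^{1-p/q}$; its second term gives $J_2(r)\lesssim \varphi(r)^{-p/q}$. Combining,
\[
|I_\rho f(x)| \lesssim Mf(x)\,\varphi(r)^{1-p/q} + \varphi(r)^{-p/q}
\qquad(r>0).
\]

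The final step is to optimize over $r$. Since $\varphi \in W$ is continuous and (by Proposition~\ref{prop:131029-2}) may be assumed strictly increasing, its range is an open interval $(\inf\varphi,\sup\varphi)\subset(0,\infty)$. When $1/Mf(x)$ lies in this interval, choose $r$ so that $\varphi(r)\sim 1/Mf(x)$; then both summands are comparable to $Mf(x)^{p/q}$, proving (\ref{eq:180525-11}). When $1/Mf(x)\ge \sup\varphi$, let $r\to\infty$: the second summand tends to $(\sup\varphi)^{-p/q}=\inf_r\varphi(r)^{-p/q}$, and the first is dominated by $Mf(x)(\sup\varphi)^{1-p/q}\le(\sup\varphi)^{-p/q}$ by the case hypothesis. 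The remaining case $1/Mf(x)\le \inf\varphi$ (which only arises when $\inf\varphi>0$) is forced to be close to the boundary: from $\|f\|_{{\mathcal M}^\varphi_p}=1$ one has the pointwise bound $Mf(x)\lesssim 1/\inf\varphi$ (by comparing any ball containing $x$ with a centered ball and using H\"older), so $Mf(x)\sim 1/\inf\varphi$ up to a constant; letting $r\to 0$ then gives both summands bounded by a constant multiple of $\inf_r\varphi(r)^{-p/q}$.

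The main obstacle is really this last optimization: one has to bookkeep the three cases depending on whether $1/Mf(x)$ lands in, above, or below the range of $\varphi$. The case $1/Mf(x)<\inf\varphi$ is the only subtle one, and it is precisely the reason the term $\inf_{r>0}\varphi(r)^{-p/q}$ must appear on the right-hand side of (\ref{eq:180525-11}); the pointwise bound $Mf(x)\lesssim 1/\inf\varphi$ (valid when $\inf\varphi>0$) keeps this case under control. The other ingredients -- the two dyadic estimates and the consequences of (\ref{eq:Nakai-1}) -- are routine once arranged in the correct order.
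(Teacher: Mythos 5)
Your proposal is correct and follows essentially the same route as the paper: the same dyadic/Hedberg-type splitting controlled by the growth condition and the bounded overlap of the intervals $[k_1 2^j r,k_2 2^j r]$, the same use of $(\ref{eq:Nakai-1})$ to reach $|I_\rho f(x)|\lesssim Mf(x)\varphi(r)^{1-p/q}+\varphi(r)^{-p/q}$, and the same optimization in $r$ using $Mf(x)\le \sup_{t>0}\varphi(t)^{-1}$ (from $\|f\|_{{\mathcal M}^\varphi_p}=1$) together with the continuity of $\varphi$. Only one bookkeeping slip in your last case: since $\inf_{r>0}\varphi(r)^{-p/q}=(\sup_{r>0}\varphi(r))^{-p/q}$, letting $r\downarrow 0$ bounds both summands by a multiple of $(\inf_{r>0}\varphi(r))^{-p/q}=Mf(x)^{p/q}$ rather than by $\inf_{r>0}\varphi(r)^{-p/q}$, which is exactly the first term of $(\ref{eq:180525-11})$, so the conclusion is unaffected.
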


Once this estimate is satisfied,
we can conclude the proof of Theorem \ref{thm:Hendra}
as follows:
We choose an arbitrary ball $B=B(z,r)$.
If we integrate Lemma \ref{lem:150824-20},
then we have
\[
\frac{1}{|B|}\int_B |I_\rho f(x)|^q\,dx
\lesssim \,
\frac{1}{|B|}\int_B [Mf(x)]^{p}\,dx
+\inf_{u>0}\varphi(u)^{-p}.
\]
If we multiply both sides by $\varphi(r)^p$,
then we have
\[
\frac{\varphi(r)^p}{|B|}\int_B |I_\rho f(x)|^q\,dx
\lesssim \,
\left(
\frac{\varphi(r)^p}{|B|}\int_B [Mf(x)]^{p}\,dx
+1\right)
\lesssim 1
\]
by virtue of the boundedness of the maximal operator $M$
on ${\mathcal M}^\varphi_p({\mathbb R}^n)$.
The ball $B$ being arbitrary, we obtain the desired result.
\begin{proof}
Recall that $k_1$ and $k_2$ appeared in the condition
$(\ref{150513-2})$ on $\rho$.
Let $$\rho^*(r) \equiv \int_{k_1r}^{k_2r} \frac{\rho(s)}{s}\,ds.$$
We have
$$
|I_\rho f(x)|
\lesssim \sum_{j=-\infty}^{-1}+ \sum_{j=0}^\infty
\frac{\rho^*(2^jr)}{(2^jr)^n} \int_{|x-y|<2^jr} |f(y)|\,dy
$$
for given $x\in{{\mathbb R}^n} $ and $r>0$. Let
$\Sigma_I$ and $\Sigma_{II}$ be the first and second summations
above.
Now we invoke the {\it overlapping property}:
\begin{equation}\label{overlapping}
\sum_{j=-\infty}^{-1}
\chi_{[2^jk_1r,2^jk_2r]}
\lesssim
\chi_{(-\infty,2^{-1}k_2r]},
\quad
\sum_{j=0}^\infty
\chi_{[2^jk_1r,2^jk_2r]}
\lesssim
\chi_{[k_1r,\infty)}.
\end{equation}
As a result,
we have
\begin{align*}
\sum_{j=-\infty}^{-1} \rho^*(2^jr)
\le
\sum_{j=-\infty}^{-1}
\int_{2^jk_1r}^{2^jk_2r}\frac{\rho(s)}{s}\,ds
\lesssim
\int_{0}^{k_2r}
\frac{\rho(s)}{s}\,ds= \tilde{\rho}(k_2r)
\end{align*}
and
\begin{align*}
\sum_{j=0}^\infty \frac{\rho^*(2^jr)}{\varphi(2^jr)}
\lesssim
\int_{k_1r}^\infty
\left(
\sum_{j=0}^\infty
\chi_{[2^jk_1r,2^jk_2r]}(s)
\right)
\frac{\rho(s)}{s\varphi(s)}\,ds
\lesssim
\int_{k_1r}^\infty
\frac{\rho(s)}{s\varphi(s)}\,ds.
\end{align*}
Thus
thanks to (\ref{eq:180423-22})
\begin{align*}
\Sigma_I
&\lesssim \sum_{j=-\infty}^{-1} \rho^*(2^jr)Mf(x)\le
C\,\tilde{\rho}(k_2r)Mf(x)\lesssim \,\varphi(r)^{1-p/q}Mf(x).
\end{align*}
Meanwhile
\begin{align*}
\Sigma_{II}
&\lesssim \sum_{j=0}^\infty \frac{\rho^*(2^jr)}{\varphi(2^jr)}\|f\|_{{\mathcal M}^\varphi_p}
\lesssim
\int_{k_1r}^\infty \frac{\rho(s)}{s\varphi(s)}\,ds.
\end{align*}
We use
$\frac{\rho}{\varphi} \in {\mathbb Z}_0$
or
(\ref{eq:Nakai-1}) now.
If we use
(\ref{eq:Nakai-1}),
then we have
\[
\int_r^\infty \frac{\rho(t)}{t\varphi(t)}\,dt
\lesssim\frac{1}{\varphi(r)^{\frac{p}{q}}}.
\]
By the doubling property of $\varphi$,
we obtain
$
\Sigma_{II}
\lesssim \,\frac{1}{\varphi(r)^{\frac{p}{q}}}.
$
Hence,
\begin{equation}\label{eq:3.0}
|I_\rho f(x)| \lesssim
,\varphi(r)^{1-p/q}\left(Mf(x)+\frac{1}{\varphi(r)}\right)
\end{equation}
for all $r>0$.

First assume $\displaystyle Mf(x) \le \inf_{r>0}\frac{1}{\varphi(r)}$.
Then,
the conclusion is immediate
from $(\ref{eq:3.0})$.

Next, we assume $\displaystyle Mf(x)>\inf_{r>0}\frac{1}{\varphi(r)}$.
Since $\|f\|_{{\mathcal M}^\varphi_p}=1$,
we have
\[
1\ge
\varphi(r)\left(
\frac{1}{|Q(x,r)|}\int_{Q(x,r)}|f(y)|^p\,dy\right)^{\frac{1}{p}}\ge
\frac{\varphi(r)}{|Q(x,r)|}\int_{Q(x,r)}|f(y)|\,dy.
\]
Hence
\[
\frac{1}{|B(x,r)|}\int_{B(x,r)}|f(y)|\,dy
\le \frac{1}{\varphi(r)}
\]
for all $r>0$. This implies
\[
\frac{1}{|B(x,r)|}\int_{B(x,r)}|f(y)|\,dy
\le \sup_{R>0}\frac{1}{\varphi(R)}
\]
for all $r>0$. Since $r>0$ and $x \in {\mathbb R}^n$ are arbitrary,
it follows that
$Mf(x) \le \sup_{r>0} \frac{1}{\varphi(r)}$.
We can thus find $R>0$ such that
$Mf(x)=2\varphi(R)$
and, with this $R$, we can obtain the desired estimate.
\end{proof}

In order that
$I_\rho$ be bounded
from
${\mathcal M}^\varphi_p({{\mathbb R}^n})$ to
${\mathcal M}^{\varphi^{p/q}}_{q}({{\mathbb R}^n})$,
we must have
\[
\frac{\tilde{\rho}(r)}{\varphi(r)}
\lesssim\frac{1}{\varphi(r)^{\frac{p}{q}}}
\]
according to
Theorem \ref{thm:Nakai-2}
with $\psi=\varphi^{p/q}$.

We note that if $\rho(r)=r^\alpha$, with $0<\alpha<n$, then
$I_\rho =I_{\alpha}$ is the classical fractional integral operator, also
known as the Riesz potential,
which is bounded
from $L^p({{\mathbb R}^n})$ to $L^q({{\mathbb R}^n})$
if and only if
$\frac{1}{p}-\frac{1}{q}=\frac{\alpha}{n}$, where $1<p,\,q<\infty$. 
The necessary part is usually proved by using the
scaling arguments.

Theorem \ref{thm:Hendra} characterizes the
kernel function $\rho$ for which $I_\rho $ is bounded
from $L^p({{\mathbb R}^n})$ to $L^q({{\mathbb R}^n})$
for $1<p<q<\infty$.
We have the following result:
\begin{corollary}{\rm \cite[Corollary 1.5]{EGNS14}}\label{thm1}
Let $1<p<q<\infty$. The operator $I_\rho$ is bounded
from $L^p({{\mathbb R}^n})$ to $L^q({{\mathbb R}^n})$
if and only if $\rho(r) \lesssim \,r^{\frac{n}{p}-\frac{n}{q}}$ for all
$r>0$.
\end{corollary}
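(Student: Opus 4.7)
The plan is to specialize Theorem~\ref{thm:Hendra} to the power weight $\varphi(r) = r^{n/p}$ and then translate the abstract condition (\ref{eq:Nakai-1}) into the pointwise bound $\rho(r) \lesssim r^{n/p - n/q}$. The first step is to verify that $\varphi(r) = r^{n/p}$ is an admissible function, i.e., it lies in $\mathcal{G}_p \cap W$: indeed $\varphi$ is continuous, strictly increasing, and $t^{-n/p}\varphi(t) \equiv 1$ is (trivially) decreasing. Moreover, ${\mathcal M}^\varphi_p({\mathbb R}^n) = L^p({\mathbb R}^n)$ and ${\mathcal M}^{\varphi^{p/q}}_q({\mathbb R}^n) = L^q({\mathbb R}^n)$ with equality of norms, since $\varphi^{p/q}(r) = r^{n/q}$; this reduces the $L^p \to L^q$ boundedness of $I_\rho$ to the Morrey-space boundedness handled by Theorem~\ref{thm:Hendra}.

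For the \emph{if} direction, I would assume $\rho(r) \lesssim r^{n/p - n/q}$ and directly verify (\ref{eq:Nakai-1}). Since $n/p - n/q > 0$,
\[
\frac{1}{\varphi(r)}\int_0^r \frac{\rho(t)}{t}\,dt
\lesssim r^{-n/p} \int_0^r t^{n/p - n/q - 1}\,dt
\simeq r^{-n/q} = \varphi(r)^{-p/q},
\]
and similarly, because $-n/q - 1 < -1$,
\[
\int_r^\infty \frac{\rho(t)}{t\varphi(t)}\,dt
\lesssim \int_r^\infty t^{-n/q - 1}\,dt
\simeq r^{-n/q} = \varphi(r)^{-p/q}.
\]
Hence (\ref{eq:Nakai-1}) holds, and Theorem~\ref{thm:Hendra}(1) yields boundedness of $I_\rho$ from $L^p$ to $L^q$.

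For the \emph{only if} direction, I would check that $\varphi(r) = r^{n/p} \in \mathbb{Z}^{-n/p}$ so that the necessity assertion of Theorem~\ref{thm:Hendra}(1) applies: with $\gamma = -n/p$, one has $\int_0^r t^{n/p} \cdot t^{n/p - 1}\,dt = \tfrac{p}{2n} r^{2n/p} = \tfrac{p}{2n}\varphi(r) r^{n/p}$, so the Zygmund condition holds. Consequently, (\ref{eq:Nakai-1}) is necessary; taking just the first summand gives
\[
\tilde{\rho}(r) \lesssim \varphi(r)^{1 - p/q} = r^{n/p - n/q}.
\]
Since $\rho$ satisfies the growth condition (\ref{150513-2}), Proposition~\ref{prop:180611-23} upgrades this $\tilde\rho$ bound to $\rho(r) \lesssim r^{n/p - n/q}$, completing the proof.

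The only subtlety, and where I would pay careful attention, is making sure that the standing hypothesis (\ref{150513-2}) on $\rho$ is part of the setup so that Theorem~\ref{thm:Hendra} and Proposition~\ref{prop:180611-23} are applicable; everything else is a direct specialization of the Morrey-space result to power weights.
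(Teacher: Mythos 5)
Your proposal is correct and follows exactly the route the paper intends: Corollary \ref{thm1} is stated as a direct specialization of Theorem \ref{thm:Hendra} to the power weight $\varphi(r)=r^{n/p}$, and your verifications that $\varphi\in{\mathcal G}_p\cap W\cap{\mathbb Z}^{-n/p}$, that condition (\ref{eq:Nakai-1}) reduces to $\rho(r)\lesssim r^{n/p-n/q}$, and the final upgrade from $\tilde\rho$ to $\rho$ via Proposition \ref{prop:180611-23} under the standing growth condition (\ref{150513-2}) are precisely the needed steps. No gaps.
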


For $\rho(r)=r^\alpha$,
Corollary \ref{thm1} further reads that the operator $I_\rho$ is bounded
from $L^p({{\mathbb R}^n})$ to $L^q({{\mathbb R}^n})$ if and only if
$\alpha=\frac{n}{p}-\frac{n}{q}$, where $1<p<q<\infty$.

With Theorems \ref{thm:Nakai-2}--\ref{thm:Hendra}
we can characterize the function $\rho$ for which
$I_\rho $ is bounded from one Morrey space to another.

The next corollary generalizes the previous characterization in
Corollary \ref{thm1}.

\begin{corollary}{\rm \cite[Corollary 1.6]{EGNS14}}\label{thm2}
Assume that the parameters $p,q,s,t$ and $\alpha$ satisfy
\[
1<q \le p<\infty, \quad
1<t \le s<\infty, \quad
0<\alpha<n
\]
and
\[
\frac{1}{s}=\frac{1}{p}-\frac{\alpha}{n}, \quad
\frac{t}{s}=\frac{q}{p}.
\]
Let $\rho:(0,\infty) \to (0,\infty)$ be a function satisfying the growth condition.
Then the generalized fractional integral operator $I_\rho $ is
bounded from ${\mathcal M}^p_q({{\mathbb R}^n})$
to ${\mathcal M}^s_t({{\mathbb R}^n})$
precisely when 
$\rho(r) \lesssim r^\alpha$.
\end{corollary}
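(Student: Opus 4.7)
The plan is to deduce Corollary~\ref{thm2} from Theorem~\ref{thm:Hendra} applied to the explicit weight $\varphi(r) = r^{n/p}$, which realizes the classical Morrey space ${\mathcal M}^p_q$ as ${\mathcal M}^\varphi_q$ up to the normalizing factor $|Q(x,r)|^{1/q}$. The first task is translating indices: in Theorem~\ref{thm:Hendra} the pair $(p,q)$ are the Lebesgue exponents of source and target, so I would identify them with $(q,t)$ from the corollary. Under $q \le p$ one checks $\varphi \in {\mathcal G}_q$, and the Adams balance $\tfrac{t}{s} = \tfrac{q}{p}$ gives $\varphi^{q/t}(r) = r^{nq/(pt)} = r^{n/s}$, so ${\mathcal M}^\varphi_q = {\mathcal M}^p_q$ and ${\mathcal M}^{\varphi^{q/t}}_t = {\mathcal M}^s_t$. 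A one-line calculation shows $\int_0^r t^{n/p} \cdot t^{n/q-1}\,dt \sim r^{n/p + n/q} = \varphi(r) r^{n/q}$, i.e.\ $\varphi \in {\mathbb Z}^{-n/q}$, which makes both halves of Theorem~\ref{thm:Hendra}(1) applicable.

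For sufficiency, assume $\rho(r) \lesssim r^\alpha$. Substituting into the left-hand side of (\ref{eq:Nakai-1}) produces
\[
\frac{1}{r^{n/p}} \int_0^r s^{\alpha-1}\,ds \;+\; \int_r^\infty s^{\alpha - 1 - n/p}\,ds,
\]
and both integrals converge because $0 < \alpha = \tfrac{n}{p} - \tfrac{n}{s} < \tfrac{n}{p}$. Each summand evaluates to a constant multiple of $r^{\alpha - n/p} = r^{-n/s} = \varphi(r)^{-q/t}$, so (\ref{eq:Nakai-1}) is verified and Theorem~\ref{thm:Hendra}(1) yields the boundedness $I_\rho:{\mathcal M}^p_q \to {\mathcal M}^s_t$.

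For necessity, assume $I_\rho:{\mathcal M}^p_q \to {\mathcal M}^s_t$ is bounded. Since $\varphi \in {\mathbb Z}^{-n/q}$, the necessity half of Theorem~\ref{thm:Hendra}(1) forces (\ref{eq:Nakai-1}); retaining only the first summand gives
\[
\tilde{\rho}(r) \;\lesssim\; \varphi(r)^{1-q/t} \;=\; r^{(n/p)(1-q/t)} \;=\; r^{n/p - n/s} \;=\; r^\alpha.
\]
Finally I would invoke Proposition~\ref{prop:180611-23}, which uses the growth condition on $\rho$ to upgrade $\tilde{\rho}(r) \lesssim r^\alpha$ to $\rho(r) \lesssim r^\alpha$, completing the characterization.

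There is no serious obstacle; the only nontrivial step is the bookkeeping between the two sets of parameters and the verification of the two structural properties $\varphi \in {\mathcal G}_q \cap {\mathbb Z}^{-n/q}$. The conceptual point is simply that the Adams relations $\tfrac{1}{s} = \tfrac{1}{p} - \tfrac{\alpha}{n}$ and $\tfrac{t}{s} = \tfrac{q}{p}$ are exactly what make $\varphi(r) = r^{n/p}$ and $\varphi(r)^{q/t} = r^{n/s}$ line up with the source and target appearing in Theorem~\ref{thm:Hendra}, so all of the analytic content has already been absorbed there and in Proposition~\ref{prop:180611-23}.
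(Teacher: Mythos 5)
Your proposal is correct and follows exactly the route the paper intends: specialize Theorem \ref{thm:Hendra} to $\varphi(r)=r^{n/p}$ (with the theorem's exponent pair playing the role of $(q,t)$, where the Adams relations give $\varphi\in{\mathcal G}_q\cap{\mathbb Z}^{-n/q}$ and $\varphi^{q/t}(r)=r^{n/s}$), and then pass from $\tilde{\rho}(r)\lesssim r^\alpha$ to $\rho(r)\lesssim r^\alpha$ via Proposition \ref{prop:180611-23} using the growth condition. The parameter bookkeeping and both implications check out, so nothing further is needed.
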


We show by examples that 
two statements
in
Theorem
\ref{thm:Hendra} are of independent interest.
As before we write
\[
\ell^{\mathbb B}(r)
\equiv
\begin{cases}
(1+|\log r|)^{\beta_1}&(0< r \le 1),\\
(1+|\log r|)^{\beta_2}&(1< r<\infty).
\end{cases}
\]
This function is used to describe
the \lq \lq log"-growth and \lq \lq log"-decay properties.
Also, we fix $p$ and $q$ so that $1<p<q<\infty$.
The key properties
we are interested in are summarized in the following table:
\begin{center}
\begin{small}
\tabcolsep1.5pt
\begin{tabular}{llllllll}
 & $\frac{\rho}{\varphi} \in {\mathbb Z}_0$ 
 & $\varphi \in{\mathbb Z}^{-\frac{n}{p}}$
 & $(\ref{eq:Nakai-1})$
 &$(\ref{eq:180423-22})$\\
\hline
\hline
Example \ref{ex:5.2} 
&$+$ & $+$ & $+$& $+$\\
\hline
Example \ref{ex:5.1} 
&$+$ & $-$ & $+$& $+$ \\
\hline
Example \ref{ex:5.3} 
&$-$ & $-$ & $+$& $+$ \\
\hline
Example \ref{ex:5.4} 
&$-$ & $-$ & $+$& $+$ \\
\hline
\end{tabular}
\end{small}
\end{center}

\begin{example}{\rm \cite[Example 2]{EGNS14}}\label{ex:5.2}
Let $\lambda<0$ satisfy $0<\left(\frac{p}{q}-1\right)\lambda<n$ and
$-\frac{n}{p}<\lambda$. Take $\mu_1,\mu_2$ arbitrarily. Set
$\beta_i\equiv \left(\frac{p}{q}-1\right)\mu_i$ for $i=1,2$. Define 
$\varphi(r)\equiv r^{-\lambda}\ell_{-\mu_1-,\mu_2}(r)$
and
$\rho(r)=\varphi(r)^{1-\frac{p}{q}}$ for $r>0$. Then this
pair $(\rho,\varphi)$ fulfills the assumptions
$\frac{\rho}{\varphi} \in {\mathbb Z}_0$
and
$\varphi \in{\mathbb Z}^{-\frac{n}{p}}$ in Theorem
{\rm \ref{thm:Hendra}}. Indeed, 
for $r>0$
we have
$\tilde{\rho}(r) \sim \rho(r)=\varphi(r)^{1-\frac{p}{q}}$
and
$$\int_r^\infty
\frac{\rho(t)}{t\varphi(t)}\,dt \sim \frac{\rho(r)}{\varphi(r)}.
$$
\end{example}

Example \ref{ex:5.1} is an endpoint case
of the above example.
\begin{example}{\rm \cite[Example 1]{EGNS14}}\label{ex:5.1}
Let $\mu_1,\mu_2$ satisfy $\mu_1,\mu_2 \ge 0. $ Set
$\alpha\equiv \frac{n}{p}-\frac{n}{q}$ and
$\beta_i\equiv \left(\frac{p}{q}-1\right)\mu_i$ for $i=1,2$. Define 
$\varphi(r)\equiv r^{\frac{n}{q}}\ell_{-\mu_1,-\mu_2}(r)$ for $r>0$
and
$\rho=\varphi^{1-\frac{p}{q}}$. 
We note that
$\tilde{\rho} \sim \rho$.
Then
this pair $(\rho,\varphi)$ fulfills the assumptions
$\frac{\rho}{\varphi}=\varphi^{-\frac{p}{q}} \in {\mathbb Z}_0$ and 
$(\ref{eq:Nakai-1})$
but $\varphi \notin{\mathbb Z}^{-\frac{n}{p}}$
since $\ell_{-\mu_1,-\mu_2} \notin {\mathbb Z}^0$. 
\end{example}

The next example concerns
the case where the spaces are close to $L^\infty({\mathbb R}^n)$
and the smoothing order of $I_\rho $ is \lq \lq almost $0$".

\begin{example}{\rm \cite[Example 3]{EGNS14}}\label{ex:5.3}
Let $\mu_1,\mu_2<0$. Set
$\beta_1\equiv \left(\frac{p}{q}-1\right)\mu_1+1 \in(1,\infty)$ and
$\beta_2\equiv \left(\frac{p}{q}-1\right)\mu_2-1\in(-1,\infty)$.
Define
$\rho\equiv \ell^{\mathbb B}$ as we did in Example \ref{example:180611-12}
and let
$\varphi\equiv \ell_{\mu_1,\mu_2}$. Then this pair
$(\rho,\varphi)$ fulfills $\varphi \notin{\mathbb Z}^{-\frac{n}{p}}$
and assumption
$(\ref{eq:Nakai-1})$ but
$\frac{\rho}{\varphi}=\ell_{\beta_1-\mu_1,\beta_2-\mu_2} \notin {\mathbb Z}_0$.
More precisely, 
we have $\tilde{\rho} \sim
\ell_{\beta_1-1,\beta_2+1}$ since $\beta_1>1$, and 
$$ 
\int_r^\infty
\frac{\rho(t)}{t\varphi(t)}\,dt \sim
\ell_{\mu_1+\beta_1-1,\mu_2+\beta_2+1}(r)
\quad (r>0). 
$$
\end{example}

We consider a case where the target space is close
to $L^\infty({\mathbb R}^n)$.
\begin{example}{\rm \cite[Example 4]{EGNS14}}\label{ex:5.4}
Let $1<p,q<\infty$.
Let $\alpha,\beta_1,\mu_1,\mu_2$ satisfy $ 0<\alpha<\frac{n}{p},
\, \mu_1+\beta_1<1, \, \mu_2<0$. Set
$\beta_2\equiv \left(\frac{p}{q}-1\right)\mu_2-1 \in (-1,\infty).$
Define $ \rho(r)\equiv \min(1,r^\alpha)\ell^{\mathbb B}(r) $
as we did in Example \ref{example:180611-12}
 and let
$
\varphi(r)\equiv \max(1,r^{-\alpha})\ell_{\mu_1,\mu_2}(r) $ for $r>0$. Then
this pair $(\rho,\varphi)$ fulfills 
$\varphi \notin{\mathbb Z}^{-\frac{n}{p}}$
and
assumption $(\ref{eq:Nakai-1})$ but 
$\frac{\rho}{\varphi} \notin {\mathbb Z}_0$ More precisely, 
\[
\frac{\tilde{\rho}(r)}{\varphi(r)} \sim
\ell_{\mu_1+\beta_1,\mu_2+\beta_2+1}(r)
\] 
and 
\[ 
\int_r^\infty
\frac{\rho(t)}{t\varphi(t)}\,dt \sim
\ell_{\mu_1+\beta_1-1,\mu_2+\beta_2+1}(r) 
\] 
for $r>0$.
\end{example}

Based upon these preliminary results and Lemma \ref{lem8},
we will prove Theorems \ref{thm:Hendra}--\ref{thm:Nakai-2}.

We remark that $(\ref{eq:Nakai-1})$ includes
$(\ref{eq:180423-22})$.
We prove an estimate.
Once we prove Lemma \ref{lem:150824-20} below,
we can obtain the boundedness of $I_\rho $ from ${\mathcal M}^\varphi_p({{\mathbb R}^n})$ to
${\mathcal M}^{\varphi^{p/q}}_q({{\mathbb R}^n})$ as we will see below.
Here we use the fact that the Hardy--Littlewood maximal operator $M$ is
bounded on ${\mathcal M}^\varphi_p({{\mathbb R}^n})$,
if $p>1$ and $\varphi$ is almost decreasing;
see Theorem \ref{thm:140525-3}.

We end this section with comparison of our results
with the existing results.
We move on to the case of Spanne type.

\begin{remark}
See \cite[Theorem 5.2]{Guliyev09}
for the Spanne-type boundedness of $I_\alpha$.
See 
\cite[Theorem 5.4]{GAKS11},
\cite[Theorem 1.8]{ShTa09},
\cite[Theorems 2.6, 2.7, 3.4 and 3.6]{ShTa12}
and
\cite[Theorem 1.5]{TaHe13}
for the case of the multilinear setting.
\end{remark}

\begin{remark}
See \cite[Theorem 2.3]{SMG12}
for the weak boundedness of the maximal operators
(on nonhomogeneous spaces),
where the integral conditions is assumed.
\end{remark}

\begin{remark}
See \cite[Theorem 5.5]{Guliyev09}
and
\cite[Theorem 5.7]{GuSh13}
for the Adams-type boundedness of $I_\alpha$.
Persson and Samko obtainted the  Adams-type boundedness of $I_\alpha$
using the weighted Hardy operator;
see \cite[Theorem 5.4]{PeSa11}.
Guliyev and Shukurov also considered a similar situation
in \cite[Theorem 3.6]{GuSh12} and \cite[Theorem 5.7]{GuSh13}.
See \cite[Theorem 2]{BuLi11}
for the multilinear case.
\end{remark}

\begin{remark}\
\begin{enumerate}
\item
See \cite[Theorem 3]{Nakai94},
\cite{GuEr09} 
for the boundedness of
$I_\alpha$ on generalized Morrey spaces.
\item
See \cite[Theorem 7.1]{Nakai08-1}
for the boundedness of the generalized fractional integral operators
on generalized Orlicz Morrey spaces
(of the first kind).
\item
See \cite{EGN04,Gunawan03,GuEr09,MNOS10,Sugano11,SuTa03}
for the study of the boundedness of $I_\rho$.
\item
See \cite{SaSh13-3}
for a different type of generalization of the form:
\[
I f(x)=\int_{{\mathbb R}^n}K(x,y)f(y)\,dy.
\]
\end{enumerate}
\end{remark}

\begin{remark}
In some special case,
some authors obtained
the necessity of the boundedness
of $I_\alpha=I_\rho$ from
generalized Morrey spaces to other generalized Morrey spaces.
See
\cite[Theorems 2.3 and 3.2]{EUG12}
as well as
\cite{Gunawan03},
\cite{SST11-1}.
\end{remark}

\begin{remark}
Kurata and Sugano pointed out that
the operator of the form
$V^\gamma(-\Delta+V)^\beta$
with a potentail $V$ satisfying the reverse H\"{o}lder inequality
falls under the scope of the results in this section
\cite{KuSu00-1}.
Here $\beta,\gamma$ are suitable real parameters.
See \cite{KuSu00-1} for more details.
\end{remark}

\begin{remark}
Many researchers handled various operators.
\begin{enumerate}
\item
In \cite{Eroglu13}
Eroglu dealt with fractional oscillatory integral operators
and their commutators.
\item
In \cite{LiSh11}
Liu and Shi considered the boundedness of the commutator
generated by BMO and the fractional integral operators.
See also 
\cite[Theorem 7.1]{GAKS11},
\cite[Theorem 7.11]{GuSh13}
for the Spanne type result
and
\cite[Theorem 7.13]{GuSh13}
for the Adams type result.
\end{enumerate}
\end{remark}

\subsection{Generalized fractional maximal operators in generalized Morrey spaces}
\label{subsection:Generalized fractional maximal operators}

We discuss the boundedness property of the generalized fractional maximal operator, defined by:
\[
M_{\rho}f(x)=\sup_{r>0} \frac{\rho(r)}{|B(x,r)|} \int_{B(x,r)} |f(y)| \ dy
\quad (x \in {\mathbb R}^n),
\]
where $f\in L^1_{\mathrm{loc}}({\mathbb R}^n)$ and $\rho$ is 
a suitable function from $(0,\infty)$ to $[0,\infty)$.

\begin{example}\label{example:180611-23}
Let $0 \le \alpha<n$.
\begin{enumerate}
\item
If we let
$\rho(t)=t^\alpha$,
then we obtain the 
fractional maximal operator $M_\alpha$;
$M_\rho=M_\alpha$.
\item
If we let
$\rho(t)=\min(t^\alpha,1)$,
then we obtain the 
local fractional maximal operator $m_\alpha$;
$M_\rho=m_\alpha$,
where
\[
m_\alpha f(x)=\sup_{0<r\le 1} \frac{r^\alpha}{|B(x,r)|} \int_{B(x,r)} |f(y)| \ dy
\]

\end{enumerate}
\end{example}

What
$M_\rho$ is to $I_\rho$
is
what
$M_\alpha$ is to $I_\alpha$.
So, we are interested in when
 $M_{\rho}$ is bounded from
${\mathcal M}^{\varphi}_{q}({\mathbb R}^n)$ to ${\mathcal M}^{\psi}_{t}({\mathbb R}^n)$.
We start with the following necessary condition:
\begin{proposition}{\rm \cite[Theorem 1]{HNS15}}\label{prop:180526-1}
Let $1\le q<\infty$ and $(\varphi,\psi) \in {\mathcal G}_q \times {\mathcal G}_1$.
Assume that $M_{\rho}$ is bounded from
${\mathcal M}^{\varphi}_{q}({\mathbb R}^n)$ to ${\rm w}{\mathcal M}^{\psi}_1({\mathbb R}^n)$.
Then
$\rho \lesssim \frac{\varphi}{\psi}$.
In particular, 
if $M_{\rho}$ is bounded from
${\mathcal M}^{\varphi}_{q}({\mathbb R}^n)$ to ${\mathcal M}^{\psi}_1({\mathbb R}^n)$,
then
$\rho \lesssim \frac{\varphi}{\psi}$.
\end{proposition}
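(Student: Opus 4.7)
The natural strategy is the standard test-function argument: evaluate the purported boundedness on characteristic functions of balls and read off the required pointwise estimate on $\rho$. Since the weak boundedness implies nothing more than the strong boundedness for this purpose, the main case is the weak one.

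Fix $x\in{\mathbb R}^n$ and $r>0$, and set $f\equiv \chi_{B(x,r)}$. The first step is to produce a large lower bound for $M_\rho f$ on a sizeable set. For any $y\in B(x,r/2)$ we have $B(y,r/2)\subset B(x,r)$, so taking $s=r/2$ in the defining supremum gives
\[
M_\rho f(y) \ge \frac{\rho(r/2)}{|B(y,r/2)|}\int_{B(y,r/2)}\chi_{B(x,r)}(z)\,dz = \rho(r/2).
\]
Consequently, for any $\lambda<\rho(r/2)$,
\[
\chi_{B(x,r/2)} \le \chi_{\{M_\rho f>\lambda\}}.
\]

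Next, I invoke the assumed weak boundedness together with Lemma~\ref{lem7} (and the remark that cubes may be replaced with balls with equivalence of norms). On the one hand $\|f\|_{{\mathcal M}^\varphi_q}\sim \varphi(r)$, and on the other hand, taking the monotone limit $\lambda\uparrow\rho(r/2)$ in the definition of $\|\cdot\|_{{\rm w}{\mathcal M}^\psi_1}$,
\[
\rho(r/2)\,\|\chi_{B(x,r/2)}\|_{{\mathcal M}^\psi_1}
\le \|M_\rho f\|_{{\rm w}{\mathcal M}^\psi_1}
\lesssim \|f\|_{{\mathcal M}^\varphi_q}
\lesssim \varphi(r).
\]
Using Lemma~\ref{lem7} once more, $\|\chi_{B(x,r/2)}\|_{{\mathcal M}^\psi_1}\sim \psi(r/2)$, so
\[
\rho(r/2)\,\psi(r/2) \lesssim \varphi(r).
\]

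Finally, since $\varphi\in{\mathcal G}_q$ and $\psi\in{\mathcal G}_1$, both functions are doubling by Proposition~\ref{prop:180611-2}; in particular $\varphi(r)\lesssim \varphi(r/2)$. Substituting $t=r/2$ yields $\rho(t)\,\psi(t)\lesssim \varphi(t)$ for every $t>0$, which is the desired conclusion. The \emph{in particular} assertion is immediate, since strong boundedness into ${\mathcal M}^\psi_1({\mathbb R}^n)$ trivially implies weak boundedness into ${\rm w}{\mathcal M}^\psi_1({\mathbb R}^n)$. No step here is a genuine obstacle; the only subtlety is the bookkeeping between cubes and balls and the use of the doubling property, which the class ${\mathcal G}_q$ has been set up precisely to guarantee.
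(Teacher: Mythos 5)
Your proposal is correct and follows essentially the same route as the paper: test the boundedness on the characteristic function of a ball, derive the pointwise lower bound $M_\rho\chi_{B}\gtrsim\rho$ on a concentric half-ball, pass through the weak Morrey norm, and conclude via Lemma \ref{lem7} (for the norms of indicators) and the doubling property of $\varphi,\psi\in{\mathcal G}_q\times{\mathcal G}_1$. The only cosmetic difference is that you take $\lambda\uparrow\rho(r/2)$ in the level-set definition of the weak norm, whereas the paper simply uses monotonicity of the weak norm applied to $\rho(R)\chi_{B(R)}\le M_\rho\chi_{B(2R)}$.
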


\begin{proof}
Let $R>0$ be fixed.
We utilize the pointwise estimate 
$\rho(R)\chi_{B(R)}\le M_{\rho} \chi_{B(2R)}$, 
and the doubling condition of $\varphi$
to obtain 
\begin{align*}
\rho(R) 
\lesssim \frac{\|\rho(R)\chi_{B(R)}\|_{{\rm w}{\mathcal M}^{\psi}_1}}{\psi(R)}
\lesssim \frac{\|M_{\rho} \chi_{B(2R)}\|_{{\mathcal M}^{\psi}_1}}{\psi(R)}
\lesssim \frac{\|\chi_{B(2R)} \|_{{\mathcal M}^\varphi_{q}}}{\psi(R)}
\sim \frac{\varphi(R)}{\psi(R)}.
\end{align*}
\end{proof}

Our first result completely 
characterizes the boundedness of $M_{\rho}$ 
on generalized Orlicz--Morrey spaces.
\begin{theorem}{\rm \cite[Theorem 1]{HNS15}}\label{thm:150206-1}
Let $0<a<1<q<\infty$.
Let $\varphi\in{\mathcal G}_{q}$.
Assume that 
$\displaystyle\lim_{t\downarrow 0} \varphi(t)=0$ and
$\displaystyle\lim_{t\to \infty} \varphi(t)=\infty$. 
Then, $M_{\rho}$ is bounded from
${\mathcal M}^{\varphi}_{q}({\mathbb R}^n)$ to ${\mathcal M}^{\varphi^a}_{a^{-1}q}({\mathbb R}^n)$ 
if and only if 
$\rho$ and $\varphi$ satisfy the inequality 
\begin{equation}\label{eq:150206}
\rho(R)\lesssim \varphi(R)^{1-a}
\end{equation}
for all $R>0$.
\end{theorem}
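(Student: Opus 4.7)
The plan is to prove both directions via the strategy already established in the excerpt for the Adams-type boundedness of $I_\rho$, adapted to the maximal setting.

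For necessity, I would follow the template of Proposition \ref{prop:180526-1}. A simple geometric observation gives the pointwise lower bound $\rho(R)\chi_{B(R)}(x)\le M_\rho\chi_{B(2R)}(x)$: for $x\in B(R)$ we have $B(x,R)\subset B(2R)$, so $|B(x,R)|^{-1}\int_{B(x,R)}\chi_{B(2R)}=1$. Taking the ${\mathcal M}^{\varphi^a}_{q/a}$-norm of both sides and applying the assumed boundedness yields
\[
\rho(R)\,\|\chi_{B(R)}\|_{{\mathcal M}^{\varphi^a}_{q/a}}\le\|M_\rho\chi_{B(2R)}\|_{{\mathcal M}^{\varphi^a}_{q/a}}\lesssim\|\chi_{B(2R)}\|_{{\mathcal M}^\varphi_q}.
\]
Since $\varphi^a\in{\mathcal G}_{q/a}$, Lemma \ref{lem7} evaluates both characteristic-function norms as $\varphi(R)^a$ and $\varphi(2R)\sim\varphi(R)$, respectively, delivering $\rho(R)\lesssim\varphi(R)^{1-a}$.

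For sufficiency, the key is a Hedberg-type pointwise estimate
\[
M_\rho f(x)\lesssim (Mf(x))^a\,\|f\|_{{\mathcal M}^\varphi_q}^{1-a}.
\]
For every $r>0$, I bound the single-ball average in two ways: first, by the trivial estimate $|B(x,r)|^{-1}\int_{B(x,r)}|f|\le Mf(x)$ combined with the hypothesis $\rho(r)\lesssim\varphi(r)^{1-a}$, giving the bound $\varphi(r)^{1-a}Mf(x)$; second, by the Morrey definition and the nesting Lemma \ref{lem:180307-1}, giving $\rho(r)\varphi(r)^{-1}\|f\|_{{\mathcal M}^\varphi_1}\lesssim\varphi(r)^{-a}\|f\|_{{\mathcal M}^\varphi_q}$. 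Since $\varphi(r)^{1-a}Mf(x)$ is increasing in $\varphi(r)$ and $\varphi(r)^{-a}\|f\|_{{\mathcal M}^\varphi_q}$ is decreasing, I choose $r^*$ so that $\varphi(r^*)=\|f\|_{{\mathcal M}^\varphi_q}/Mf(x)$; this is where the hypotheses $\lim_{t\downarrow 0}\varphi(t)=0$, $\lim_{t\to\infty}\varphi(t)=\infty$, together with continuity secured by Proposition \ref{prop:131029-2}, are essential. Substitution yields the Hedberg bound; the degenerate cases $Mf(x)\in\{0,\infty\}$ or $\|f\|_{{\mathcal M}^\varphi_q}\in\{0,\infty\}$ are trivial or vacuous.

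The final step is integration. Raising the Hedberg estimate to the power $q/a$, integrating over a ball $B(x_0,R)$, taking the $(a/q)$-th root, and multiplying by $\varphi(R)^a$ gives
\[
\varphi(R)^a\!\left(\frac{1}{|B(x_0,R)|}\!\int_{B(x_0,R)}\!M_\rho f(y)^{q/a}\,dy\right)^{\!a/q}\!\lesssim\|f\|_{{\mathcal M}^\varphi_q}^{1-a}\!\left[\varphi(R)\!\left(\frac{1}{|B(x_0,R)|}\!\int_{B(x_0,R)}\!Mf(y)^{q}\,dy\right)^{\!1/q}\right]^{\!a}\!.
\]
The bracketed quantity is bounded by $\|Mf\|_{{\mathcal M}^\varphi_q}$, and since $1<q<\infty$ and $\varphi\in{\mathcal G}_q$, Theorem \ref{thm:140525-3} gives $\|Mf\|_{{\mathcal M}^\varphi_q}\lesssim\|f\|_{{\mathcal M}^\varphi_q}$. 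Taking the supremum over $x_0$ and $R$ completes the proof.

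The main obstacle is arranging the Hedberg optimization: one needs a value of $r$ realizing $\varphi(r)=\|f\|_{{\mathcal M}^\varphi_q}/Mf(x)$, and this is precisely where the two limit conditions on $\varphi$ play a nontrivial role, distinguishing this theorem from the weaker scaling assumption $\varphi\in{\mathcal G}_q$ alone. All remaining steps are routine applications of the structural results collected earlier in the paper.
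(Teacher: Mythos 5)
Your proof is correct and follows essentially the same route as the paper: necessity by testing $M_\rho$ on characteristic functions and evaluating their norms via Lemma \ref{lem7} (the argument of Proposition \ref{prop:180526-1}), and sufficiency via the Hedberg-type pointwise bound $M_\rho f\lesssim (Mf)^a\|f\|_{{\mathcal M}^\varphi_q}^{1-a}$ combined with the scaling law and the boundedness of $M$ from Theorem \ref{thm:140525-3}, which is exactly Lemma \ref{lem:180526-1}. The only cosmetic difference is at the optimization step: the paper bounds $\min\{\varphi(r)^{1-a}Mf(x),\varphi(r)^{-a}\}$ by $\sup_{t>0}\min\{t^{1-a}Mf(x),t^{-a}\}=Mf(x)^a$, so no radius $r^*$ with $\varphi(r^*)=\|f\|_{{\mathcal M}^\varphi_q}/Mf(x)$ (hence neither continuity nor the two limit conditions) is actually needed there; your monotonicity comparison gives the same uniform-in-$r$ bound, so invoking Proposition \ref{prop:131029-2} is harmless but superfluous.
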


The proof hinges on the following Hedberg inequality:
\begin{lemma}{\rm \cite[(15)]{HNS15}}\label{lem:180526-1}
Let $0<a<1<q<\infty$.
Let $\varphi\in{\mathcal G}_{q}$.
Assume that 
$\displaystyle\lim_{t\downarrow 0} \varphi(t)=0$ and
$\displaystyle\lim_{t\to \infty} \varphi(t)=\infty$. 
Then for any $f \in {\mathcal M}^{\varphi}_{q}({\mathbb R}^n)$ 
with $\|f\|_{{\mathcal M}^\varphi_q} \le 1$,
\begin{align}\label{eq:150206-2}
M_\rho f(x)
\lesssim
M f(x)^a
\quad (x \in {\mathbb R}^n).
\end{align}
\end{lemma}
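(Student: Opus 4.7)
The plan is to prove this Hedberg-type inequality by the classical splitting trick: for each $x$ and each threshold $r>0$, separate the supremum defining $M_\rho f(x)$ into small scales $R\le r$ (controlled by $M f(x)$) and large scales $R>r$ (controlled by the Morrey norm via Hölder), then optimize the choice of $r$. The growth hypothesis $\rho(R)\lesssim\varphi(R)^{1-a}$ from Theorem \ref{thm:150206-1} is what allows the two bounds to be balanced. By Proposition \ref{prop:131029-2} I may assume $\varphi\in W$, so that $\varphi$ is a continuous strictly increasing bijection from $(0,\infty)$ onto $(0,\infty)$ in view of $\lim_{t\downarrow 0}\varphi(t)=0$ and $\lim_{t\to\infty}\varphi(t)=\infty$.

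Fix $x\in{\mathbb R}^n$ with $0<M f(x)<\infty$ (the degenerate cases are trivial). For $R\le r$, since $\varphi$ is increasing and $1-a>0$,
\[
\frac{\rho(R)}{|B(x,R)|}\int_{B(x,R)}|f(y)|\,dy
\lesssim \varphi(R)^{1-a}M f(x)\le \varphi(r)^{1-a}M f(x).
\]
For $R>r$, by Hölder's inequality with respect to the normalized measure on $B(x,R)$ and the assumption $\|f\|_{{\mathcal M}^\varphi_q}\le 1$,
\[
\frac{1}{|B(x,R)|}\int_{B(x,R)}|f(y)|\,dy
\le\left(\frac{1}{|B(x,R)|}\int_{B(x,R)}|f(y)|^q\,dy\right)^{1/q}\le\frac{1}{\varphi(R)},
\]
so that
\[
\frac{\rho(R)}{|B(x,R)|}\int_{B(x,R)}|f(y)|\,dy
\lesssim \frac{\varphi(R)^{1-a}}{\varphi(R)}=\varphi(R)^{-a}\le \varphi(r)^{-a},
\]
using $-a<0$ and $\varphi(R)\ge\varphi(r)$. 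Taking the supremum over all $R>0$ yields the two-term estimate
\[
M_\rho f(x)\lesssim \varphi(r)^{1-a}M f(x)+\varphi(r)^{-a}\qquad(r>0).
\]

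Finally, I optimize by choosing $r$ so that $\varphi(r)=1/M f(x)$; this $r$ exists by the continuity and surjectivity of $\varphi$. Substituting gives $\varphi(r)^{1-a}M f(x)=M f(x)^{a}=\varphi(r)^{-a}$, hence $M_\rho f(x)\lesssim M f(x)^{a}$, which is \eqref{eq:150206-2}.

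The only subtle point is the optimization step: it requires $1/Mf(x)$ to lie in the range of $\varphi$, and this is precisely why the hypotheses $\lim_{t\downarrow 0}\varphi(t)=0$ and $\lim_{t\to\infty}\varphi(t)=\infty$ are imposed in Theorem \ref{thm:150206-1}. If one wishes to avoid continuity, the doubling property of $\varphi\in\mathcal G_q$ (Proposition \ref{prop:180611-2}) suffices to pick a dyadic-like $r$ giving the same conclusion up to constants; either way the argument is routine once the two-term estimate above is in place.
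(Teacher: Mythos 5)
Your proposal is correct and is essentially the paper's own argument: both proofs rest on the same two estimates for a fixed scale $R$, namely $\rho(R)\cdot\frac{1}{|B(x,R)|}\int_{B(x,R)}|f|\lesssim\varphi(R)^{1-a}Mf(x)$ and $\lesssim\varphi(R)^{-a}$ (the latter via H\"older/nesting and $\|f\|_{{\mathcal M}^\varphi_q}\le 1$), followed by balancing the two bounds. The only difference is bookkeeping: the paper bounds, for each $R$ separately, $\min\bigl\{\varphi(R)^{1-a}Mf(x),\varphi(R)^{-a}\bigr\}\le\sup_{t>0}\min\bigl\{t^{1-a}Mf(x),t^{-a}\bigr\}=Mf(x)^a$, which makes your preliminary reduction to a continuous, surjective $\varphi$ (and the exact choice $\varphi(r)=1/Mf(x)$) unnecessary, though your version is also valid as you justify it via Proposition \ref{prop:131029-2}.
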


Once Lemma \ref{lem:180526-1} is proved,
we have only to resort to the scaling law
(Lemma \ref{lem:150318-1})
and the boundedness of $M$ on ${\mathcal M}^\varphi_q({\mathbb R}^n)$.
\begin{proof}[Proof of Lemma \ref{lem:180526-1}]
Remark that both $\varphi$ is bijective. 
Let $R>0$.
By using the definition of $M$, 
we obtain
\begin{align*}
\frac{\rho(R)}{|B(x,R)|} \int_{B(x,R)} |f(y)| \ dy
\le
\rho(R) Mf(x)
\lesssim 
\varphi(R)^{1-a} Mf(x)
\end{align*}
and
\begin{align*}
\frac{\rho(R)}{|B(x,R)|} \int_{B(x,R)} |f(y)| \ dy
\lesssim
\rho(R)
\frac{\|f\|_{{\mathcal M}^\varphi_1}}{\varphi(R)}
\lesssim
\varphi(R)^{-a}. 
\end{align*}
Thus, it follows that
\begin{align*}
\frac{\rho(R)}{|B(x,R)|} \int_{B(x,R)} |f(y)| \ dy
&\lesssim
\min\left\{ 
\varphi(R)^{1-a} Mf(x), 
\varphi(R)^{-a} \right\}
\\
&\le 
\sup_{t>0}
\min\left\{
t^{1-a} M f(x), 
t^{-a} \right\}
\\
&=
M f(x)^a.
\end{align*}
Since $R>0$ being arbitrary, we obtain
(\ref{eq:150206-2}).
\end{proof}

The weak boundedness of $M_\rho$ can be characterized in a similar way.
\begin{corollary}{\rm \cite[Corollary 1]{HNS15}}\label{cor:180414-15}
Let $0<a <1 \le q<\infty$.
Let $\varphi\in{\mathcal G}_{q}$
satisfy
$\inf_{t>0} \varphi(t)=0$
and
$\sup_{t>0} \varphi(t)=\infty$. 
Then, $M_{\rho}$ is bounded from
${\mathcal M}^{\varphi}_{q}({\mathbb R}^n)$ 
to ${\rm w}{\mathcal M}^{\varphi^a}_{a^{-1}q}({\mathbb R}^n)$ 
if and only if 
$\rho$ and $\varphi$ satisfy 
$(\ref{eq:150206})$
for all $R>0$.
\end{corollary}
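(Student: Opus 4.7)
The plan is to mirror the proof of Theorem \ref{thm:150206-1} but replace strong norms in the target by weak norms, drawing on (i) the Hedberg-type pointwise estimate already established in Lemma \ref{lem:180526-1}, (ii) the weak-type $(q,q)$ boundedness of $M$ on ${\mathcal M}^\varphi_q({\mathbb R}^n)$ in Theorem \ref{thm:140525-333}, and (iii) the fact that characteristic functions have identical strong and weak generalized Morrey norms.

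For the necessity direction, I would recycle the argument of Proposition \ref{prop:180526-1} almost verbatim. First I would verify that $\varphi^a \in {\mathcal G}_{a^{-1}q}$: $t^{-na^{-1}q^{-1}}\varphi(t)^a = (t^{-n/q}\varphi(t))^a$ is decreasing because the base is, and $\varphi^a$ is increasing because $\varphi$ is, so Lemma \ref{lem7} gives $\|\chi_{B(R)}\|_{{\mathcal M}^{\varphi^a}_{a^{-1}q}} = \varphi(R)^a$. Next I would observe that for any measurable set $E$, $\|\chi_E\|_{{\rm w}{\mathcal M}^\psi_q}=\|\chi_E\|_{{\mathcal M}^\psi_q}$, because the distribution-truncation $\lambda\chi_{(\lambda,\infty)}(\chi_E)$ equals $\lambda\chi_E$ for $\lambda<1$ and vanishes otherwise. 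Combined with the pointwise estimate $\rho(R)\chi_{B(R)} \le M_\rho\chi_{B(2R)}$ and the assumed weak boundedness, this yields
\[
\rho(R)\varphi(R)^a \;\lesssim\; \|M_\rho\chi_{B(2R)}\|_{{\rm w}{\mathcal M}^{\varphi^a}_{a^{-1}q}} \;\lesssim\; \|\chi_{B(2R)}\|_{{\mathcal M}^\varphi_q} \;\sim\; \varphi(R),
\]
which is exactly (\ref{eq:150206}).

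For the sufficiency direction, suppose $\rho(R)\lesssim \varphi(R)^{1-a}$ and let $f\in{\mathcal M}^\varphi_q({\mathbb R}^n)$ be non-zero. Applying Lemma \ref{lem:180526-1} to $g\equiv f/\|f\|_{{\mathcal M}^\varphi_q}$ and scaling back, I obtain the Hedberg-type bound $M_\rho f(x) \lesssim \|f\|_{{\mathcal M}^\varphi_q}^{1-a}(Mf(x))^a$. From here, the routine is to verify a weak-type scaling identity
\[
\||g|^a\|_{{\rm w}{\mathcal M}^{\varphi^a}_{a^{-1}q}} = \|g\|_{{\rm w}{\mathcal M}^\varphi_q}^a,
\]
which follows directly from Lemma \ref{lem:150318-1} after noting that $\{|g|^a > \lambda\} = \{|g|>\lambda^{1/a}\}$. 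Stringing these together gives
\[
\|M_\rho f\|_{{\rm w}{\mathcal M}^{\varphi^a}_{a^{-1}q}} \lesssim \|f\|_{{\mathcal M}^\varphi_q}^{1-a}\|Mf\|_{{\rm w}{\mathcal M}^\varphi_q}^a \lesssim \|f\|_{{\mathcal M}^\varphi_q},
\]
where the last inequality is Theorem \ref{thm:140525-333} (valid since $q\ge 1$).

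I expect the only non-trivial point to be the weak-type scaling identity: Lemma \ref{lem:150318-1} is stated for strong norms, so I would need to re-examine the proof, noting that the level-set substitution $\lambda\mapsto \lambda^{1/a}$ interacts with the supremum over $\lambda$ cleanly because the exponents in $(\varphi^a)$ and $a^{-1}q$ are adjusted in lockstep. This is a purely formal verification, but it is the one step that does not literally appear in the text.
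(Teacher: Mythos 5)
Your proposal is correct and takes exactly the route the paper intends when it says the weak boundedness ``can be characterized in a similar way'': necessity via $\rho(R)\chi_{B(R)}\le M_\rho\chi_{B(2R)}$, Lemma \ref{lem7} and the coincidence of weak and strong norms on characteristic functions, and sufficiency via the Hedberg bound of Lemma \ref{lem:180526-1}, the (weak-norm version of the) scaling law of Lemma \ref{lem:150318-1}, and Theorem \ref{thm:140525-333}. The only point worth recording is that Lemma \ref{lem:180526-1} is stated for $1<q$ while the corollary allows $q=1$; since its proof uses only the nesting ${\mathcal M}^\varphi_q({\mathbb R}^n)\subset{\mathcal M}^\varphi_1({\mathbb R}^n)$, it carries over verbatim to $q=1$, so your argument is complete.
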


We move on to the vector-valued inequality 
for $M_{\rho}$ on generalized Orlicz--Morrey spaces 
and generalized weak Orlicz--Morrey spaces.
\begin{theorem}{\rm \cite[Theorem8]{HNS15}}\label{thm:150206-1A}
Let $0<a<1<q<\infty$ and let $1 \le u<\infty$.
Let $\varphi\in{\mathcal G}_{q}$.
Assume that 
$\displaystyle\lim_{t\downarrow 0} \varphi(t)=0$ and
$\displaystyle\lim_{t\to \infty} \varphi(t)=\infty$. 
\begin{enumerate}
\item
If
$\rho$ and $\varphi$ satisfy 
$(\ref{eq:Nakai-19})$ and $(\ref{eq:150206})$,
then 
for $\{f_j\}_{j=1}^\infty \subset {\mathcal M}^{\varphi}_{q}({\mathbb R}^n)$
\[
\left\|
\left(
\sum_{j=1}^\infty M_\rho f_j{}^u
\right)^{\frac{1}{u}}
\right\|_{{\mathcal M}^{\varphi^a}_{a^{-1}q}}
\lesssim
\left\|
\left(
\sum_{j=1}^\infty |f_j|^u
\right)^{\frac{1}{u}}
\right\|_{{\mathcal M}^{\varphi}_{q}}.
\]
\item
Conversely, if 
\begin{equation}\label{eq:150108-55}
\left\|
\left(
\sum_{j=1}^\infty M_\rho f_j{}^u
\right)^{\frac{1}{u}}
\right\|_{{\rm w}{\mathcal M}^{\varphi^a}_1}
\lesssim
\left\|
\left(
\sum_{j=1}^\infty |f_j|^u
\right)^{\frac{1}{u}}
\right\|_{{\mathcal M}^{\varphi}_{q}}
\end{equation}
for $\{f_j\}_{j=1}^\infty \subset {\mathcal M}^{\varphi}_{q}({\mathbb R}^n)$,
then $\rho$, $\varphi$ and $\psi$ satisfy \eqref{eq:150206}.
Moreover, under the assumption that $\rho\sim\varphi/\psi$,
inequality \eqref{eq:150108-55} holds if and only if
$\varphi$ satisfies $(\ref{eq:Nakai-19})$.
\end{enumerate}
\end{theorem}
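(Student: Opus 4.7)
Both halves of Theorem~\ref{thm:150206-1A} rest on three pieces of machinery already set up in the paper: the scalar Hedberg estimate of Lemma~\ref{lem:180526-1}, the vector-valued Hardy--Littlewood maximal inequality in Theorem~\ref{thm:180524-1} (which needs \eqref{eq:Nakai-19}), and the $\eta$-power scaling law of Lemma~\ref{lem:150318-1}. The scalar analogue Theorem~\ref{thm:150206-1} and the scalar Nakai-type necessity Proposition~\ref{prop:180311-1} serve as blueprints; the present task is to lift each argument across an $\ell^u$ sum.

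For the sufficiency in part (1), write $F = (\sum_j |f_j|^u)^{1/u}$ and normalize $\|F\|_{{\mathcal M}^\varphi_q} = 1$. Since $|f_j|\le F$ pointwise, each $\|f_j\|_{{\mathcal M}^\varphi_q} \le 1$, and Lemma~\ref{lem:180526-1} yields $M_\rho f_j(x) \lesssim Mf_j(x)^a$ for every $j$. Taking the $\ell^u$-sum gives the pointwise estimate
\[
\Bigl(\sum_j M_\rho f_j(x)^u\Bigr)^{1/u}
\lesssim
\Bigl[\Bigl(\sum_j Mf_j(x)^{au}\Bigr)^{1/(au)}\Bigr]^a.
\]
The scaling law with $\eta=a$ converts the claimed ${\mathcal M}^{\varphi^a}_{q/a}$-bound on the left into an ${\mathcal M}^\varphi_q$-bound on the inner $\ell^{au}$-norm of $(Mf_j)_j$, and then Theorem~\ref{thm:180524-1}, available under \eqref{eq:Nakai-19}, dominates this by the ${\mathcal M}^\varphi_q$-norm of the $\ell^{au}$-norm of $(|f_j|)_j$, which is controlled by $\|F\|_{{\mathcal M}^\varphi_q}$ after the final exponent bookkeeping. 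In the regime $au\le 1$ where Theorem~\ref{thm:180524-1} does not apply to $\ell^{au}$ directly, one refines the Hedberg step through the powered maximal operator $M^{(s)}$ with $1\le s<q$, whose vector-valued extension in $\ell^{au/s}$ holds once $au/s>1$, i.e.\ after rescaling past the threshold.

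For part (2), necessity of \eqref{eq:150206} is tested with the degenerate one-term sequence $f_1 = \chi_{B(R)}$ and $f_j=0$ for $j\ge 2$: then $(\sum_j|f_j|^u)^{1/u} = \chi_{B(R)}$ and $M_\rho f_1 \ge \rho(R)\chi_{B(R)}$, so the norm identities $\|\chi_{B(R)}\|_{{\mathcal M}^\varphi_q}\sim\varphi(R)$ and $\|\chi_{B(R)}\|_{{\rm w}{\mathcal M}^{\varphi^a}_1}\sim\varphi(R)^a$ from Lemma~\ref{lem7} reduce \eqref{eq:150108-55} to $\rho(R)\varphi(R)^a \lesssim \varphi(R)$, i.e.\ \eqref{eq:150206}. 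Under the stronger hypothesis $\rho\sim\varphi^{1-a}$ (tacitly $\psi=\varphi^a$), the sufficiency half of the equivalence is contained in part~(1); the necessity of \eqref{eq:Nakai-19} is obtained by contradiction modelled on Proposition~\ref{prop:180311-1}. If \eqref{eq:Nakai-19} failed, then for each $m\ge 2$ one would find $r_m>0$ with $\varphi(2^m r_m)\le 2\varphi(r_m)$, and the annular test family $f_j = \chi_{[1,m]}(j)\,\chi_{B(2^jr_m)\setminus B(2^{j-1}r_m)}$ would satisfy $\|F\|_{{\mathcal M}^\varphi_q}\lesssim\varphi(r_m)$, while for $x\in B(r_m)$ and $1\le j\le m$, $M_\rho f_j(x)\gtrsim\rho(2^j r_m)\sim\varphi(r_m)^{1-a}$. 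Evaluating the weak ${\mathcal M}^{\varphi^a}_1$-norm on $B(r_m)$ yields a lower bound of order $\varphi(r_m)\,m^{1/u}$, which is incompatible with \eqref{eq:150108-55} as $m\to\infty$.

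The crux of the whole argument is the exponent mismatch in part~(1): the $a$-th power Hedberg bound produces an $\ell^{au}$-norm with $au<u$, while the hypothesis controls only the $\ell^u$-norm, so dominating the former by the latter is not automatic. The scaling law routes around this by absorbing the outer $a$-power into a change of Morrey parameters, after which Theorem~\ref{thm:180524-1} (either for $M$ or for a powered maximal operator $M^{(s)}$ chosen so that $au/s>1$) finishes the estimate; all other steps---scaling, testing against characteristic functions, and the dyadic annular counterexample---are adaptations of computations already performed in Lemma~\ref{lem:180526-1}, Theorem~\ref{thm:150206-1}, and Proposition~\ref{prop:180311-1}.
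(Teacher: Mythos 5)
Your part (2) is essentially the paper's argument: testing with a single characteristic function (equivalently, invoking the scalar characterization through Proposition \ref{prop:180526-1}/Theorem \ref{thm:150206-1}) gives \eqref{eq:150206}, and the annular family modeled on Proposition \ref{prop:180311-1} gives the necessity of \eqref{eq:Nakai-19}; both computations match the paper.

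Part (1), however, has a genuine gap, and it sits exactly at the point you yourself call the crux. After the per-term Hedberg bound $M_\rho f_j\lesssim (Mf_j)^a$ and the scaling law with $\eta=a$, you must estimate $\bigl\|\bigl(\sum_j (Mf_j)^{au}\bigr)^{1/(au)}\bigr\|_{{\mathcal M}^\varphi_q}$, and the vector-valued maximal inequality (Theorem \ref{thm:180524-1}, with index $au$, or a powered variant) can only return $\bigl\|\bigl(\sum_j |f_j|^{au}\bigr)^{1/(au)}\bigr\|_{{\mathcal M}^\varphi_q}$: the sequence index is preserved by that theorem, never improved. Since $au<u$, pointwise $\bigl(\sum_j |f_j|^{au}\bigr)^{1/(au)}\ge\bigl(\sum_j |f_j|^{u}\bigr)^{1/u}$ (take $N$ functions of unit size at a point to see the ratio $N^{1/(au)-1/u}\to\infty$), so no ``exponent bookkeeping'' controls it by $\|F\|_{{\mathcal M}^\varphi_q}$; the scaling law only transfers the outer power $a$ into the Morrey parameters and does nothing to the $\ell^{au}$-versus-$\ell^u$ mismatch. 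The paper circumvents this by \emph{linearizing} rather than using Hedberg per term: by \eqref{eq:150206} and the doubling of $\varphi$ one has the pointwise bound $M_\rho f_j\lesssim I_{\varphi^{1-a}}|f_j|$, then Minkowski's inequality for the positive integral operator gives
\[
\left(\sum_{j=1}^\infty \bigl(I_{\varphi^{1-a}}|f_j|\bigr)^u\right)^{\frac1u}
\lesssim
I_{\varphi^{1-a}}\left[\left(\sum_{j=1}^\infty |f_j|^u\right)^{\frac1u}\right],
\]
so the $\ell^u$-norm passes through with the index $u$ intact, and the scalar Adams-type boundedness of $I_{\varphi^{1-a}}$ from ${\mathcal M}^\varphi_q({\mathbb R}^n)$ to ${\mathcal M}^{\varphi^a}_{a^{-1}q}({\mathbb R}^n)$ (Theorem \ref{thm:Hendra}, whose hypothesis $\varphi^{-a}\in{\mathbb Z}_0$ is supplied by \eqref{eq:Nakai-19} via Proposition \ref{prop:150312-1}) finishes the proof. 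If you want to keep a Hedberg-style argument you would have to apply it to the full vector, i.e.\ bound the $\ell^u$-aggregate of the averages for a fixed radius by averages of $F$ before taking the supremum in $r$, which is in effect the same linearization step.
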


\begin{proof}
\
\begin{enumerate}
\item
Using
$(\ref{eq:150206})$,
we may assume that
$\rho=\varphi^{1-a}$.
Then since $\varphi$ is a doubling function and $0<a<1$,
we have
\[
M_{\varphi^{1-a}}f_j \lesssim I_{\varphi^{1-a}}|f_j|.
\]
Thus,
\[
\left(
\sum_{j=1}^\infty M_\rho f_j{}^u
\right)^{\frac{1}{u}}
\lesssim
\left(
\sum_{j=1}^\infty (I_{\varphi^{1-a}}|f_j|)^u
\right)^{\frac{1}{u}}
\lesssim
I_{\varphi^{1-a}}
\left[\left(
\sum_{j=1}^\infty |f_j|^u
\right)^{\frac{1}{u}}\right].
\]
It remains to resort to the boundedness
of $I_{\varphi^{1-a}}$
from ${\mathcal M}^\varphi_q({\mathbb R}^n)$
to ${\mathcal M}^{\varphi^a}_{a^{-1}q}({\mathbb R}^n)$.
\item
We let
$$
 f_j=
\begin{cases}
 f, \quad j=1,\\
 0, \quad j\ge 1,
\end{cases}
\quad f\in {\mathcal M}^\varphi_1({\mathbb R}^n).
$$ 
Then
we have the boundedness of $M_{\rho}$ on ${\mathcal M}^\varphi_1({\mathbb R}^n)$.
Hence, 
by Theorem \ref{thm:150206-1}, 
we conclude that the inequality (\ref{eq:150206}) holds.

Finally, under the assumption that $\rho\sim\varphi/\psi$,
we prove that 
inequality \eqref{eq:150108-55} holds if and only if
$\varphi$ satisfies \eqref{eq:Nakai-19}.
To do this, it is enough to show that 
\eqref{eq:Nakai-19} follows from \eqref{eq:150108-55}.
Now, 
assume that 
the integral condition \eqref{eq:Nakai-19} fails.
Then, for any $m\in {\mathbb N}$, there exists $r_m>0$ such that 
$$
 \varphi(2^mr_m)\le 2\varphi(r_m).
$$
Letting 
$
 f_j=
\chi_{[1,m]}(j)
 \chi_{B(2^jr_m) \setminus B(2^{j-1}r_m)},
j \in {\mathbb N},
$ 
we have 
\begin{align}\label{eq:150211-1}
\|f_{j}\|_{{\mathcal M}^\varphi_\Phi(\ell^u)}
 \le \|\chi_{B(2^mr_m)}\|_{{\mathcal M}^\varphi_\Phi}
 \sim \varphi(2^mr_m)
 \le 2\varphi(r_m).
\end{align}
Since $\theta\in{\mathcal G}_n$ and
$\rho\sim\varphi/\psi=\varphi/\theta(\varphi)$,
$\rho(r)\lesssim \rho(s)$ for all $r\le s$.
Due to this fact and the inequality 
$M_{\rho} f_j \gtrsim \rho(2^jr_m)\chi_{B(r_m)}$,
we have
\begin{align}\label{eq:150211-2}
\|M_{\rho}f_{j}\|_{{\rm w}{\mathcal M}^\psi_1(\ell^u)} \nonumber
&\gtrsim
\left\|\left( \sum_{j=1}^m \rho(2^jr_m) ^u\right)^\frac{1}{u} \chi_{B(r_m)} 
\right\|_{{\rm w}{\mathcal M}^\psi_1}\\ \nonumber
&\gtrsim 
\rho(2r_m) \psi(2r_m) m^{\frac{1}{u}}\\
&\gtrsim \varphi(r_m) m^{\frac{1}{u}}.
\end{align}
We combine the inequalities (\ref{eq:150211-1}) and (\ref{eq:150211-2}) 
with the boundedness of $M_{\rho}$ from ${\rm w}{\mathcal M}^\varphi_q(\ell^u)$ to
${\mathcal M}^\psi_1(\ell^u)$ to obtain $m\le D$ where $D$ is independent of $m$, 
contradictory to the fact that $m\in {\mathbb N}$ is arbitrary. 
Thus the integral condition (\ref{eq:Nakai-19}) holds.
\end{enumerate}
\end{proof}

\begin{remark}
One may ask ourselves
how different $I_\alpha$ and $M_\alpha$.
See
\cite[Theorem 1.10]{GoMu12}
or
compare
\cite[Theorem 1.3]{SST11-1}
with
\cite[Theorem 1.7]{SST11-1}
and
\cite[Proposition 4.1]{SST11-1}
to see  the gap
between $I_\alpha$ and $M_\alpha$.
See also
\cite[Theorems 5.1 and 5.2]{GoMu12}
to see that when they are the same.
\end{remark}

\begin{remark}
See \cite[Theorem 5.2]{Guliyev09}
and
\cite[Theorem 4.3]{GuSh13}
for the Spanne-type boundedness of $M_\alpha$,
where the integral condition is assumed.
\end{remark}

\begin{remark}
See \cite[Theorem 5.5]{Guliyev09}
and
\cite[Theorem 4.4]{GuSh13}
for the Adams-type boundedness of $M_\alpha$,
where the integral condition is assumed.
\end{remark}
\begin{remark}
See \cite[Theorem 4.1]{YuTa14}
for the boundedness of the fractional maximal operator
on generalized Morrey spaces
in the multilinear setting.
\end{remark}

\section{Overview of other types of generalizations}
\label{s5}

\subsection{Morrey spaces for general Radon measures
and Morrey spaces over metric measure spaces}

In addition to the genealization of $p$ into functions,
one can replace the Lebesgue measure by general Radon measures.
Here we work on a metric measure space
$(X,d,\mu)$.
We refer to \cite{YYH13} for an exhaustive account
of the analysis on metric measure spaces.
Let $k>0$ and $0< q \le p<\infty$.
We define the Morrey space ${\mathcal M}^p_q(k,\mu)$ as
\[
{\mathcal M}^p_q(k,\mu)\equiv
\left\{f \in L^q_{\rm loc}(\mu):\,
\|f \|_{{\mathcal M}^p_q(k,\mu)}<\infty\right\},
\]
where
\begin{equation} \label{norm:def}
\|f \|_{{\mathcal M}^p_q(k,\mu)}\equiv
\sup_{B(x,r) \in {\mathcal B}(\mu)}\mu(B(x,k r))^{\frac{1}{p}-\frac{1}{q}}
\left(\int_{B(x,r)}|f(z)|^q\,d\mu(z)\right)^{\frac{1}{q}}.
\end{equation}
Here
${\mathcal B}(\mu)$ stands for the set of all
balls having positve $\mu$-measure.
In the Euclidean space ${\mathbb R}^n$,
${\mathcal Q}(\mu)$ stands for the set of all
cubes having positve $\mu$-measure.
Clearly we have $L^p(\mu)={\mathcal M}^p_p(k,\mu)$, and
by applying H\"{o}lder's inequality to \eqref{norm:def}
we have
$\|f \|_{{\mathcal M}^p_{q_1}(k,\mu)} 
\ge \|f \|_{{\mathcal M}^p_{q_2}(k,\mu)}$
for all $p \ge q_1 \ge q_2>0$ and $k \ge 1$.
Thus the following inclusions hold:
\begin{proposition}
Let
$p \ge q_1 \ge q_2>0$ and $k \ge 1$.
Then
\[
L^p(\mu) = {\mathcal M}^p_p(k,\mu)
\subset {\mathcal M}^p_{q_1}(k,\mu) \subset {\mathcal M}^p_{q_2}(k,\mu).
\]
\end{proposition}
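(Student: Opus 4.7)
The plan is to handle the equality and the two inclusions separately. The equality $L^p(\mu) = \mathcal{M}^p_p(k,\mu)$ follows from the definition: when $p=q$ the prefactor $\mu(B(x,kr))^{1/p-1/q}$ is identically $1$, so that
\[
\|f\|_{\mathcal{M}^p_p(k,\mu)} = \sup_{B(x,r)\in\mathcal{B}(\mu)} \left(\int_{B(x,r)}|f|^p\,d\mu\right)^{1/p},
\]
and by monotone convergence this supremum equals $\|f\|_{L^p(\mu)}$. The nesting $\mathcal{M}^p_p(k,\mu) \subset \mathcal{M}^p_{q_1}(k,\mu)$ is then a special case of the second inclusion (with $q_1 = p$), so it suffices to prove $\mathcal{M}^p_{q_1}(k,\mu) \subset \mathcal{M}^p_{q_2}(k,\mu)$ for $p \ge q_1 \ge q_2 > 0$ and $k \ge 1$.

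For the key inclusion I would fix a ball $B(x,r) \in \mathcal{B}(\mu)$ and apply Hölder's inequality on $B(x,r)$ with exponent $q_1/q_2 \ge 1$ and conjugate $(q_1/q_2)'$. This yields
\[
\left(\int_{B(x,r)} |f|^{q_2}\,d\mu\right)^{1/q_2} \le \mu(B(x,r))^{1/q_2 - 1/q_1} \left(\int_{B(x,r)} |f|^{q_1}\,d\mu\right)^{1/q_1}.
\]
Multiplying both sides by $\mu(B(x,kr))^{1/p - 1/q_2}$ and using the identity
\[
\tfrac{1}{p} - \tfrac{1}{q_2} = \left(\tfrac{1}{p} - \tfrac{1}{q_1}\right) + \left(\tfrac{1}{q_1} - \tfrac{1}{q_2}\right),
\]
the estimate reduces to the algebraic comparison
\[
\mu(B(x,r))^{1/q_2 - 1/q_1} \cdot \mu(B(x,kr))^{1/q_1 - 1/q_2} \le 1.
\]
Since $q_1 \ge q_2$ gives $1/q_2 - 1/q_1 \ge 0$, and $k \ge 1$ gives $\mu(B(x,r)) \le \mu(B(x,kr))$, this inequality holds. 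Taking the supremum over all $B(x,r) \in \mathcal{B}(\mu)$ then delivers $\|f\|_{\mathcal{M}^p_{q_2}(k,\mu)} \le \|f\|_{\mathcal{M}^p_{q_1}(k,\mu)}$, hence the inclusion of the spaces.

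There is no real obstacle here: the proof is a direct consequence of Hölder's inequality and the monotonicity $\mu(B(x,r)) \le \mu(B(x,kr))$ guaranteed by $k \ge 1$. The only subtle point worth flagging is the role of the parameter $k$: if one had $k < 1$ the final comparison of ball measures would go the wrong way (and one would need a doubling hypothesis on $\mu$ to recover the inclusion up to a constant), which is precisely why the statement is formulated with $k \ge 1$.
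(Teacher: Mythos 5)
Your proof is correct and follows essentially the same route as the paper: the equality $L^p(\mu)={\mathcal M}^p_p(k,\mu)$ is immediate from the definition (the prefactor is $1$ and an exhaustion by balls gives the full integral), and the inclusions come from H\"older's inequality on $B(x,r)$ combined with $\mu(B(x,r))\le\mu(B(x,kr))$ for $k\ge 1$, which is exactly the one-line argument the paper gives before stating the proposition. Your closing remark on why $k\ge 1$ (or a doubling hypothesis) is needed is a sensible observation but not required.
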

A remarkable property of 
${\mathcal M}^p_q(k,\mu)$
is that 
the space
${\mathcal M}^p_q(k,\mu)$ does not depend on $k>1$.
\begin{proposition}\label{prop:180925-1}
Let $(X,d,\mu)$ be the Euclidean space
${\mathbb R}^n$
with the Euclidean distance and the Lebesgue measure.
Then for all 
${\mathcal M}^p_q(k,\mu)={\mathcal M}^p_q(2,\mu)$
$p \ge q>0$ and $k>1$.
\end{proposition}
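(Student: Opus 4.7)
The plan is to reduce the claim to the precise scaling identity $\mu(B(x,kr)) = k^n \mu(B(x,r))$, which holds for every ball in Euclidean space because Lebesgue measure is $n$-homogeneous. Substituting this identity into the factor $\mu(B(x,kr))^{1/p-1/q}$ appearing in $\|f\|_{\mathcal M^p_q(k,\mu)}$ will pull out a multiplicative constant that does not depend on $x$, $r$, or $f$, and convert the $k$-norm into a scalar multiple of the $2$-norm. Concretely, I would write
\[
\mu(B(x,kr))^{\frac1p-\frac1q}
=\left(\frac{k}{2}\right)^{n\left(\frac1p-\frac1q\right)}\mu(B(x,2r))^{\frac1p-\frac1q}
\]
and take the supremum over $B(x,r)\in\mathcal B(\mu)$ on both sides of the definition \eqref{norm:def} to obtain
\[
\|f\|_{\mathcal M^p_q(k,\mu)}
=\left(\frac{k}{2}\right)^{n\left(\frac1p-\frac1q\right)}\|f\|_{\mathcal M^p_q(2,\mu)}.
\]

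Since $p\ge q>0$ forces $n(1/p-1/q)\le 0$, the prefactor $(k/2)^{n(1/p-1/q)}$ is a strictly positive finite constant depending only on $n,p,q,k$. Thus the two quasi-norms are not merely equivalent but in fact proportional, so the two sets of measurable functions coincide. The one small point worth flagging is that the supremum is taken over the same index set in both norms: the collection $\mathcal B(\mu)$ of balls with positive $\mu$-measure does not depend on the dilation constant $k$, since for Lebesgue measure every nondegenerate ball has positive measure. There is no real obstacle here; the whole content of the proposition is the scaling property of Lebesgue measure, and outside the Euclidean/doubling setting one should not expect such an identity to hold.
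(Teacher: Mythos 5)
Your argument is correct for the proposition exactly as stated: for Lebesgue measure one has $\mu(B(x,kr))=(k/2)^n\mu(B(x,2r))$ for every ball, so the constant $(k/2)^{n(\frac1p-\frac1q)}$ factors out of the supremum, the two quasi-norms are proportional, and the index set ${\mathcal B}(\mu)$ is the same in both definitions, as you note. However, this is not the route the paper takes: it gives no proof at all, referring instead to \cite[Proposition 1.1]{SaTa05}, and both its remark that the proof ``hinges on a geometric structure of ${\mathbb R}^n$'' and the follow-up comment that ${\mathcal M}^p_q(1,\mu)$ can be a proper subset of ${\mathcal M}^p_q(2,\mu)$ (which would be false for Lebesgue measure, where your computation applies just as well to $k=1$) indicate that the intended content is independence of the dilation parameter $k>1$ for a \emph{general} Radon measure $\mu$ on ${\mathbb R}^n$, possibly non-doubling. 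For such $\mu$ your key identity fails: $\mu(B(x,kr))$ and $\mu(B(x,2r))$ need not be comparable at all, and the cited proof proceeds instead by a covering argument -- covering $B(x,r)$ by boundedly many balls of smaller radius whose $k$-dilates remain inside $B(x,2r)$ -- which exploits the geometry of ${\mathbb R}^n$ rather than any homogeneity of the measure. So your proof is a legitimate and more elementary argument for the literal statement, and it even yields exact proportionality of the norms rather than mere equivalence with constants; what it does not deliver is the non-doubling version that the cited result, and this subsection on Morrey spaces for general Radon measures, are really about.
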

We do not recall its proof
whose proof hinges on a geometric structure of ${\mathbb R}^n$;
see \cite[Proposition 1.1]{SaTa05}.
It can happen that
${\mathcal M}^p_q(1,\mu)$
is a proper subset
${\mathcal M}^p_q(2,\mu)$
in Proposition \ref{prop:180925-1},
as was shown in \cite{SST18}.
Instead of the norm above,
we can use
\[
\|f \|_{{\mathcal L}^\nu_q(k,\mu)}\equiv
\sup_{B(x,r) \in {\mathcal B}(\mu)}r^\nu\mu(B(x,k r))^{-\frac{1}{q}}
\left(\int_{B(x,r)}|f(z)|^q\,d\mu(z)\right)^{\frac{1}{q}}.
\]
See \cite{MSS09}.
See also \cite{SaSh13}.
We can also define
the generalized Morrey spaces
with Radon measures.
We work on a metric measure space
$(X,d,\mu)$.
Let $\varphi:(0,\infty) \to (0,\infty)$ be a function.
Then define
\[
{\mathcal M}^\varphi_q(k,\mu)\equiv
\left\{f \in L^q_{\rm loc}(\mu):\,
\|f \|_{{\mathcal M}^\varphi_q(k,\mu)}<\infty\right\},
\]
where
\begin{equation} \label{norm:def2}
\|f \|_{{\mathcal M}^p_q(k,\mu)}\equiv
\sup_{B(x,r) \in {\mathcal B}(\mu)}
\varphi(\mu(B(x,k r)))
\left(\frac{1}{\mu(B(x,k r))}\int_{B(x,r)}|f(z)|^q\,d\mu(z)\right)^{\frac{1}{q}}.
\end{equation}
See
\cite{GuSa13,Sawano08-1}
for generalized Morrey spaces with general Radon measures
and
\cite{Sawano06-1}
for weak Morrey spaces with general Radon measures.
See
\cite{Ho17,HSS16, SaSh13-2, SaSh17-1,SaSh17-2}
for this direction of approaches.
We refer to \cite{YLY15}
for Morrey spaces with gradient in this type of setting.

We can consider some concrete cases.
See
\cite{EGN18,GEM13,GuMa13} for genrealized Morrey spaces 
on Heisenberg group, 
\cite{GuRa12} for parabolic genealized Morrey spaces
and
\cite{DzKh12,GuMu11} for anisotropic genealized Morrey spaces.
See \cite{EGA17, GAM13} and \cite{Volosivets12}
for generalized Morrey spaces on Carnot groups
and
for generalized Morrey spaces for the $p$-adic fields.

\subsection{Local generalized Morrey spaces}

Motivated by the works 
\cite{Guliyev94,GORS18},
define the generalized local Morrey space
${\rm L}{\mathcal M}^\varphi_q({\mathbb R}^n)$
to be the set of all measurable functions $f$ such that
\begin{equation}\label{eq:131115-111a}
\|f\|_{{\rm L}{\mathcal M}^\varphi_q}
\equiv
\sup_{r>0}
\varphi(r)\left(
\frac{1}{|Q(r)|}
\int_{Q(r)}|f(y)|^q\,dy\right)^{\frac1q}<\infty.
\end{equation}
The space
${\rm L}{\mathcal M}^\varphi_q({\mathbb R}^n)$ 
is sometimes referred to as
the $B_\sigma$-spaces.
See \cite{NaSo16,SaYo18}
for  $B_\sigma$-spaces.

\subsection{Generalized Orlicz--Morrey spaces}

Instead of $p$,
we can generalize $q$ using Young functions.
In the definition below
we exclude the case where
$\varphi(t)=\infty$ for some $t \in(0,\infty)$.
\begin{definition}[Young function]
\label{defi:170428-113}
A function $\Phi:[0,\infty) \to [0,\infty)$
is said to be a Young function,
if there exists an increasing function
$\varphi$ which is right-continuous such that
\begin{equation}\label{eq:161231-701}
\Phi(t)=\int_0^t \varphi(s){\rm d}s \quad (t \ge 0).
\end{equation}
Equality $(\ref{eq:161231-701})$ is called
the {\it canonical representation} of a Young function $\Phi$.
By convention 
define $\Phi(\infty)\equiv \infty$.
\index{canonical representation@canonical representation}
\index{Young function@Young function}
\end{definition}

There are three types of generalizations.
We start with the Orlicz--Morrey spaces
of the first kind.
To this end
we start with the defintion of the $(\varphi,\Phi)$-average over $Q$.
\begin{definition}[$(\varphi,\Phi)$-average]\label{defi:108707-1}
\index{phi Phi average@$(\varphi,\Phi)$-average}
\index{$\|\cdot\|_{(\varphi,\Phi);Q}$}
Let $\varphi:(0,\infty) \to (0,\infty)$
be a function
and
$\Phi:(0,\infty) \to (0,\infty)$
a Young function.
For a cube $Q$ and $f \in L^0(Q)$, define the $(\varphi,\Phi)$-average over $Q$ by:
\[
\|f\|_{(\varphi,\Phi);Q}
\equiv
\inf\left\{
\lambda>0\,:\,
\frac{\varphi(\ell(Q))}{|Q|}\int_{Q}\Phi\left(\frac{|f(x)|}{\lambda}\right)\,dx
\le 1
\right\}.
\]
\end{definition}

In \cite{Nakai08-1}
Nakai defined
the generalized Orlicz--Morrey space of the first kind
via the $(\varphi,\Phi)$-average.
\begin{definition}[Generalized Orlicz--Morrey spaces of the first kind]
\label{defi:150824-259}
\index{L Phi phi@${\mathcal L}^{\varphi}_{\Phi}({\mathbb R}^n)$}
\index{generalized Orlicz--Morrey spaces of the first kind@generalized Orlicz--Morrey spaces of the first kind}
Suppose thatwe have a function
 $\varphi:(0,\infty) \to (0,\infty)$
and a Young function
$\Phi:[0,\infty) \to [0,\infty)$.
For a measurable function $f$
define
\[
\|f\|_{{\mathcal L}^{\varphi}_{\Phi}}
\equiv
\sup_{Q \in {\mathcal Q}}
\|f\|_{(\varphi,\Phi);Q}.
\]
The function space ${\mathcal L}^{\varphi}_{\Phi}({\mathbb R}^n)$,
the generalized Orlicz--Morrey space of the first kind,
is defined to be the set of all measurable functions
$f$ for which the norm $\|f\|_{{\mathcal L}^{\varphi}_{\Phi}}$
is finite.
Likewise
the function space ${\rm W}{\mathcal L}^{\varphi}_{\Phi}({\mathbb R}^n)$,
the weak generalized Orlicz--Morrey space of the first kind,
\index{w L Phi phi@${\rm W}{\mathcal L}^{\varphi}_{\Phi}({\mathbb R}^n)$}
\index{weak generalized Orlicz--Morrey spaces of the first kind@weak generalized Orlicz--Morrey spaces of the first kind}
is defined to be the set of all measurable functions
$f$ for which the norm $\|f\|_{{\rm W}{\mathcal L}^{\varphi}_{\Phi}}
=\sup_{\lambda>0}\lambda\|\chi_{(\lambda,\infty]}(|f|)\|_{{\mathcal L}^{\varphi}_{\Phi}}$
is finite.
\end{definition}

We move on to Orlicz--Morrey spaces of the second kind.
To define  Orlicz--Morrey spaces of the second kind,
we need another notion of the average.
\begin{definition}\label{defi:180426-1}
\index{Phi average@$\Phi$-average}
Let
$\Phi:[0,\infty) \to (0,\infty)$
a Young function.
For a cube $Q$, define its $\Phi$-average over $Q$ by:
\begin{equation}\label{eq:180313-1}
\|f\|_{\Phi;Q}
\equiv
\inf\left\{
\lambda>0\,:\,
\frac{1}{|Q|}\int_{Q}\Phi\left(\frac{|f(x)|}{\lambda}\right)\,dx
\le 1
\right\}.
\end{equation}
\end{definition}

With this new definition of the average in mind,
Sawano, Sugano and Tanaka define generalized Orlicz--Morrey spaces of the second kind
\cite{SST12}.
\begin{definition}[Generalized Orlicz--Morrey spaces of the second kind]\label{defi:108707-2}
\index{${\mathcal M}^{\varphi}_{\Phi}({\mathbb R}^n)$}
Suppose that we have a Young function
$\Phi:[0,\infty) \to [0,\infty)$ 
and a function
$\varphi:(0,\infty) \to [0,\infty)$.
Let $f \in L^0({\mathbb R}^n)$.
\begin{enumerate}
\item
Define
\[
\|f\|_{{\mathcal M}^{\varphi}_{\Phi}}
\equiv
\sup_{Q \in {\mathcal Q}}
\varphi(\ell(Q))\|f\|_{\Phi;Q}.
\]
The  generalized Orlicz--Morrey space
of the second kind ${\mathcal M}^{\varphi}_{\Phi}({\mathbb R}^n)$
is defined to be the
the set of all measurable functions
$f$ for which the norm $\|f\|_{{\mathcal M}^{\varphi}_{\Phi}}$
is finite.
\item
We define
\[
\|f\|_{{\rm W}{\mathcal M}^{\varphi}_{\Phi}}
\equiv\sup_{\lambda>0}\lambda\|\chi_{(0,\lambda)}(|f|)\|_{{\mathcal M}^{\varphi}_{\Phi}}.
\]
The function space ${\rm W}{\mathcal M}^{\varphi}_{\Phi}({\mathbb R}^n)$
is defined to be the weak generalized Orlicz--Morrey space
of the second kind
as the set of all measurable functions
$f$ for which the norm $\|f\|_{{\rm W}{\mathcal M}^{\varphi}_{\Phi}}$
is finite.
\end{enumerate}
\end{definition}
Finally, 
Deringoz, Samko and Guliyev define generalized Orlicz-Morrey space
of the third kind as follows:
\begin{definition}
The generalized Orlicz-Morrey space
${\mathcal Z}^{\varphi}_{\Phi}({\mathbb R}^n)$ of the third kind
is defined as the set of all measurable functions
$f$ for which the norm
\[
\|f\|_{{\mathcal Z}^{\varphi}_{\Phi}}
\equiv
\sup_{Q \in {\mathcal Q}} ~
\varphi(\ell(Q))\Phi^{-1}\left(\frac{1}{|Q|}\right) ~ \|f\|_{L^{\Phi}(Q)}
\]
is finite.
\end{definition}
We do not go into the details of these function spaces;
here we content ourselves with mentioning that
the first kind and the second kind are different
and that
the second kind and the third kind are different
according to \cite{GST15}.
See \cite{DGS14,GHSN16,HaSa16,SaSh18, SST11-2,SuWa12}
for more about these function spaces.

\subsection{Generalization of $\varphi$ to the function depending also on $x$}

One can consider the case where
$\varphi$ depends on $x$ not only on $r$.
\begin{equation}\label{eq:131115-111p}
\|f\|_{{\mathcal M}^\varphi_q}
\equiv
\sup_{x \in {\mathbb R}^n, r>0}
\varphi(x,r)\left(
\frac{1}{|Q(x,r)|}
\int_{Q(x,r)}|f(y)|^q\,dy\right)^{\frac1q}<\infty.
\end{equation}
See
\cite{ArNa18,Guliyev09,GuSa13,Nakai94}
for the approach to this direction.

\subsection{Martingale Morrey spaces}
One can use martingales
to generalize Morrey spaces \cite{NaSa12-1, NaSa17}.

Let $(\Omega,\Sigma,P)$ be a probability space, 
and ${\mathcal F}=\{{\mathcal F}_n\}_{n\ge0}$ a nondecreasing sequence of sub-$\sigma$-algebras 
of $\Sigma$ such that
$\Sigma=\sigma(\bigcup_{n}{\mathcal F}_n)$.
For the sake of simplicity, let ${\mathcal F}_{-1}={\mathcal F}_0$.
The set $B\in{\mathcal F}_n$ is called atom, more precisely $({\mathcal F}_n,P)$-atom, 
if any $A\subset B$, $A\in{\mathcal F}_n$, satisfies $P(A)=P(B)$ or $P(A)=0$.
Denote by $A({\mathcal F}_n)$ the set of all atoms in ${\mathcal F}_n$.

The expectation operator and the conditional expectation operators relative to ${\mathcal F}_n$
are denoted by $E$ and $E_n$, respectively.
It is known as the Doob theorem that, if $p\in(1,\infty)$, 
then any $L_{p}$-bounded martingale converges in $L_{p}$.
Moreover, if $p\in[1,\infty)$,
then, for any $f \in L_{p}$, its corresponding martingale $\{f_n\}_{n=1}^\infty$ with $f_n=E_nf$ 
is an $L_{p}$-bounded martingale and converges to $f$ in $L_p$
(see for example \cite{Neveu1975}).
For this reason 
a function $f\in L_1$ and the corresponding martingale $\{f_n\}_{n=1}^\infty$ 
will be denoted by the same symbol $f$.

Let ${\mathcal M}$ be the set of all martingales 
such that $f_0=0$.
For $p\in[1,\infty]$, 
let $L^0_p(\Omega,{\mathcal F},P)$ be the set of all $f\in L^p(\Omega,{\mathcal F},P)$ such that $E_0f=0$.
For any $f \in L^0_p(\Omega,{\mathcal F},P)$, 
its corresponding martingale $(f_n)$ with $f_n=E_nf$ 
is an $L_{p}$-bounded martingale in ${\mathcal M}$.
For this reason we regard as $L^0_p(\Omega,{\mathcal F},P)\subset{\mathcal M}$.

Let ${\mathcal B}=\{{\mathcal B}_n\}_{n\ge0}$ be sub-families of ${\mathcal F}=\{{\mathcal F}_n\}_{n\ge0}$ with
${\mathcal B}_n\subset{\mathcal F}_n$ for each $n\ge 0$.
We denote by ${\mathcal B}\subset{\mathcal F}$ this relation of ${\mathcal B}$ and ${\mathcal F}$. 

In this paper we always postulate the following condition on $\mathcal{B}$:
\begin{equation}\label{full}
\text{There exists a countable subset $\mathcal{B}'\subset\mathcal{B}_0$ 
such that }P\left(\bigcup_{B\in\mathcal{B}'}B\right)=1.
\end{equation}
We first define generalized martingale Morrey-Campanato spaces with respect to ${\mathcal B}$
as the following:

\begin{definition}\label{defn:gMC}
Let ${\mathcal B}\subset{\mathcal F}$, $p\in [1,\infty)$ and $\varphi:(0,1]\to(0,\infty)$.
For $f\in L_1$, let 
\begin{align*}
  \|f\|_{L^\varphi_p}
  =\|f\|_{L^\varphi_p({\mathcal B})}
  &=
  \sup_{n\ge0}\sup_{B\in \mathcal{B}_n}
  \varphi(P(B))\left(\frac{1}{P(B)}\int_B|f|^p\,dP\right)^{1/p}, 
\\
  \|f\|_{{\mathcal L}^\varphi_{p}}
  =\|f\|_{{\mathcal L}^\varphi_{p}({\mathcal B})}
  &=
  \sup_{n\ge0}\sup_{B\in \mathcal{B}_n}
  \varphi(P(B))\left(\frac{1}{P(B)}\int_B|f-E_nf|^p\,dP\right)^{1/p},
\\
  \|f\|_{({\mathcal L}^\varphi_{p})^{-}}
  =\|f\|_{({\mathcal L}^\varphi_{p})^{-}({\mathcal B})}
  &=
  \sup_{n\ge0}\sup_{B\in \mathcal{B}_n}
  \varphi(P(B))\left(\frac{1}{P(B)}\int_B|f-E_{n-1}f|^p\,dP\right)^{1/p},
\end{align*}
and define
\begin{align*}
  L^\varphi_p 
  =L^\varphi_p({\mathcal B}) 
  &= \{f\in L^0_p: \|f\|_{L^\varphi_p}<\infty \}, 
\\
  {\mathcal L}^\varphi_{p} 
  ={\mathcal L}^\varphi_{p}({\mathcal B}) 
  &= \{f\in L^0_p: \|f\|_{{\mathcal L}^\varphi_{p}}<\infty \}, 
\\
  ({\mathcal L}^\varphi_{p})^{-} 
  =({\mathcal L}^\varphi_{p})^{-}({\mathcal B}) 
  &= \{f\in L^0_p: \|f\|_{({\mathcal L}^\varphi_{p})^{-}}<\infty \}. 
\end{align*}
\end{definition}

\subsection{Replacing $\sup$ in the Morrey norm by other norms}
Instead of taking the supremum,
Fueto considered to take the $L^p$-norm in \cite{Fueto14}.
Although Feuto worked in the weighted setting,
we describe it in the unweighted setting.
\begin{definition}
Let $0<q \le p<\infty$ and $0<r\le \infty$.
One defines
${\mathcal M}^p_{q,r}$ to be the set of all
$f \in L^q_{\rm loc}$
for which
\[
\|f\|_{{\mathcal M}^p_{q,r}}
=
\left\|
\left\{
|Q_{\nu m}|^{\frac{1}{p}-\frac{1}{q}}
\left(\int_{Q_{\nu m}}|f(y)|^q\,dy\right)^{\frac{1}{q}}
\right\}_{\nu \in {\mathbb Z}, m \in {\mathbb Z}^n}
\right\|_{\ell^r}
\]
is finite. 
\end{definition}
See
\cite{Bourgain91,MaSe18}
for applications to partial differential equations.

\subsection{Grandification of the parameter $q$}

In addition to generalization of the parameter $p$,
we can also grandify the parameter $q$;
for $f \in L^0({\mathbb R}^n)$,
we define
\[
\|f\|_{{\mathcal M}^p_{q),\theta}}
\equiv
\sup_{x \in {\mathbb R}^n, r>0}
\sup_{\varepsilon \in (0,q-1)}
\varepsilon^\theta
|Q(x,r)|^{\frac1p-\frac1{q-\varepsilon}}
\left(\int_{Q(x,r)}|f(y)|^{q-\varepsilon}\,dy\right)^{\frac1{q-\varepsilon}}.
\]
The space 
${\mathcal M}^p_{q),\theta}({\mathbb R}^n)$
collects all $f \in L^0({\mathbb R}^n)$
for which
$\|f\|_{{\mathcal M}^p_{q),\theta}}$ is finite.
See \cite{KMR13,MeSa18,MiOh14-1}
for more details and variants.

\subsection{The case of the variable expoenent}

By a variable exponent
we mean any measurable function 
from ${\mathbb R}^n$ to a subset of $(-\infty,\infty]$.
We define
$\|\cdot\|_{L^{p(\cdot)}}$
which is called
the variable Lebesgue norm
or
the Nakano--Luxenburg norm.
\begin{definition}[Variable Lebesgue spaces, Variable exponent Lebesgue space]
\label{defi:170428-116}
Let
$$p(\cdot) : {\mathbb R}^n \to [1,\infty ]$$
be a measurable function. 
Then define
the {\it variable exponent Lebesgue space}
$L^{p(\cdot)}({\mathbb R}^n)$ {\it with variable exponents} 
by
\[
L^{p(\cdot)}({\mathbb R}^n) \equiv 
\bigcup_{\lambda>0}
\{ f \in L^0({\mathbb R}^n) \, : \, 
 \rho_p(\lambda^{-1}f) < \infty \} ,
\]
where 
\[
\rho_p(f) \equiv \|\chi_{p^{-1}(0,\infty)} |f|^{p(\cdot)}\|_1
+\| f \|_{L^{\infty}(p^{-1}(\infty))} .
\]
Moreover,
for $f \in L^{p(\cdot)}({\mathbb R}^n)$ one defines the 
{\it variable Lebesgue norm}
by
\[ \| f \|_{L^{p(\cdot)}({\mathbb R}^n)}\equiv 
\inf\left( \left\{ \lambda \in(0,\infty) \,:\, \rho_p(\lambda^{-1}f) \le 1 \right\} \cup \{\infty\}\right).
\]
\index{$\| \cdot \|_{L^{p(\cdot)}({\mathbb R}^n)}$}
\index{Nakano--Luxenburg norm@Nakano--Luxenburg norm}
\index{variable exponent Lebesuge space@variable exponent Lebesgue space}
\end{definition}
Using this techinique,
we can generalize the exponent $q$.
See \cite{GHS10, LPSW13, LoHa16} for example.

\section*{Acknowledgement}

The author is thankful to Professors Eiichi Nakai
and Vagif Guliyev
for their careful reading.

\end{document}